\pdfoutput=1
\documentclass[11pt,reqno]{amsart}
\usepackage[a4paper,margin=1in,footskip=0.3in]{geometry}
\usepackage[table]{xcolor}
\usepackage{mathrsfs}
\usepackage{amssymb}
\usepackage{mathtools}
\usepackage{tikz-cd}
\usepackage{enumitem}
\usepackage[all,cmtip]{xy}
\usepackage[square, comma, numbers, sort&compress]{natbib}
\usepackage[T1]{fontenc}
\usepackage{booktabs}
\usepackage{pgf,tikz}
\usepackage{tikz-cd}
\usepackage{subcaption}
\usepackage{setspace}
\usepackage{csquotes}
\usepackage{epigraph}
\usepackage{extarrows}
\usetikzlibrary{arrows}
\definecolor{qqqqff}{rgb}{0,0,1}
\usepackage{lscape}
\usepackage{bm}
\usepackage{todonotes} 
\usepackage[normalem]{ulem}

\PassOptionsToPackage{pdfusetitle,pagebackref,colorlinks}{hyperref}
\usepackage{footnotebackref}

\usepackage{bookmark}
\hypersetup{
  linkcolor={blue!70!black},
  citecolor={red!60!black},
  urlcolor={blue!80!black}
}
\usepackage[capitalize]{cleveref}
\newtheorem{theorem}{Theorem}[section]
\newtheorem{lemma}[theorem]{Lemma}
\newtheorem{proposition}[theorem]{Proposition}
\newtheorem{corollary}[theorem]{Corollary}

\newtheorem{alphtheorem}{Theorem}

\theoremstyle{definition}
\newtheorem{definition}[theorem]{Definition}

\newtheorem{example}[theorem]{Example}

\theoremstyle{remark}
\newtheorem{remark}[theorem]{Remark}

\newtheoremstyle{cited}{.5\baselineskip\@plus.2\baselineskip\@minus.2\baselineskip}{.5\baselineskip\@plus.2\baselineskip\@minus.2\baselineskip}{\itshape}{}{\bfseries}{\bfseries .}{5pt plus 1pt minus 1pt}{\thmname{#1}\thmnumber{~#2}\thmnote{ \normalfont#3}}
\theoremstyle{cited}
\newtheorem{citedthm}[theorem]{Theorem}

\newtheorem{citedprop}[theorem]{Proposition}

\newtheoremstyle{citeddef}{.5\baselineskip\@plus.2\baselineskip\@minus.2\baselineskip}{.5\baselineskip\@plus.2\baselineskip\@minus.2\baselineskip}{}{}{\bfseries}{\bfseries .}{5pt plus 1pt minus 1pt}{\thmname{#1}\thmnumber{~#2}\thmnote{ \normalfont#3}}
\theoremstyle{citeddef}

\newcommand{\mor}[1][]{\xrightarrow{#1}}
\newcommand{\isomor}{\mor[\sim]}

\newcommand\hz{\mathbb Z}
\newcommand\Z{\mathbb Z}

\def\GL{\mathop{\mathrm{GL}}\nolimits}

\newcommand{\Hom}{\text{Hom}}
\def\codim{\mathop{\mathrm{codim}}\nolimits}

\newcommand{\rk}{\operatorname{rank}}

\newcommand{\NS}{\operatorname{NS}}
\newcommand{\Db}{\textbf{D}^\text{b}}

\newcommand{\tr}{\texttt{tr}}

\newcommand{\Pp}{\mathcal{P}}

\newcommand{\dive}{\text{div}}

\newcommand{\Exc}{\text{Exc }}

\def\wGL2{\ensuremath{\widetilde{\mathrm{GL}}_2^+(\bbR)}}
\newcommand{\ign}[1]{}

\newcommand{\Span}{\operatorname{span}}

\newcommand{\Br}{\operatorname{Br}}

\newcommand{\ord}{\operatorname{ord}}

\newcommand{\MonH}{\operatorname{Mon}_{\rm Hdg}^2}

\DeclareMathOperator{\Aut}{Aut}

\DeclareMathOperator{\im}{Im}

\DeclareMathOperator{\Pic}{Pic}

\DeclareMathOperator{\Hilb}{Hilb}
\DeclareMathOperator{\Alb}{Alb}

\DeclareMathOperator{\Mov}{Mov}

\DeclareMathOperator{\Amp}{Amp}
\DeclareMathOperator{\Nef}{Nef}
\DeclareMathOperator{\Mon}{Mon}

\DeclareMathOperator{\Bir}{Bir}
\DeclareMathOperator{\Kum}{Kum}

\DeclareMathOperator{\Ext}{Ext}

\DeclareMathOperator{\ext}{ext}

\DeclareMathOperator{\ch}{ch}

\DeclareMathOperator{\Stab}{Stab}
\DeclareMathOperator{\Stabd}{Stab^\dagger}

\DeclareMathOperator{\Coh}{Coh} %Actually better not bold

\newcommand\calA{\mathcal{A}}

\newcommand\calC{\mathcal{C}}
\newcommand\calD{\mathcal{D}}
\newcommand\calE{\mathcal{E}}
\newcommand\calF{\mathcal{F}}

\newcommand\calH{\mathcal{H}}

\newcommand\calK{\mathcal{K}}

\newcommand\calM{\mathcal{M}}

\newcommand\calO{\mathcal{O}}
\newcommand\calP{\mathcal{P}}

\newcommand\calT{\mathcal{T}}

\newcommand\calW{\mathcal{W}}

\newcommand{\sA}{\mathcal{A}}

\newcommand{\sE}{\mathcal{E}}

\newcommand{\sH}{\mathcal{H}}

\newcommand{\sK}{\mathcal{K}}

\newcommand{\sM}{\mathcal{M}}

\newcommand{\sO}{\mathcal{O}}

\newcommand{\sW}{\mathcal{W}}

\newcommand{\bbC}{\mathbb{C}}
\newcommand{\bbG}{\mathbb{G}}
\newcommand{\bbH}{\mathbb{H}}

\newcommand{\bbN}{\mathbb{N}}
\newcommand{\bbP}{\mathbb{P}}
\newcommand{\bbQ}{\mathbb{Q}}
\newcommand{\bbR}{\mathbb{R}}
\newcommand{\bbZ}{\mathbb{Z}}

\newcommand{\fraka}{\mathfrak{a}}

\newcommand{\into}{\hookrightarrow}

\DeclareMathOperator{\dv}{div}
\DeclareMathOperator{\Id}{Id}

\title[Finite Order Maps on Kummer-type Manifolds]{Finite Order Symplectic Birational Self-maps on Kummer-type Manifolds}
\date{}
\author{Yajnaseni Dutta}
\address{Universiteit Leiden, Mathematisch Instituut, Gorlaeus Gebouw, Einsteinweg 55, 2333 CC, the Netherlands.}
\email{y.dutta@math.leidenuniv.nl}
\author{Dominique Mattei}
\address{Institut f\"{u}r Algebraische Geometrie, Leibniz Universit\"{a}t Hannover, Welfengarten 1, 30167 Hannover, Germany.}
\email{mattei@math.uni-hannover.de}
\email{muller@math.uni-hannover.de}
\author{Stevell Muller}
\author{Howard Nuer}
\address{Technion - Israel Institute of Technology, Derech Ya'akov Dori, Haifa, 3200003, Israel}
\email{hnuer@technion.ac.il}

\usepackage{bookmark}

\begin{document}
\subjclass[2020]{} 
\keywords{}

    \begin{abstract}
    A projective hyperk\"ahler manifold of Kummer-type is said to be twisted modular if it is birational to the Albanese fiber of a moduli space of twisted sheaves on an abelian surface. We prove that, with the exception of certain cases of Picard rank 3, any projective Kummer-type manifold admitting a finite-order symplectic birational self-map that acts nontrivially on its second cohomology group is twisted modular. We provide a complete characterization of these exceptions in terms of their N\'eron--Severi lattices. We then investigate symplectic birational self-maps of modular Kummer-type manifolds, determining exactly which Mukai vectors allow the birational transformation induced by crossing the vertical wall, which acts on cohomology as a reflection, to correspond to a finite-order symplectic birational self-map. Additionally, we prove in an appendix several results concerning moduli spaces of twisted sheaves on abelian surfaces which were not readily available in the literature.
\end{abstract}
 \maketitle
\setcounter{tocdepth}{1}
\tableofcontents
\hypersetup{bookmarksdepth=2}

Hyperk\"ahler manifolds occupy a central role in the classification of compact K\"ahler manifolds with trivial canonical bundle \cite{BogDecomp,Bea83}. In this paper, we study the symmetries of hyperk\"ahler manifolds of $\Kum_n$-type, a fundamental series of examples introduced by Beauville \cite{Bea83}. For an abelian surface $A$ and an integer $n\ge1$, the generalized Kummer manifold $K_n(A)$ of dimension $2n$ is defined as the fiber over zero of the summation map $\Sigma\colon A^{[n+1]}\to A$ from the Hilbert scheme of points to the abelian surface. Any hyperk\"ahler manifold deformation equivalent to $K_n(A)$ is said to be of $\Kum_n$-type, or Kummer-type when specifying $n$ is not necessary.

Our goal is to establish geometric constraints for the existence of finite birational self-maps on such manifolds (Theorem \ref{thm:main theorem}), and to provide explicit examples of such self-maps on modular $\Kum_n$-type manifolds (Theorem \ref{theorem:main theorem 2-explicit examples}). These results rely on a criterion for studying birational isomorphisms between $\Kum_n$-type manifolds, which is of independent interest. Building on the work of Wieneck \cite{Wie18}, for any $\Kum_n$-type manifold $X$, we define a canonical $O(U^{\oplus 4})$-orbit of embeddings $\iota_X\colon H^2(X,\bbZ) \to U^{\oplus 4}$, where $U$ denotes the hyperbolic plane lattice. We extend the Hodge structure of $H^2(X,\bbZ)$ to $U^{\oplus 4}$ by requiring that $H^2(X,\bbZ)^\perp\subset U^{\oplus 4}$ is of type $(1,1)$.

\begin{alphtheorem}[see Theorem~\ref{thm:biratU4}]\label{thm:IntroThmCriterion}
Let $X$ and $Y$ be $\Kum_n$-type manifolds for some $n\geq 2$, and let $\iota_X,\iota_Y$ be two compatible (see Definition \ref{def:compatible embeddings}) Wieneck embeddings. The following are equivalent:
\begin{enumerate}
    \item[(i)] $X$ and $Y$ are bimeromorphic,
    \item[(ii)] There exists a commutative diagram
    $$\begin{tikzcd}
        H^2(X,\bbZ) \ar[r,hook,"\iota_X"] \ar[d,"g"] & U^{\oplus 4} \ar[d,"\widetilde{g}"] \\
        H^2(Y,\bbZ) \ar[r,hook,"\iota_Y"] & U^{\oplus 4}
    \end{tikzcd}$$
    such that $\widetilde{g}\in SO(U^{\oplus 4})$, and $g$ is a Hodge isometry.
\end{enumerate}
\end{alphtheorem}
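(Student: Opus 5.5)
The plan is to reduce the equivalence to the global Torelli theorem for hyperkähler manifolds (Verbitsky, Markman) together with Mongardi's computation of the monodromy group of $\Kum_n$-type manifolds. By Torelli, $X$ and $Y$ are bimeromorphic if and only if there is a Hodge isometry $H^2(X,\bbZ)\to H^2(Y,\bbZ)$ that is a \emph{parallel transport operator}. For $\Kum_n$-type manifolds with $n\ge 2$, the monodromy group is
\[
\Mon^2(X)=\{\,g\in \widetilde{O}^+(H^2(X,\bbZ))\ :\ \chi(g)\det(g)=1\,\}=W(H^2(X,\bbZ))\cap \ker(\chi\cdot\det),
\]
where $\chi$ is the character recording the action on the discriminant group $\bbZ/(2n+2)$, or rather its sign. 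The key input is Wieneck's description of $\Mon^2$ through the embedding into $U^{\oplus 4}\cong H^*(A,\bbZ)$: the monodromy operators are exactly the restrictions of elements of $SO^+(U^{\oplus4})$ that preserve the image of $H^2$. The spinor-norm/orientation condition will be absorbed into compatibility of the embeddings and the Hodge condition.

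\textbf{Step 1.} I would use the definition of compatible Wieneck embeddings (Definition \ref{def:compatible embeddings}) to identify the parallel transport operators. If $X$ and $Y$ are deformation equivalent along a path $\gamma$, parallel transport $P_\gamma$ extends to a family of Wieneck embeddings, since the embedding is canonical up to $O(U^{\oplus4})$ and varies locally constantly. Compatibility should mean precisely that $\iota_Y\circ P_\gamma$ and $\iota_X$ differ by an element of $SO(U^{\oplus 4})$, or can be chosen to agree.

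It follows that a general isometry $g$ is a parallel transport operator if and only if $\iota_Y\circ g\circ\iota_X^{-1}$ extends to some $\widetilde g\in SO(U^{\oplus4})$ with the correct orientation behaviour. This reduces to Wieneck's result that $\Mon^2(X)$ equals the image of the stabiliser of $\iota_X(H^2)$ in $SO^+(U^{\oplus 4})$.

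\textbf{Step 2: (i)$\Rightarrow$(ii).} A bimeromorphic map $f$ induces a Hodge isometry $g=f_*$, which is a parallel transport operator by Huybrechts. Step 1 then produces the extension $\widetilde g$ in $SO(U^{\oplus4})$.

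\textbf{Step 3: (ii)$\Rightarrow$(i).} Given $\widetilde g\in SO$, I would first check that $g$ is orientation preserving on positive 3-planes, i.e. $g\in O^+$. If it is not, I would compose with $-\mathrm{id}$. The point is that $-\mathrm{id}_{U^{\oplus4}}$ has determinant $+1$, lies in $SO$, restricts to $-\mathrm{id}$ on $H^2$, and $-\mathrm{id}$ is a Hodge isometry. Since bimeromorphism only requires \emph{some} Hodge parallel transport operator, this fixes the orientation.

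After this correction, $\pm g$ is a parallel transport operator by Step 1 and is Hodge. Torelli then gives a bimeromorphic map, using the Hodge-theoretic version for possibly non-projective $X$ (Markman, Theorem 1.3).

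\textbf{Main obstacle.} The delicate point is the sign and orientation bookkeeping. One has to reconcile $SO$ versus $SO^+$ in $U^{\oplus4}$ with orientation-preserving, determinant, and discriminant-character conditions on $H^2$. Concretely, $\det\widetilde g=\det g\cdot\det(\widetilde g|_{H^2{}^\perp})$, and the action on the rank-one complement $H^2{}^\perp=\langle v\rangle$, $v^2=2n+2$, is $\pm1$, tied to the discriminant character via gluing.

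I would verify carefully that the condition $\det(\widetilde g)=1$ is exactly $\chi(g)\det(g)=1$. Separately, the spinor norm condition must be handled by the $-\mathrm{id}$ trick, since $-\mathrm{id}$ on the rank-7 lattice $H^2$ reverses positive orientation. This works because $H^2$ has signature $(3,4)$, so $-\mathrm{id}$ on $H^2$ has determinant $-1$ but $-\mathrm{id}$ on $U^{\oplus4}$ has determinant $+1$, consistent with the sign on $v$.
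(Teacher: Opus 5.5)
This is essentially the paper's proof. For (i)$\Rightarrow$(ii), compatibility applies directly because $\varphi^*$ is a Hodge parallel transport operator. For (ii)$\Rightarrow$(i), the paper likewise writes $\iota_X=\iota_Y\circ f$ with $f$ a parallel transport, and replaces $(g,\widetilde g)$ by $(-g,-\widetilde g)$ to make $g$ orientation-preserving. It then deduces from $\det\widetilde g=1$, via Corollary~\ref{cor:Mon2isSO}, that $f^{-1}\circ g$ is a monodromy operator, and concludes by Torelli.

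The one thing to fix is how you state the key input. If $SO^+$ means preserving the orientation of positive definite $4$-planes, then $\Mon^2(X)$ is \emph{not} the image of the $SO^+(U^{\oplus4})$-stabiliser. A determinant $-1$ monodromy, such as a reflection $R_e$ as in Proposition~\ref{Prop:Reflection in Mon}, extends only via $v\mapsto -v$, so its extension lies in $SO\setminus SO^+$. Conversely, $-\mathrm{id}\in SO^+(U^{\oplus4})$ restricts to a non-monodromy. The correct statement, and the one your Step~3 actually uses, is this: an orientation-preserving isometry of $H^2$ is a monodromy operator if and only if it extends to an element of $SO(U^{\oplus4})$.
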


This criterion is already known for hyperk\"ahler manifolds of $\text{K3}^{[n]}$-type thanks to the work of Markman \cite{Marsurvey}. However, the requirement regarding the sign of $\det(\widetilde{g})$ in Theorem \ref{thm:IntroThmCriterion} is a subtlety unique to $\Kum_n$-type manifolds and is necessary (see Example \ref{ex:nonMon}).

Recent developments in the study of hyperk\"ahler manifolds and their singular analogs motivate the analysis of finite symplectic birational actions. For instance, if a finite group $G$ acts symplectically on $X$, any symplectic form $\sigma_X\in H^{2,0}(X)$ restricts to the fixed locus $X^G$ and descends to the quotient $X/G$. 
In both contexts, understanding the geometry of the $G$-action on $X$ often gives rise to numerous examples of primitive symplectic varieties \cite{zbMATH08068450,arXiv:2604.06851,arXiv:2308.14692,arXiv:2211.14524}.

In \cite{DMPM}, the first two named authors and Prieto-Monta\~nez demonstrated that any projective $\text{K3}^{[n]}$-type hyperk\"ahler manifold admitting a finite-order symplectic birational self-map is twisted modular. We say that a projective hyperk\"ahler manifold is \emph{twisted modular} if it is birational to the Albanese fiber of a moduli space of stable twisted sheaves on a $K$-trivial surface. When the surface is K3, this construction yields manifolds of $\text{K3}^{[n]}$-type (where the Albanese fiber is the entire moduli space). When the surface is abelian, one obtains the Kummer-type manifolds studied here. Our aim is to determine the extent to which the results of \cite{DMPM} apply to Kummer-type manifolds, and to highlight where they diverge. We prove the following:

\begin{alphtheorem}[see Theorem~\ref{thm:modular except for some examples}]\label{thm:main theorem}
Let $X$ be a projective $\Kum_n$-type manifold for some $n\geq2$. Assume there exists a symplectic birational self-map $f$ of finite order with a non-trivial action on $H^2(X,\bbZ)$. Then $X$ is twisted modular unless there exists a positive integer $d\geq 1$ such that either of the following holds:
\begin{enumerate}
    \item[\textnormal{(I)}] There is a finite index embedding $\langle2d\rangle\oplus A_1^{\oplus2}(-1)\hookrightarrow \NS(X)$ and $(n+1)x^2+dy^2-z^2-t^2$ is anisotropic.
    \item[\textnormal{(II)}] There is a finite index embedding $\langle2d\rangle\oplus A_2(-1)\hookrightarrow \NS(X)$ and $(n+1)x^2+dy^2-3z^2-t^2$ is anisotropic.
\end{enumerate}
Moreover, both cases (I) and (II) can occur, and Corollary~\ref{cor: NeronSeveriExceptions} lists all the possible isometry classes for $\NS(X)$ in these cases.
\end{alphtheorem}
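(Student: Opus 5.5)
The plan is to work inside the extended Mukai-type lattice. Using Theorem~\ref{thm:IntroThmCriterion}, I embed $H^2(X,\bbZ)$ into $U^{\oplus4}$ via a Wieneck embedding $\iota_X$. The orthogonal complement is generated by a vector $v$ with $v^2=2n+2$, and it is of type $(1,1)$. The algebraic part of $U^{\oplus 4}$ is then the extended Néron--Severi lattice $\widetilde{\NS}(X)=\NS(X)\oplus_{\text{glue}}\bbZ v$, which is even of signature $(2,\rho-1)$. By analogy with the modularity criterion for $\text{K3}^{[n]}$-type, I will first prove (or take from the body of the paper) a characterization of twisted modularity: $X$ is twisted modular if and only if $\widetilde{\NS}(X)$ contains a copy of $U(k)$ for some $k\geq1$, i.e.\ an isotropic vector. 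The argument follows Huybrechts' and Mukai's: the isotropic class plays the role of the class of a point in $H^*(A,\bbZ)$ of a twisted abelian surface. Combined with Theorem~\ref{thm:IntroThmCriterion} and the appendix results on twisted moduli spaces, such a class realizes $X$ birationally as an Albanese fibre. In the untwisted case $k=1$ I recover genuine modularity.

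Next I analyse $f$. Since $f$ is symplectic of finite order and acts nontrivially on $H^2$, the induced Hodge isometry $g=f^*$ acts trivially on the transcendental lattice $T(X)$. The coinvariant lattice $S=H^2(X,\bbZ)_g$ is therefore a nonzero negative definite sublattice of $\NS(X)$ without wall divisors of the relevant type. By Theorem~\ref{thm:IntroThmCriterion}, $g$ extends to $\widetilde g\in SO(U^{\oplus 4})$ fixing $v$. The invariant lattice $\widetilde{\NS}(X)^{\widetilde g}$ contains $v$ and an ample-type class $h$, and $S$ is orthogonal to both. The key step is then to show that $\widetilde{\NS}(X)$ is isotropic, except in a small residual case. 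When $\rho(X)\ge 4$, the lattice $\widetilde{\NS}(X)$ has rank $\ge5$, so by Meyer's theorem it is isotropic (it is indefinite), and $X$ is twisted modular. When $\rho(X)=2$, $S$ has rank $1$, so $g$ acts as a reflection in a negative class; this is a single negative vector on a rank-$3$ lattice. I would use the classification of reflective/symplectic classes (the analogue of wall divisors for $\Kum_n$, where the determinant constraint on $\widetilde g$ matters) to produce an isotropic vector directly, or to rule out nonmodular cases. This leaves $\rho(X)=3$ with $S$ of rank $2$.

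For rank-$2$ negative definite coinvariant lattices of finite-order isometries, $S$ must have the finite-order isometry acting without fixed vectors. Hence $S(-1)$ is, up to finite index, $A_1^{\oplus2}$ (order $2$ or $4$) or $A_2$ (order $3$ or $6$). Moreover, $\NS(X)$ contains $\langle 2d\rangle\oplus S$ with finite index, where $\langle2d\rangle=\NS(X)^g$. Then $\widetilde{\NS}(X)$ contains with finite index $\langle 2n+2\rangle\oplus\langle2d\rangle\oplus S$. Over $\bbQ$, this lattice has quadratic form $(n+1)x^2+dy^2-z^2-t^2$, respectively $(n+1)x^2+dy^2-3z^2-t^2$ after diagonalizing $A_2$ (up to scaling by $2$). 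Isotropy of $\widetilde{\NS}(X)$ is exactly isotropy of this form, which yields (I) and (II). The \textbf{main obstacle} is the first step in the Kummer setting. The isotropic-vector criterion must account for the $SO$ versus $O$ subtlety, and one must check that an isotropic vector of $\widetilde{\NS}(X)$ can always be moved by an element of $SO(U^{\oplus4})$ preserving $v$ into the standard position $(0,0,1)$ relative to an abelian surface. If the determinant comes out wrong, I would compose with $-\Id$ on a $U$ summand orthogonal to the relevant data; this is where the non-monodromy example must be avoided.

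Finally, to show (I) and (II) occur, I exhibit explicit $n,d$ for which the quaternary forms are anisotropic, for instance by choosing local obstructions at $2$ or $3$ via Hilbert symbols. I then realize the corresponding $\NS$ by surjectivity of the period map, with an isometry $g$ of $\Lambda$ acting as $-1$ (resp.\ order $3$) on $S$ that lies in the appropriate monodromy group and preserves a chamber. The torelli-type results then give the birational self-map, and enumerating the overlattices yields Corollary~\ref{cor: NeronSeveriExceptions}.
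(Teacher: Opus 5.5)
Your overall architecture (the isotropy criterion, Meyer's theorem for $\rho\ge 4$, rational diagonalization of $\langle 2n+2\rangle\oplus\langle 2d\rangle\oplus S$, realization via surjectivity of the period map) matches the paper. However, the step that actually produces the list (I)/(II) fails as written.

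\textbf{The gap.} You claim that because $g$ acts without fixed vectors on the rank-$2$ negative definite lattice $S=H^2(X,\bbZ)_g$, $S(-1)$ is, up to finite index, $A_1^{\oplus2}$ or $A_2$. This is false:
\begin{enumerate}
\item $-\Id$ acts without fixed vectors on every negative definite binary lattice. For example, $S=\langle-2\rangle\oplus\langle-4\rangle$ is not rationally isometric to $A_1^{\oplus2}(-1)$, since the determinants $8$ and $4$ differ by a non-square.
\item $A_2(-2)$ carries an order-$3$ isometry without fixed vectors, yet $A_2(-2)_\bbQ\simeq\langle-1,-3\rangle$ is not isometric to $A_2(-1)_\bbQ\simeq\langle-2,-6\rangle$, because $x^2+3y^2=2$ has no rational solution.
\end{enumerate}
Such $S$ would yield extra quaternary forms that your argument cannot exclude.

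\textbf{The missing idea} is the action on discriminant groups. In this case $\det g=+1$, so by Proposition~\ref{prop:Mon2isN} $g$ acts trivially on $A_{H^2(X,\bbZ)}$. Since $g$ is also trivial on the transcendental lattice, it acts trivially on $A_{\NS(X)}$, and hence on $A_S$. Lemma~\ref{lem:action on discriminant} then forces $S$ to be $2$- or $3$-elementary. It also rules out orders $4$ and $6$, since for a negative definite rank-$2$ lattice $l_+-l_-=-2$ is divisible by neither $4$ nor $8$. This leaves exactly $S\simeq A_1^{\oplus2}(-1)$ or $S\simeq A_2(-1)$.

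\textbf{Smaller errors.} By Lemma~\ref{lem:monodromyextension}, $\widetilde g(v)=\det(g)\,v$, not $v$. Your case split also omits Picard rank $3$ with $\det g=-1$, where $S$ has rank $1$, not $2$.

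Both are fixed by making your $\rho=2$ idea precise. Whenever $S=\bbZ e$ has rank $1$, $g$ is an involution with $(-1)$-eigenspace $\bbZ e$, so $g=R_e$. Proposition~\ref{Prop:Reflection in Mon} then forces $e^2=-2(n+1)=-v^2$, so $v+\iota_X(e)$ is a nonzero isotropic vector of $\widetilde\Lambda^{1,1}$. This is more direct than the paper's route. The paper uses $\widetilde g(v)=-v$ to see that the coinvariant lattice of the involution $\widetilde g$ on the unimodular $\widetilde\Lambda$ is an even, hyperbolic, $2$-elementary rank-$2$ lattice, and then invokes Nikulin to get an isotropic vector.

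\textbf{On the sign issue.} For the determinant problem you flag in the modularity criterion, $-\Id$ on a copy of $U$ is a Hodge isometry only if that copy is algebraic. This fails in general in the twisted case, because $\widetilde\Lambda^{1,1}$ need not contain $U$. The paper instead passes to the dual abelian surface via Shioda's determinant $-1$ Hodge isometry (Example~\ref{ex:nonMon}).
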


\begin{remark}
    Unlike the $\text{K3}^{[n]}$ case, where the representation $\Bir(X)\to \GL(H^2(X, \bbZ))$ is faithful, $\Kum_n$-type manifolds have a representation with a nontrivial kernel. Consequently, they always admit finite-order symplectic self-maps with a trivial action on $H^2(X,\bbZ)$. 
\end{remark}

Regarding the non-modular exceptions identified in Theorem \ref{thm:main theorem} (I) and (II), it would be interesting to find a concrete geometric realization for these symmetries. 

\begin{remark}
Note that the exceptions (I) and (II) in Theorem~\ref{thm:main theorem} appear in Picard rank $3$. In fact, in the proof of the theorem we show if $X$ is of Picard rank $2$ or more than $3$ and there exists a symplectic birational self-map $f$ of finite order with a non-trivial action on $H^2(X,\bbZ)$, then $X$ is twisted modular.
 By contrast, already in Picard rank $2$,  the existence of an \emph{infinite}-order symplectic birational self-map certainly does not ensure modularity. For instance, 
 if $X$ is a $\Kum_2$-type manifold such that $\NS(X)$ has the Gram matrix $\begin{pmatrix} 4&6\\6&4 \end{pmatrix}$ with respect to some basis, then $X$ admits an infinite-order symplectic birational self-map whose action on $\NS(X)$ is given by $\begin{pmatrix} -55&144\\-144 &377 \end{pmatrix}$. By our numerical criterion, however, such an $X$ is not twisted modular. 
 This example arose out of a conversation between the third-named author and Simone Billi.
\end{remark}

Theorem \ref{thm:main theorem} suggests that the search for finite-order birational self-maps should be focused on twisted modular Kummer-type manifolds. Following the strategy of \cite{DMPM}, we let $A$ be an abelian surface and $X=K_A(v)$ be a modular Kummer-type manifold. The stability manifold $\Stab(A)$, alongside its wall-and-chamber decomposition determined by $v$, controls the birational properties of $X$. By isolating a specific wall-crossing in $\Stab(A)$, we obtain the following result:

\begin{alphtheorem}\label{theorem:main theorem 2-explicit examples}
Let $A$ be an abelian surface and $v=(r,cD,\chi)$ be a primitive Mukai vector with $D\in\NS(A)$ primitive and $r\ne 0$. Consider the Albanese fiber $K:=K_A(v,\sigma)$ of the moduli space of stable objects with respect to a $v$-generic stability condition $\sigma$. For $e=(r,cD,\frac{c^2D^2}{r}-\chi)$, the reflection $R_e\in O(H^2(K,\bbZ))$ is induced by a symplectic birational self-map if and only if $r\mid 2c$, $\gcd(r,\chi)=1$ or $2$, $r\ne 1$ or $2$, and $v$ does not belong to one of the following series of Mukai vectors:
\begin{enumerate}
    \item $(r,krD,\frac{k^2D^2r}{2}-m)$ for some $k\in\bbZ$ and $m\in \{1, 2\}$.
    \item $D^2\equiv 0\pmod 4$, $v=(2a,maD,\frac{D^2}{4}m^2a-1)$ for integers $a\ge 2$ and odd $m$.
    \item $D^2\equiv 2\pmod 4$, $v=(2a,maD,\frac{D^2m^2a-2}{4})$ for odd integers $a\ge 3$ and $m$.
\end{enumerate}
\end{alphtheorem}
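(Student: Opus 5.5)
The plan is to reduce the question to a lattice-theoretic criterion, using Theorem~\ref{thm:IntroThmCriterion} together with the description of the movable cone of $K=K_A(v,\sigma)$ coming from wall-crossing in $\Stab(A)$. First, $e=(r,cD,\frac{c^2D^2}{r}-\chi)$ satisfies $(v,e)=0$, so $e\in v^\perp$. Using the Mukai pairing we compute $e^2=c^2D^2-2r(\frac{c^2D^2}{r}-\chi)=2r\chi-c^2D^2=-v^2$. Hence $e$ is a class of square $-v^2=-2(n+1)$ in $v^\perp\cong H^2(K,\bbZ)$. Geometrically, $e$ is the vertical (rank-type) wall, i.e.\ $v^\perp$ intersected with the lattice generated by $v$ and $(0,0,1)$ or its twist. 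The reflection $R_e(x)=x-\frac{2(x,e)}{(e,e)}e$ is integral on $H^2(K,\bbZ)$ exactly when $\operatorname{div}(e)$ in $v^\perp$ satisfies $\operatorname{div}(e)\in\{n+1,2(n+1)\}$. Computing the divisibility of $e$ inside $v^\perp$ in terms of $r,c,\chi$ and the divisibility of $D$ should yield the conditions $r\mid 2c$ and $\gcd(r,\chi)\in\{1,2\}$.

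Second, we check the extension and orientation condition. By Theorem~\ref{thm:IntroThmCriterion} (and the Kummer-type monodromy description of Markman/Mongardi), a Hodge isometry of $H^2(K,\bbZ)$ is induced by a birational map iff it extends to $\widetilde g\in SO(U^{\oplus4})=SO(H^*_{\mathrm{ev}}(A))$ and, in addition, preserves the movable cone. The reflection $R_e$ extends as $-R_e$ or as $R_e$ composed with a reflection in $v$. Here $\det(R_e\circ R_v)=+1$, and since $R_v$ acts as $-1$ on $v$ we compose further with $-\Id$ as needed. Acting symplectically corresponds to $R_e$ fixing $H^{2,0}$, which holds because $e$ is algebraic. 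Finite order holds because $R_e^2=\Id$ on $H^2$, and a birational map with finite-order action on $H^2$ has finite order, since the kernel of $\Bir(K)\to O(H^2)$ is finite.

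Third comes the main obstacle: showing that $R_e$ preserves $\Mov(K)$, i.e.\ that it maps some chamber of the decomposition by flopping walls to a chamber. For this we need that the hyperplane $e^\perp$ meets the interior of the movable cone, and that no divisorial wall is exchanged with a non-divisorial one. The walls of $\Mov(K)$ come from classes $s\in H^*_{\mathrm{alg}}(A)$ with $s^2=0$ and $(s,v)\in\{1,2\}$ (isotropic classes giving Hilbert--Chow/Li--Gieseker--Uhlenbeck-type contractions). The fact that $e^\perp$ must be a genuine flopping hyperplane, rather than lying on the boundary of the movable cone, forces $r\neq1,2$; when $r=1$ or $2$ the vertical wall is itself divisorial. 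We would then check that $R_e$ permutes the set of such isotropic $s$. If some isotropic $s$ has $(s,v)\le 2$ and $R_e(s)$ is not of this type, or if $e^\perp$ coincides with such a wall, the reflection fails. Solving the equations $s=(r',c'D,\chi')$, $s^2=0$, $(s,v)=m\in\{1,2\}$ lying on $e^\perp$ should produce precisely the exceptional families (1)--(3). This amounts to a Diophantine case analysis in which the split by $D^2\bmod 4$ arises from when $\frac{D^2}{4}m^2a$ is integral, and I expect it to be the hardest step. Conversely, outside these families the reflection preserves the set of walls, hence maps a chamber of $\Mov(K)$ to a chamber. By the Torelli-type statement, $R_e$ is then induced by a birational self-map.
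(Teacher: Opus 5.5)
Your skeleton matches the paper's. You decide when $R_e\in\MonH(K)$. You then decide whether the vertical wall is divisorial using isotropic classes $s$ with $(s,v)\in\{1,2\}$, and the exceptional families come out of the same Diophantine analysis as in \cite[Proposition 4.15]{DMPM}. The ``only if'' half is fine: in the divisorial families $e$ is proportional to a stably prime exceptional class, so $e^\perp$ misses the interior of every exceptional chamber. The ``if'' half, however, has a genuine gap at the decisive step.

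\textbf{The gap in the ``if'' direction.} Your conclusion ``outside these families $R_e$ preserves the set of walls, hence maps a chamber to a chamber, hence is birational'' does not follow.

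Every element of $\MonH(K)$ preserves $\calW^{\rm pex}(K)$ and permutes the exceptional chambers (Theorem~\ref{theo:semidirect decomposition}). So the ``exchange'' check you plan is vacuous. Theorem~\ref{thm:BirmapHK} needs $R_e(\Mov(K))=\Mov(K)$, which for a reflection holds iff $e^\perp$ meets the interior of $\Mov(K)$.

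Knowing that $e$ is not proportional to a stably prime exceptional class only shows that $e^\perp$ meets the interior of \emph{some} exceptional chamber. That chamber could be $\psi(\Mov(K))$ with $\psi\in W_{\mathrm{Exc}}(K)$ nontrivial, in which case $R_e\notin\Mon^2_{\rm Bir}(K)$.

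The missing input is the one the paper uses, namely the Bayer--Macr\`i map. For $(\omega,\beta)\in\calW$ one has $\overline{\ell}(\sigma_{\omega,\beta})\sim_{\bbR_{>0}}\theta_v(0,\omega,\omega\cdot\theta/r)$. Hence the vertical wall maps onto a codimension-one cone $C_\calW\subset e^\perp$ that lies in $\overline{\Mov(K)}$ and bounds the Gieseker chamber. With this in hand, the argument closes:
\begin{itemize}
\item If the wall is not divisorial, $C_\calW$ lies in the interior of $\Mov(K)$. Then $R_e$ fixes an interior point of $\Mov(K)$ and so preserves it; equivalently, $\psi=\Id$ in the decomposition $R_e=\psi\circ g^*$.
\item If the wall is divisorial, $C_\calW$ is a face of $\Mov(K)$ and $R_e\in W_{\mathrm{Exc}}(K)$.
\end{itemize}

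\textbf{Two smaller corrections.}

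First, the Diophantine condition is not ``$s$ lying on $e^\perp$''. The wall of $s$ coincides with $e^\perp$ iff $s\in\bbQ v+\bbQ e$. Equivalently, $s\in\calH_\calW=\operatorname{Sat}(\bbZ v+\bbZ(0,0,1))$, so $s=(\lambda,\frac{\lambda}{r}cD,\mu)$ (Theorem~\ref{thm:ClassificationWalls}(1)).

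Second, in the monodromy step, the criterion ``$\dv(e)\in\{n+1,2(n+1)\}$'' only makes sense when $e$ is integral and primitive. In general $\frac{c^2D^2}{r}-\chi$ need not be an integer. A reflection in a non-primitive direction can also be integral without being a monodromy operator. Note too that $-R_e$ is not an extension of $R_e$. The correct test is Proposition~\ref{Prop:Reflection in Mon}, equivalently integrality of $R_e\circ R_v$ on $\widetilde\Lambda$ (Lemma~\ref{lem:monodromyextension}). That computation, as in \cite[Lemma 4.13]{DMPM}, is what yields $r\mid 2c$ and $\gcd(r,\chi)\in\{1,2\}$.
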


Theorem \ref{theorem:main theorem 2-explicit examples} provides infinitely many examples of birational self-maps on Kummer-type manifolds that act as reflections on the second cohomology group. 
Furthermore, we provide an explicit geometric interpretation of these maps in Example~\ref{example:MarkmanExample}.
For technical simplicity, we restricted our focus to non-twisted moduli spaces in this theorem, though we expect a parallel result to hold in the twisted case. 

Finally, we address a gap in the literature regarding moduli spaces of \emph{twisted} sheaves on abelian surfaces. While many of these results are known to the experts, precise references are scarce. We review and sometimes reprove some results in Appendix~\ref{sec:modulispaces}, see Theorem~\ref{thm:twistedYoshioka} and Corollary~\ref{coro:mukai+marking=wieneck}.
Even though the techniques are similar to the case of K3 surfaces, it turns out that the extension to abelian surfaces is more subtle.
Therefore, we provide missing details as well as many references  where necessary.

\subsection*{Structure of the paper}

In Section \ref{sec:prelim}, we gather preliminaries on lattices,  cones in hyperkähler geometry
and Hodge Torelli theorems. In Section \ref{sec:KummerHK}, we recall examples and properties of Kummer-type manifolds, detail the construction and properties of Wieneck embeddings, and prove Theorem \ref{thm:IntroThmCriterion}. In Section \ref{sec:SympBirat}, we study finite-order symplectic birational self-maps, analyze the non-modular symmetric examples, and prove Theorem \ref{thm:main theorem}. Finally, Section \ref{sec:RefVertWall} is devoted to Bridgeland stability conditions on abelian surfaces, the classification of the vertical wall, and the proof of Theorem \ref{theorem:main theorem 2-explicit examples}. Appendix \ref{sec:modulispaces}  
fills some gaps in the literature concerning moduli spaces of twisted sheaves on abelian surfaces.

\subsection*{Acknowledgment}
We are grateful to Jonathan Love, Eyal Markman, Ludvig Modin, Giovanni Mongardi, Emanuel Reinecke, and Paolo Stellari for patiently answering our questions, to Simon Brandhorst for the discussions on wall divisors, and to Martijn Kool for making us aware of \cite{vBGJM}.
The authors would also like to thank Stefan Schreieder and the Institut für Algebraische Geometrie at the Leibniz Universität Hannover for supporting a productive work visit and working environment that facilitated the final push resulting in this paper.
HN was partially supported by ISF Grant 3175/25.

\section{Preliminaries}\label{sec:prelim}

\subsection{Lattices}
A \emph{lattice} $L$ is a finitely generated free $\bbZ$-module equipped with a $\bbQ$-valued nondegenerate symmetric bilinear form $(\cdot,\cdot)$. Equivalently, we can define a lattice $L$ by the quadratic form $x\mapsto (x,x)$. 
We call $L$ \emph{integral} if $(x,y)\in\bbZ$ for all $x,y\in L$ and \emph{even}, if moreover $x^2\coloneqq (x,x) \in2\mathbb{Z}$ for all $x\in L$. An integral lattice $L$ is said to be \emph{indivisible} if there exist $x,y\in L$ such that $(x,y) = 1$.

Given a lattice $L$, we denote $L_R := L\otimes_{\mathbb{Z}}R$ for any $R\in \{\bbQ, \bbR, \bbC\}$, which we equip with a bilinear form by extending $(\cdot,\cdot)$ bilinearly.
We define the \emph{signature} of $L$ to be the signature $(l_+, l_-)$ of the regular real quadratic space $L_\bbR$: The lattice $L$ is called \emph{positive definite} (resp. \emph{negative definite}) if $l_- = 0$ (resp. if $l_+ = 0$) and \emph{hyperbolic} if $l_+ = 1$. In this paper, the ADE root lattices are assumed to be positive definite.
We denote by $U$ the hyperbolic plane lattice given by the Gram matrix
$\begin{pmatrix}
    0&1\\1&0
\end{pmatrix}$. 
We let moreover $L(-1)$ denote the same module $L$ but equipped with the opposite form $-(\cdot, \cdot)$.

Given an integral lattice $L$, we let $L^\vee:= \{x\in L_\mathbb{Q}\;\mid\; (x,y) \in \bbZ,\, \forall y\in L\}$ denote its dual lattice and we let $A_L := L^\vee/L$ be the so-called \emph{discriminant group} of $L$. If $L$ is even, then $A_L$ is equipped with a torsion quadratic form
\[q_L\colon A_L\to \bbQ/2\bbZ,\; x+L\mapsto x^2+2\bbZ.\]
For an integer $n\geq 1$, the lattice $L$ is said to be \emph{$n$-elementary} if $nA_L = \{0\}$ and $1$-elementary lattices are called \emph{unimodular} (they satisfy $L = L^\vee$).

Given a lattice $L$ and a sublattice $S\subseteq L$, we say that $S$ is \emph{primitive} in $L$ if $L/S$ has no torsion, and we call $L$ an \emph{overlattice} of $S$ if the quotient module $L/S$ is finite or, equivalently, if $\rk L = \rk S$. We call the lattice \[\operatorname{Sat}_L(S) := L\cap S_\mathbb{Q}\]
the \emph{saturation} of $S$ in $L$. By definition, the lattice $\operatorname{Sat}_L(S)$ is primitive in $L$ and it is the largest overlattice of $S$ contained in $L$. Since lattices are assumed to be nondegenerate, any morphism of lattices\footnote{By this we mean a morphism in the category of lattices, i.e. morphism of $\bbZ$-modules respecting bilinear forms.} $i\colon S\hookrightarrow L$ is an embedding. Such an embedding $i$ is said to be \emph{primitive} if $i(S)\subseteq L$ is a primitive sublattice and it is said to be \emph{of finite index} if $L$ is an overlattice if $i(S)$.

We denote by $O(L)$ the group of isometries of $L$. For a subgroup $G\subseteq O(L)$, we define:
\begin{enumerate}
    \item $L^G := \{v\in L\;\mid\; g(v)=v,\, \forall g\in G\}$ the \emph{invariant sublattice} of $G$,
    \item $L_G := (L^G)^{\perp}$ the \emph{coinvariant sublattice} of $G$,
    \item $SG := \{g\in G\;\mid\; \det(g) = +1\}$ the special subgroup of $G$.
\end{enumerate}
Similarly to Markman \cite[Section 4]{Marsurvey}, if $L$ has signature $(3, \ast)$, we denote by $G^+$ the subgroup of isometries of $G$ which preserve the orientation of a basis on any positive definite three-space $W\subseteq L_\mathbb{R}$. Any isometry in $O^+(L) := O(L)^+$ is said to be \emph{orientation-preserving}. For any isometry $f\in O(L)$, we denote similarly $L^f := L^{\langle f\rangle}$ and $L_f := L_{\langle f\rangle}$ the associated invariant and coinvariant sublattices.

Given a lattice $L$ and a nontrivial vector $v\in L\setminus\{0\}$, we say that $v$ is \emph{primitive} if the sublattice $\bbZ v\subseteq L$ is primitive. The vector $v$ is called \emph{isotropic} if $v^2 = 0$ and we call it \emph{positive} (resp.\ \emph{negative}) if $v^2 > 0$ (resp.\ $v^2 < 0$). The lattice $L$ is said to be \emph{isotropic} (resp.\ \emph{anisotropic}) if it admits an isotropic vector (resp.\ no isotropic vector). We define the \emph{divisibility} $\dive_L(v)$ of $v$ in $L$ to be the positive generator of the principal ideal $\{(v,y)\ \mid\ y\in L\}$. If $L$ is integral, then $\dive_L(v)$ is the largest positive integer $d$ such that $v\in dL^\vee$.

Given a lattice $L$ and a non-isotropic vector $v\in L_\bbR$, we define the \emph{reflection in $v$} to be the isometry $R_v\in O(L_\bbR)$ defined by:
\begin{equation}\label{eq:reflection}
R_v(x) := x - 2\frac{(x,v)}{v^2}v,\qquad \forall x\in L_\mathbb{R}.
\end{equation}
If $v\in L$ and $R_v$ defines an isometry of $L$, we call $v$ a \emph{root} of $L$. Note that if $L$ is integral and $v\in L$ is a vector, then $v$ is a root if and only if $v^2$ divides $2\dive_L(v)$. For any $n\in \mathbb{Z}$ nonzero, we call an \emph{$n$-vector} (resp.\ an \emph{$n$-root}) of $L$ any vector (resp.\ root) $v\in L$ such that $v^2 = n$.

Finally for $a_1,\ldots, a_n\in \bbQ^{\times}$, we denote by $\langle a_1,\ldots, a_n\rangle$ the lattice of rank $n$ whose Gram matrix in a given basis is diagonal with entries $a_1,\ldots, a_n$.

\subsection{Hyperk\"ahler manifolds, cones and Hodge Torelli}
Let $X$ be a \emph{hyperk\"ahler manifold}, that is a simply-connected compact K\"ahler manifold whose space of holomorphic two-forms is one-dimensional, spanned by a nowhere degenerate symplectic form. The second integral cohomology $H^2(X, \bbZ)$ of $X$ is equipped with the so-called \emph{Beauville--Bogomolov--Fujiki (BBF) quadratic form} $q_X$. It is an indivisible integral nondegenerate form, turning $(H^2(X, \mathbb{Z}), q_X)$ into a lattice of signature $(3, b_2(X)-3)$ \cite[Th\'eor\`eme 5]{Bea83}; for the rest of the paper, we always assume $H^2(X, \bbZ)$ comes equipped with its BBF-form and we omit $q_X$ from notation. We denote moreover by $\Mon^2(X)\subset O(H^2(X,\bbZ))$ the subgroup of \textit{monodromy operators}, that is, isometries induced by parallel transport operators (see \cite[Definition 1.1]{Marsurvey} for more detailed definitions), and $\Mon^2_{\rm Hdg}(X)\subset \Mon^2(X)$ those operators preserving the Hodge structure. In this paper, for any subring $R\subseteq \bbC$, we denote $H^{1,1}(X, R) := H^2(X, R)\cap H^{1,1}(X)$.\smallskip

The geometry of $X$ is controlled by several cones inside $H^{1,1}(X,\bbR)$, which admit certain wall-and-chamber decompositions. For this, we introduce two sets of classes in the N\'eron--Severi lattice $\NS(X) = H^{1,1}(X,\mathbb{Z})$ of $X$: the set of stably prime exceptional divisors $\calW^{\rm pex}_X$ and the set of wall divisors $\calW_X$ (see \cite[Sections 5 and 6]{Marsurvey} and \cite[Definition 1.2]{MonKahCone} for the definitions). They satisfy $\calW^{\rm pex}_X\subset \calW_X$. We define moreover the \emph{positive cone} $\calC_X$ of $X$ to be the connected component of $P_X:= \{x\in H^{1,1}(X,\bbR) \ \mid \ x^2>0\}$ containing a K\"ahler class.

\begin{remark}\label{rem monodromy are orientation-preserving}
    In \cite[Section 4]{Marsurvey}, Markman introduces the notion of orientation on $P_X$, related to the orientation of bases for any positive definite three-space in $H^2(X, \mathbb{R})$. It is known that a monodromy operator $f\in  \Mon^2(X)$ of $X$ preserves such an orientation, and in particular $f$ lies in $O^+(H^2(X, \bbZ))$, the subgroup of orientation-preserving isometries.
\end{remark}

\begin{definition}[{{{\cite[Definition 5.2]{Marsurvey}, \cite[Proposition 1.5]{MonKahCone}}}}]\label{def:wallchamberCX}
    We define the following convex sets:
    \begin{enumerate}
        \item An \textit{exceptional chamber} of $\calC_X$ is a connected component of
    \begin{equation}\label{eq:Excchamber}
        \calC_X \setminus \bigcup_{\alpha\in \calW^{\rm pex}_X}\alpha^\perp,
    \end{equation}
    and we call the exceptional chamber $\mathcal{FE}_X$  containing a K\"ahler class the \emph{fundamental exceptional chamber} $X$.
    \item A \textit{K\"ahler-type chamber} of $\calC_X$ is a connected component of
    \begin{equation}\label{eq:Kchamber}
        \calC_X \setminus \bigcup_{\delta\in \calW_X}\delta^\perp,
    \end{equation} 
    and we call the K\"ahler-type chamber $\mathcal{K}_X$ containing a K\"ahler class the \emph{K\"ahler cone} of $X$.
    \end{enumerate}
    There is an obvious inclusion $\mathcal{K}_X\subset \mathcal{FE}_X$.
\end{definition}

If $X$ is projective, we define moreover the following cones in $\overline{\calC_X}\cap \NS(X)_\bbR$:

\begin{enumerate}
    \item The \textit{ample cone} $\Amp(X)$ of ample classes, and its closure $\Nef(X)=\overline{\Amp(X)}$.
    \item The \textit{movable cone} $\Mov(X)$ which is the cone generated by classes of \textit{movable divisors}, i.e. divisors $D$ such that ${\rm codim} \ {\rm Bs}|D|\geq 2$.
\end{enumerate}

Defining these cones and chambers, we can formulate the Hodge-theoretic version of the Huybrechts--Verbitsky Torelli-type theorems for hyperk\"ahler manifolds.

\begin{theorem}[{\cite[Theorems 1.2, 1.6 and Corollary 5.7]{Marsurvey}}]\label{thm:BirmapHK}
    Let $X,Y$ be hyperk\"ahler manifolds which are deformation equivalent, and let $\phi\colon H^2(Y, \bbZ)\to H^2(X, \bbZ)$ be a parallel transport operator which is a Hodge isometry. 
    
    \begin{enumerate}
        \item There exists a bimeromophic map $f\colon X\dashrightarrow Y$ such that $\phi=f^\ast$ if and only if $\phi(\mathcal{FE}_Y) = \mathcal{FE}_X$;
        \item There exists a biholomorphic map $f\colon X\to Y$ such that $\phi = f^\ast$ if and only if $\phi(\calK_Y) = \calK_X$.
    \end{enumerate}
    If $X$ and $Y$ are projective, then one may replace fundamental exceptional chambers by movable cones and K\"ahler cones by amples cones in the previous two statements. Moreover, in this situation, one talks about birational maps and isomomorphisms.
\end{theorem}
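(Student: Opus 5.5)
The plan is to derive both statements from Verbitsky's global Torelli theorem, together with Huybrechts' description of non-separated points of the marked moduli space. Part (2) is the base case, and part (1) is reduced to it through the chamber structure of Definition~\ref{def:wallchamberCX}.

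\textbf{Step 1: setting up the markings.} Choose a marking $\eta_Y\colon H^2(Y,\bbZ)\to\Lambda$. Since $\phi$ is a parallel transport operator, $\eta_X:=\eta_Y\circ\phi^{-1}$ is a marking of $X$, and $(X,\eta_X)$ and $(Y,\eta_Y)$ lie in the same connected component $\mathfrak M^0_\Lambda$ of the marked moduli space. Since $\phi$ is a Hodge isometry, the two marked pairs have the same period.

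\textbf{Step 2: a correspondence cycle.} By Verbitsky's theorem, the period map restricted to $\mathfrak M^0_\Lambda$ is injective up to inseparability. So $(X,\eta_X)$ and $(Y,\eta_Y)$ are inseparable points. Huybrechts' theorem then gives an effective cycle
\[
\Gamma = Z + \textstyle\sum_i Y_i \subset X\times Y,
\]
with $[\Gamma]_* = \phi$, where $Z$ is the graph of a bimeromorphic map $f$ and the $Y_i$ do not dominate either factor.

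\textbf{Step 3: proof of (2).} One direction is immediate: if $f$ is biholomorphic, then $f^*$ maps $\calK_Y$ onto $\calK_X$. For the converse, suppose $\phi(\calK_Y)=\calK_X$. I would degenerate the inseparable pair along a one-parameter family whose generic members have trivial N\'eron--Severi group. On such members the cycle $\Gamma$ is the graph of an isomorphism, and the limiting cycle still acts on $H^2$ as $\phi$. Since $\phi$ maps a K\"ahler class $\omega$ to a K\"ahler class, the correction terms $Y_i$ must pair trivially against these classes, so $f^*$ sends a K\"ahler class to a K\"ahler class. By Fujiki's criterion, a bimeromorphic map that pulls back a K\"ahler class to a K\"ahler class extends to an isomorphism. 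Hence $f$ is biholomorphic with $f^*=\phi$.

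\textbf{Step 4: the forward direction of (1).} If $f$ is bimeromorphic, it is an isomorphism away from codimension two. Therefore $f^*$ preserves the prime exceptional divisors and the positive cone, and so maps $\mathcal{FE}_Y$ to $\mathcal{FE}_X$.

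\textbf{Step 5: the converse of (1).} Suppose $\phi(\mathcal{FE}_Y)=\mathcal{FE}_X$. Then $\phi(\calK_Y)$ is a K\"ahler-type chamber contained in $\mathcal{FE}_X$. The key input, which I expect to be the main obstacle, is that every K\"ahler-type chamber in $\mathcal{FE}_X$ equals $g^*\calK_{X'}$ for some bimeromorphic $g\colon X\dashrightarrow X'$ to another hyperk\"ahler manifold. To prove this, I would pick a very general class in the chamber. I would then use the inseparable points in the fiber of the period map over $(X,\eta_X)$: deforming along twistor lines of that class produces a bimeromorphic model $X'$ whose K\"ahler cone is the chamber, by Markman's description of the fiber. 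Given this, the composite $(g^{-1})^{*}\circ\phi$ maps $\calK_Y$ onto $\calK_{X'}$, and part (2) shows it is induced by an isomorphism $h$. Then $f:=g^{-1}\circ h$ is bimeromorphic with $f^*=\phi$.

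\textbf{Step 6: the projective case.} When $X$ and $Y$ are projective, bimeromorphic maps and biholomorphic maps are birational maps and isomorphisms, by GAGA. It remains to replace the chambers by cones. The closure of $\mathcal{FE}_X$ intersected with $\NS(X)_\bbR$ is $\overline{\Mov(X)}$, by Boucksom's divisorial Zariski decomposition. The intersection $\calK_X\cap\NS(X)_\bbR$ is $\Amp(X)$. The condition on $\phi$ is determined by its restriction to $\NS$, because $\phi$ is a Hodge isometry, so the chamber conditions can be replaced by the corresponding conditions on movable and ample cones.
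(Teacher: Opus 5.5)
The paper gives no proof of this statement. It quotes Markman's survey, where (2) is obtained from Verbitsky's theorem together with Huybrechts' results on inseparable marked pairs, and (1) is then deduced from (2). Your overall architecture is the same, and Steps 1, 2, 4 and 6 are fine. However, the two steps that carry the actual content do not hold as written.

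\textbf{Step 3.} The sentence ``the correction terms $Y_i$ must pair trivially against these classes'' is the entire content of (2), and you give no reason for it. In general $[\Gamma]_*\neq f^*$. Take a K3 surface $S$ containing a smooth rational curve $C$. The marked pairs $(S,\eta)$ and $(S,\eta\circ R_C)$ are inseparable, with $\Gamma=\Delta_S+C\times C$, and $[C\times C]_*\alpha=(\alpha\cdot C)\,C\neq 0$ for every K\"ahler class $\alpha$. Knowing only that the total correspondence $[\Gamma]_*$ sends one K\"ahler class to a K\"ahler class gives no control over the individual $[Y_i]_*$, so Fujiki's criterion cannot yet be applied to $f$.

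One standard way to close this avoids the cycle at the special point:
\begin{enumerate}
\item Choose a general $\omega\in\calK_Y$ with $\phi(\omega)\in\calK_X$.
\item The twistor lines of $(Y,\eta_Y)$ through $\omega$ and of $(X,\eta_X)$ through $\phi(\omega)$ lie over the same conic in the period domain.
\item At a very general point of that conic $\NS=0$, so $\calK=\calC$ on both sides, and the two marked pairs (inseparable by Verbitsky) are isomorphic compatibly with the markings.
\item By Yau's uniqueness of the Ricci-flat metric in a K\"ahler class, this isomorphism is an isometry carrying the three parallel K\"ahler forms to each other. Hence it is holomorphic for every complex structure of the twistor family, in particular at the original point. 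This gives $f\colon X\to Y$ with $f^*=\phi$.
\end{enumerate}

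\textbf{Step 5.} The key input is argued circularly. Your twistor construction does produce a marked pair $(X',\eta')$ over the period of $(X,\eta_X)$ such that $\psi:=\eta_X^{-1}\circ\eta'$ maps $\calK_{X'}$ onto the chosen chamber, and Huybrechts then gives some bimeromorphic $g\colon X\dashrightarrow X'$. But nothing identifies $g^*$ with $\psi$. They differ by the Hodge monodromy $\psi\circ(g^*)^{-1}$ of $X$, which preserves $\mathcal{FE}_X$, and showing that such an operator is induced by a bimeromorphic map is exactly part (1). Likewise, ``Markman's description of the fiber'' in terms of chambers is a consequence of this theorem, not an input to it.

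You need much less than every K\"ahler-type chamber in $\mathcal{FE}_X$ being a pulled-back K\"ahler cone. It suffices that the nonempty open set $\phi(\calK_Y)\subset\mathcal{FE}_X$ meets $g^*\calK_{X'}$ for some bimeromorphic $g$. This follows from Boucksom's theorem that the birational K\"ahler cone $\bigcup_g g^*\calK_{X'}$ is dense in $\mathcal{FE}_X$, i.e.\ $\mathcal{FE}_X$ is the interior of its closure. This is essentially how the cited Corollary~5.7 of Markman is obtained. The argument then runs as follows:
\begin{itemize}
\item $(g^*)^{-1}\circ\phi$ is a parallel transport operator, because $g^*$ is one by Huybrechts.
\item It sends a K\"ahler class of $Y$ to a K\"ahler class of $X'$, so by (2) it equals $h^*$ for an isomorphism $h\colon X'\to Y$.
\item Then $f=h\circ g$. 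Your $g^{-1}\circ h$ has the composition in the wrong order.
\end{itemize}
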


According to \cite[Theorem 6.18]{Marsurvey}, any element $E\in \calW^{\rm pex}_X$ is a root of $H^2(X, \mathbb{Z})$, 
and the reflection  $R_E$ defines an element of $\Mon^2_{\rm Hdg}(X)$. Let $W_{\rm Exc}(X)\coloneqq \langle R_E \ \mid \ E\in \calW^{\rm pex}_X\rangle\subset \Mon^2_{\rm Hdg}(X)$ be the group generated by such reflections and we let $\Mon^2_{\rm Bir}(X)\subseteq \Mon^2_{\rm Hdg}(X)$ be the image of the natural map:
\[\rho_X\colon \Bir(X)\to O(H^2(X, \bbZ)),\]
where we denote by $\Bir(X)$ the group of bimeromorphic self-maps of $X$. The group $W_{\rm Exc}(X)$ is sometimes referred to as the \emph{Weyl group} of $X$.

\begin{theorem}[{{\cite[Theorem 6.18]{Marsurvey}}}]\label{theo:semidirect decomposition}
    The group $\Mon^2_{\rm Hdg}(X)$ acts transitively on the set of exceptional chambers of $\calC_X$ and the subgroup $W_{\rm Exc}(X)$ acts simply-transitively on this set. In particular $\overline{\mathcal{FE}}_X$ is a fundamental domain for the action of $\Mon^2_{\rm Hdg}(X)$ on $\calC_X$. There is  moreover a semidirect product decomposition
    $$\Mon^2_{\rm Hdg}(X)=W_{\rm Exc}(X)\rtimes \Mon^2_{\rm Bir}(X),$$
    and $\Mon^2_{\rm Bir}(X)$ coincides with the stabilizer of $\mathcal{FE}_X$ under the action of $\Mon^2_{\rm Hdg}(X)$.
\end{theorem}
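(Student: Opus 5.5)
The statement immediately above is Theorem~\ref{theo:semidirect decomposition}, which the paper cites from Markman \cite[Theorem 6.18]{Marsurvey}. My plan is the standard reflection-group argument, using Theorem~\ref{thm:BirmapHK} as input.

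\textbf{Step 1: the chamber structure is preserved by $\Mon^2_{\rm Hdg}(X)$.} The set $\calW^{\rm pex}_X$ is invariant under $\Mon^2_{\rm Hdg}(X)$. Stably prime exceptional classes are defined through deformation invariance: a class $E$ is prime exceptional on every small deformation keeping it of type $(1,1)$. A Hodge monodromy operator is a parallel transport that preserves this condition, so it permutes $\calW^{\rm pex}_X$. Monodromy operators are also orientation-preserving (Remark~\ref{rem monodromy are orientation-preserving}), so a Hodge monodromy operator preserves $\calC_X$. Hence $\Mon^2_{\rm Hdg}(X)$ permutes the exceptional chambers.

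\textbf{Step 2: $W_{\rm Exc}(X)$ acts simply transitively.} I use that each $E\in\calW^{\rm pex}_X$ is a root and $R_E\in\Mon^2_{\rm Hdg}(X)$. The hyperplanes $\alpha^\perp$, $\alpha\in\calW^{\rm pex}_X$, are locally finite in $\calC_X$. This follows from the integrality of the BBF form together with a bound on $|\alpha^2|$ for prime exceptional classes, as for $(-2)$-curves on K3 surfaces. So $W_{\rm Exc}(X)$ is a discrete reflection group acting on the hyperbolic space attached to $\calC_X$, and its walls are exactly the $\alpha^\perp$.

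Transitivity follows from the classical argument. Take a point in another chamber and join it to $\mathcal{FE}_X$ by a path. Reflect successively in the finitely many walls the path crosses.

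For simple transitivity, I invoke the standard Coxeter-theory fact, due to Vinberg, that a discrete reflection group acts simply transitively on the chambers cut out by its mirrors. The one subtlety is that every mirror of $W_{\rm Exc}(X)$ must come from $\calW^{\rm pex}_X$. This holds because a conjugate $wR_Ew^{-1}$ equals $R_{w(E)}$, and $w(E)\in\calW^{\rm pex}_X$ by Step 1.

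\textbf{Step 3: the stabilizer is $\Mon^2_{\rm Bir}(X)$.} By Theorem~\ref{thm:BirmapHK}(1) applied with $Y=X$, a Hodge monodromy operator $\phi$ is of the form $f^\ast$ for some bimeromorphic self-map $f$ if and only if $\phi(\mathcal{FE}_X)=\mathcal{FE}_X$. Hence $\Mon^2_{\rm Bir}(X)$ is exactly the stabilizer of $\mathcal{FE}_X$.

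\textbf{Step 4: the semidirect product.} By Step 1, conjugating $R_E$ by any $g\in\Mon^2_{\rm Hdg}(X)$ gives $R_{g(E)}$, so $W_{\rm Exc}(X)$ is normal in $\Mon^2_{\rm Hdg}(X)$. Given $g$, Step 2 provides a unique $w\in W_{\rm Exc}(X)$ with $w g(\mathcal{FE}_X)=\mathcal{FE}_X$. Then $wg$ lies in the stabilizer, so $g\in W_{\rm Exc}(X)\cdot\Mon^2_{\rm Bir}(X)$. The intersection $W_{\rm Exc}(X)\cap\Mon^2_{\rm Bir}(X)$ is trivial, again by simple transitivity. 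The fundamental-domain statement follows because every point of $\calC_X$ lies in the closure of some exceptional chamber.

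The main obstacle is Step 2. Specifically, one needs the local finiteness of the walls and the fact that the reflections $R_E$ are Hodge monodromy operators. I would take both from Markman's analysis of prime exceptional divisors, using the Markman--Mongardi bound on the squares of wall divisors for the local finiteness, rather than reprove them.
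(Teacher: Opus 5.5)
The paper gives no proof of this statement; it quotes it from \cite[Theorem 6.18]{Marsurvey}. Your outline is the standard reflection-group argument behind that result, and it is correct in substance: $\Mon^2_{\rm Hdg}(X)$ permutes the exceptional chambers, $W_{\rm Exc}(X)$ acts on them as a reflection group, the stabilizer of $\mathcal{FE}_X$ is $\Mon^2_{\rm Bir}(X)$ by Theorem~\ref{thm:BirmapHK}(1), and the semidirect product is then formal.

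A few points need tightening.

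\textbf{Step 2, the mirrors of $W_{\rm Exc}(X)$.} The identity $wR_Ew^{-1}=R_{w(E)}$ only shows that the arrangement $\{\alpha^\perp\}_{\alpha\in\calW^{\rm pex}_X}$ is stable under $W_{\rm Exc}(X)$. It does not show that every reflection in $W_{\rm Exc}(X)$ is some $R_E$: a product of several $R_E$ could a priori be a reflection in another hyperplane. The clean route is to apply the Tits--Bourbaki chamber theorem directly to this locally finite arrangement, which is stable under its own reflections (Bourbaki, \emph{Groupes et alg\`ebres de Lie}, Ch.~V, \S3, or Vinberg's version in the hyperbolic setting). This gives simple transitivity on exceptional chambers. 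The statement about mirrors then follows: a reflection in $W_{\rm Exc}(X)$ whose mirror is not of the form $\alpha^\perp$ fixes a generic point of that mirror, hence stabilizes an exceptional chamber, hence is trivial.

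\textbf{Local finiteness.} You do not need any external bound on wall divisors. A primitive root $\alpha$ satisfies $\alpha^2\mid 2\dv(\alpha)$, and $\dv(\alpha)$ divides the exponent of $A_{H^2(X,\bbZ)}$, so $|\alpha^2|$ is bounded. Since $x^\perp\cap H^{1,1}(X,\bbR)$ is negative definite for $x\in\calC_X$, only finitely many integral classes of bounded square have $\alpha^\perp$ meeting a small neighbourhood of $x$.

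\textbf{Step 1, invariance of $\calW^{\rm pex}_X$.} Base this on \cite[Proposition 5.14]{MarkmanPex}, as the paper does, rather than on your paraphrase of the definition. The paraphrase is inaccurate: at special points of the Hodge locus, a stably prime exceptional class need not be the class of a prime divisor.

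\textbf{Fundamental domain.} Your argument makes $\overline{\mathcal{FE}}_X$ a genuine fundamental domain for $W_{\rm Exc}(X)$. For $\Mon^2_{\rm Hdg}(X)$ it only shows that $\overline{\mathcal{FE}}_X$ meets every orbit, since $\Mon^2_{\rm Bir}(X)$ stabilizes $\mathcal{FE}_X$. That weak sense is the only one in which the corresponding assertion of the statement holds.
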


\begin{remark}
     In this work, we are interested in the case where $X$ is projective. In particular, we want to work inside $\overline{\calC_X}\cap \NS(X)_\bbR$, rather than inside the full positive cone $\calC_X$. The interior of $\Mov(X)$ coincides with the convex hull of $\calF\calE_X\cap \NS(X)_\bbR$. Therefore, the wall-and-chamber decompositions in Definition~\ref{def:wallchamberCX} descend to $\overline{\calC_X}\cap \NS(X)_\bbR$. Applying similar terminology, we obtain that the interior of $\Mov(X)$ (resp.\ $\Amp(X)$) is an exceptional chamber (resp.\ K\"ahler-type chamber) of $\overline{\calC_X}\cap \NS(X)_\bbR$. We can therefore reformulate \Cref{theo:semidirect decomposition} by replacing $\calC_X$ by $\overline{\calC_X}\cap \NS(X)_\bbR$ and $\calF\calE_X$ by $\Mov(X)$.
\end{remark}

A combination of \Cref{thm:BirmapHK} and \Cref{theo:semidirect decomposition}, implies the following standard result.
\begin{corollary}\label{cor:Verbitskytorelli}
    Let $X,Y$ be hyperk\"ahler manifolds. Then they are bimeromorphic if and only if there exists a Hodge parallel transport isometry $H^2(X, \bbZ)\to H^2(Y, \bbZ)$.
\end{corollary}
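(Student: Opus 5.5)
The plan is to prove the two implications separately, using \Cref{thm:BirmapHK} and \Cref{theo:semidirect decomposition} as the only inputs. Throughout, $X$ and $Y$ are deformation equivalent, which is implicit in the notion of a parallel transport operator.

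For the forward direction, let $f\colon X\dashrightarrow Y$ be a bimeromorphic map. By Huybrechts' theorem, bimeromorphic hyperk\"ahler manifolds are deformation equivalent, and the pullback $f^\ast\colon H^2(Y,\bbZ)\to H^2(X,\bbZ)$ is a parallel transport operator: there are families over a one-dimensional base whose fibers over a punctured neighbourhood are isomorphic and whose special fibers are $X$ and $Y$. Since $f^\ast$ comes from a bimeromorphic map, it also preserves the Hodge structure. Its inverse $(f^\ast)^{-1}$ is then the required Hodge parallel transport isometry $H^2(X,\bbZ)\to H^2(Y,\bbZ)$. Alternatively, this direction is exactly the "only if" part of \Cref{thm:BirmapHK}(1), taking $\phi=f^\ast$.

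For the converse, let $\psi\colon H^2(X,\bbZ)\to H^2(Y,\bbZ)$ be a Hodge parallel transport isometry, and set $\phi=\psi^{-1}$. Then $\phi(\calC_Y)$ is a connected component of the positive cone of $X$. Since parallel transport operators preserve the orientation classes (Remark~\ref{rem monodromy are orientation-preserving}), this component is $\calC_X$. Being a Hodge isometry, $\phi$ maps the wall-and-chamber structure given by stably prime exceptional divisors of $Y$ onto that of $X$, because $\calW^{\rm pex}$ is invariant under Hodge parallel transport. Hence $\phi(\mathcal{FE}_Y)$ is some exceptional chamber of $\calC_X$.

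By \Cref{theo:semidirect decomposition}, there is $w\in W_{\rm Exc}(X)\subset\Mon^2_{\rm Hdg}(X)$ with $w(\phi(\mathcal{FE}_Y))=\mathcal{FE}_X$. The composition $w\circ\phi$ is a parallel transport operator, since composing a monodromy operator with a parallel transport operator gives another one. It is also a Hodge isometry and maps $\mathcal{FE}_Y$ to $\mathcal{FE}_X$. By \Cref{thm:BirmapHK}(1), it is therefore induced by a bimeromorphic map $X\dashrightarrow Y$.

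The main subtle point is the invariance step: one must check that a Hodge parallel transport operator sends $\calW^{\rm pex}_Y$ to $\calW^{\rm pex}_X$ and $\calC_Y$ to $\calC_X$. I would argue this via the deformation-theoretic definition of stably prime exceptional classes in \cite{Marsurvey}: they are defined as classes that remain prime exceptional along Hodge-preserving parallel transport. Once this invariance is in place, the rest is a direct combination of the two cited results.
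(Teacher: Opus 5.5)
Your argument is correct and is exactly the combination of Theorem~\ref{thm:BirmapHK} and Theorem~\ref{theo:semidirect decomposition} that the paper invokes. Huybrechts' theorem gives the forward direction. For the converse, invariance of $\calW^{\rm pex}$ and of the positive cone under Hodge parallel transport, together with transitivity of $W_{\rm Exc}(X)$ on exceptional chambers, lets you correct $\phi$ so that it maps $\mathcal{FE}_Y$ onto $\mathcal{FE}_X$. One small caveat: your ``alternatively'' remark is circular as stated, because Theorem~\ref{thm:BirmapHK}(1) already assumes $\phi$ is a parallel transport operator, so the fact that $f^\ast$ is one must come from Huybrechts' theorem, as in your main argument.
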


\subsection{Moduli space of marked pairs and period map}
Let $X$ be a hyperk\"ahler manifold and let $\Lambda$ be a lattice isometric to $H^2(X, \mathbb{Z})$. 
Any fixed isometry $\mu\colon H^2(X, \bbZ)\to \Lambda$ is called a \emph{marking} of $X$, and we call the pair $(X, \mu)$ a \emph{$\Lambda$-marked} pair. Two such $\Lambda$-marked pairs $(X, \mu)$ and $(X', \mu')$ are said to be \emph{equivalent} if there is an isomorphism $f\colon X\to X'$ such that $\mu\circ f^\ast = \mu'$. For $X$ in a fixed deformation family, there is a coarse moduli space $\calM_{\Lambda}$ parametrizing $\Lambda$-marked pairs $(X, \mu)$ up to equivalence. 
This space is in general neither Hausdorff, nor connected.

\begin{proposition}[{{\cite[Theorem 4.6]{HuycptHKbasic}}}]\label{propo:same connected component gives parallel transport}
    Two equivalence classes $[(X, \mu)],\,[(X', \mu')]\in \calM_\Lambda$ are in the same connected component if and only if the isometry $\mu^{-1}\circ\mu'\colon H^2(X',\bbZ)\to H^2(X,\bbZ)$ is a parallel transport operator.
\end{proposition}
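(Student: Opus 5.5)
The plan is to prove both directions by relating connected components of $\calM_\Lambda$ to path-connectedness, using the local structure of the moduli space near a marked pair. Since the statement is cited from Huybrechts, I would reconstruct the standard argument as follows.

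\emph{($\Rightarrow$)} Suppose $[(X,\mu)]$ and $[(X',\mu')]$ lie in the same connected component. The first step is to use that $\calM_\Lambda$ is a (non-Hausdorff) complex manifold, locally isomorphic to the base of the universal Kuranishi deformation. This holds by unobstructedness (Bogomolov--Tian--Todorov) and the local Torelli theorem. Concretely, every point has a neighbourhood $\mathrm{Def}(X)$ over which a family $\calX\to \mathrm{Def}(X)$ exists, with markings obtained by parallel transport from $\mu$.

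Connected components of a locally path-connected space are path components. Hence there is a path, and by compactness a finite chain of Kuranishi charts $B_0,\dots,B_k$ with consecutive charts intersecting, joining the two points. On each chart $B_i$, the marking of every fiber is $\mu_i$ composed with parallel transport in the family $\calX_i\to B_i$. On an overlap $B_i\cap B_{i+1}$ the two families have isomorphic fibers compatible with the markings, by definition of equivalence of marked pairs. Composing along the chain, $\mu^{-1}\circ\mu'$ is a composition of parallel transports in families, together with pullbacks by isomorphisms $f$ satisfying $\mu\circ f^\ast=\mu'$. Each of these pullbacks is itself a parallel transport operator, via the trivial family, so the composite is a parallel transport operator.

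\emph{($\Leftarrow$)} Suppose $\mu^{-1}\circ\mu'$ is a parallel transport operator. Then there is a smooth proper family $\pi\colon\calX\to S$ over a connected base, points $s,s'$ with fibers $X,X'$, and a path $\gamma$ from $s$ to $s'$ whose transport is $\mu^{-1}\circ\mu'$. Transporting the marking $\mu$ along $\gamma$ gives markings of all fibers $\calX_{\gamma(t)}$. Locally on $S$ this yields a continuous map to $\calM_\Lambda$, because a marked family over a simply connected neighbourhood is classified by the universal property of the Kuranishi family. The image of $\gamma$ is therefore a path in $\calM_\Lambda$ from $[(X,\mu)]$ to the class of $X'$ with the transported marking. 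That transported marking equals $\mu\circ(\mu^{-1}\circ\mu')=\mu'$, so the two classes lie in one component.

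The main obstacle I expect is the gluing of local classifying maps in both directions. Since $\calM_\Lambda$ is non-Hausdorff, one must check that the local maps induced by the Kuranishi families agree on overlaps. This uses that markings are determined by continuity, together with the fact that equivalence of marked pairs identifies the relevant points. I would handle this via the universality of the Kuranishi family and the discreteness of $\Lambda$-markings, which forces local constancy.
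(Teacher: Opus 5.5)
Your argument is correct and is the standard proof of this result; the paper gives no proof of its own and only cites Huybrechts. Both rest on the same ingredients: locally $\calM_\Lambda$ is a Kuranishi base with markings given by parallel transport, connected components are path components, a chain of such charts gives one direction, and lifting the transport path through local classifying maps gives the other. The gluing issue you flag is harmless, because the path $t\mapsto[(\mathcal{X}_{\gamma(t)},\mu_t)]$ is already defined globally as a map of sets, and continuity is a local property that you check chart by chart.
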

The space $\calM_\Lambda$ is equipped with the so-called \emph{period map}
\[\calP_{\Lambda}\colon \calM_{\Lambda}\to\Omega_{\Lambda},\; [(X, \mu)]\mapsto \mu(H^{2,0}(X))\]
whose target is the \emph{period domain}
\[\Omega_{\Lambda} := \left\{\bbC\sigma\in\bbP(\Lambda_\bbC)\;\mid\; \sigma^2 = 0,\, (\sigma,\overline{\sigma})>0\right\}.\]
We recall the following crucial result about the period map $\calP_\Lambda$.
\begin{proposition}[{{\cite[Theorem 8.1]{HuycptHKbasic}}}]\label{prop:surjectivity period map}
    Let $\calM^\circ_{\Lambda}$ be any connected component of the moduli space of $\Lambda$-marked pairs. Then, the restriction of  $\mathcal{P}_{\Lambda}$ to $\calM^\circ_{\Lambda}$ is surjective.
\end{proposition}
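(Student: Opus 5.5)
The plan is to follow Huybrechts' original argument, which combines the local Torelli theorem with twistor lines. First, by local Torelli (the Kuranishi space of $X$ is smooth and the period map restricted to it is a local isomorphism onto an open subset of $\Omega_\Lambda$), the restriction $\calP_\Lambda|_{\calM^\circ_\Lambda}$ is a local homeomorphism. In particular its image $\calP_\Lambda(\calM^\circ_\Lambda)$ is open and nonempty in $\Omega_\Lambda$.

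Second, I would bring in twistor lines. A point of $\Omega_\Lambda$ corresponds to an oriented positive $2$-plane $P_\sigma=\langle \mathrm{Re}\,\sigma,\mathrm{Im}\,\sigma\rangle\subset\Lambda_\bbR$. A positive definite $3$-space $W\subset \Lambda_\bbR$ determines a conic $T_W:=\Omega_\Lambda\cap\bbP(W_\bbC)\cong\bbP^1$, called the twistor line of $W$. If $(X,\mu)\in\calM^\circ_\Lambda$ and $\omega$ is a Kähler class on $X$, Calabi--Yau gives the twistor family $\calX\to\bbP^1$. Its marking is induced from $\mu$, so the family lies in the same connected component, and its period map identifies $\bbP^1$ with $T_W$ for $W=\mu(\langle \mathrm{Re}\,\sigma,\mathrm{Im}\,\sigma,\omega\rangle)$. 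Hence, whenever $W$ contains the period of some $(X,\mu)\in\calM^\circ_\Lambda$ together with the image of a Kähler class, all of $T_W$ lies in the image. The key observation is that if the period $x$ is \emph{very general}, in the sense that $x^\perp\cap\Lambda=0$, then $\NS(X)=0$. There are then no wall divisors, and the Kähler cone equals the whole positive cone. So for such $x$ in the image, \emph{every} twistor line through $x$ lies in the image.

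Third, I would show that any two points of $\Omega_\Lambda$ are joined by a chain of twistor lines whose intermediate vertices are very general. This step is the main obstacle. Two points whose planes $P_1,P_2$ span a positive definite $\le3$-space already lie on a common twistor line. In general, one interpolates through a sequence of nearby planes. The set of very general points is the complement of countably many hyperplane sections $\lambda^\perp$, $\lambda\in\Lambda$, so it is dense. Moreover the twistor lines through a fixed point sweep out an open neighbourhood in the positive Grassmannian. Hence one can perturb the intermediate planes of a chain to be very general while keeping consecutive ones spanning positive $3$-spaces. Combined with the connectedness of $\Omega_\Lambda$, this gives chains with very general vertices between any two points.

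Finally, I would conclude as follows. Pick $y\in\Omega_\Lambda$ arbitrary and a very general point $x_0$ in the (open) image. Choose a chain $x_0,x_1,\dots,x_k$ of very general points connected by twistor lines, with $x_k$ lying on a twistor line $T_{W}$ through $y$. For this last step, choose $W\supset P_y$ generic; then $T_W$ contains very general points, and one of them can serve as $x_k$. By the second step and induction, each $x_i$ is in the image. The line $T_W$ passes through the very general image point $x_k$, so $T_W$ lies in the image, and hence $y$ does too. The delicate points to check are the lifting along the chain, which requires the lifted marked pair to remain in $\calM^\circ_\Lambda$ (automatic from continuity of the twistor family), and the density argument in the third step.
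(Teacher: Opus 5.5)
Your overall architecture is that of Huybrechts' proof, which the paper only cites: local Torelli makes the image open, a marked pair with $\NS(X)=0$ has $\calK_X=\calC_X$ so every twistor line through its period lifts, and one propagates along chains of twistor lines. For the middle step you should invoke Huybrechts' result that $\NS(X)=0$ forces $\calK_X=\calC_X$. The wall-divisor description of the Kähler cone is proved via global Torelli, and hence via the very statement at hand.

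The genuine gap is your third step. The claim that the twistor lines through a fixed point $x$ sweep out a neighbourhood of $x$ is false. A point $y$ lies on a twistor line through $x$ only if $P_x\cap P_y\neq 0$ and $P_x+P_y$ is positive definite. Such $y$ form a set of real dimension at most $(\rk\Lambda-3)+2=\rk\Lambda-1$, while $\dim_\bbR\Omega_\Lambda=2\rk\Lambda-4$; for $\Lambda_n$ this is $6$ versus $10$. For the same reason ``interpolating through nearby planes'' produces nothing. Two nearby positive planes generically span a $4$-space, and even when they share a line their span can have signature $(2,1)$ (tilt $P_x$ slightly towards a negative direction). So you never actually produce a chain between two distinct points in general position, and the density and connectedness arguments have nothing to act on.

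What is missing is a short-chain lemma, which is easy once stated. Let $V=P_x+P_y$ be $4$-dimensional and nondegenerate.
\begin{itemize}
\item If $V$ has signature $(3,1)$, pick positive $u_x\in P_x^\perp\cap V$ and $u_y\in P_y^\perp\cap V$. The positive $3$-spaces $P_x+\bbR u_x$ and $P_y+\bbR u_y$ both lie in $V$, so they meet in a positive plane, and two twistor lines suffice.
\item If $V$ has signature $(2,2)$, pick a positive $u\in V^\perp$ (which has signature $(1,\rk\Lambda-5)$), and choose $a\in P_x$ and $a'\in P_y\cap a^\perp$. Then $x$, $\langle a,u\rangle$, $\langle a',u\rangle$, $y$ are consecutively joined by twistor lines, because $P_x+\bbR u$, $\langle a,a',u\rangle$ and $P_y+\bbR u$ are positive definite.
\end{itemize}
Degenerate configurations are removed by first moving $y$ slightly along a twistor line. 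When $x$ and $y$ are very general, the free parameters ($u$ in an open cone, $a$ on a circle, $u_x,u_y$ in open arcs) let the new vertices avoid the countably many proper real-analytic conditions $\lambda\perp P_z$, by Baire.

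Alternatively, argue as Huybrechts does. Work with generic twistor lines $T_W$, i.e.\ $W^\perp\cap\Lambda=0$; on such a line all but countably many points have $\NS=0$. Openness of the image then shows that a generic twistor line meeting the image is contained in it, so the vertices of the chain need not be very general at all.
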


Let $\calM_{\Lambda}^\circ$ be a connected component of the moduli space of $\Lambda$-marked pairs. Then there is a set of vectors $\calW_\circ^{\rm pex}(\Lambda)$,  and a finite index subgroup $\Mon^2_\circ(\Lambda)$ of $O^+(\Lambda)$ satisfying the following: for any equivalence class $[(X, \mu)]\in \calM^\circ_{\Lambda}$, we have equalities
\begin{align}
    \calW^{\rm pex}_X &= \mu^{-1}\left(\calW_\circ^{\rm pex}(\Lambda)\right)\cap H^{1,1}(X, \mathbb{R})\label{eq:numerical walls}\\
    \Mon^2(X) &= \mu^{-1}\Mon^2_\circ(\Lambda)\mu.
\end{align}
The first equality follows from \cite[Proposition 5.14]{MarkmanPex} which tells us that stably prime exceptional classes are invariant under parallel transport (mapping some Hodge class to another Hodge class). The second equality is trivial and follows by definition of monodromy. Note that, a priori, the set $\calW_\circ^{\rm pex}(\Lambda)$ and the group $\Mon^2_\circ(\Lambda)$ may actually vary if we change the connected component $\calM_{\Lambda}^\circ$. However, in the case of Kummer-type manifolds, we recall in the next section that the description of the group $\Mon^2_{\circ}(\Lambda)$ is constant in $\calM_{\Lambda}$. For the rest of the paper, we refer to $\Mon^2_{\circ}(\Lambda)$ as the \emph{numerical monodromy group} and to the vectors in $\calW_\circ^{\rm pex}(\Lambda)$ as \emph{numerical stably prime exceptional divisors} (associated to a given connected component of $\calM_{\Lambda}$).

\section{Hyperk\"ahler manifolds of Kummer-type }\label{sec:KummerHK}

\subsection{Definition and examples}\label{sec:DefExamples}

Let $A$ be a complex torus of dimension 2 and $n\geq 1$. The summation map from the symmetric product $A^{(n+1)}\to A$, composed with the Hilbert--Chow resolution from the Douady space $A^{[n+1]}\to A^{(n+1)}$, gives a morphism
$$\Sigma\colon A^{[n+1]}\to A$$
which can be shown to be smooth, proper and isotrivial. Its fiber $K_n(A)=\Sigma^{-1}(0)$ is a hyperk\"ahler manifold of dimension $2n$ \cite[Th\'eor\`eme 4]{Bea83}, called a \textit{generalized Kummer manifold}. Any hyperk\"ahler  manifold $X$ of dimension $2n\geq 4$ which is deformation equivalent to such a Kummer-type manifold is called a \textit{(hyperk\"ahler) manifold of $\Kum_n$-type}, or just a \textit{Kummer-type manifold} if $n$ is not fixed. In this case, the second cohomology group $H^2(X,\bbZ)$, equipped with its BBF form, is isometric as a lattice to: 
$$U^{\oplus 3}\oplus \langle -2-2n \rangle,$$
see \cite[Proposition 8]{Bea83} for a proof.
We denote by $\delta$ a generator of $\langle -2-2n\rangle$.
Apart from generalized Kummer manifolds themselves, the simplest examples of $\Kum_n$-type manifolds arise from moduli spaces of (twisted) sheaves on abelian surfaces, which we now discuss.\smallskip

Let $A$ be an abelian surface, and consider a class $B\in H^2(A,\bbQ)$, called a \textit{B-field}. We define a $\mathbb{Z}$-polarized Hodge structure of weight 2 on $\widetilde{H}(A,B,\bbZ)$ as follows:
\begin{itemize}
    \item The underlying lattice is $H^{\rm even}(A,\bbZ)$ with Mukai pairing  
    $$((r,\theta,\chi),(r',\theta',\chi'))=\theta\cdot \theta'-r\chi'-\chi r',$$
    induced by the usual intersection pairing.
    \item The Hodge structure is defined by setting $\widetilde{H}^{2,0}(A,B)=\bbC(0,\sigma,\sigma\cdot B)$, where $\sigma\in H^{2,0}(A)$ is a symplectic generator.
\end{itemize}

We drop $B$ from the notation when it is trivial.

\begin{remark}\label{rmk:U(k)inH11}
There is always a finite index embedding
$$\bbZ(m,mB,0)\oplus \NS(A) \oplus \bbZ(0,0,1)\hookrightarrow \widetilde{H}^{1,1}(A,B,\bbZ),$$
where $m\in\bbZ$ is chosen such that $mB$ is integral. In particular one gets an embedding $U(m)\hookrightarrow \widetilde{H}^{1,1}(A,B,\bbZ)$. However, when $B\neq 0$, in general there is no embedding $U\hookrightarrow \widetilde{H}^{1,1}(A,B,\bbZ)$.
\end{remark}

\begin{definition}\label{def:posvector}
An integral vector $v=(r,\theta,\chi)\in \widetilde{H}^{1,1}(A,B,\bbZ)$ is called a \textit{Mukai vector}. It is called \textit{positive}\footnote{Here \emph{positive} does not mean that $v^2 > 0$ holds. This is a standard definition specific to Mukai vectors.} if one of the following holds: 
\begin{enumerate}
        \item[(i)] $r>0$,
        \item[(ii)] $r=0$, $\theta\in\NS(A)$ is effective,
        \item[(iii)] $r=\theta=0$, $\chi>0$.
    \end{enumerate}
\end{definition}
 Note that when $v^2\geq 2$, then either $v$ or $-v$ is positive. Indeed, if $r=0$, then $\theta^2 = v^2$ is positive and
the claim follows by Riemann--Roch. 

The following theorem concerns moduli spaces of stable $\alpha$-twisted sheaves on the abelian surface $A$, where $\alpha\in\Br(A)$ is a Brauer class. As $\Br(A)\simeq H^2(A,\calO_A^*)_{\rm tors}$ and $H^3(A,\bbZ)$ is torsion-free, it follows via the exponential exact sequence that $\alpha$ lifts to some $B^{0,2}\in H^2(A,\calO_A)$ which is the $(0,2)$-component of some rational cohomology class $B\in H^2(A,\bbQ)$ called a \emph{$B$-field lift} of $\alpha$.
The following theorem is proved in the non-twisted case, i.e. $\alpha=0$, by Yoshioka \cite[Theorem 0.2]{YosModuliSheaves}, and known by experts in the general twisted case, but we were not able to trace it in the literature. See Appendix~\ref{sec:modulispaces} for a short discussion about moduli spaces of twisted sheaves and a proof of the result.

For a primitive Mukai vector $v\in \widetilde{H}(A, \bbZ)$, we recall that a polarization $H\in \Amp(A)$ is called \emph{$v$-generic} if all $H$-semistable sheaves with Mukai vector $v$ are stable. A general polarization is $v$-generic, see e.g. \cite[Section 4.C]{HuyLeh} or \cite{PerRapISVModuli}, see also Proposition~\ref{prop:modulitwistsmoothproj}.

\begin{theorem}\label{thm:twistedYoshioka}
 Let $A$ be an abelian surface, $\alpha\in\Br(A)$ a Brauer class and $B\in H^2(A,\bbQ)$ a $B$-field of $\alpha$. Let $v=(r,\theta,\chi)\in \widetilde{H}(A,B,\bbZ)$ be a primitive positive Mukai vector with $v^2 = 2n+2\geq 6$, and $H$ a $v$-generic polarization. Then:
    \begin{enumerate}
        \item The moduli space $M_{A,\alpha}(v,H)$ of $H$-stable $\alpha$-twisted sheaves on $A$ with $B$-Mukai vector $v$ (see (\ref{eq:Bmukaiapp})) contains a smooth projective $2n$-dimensional hyperk\"ahler submanifold $K_{A,\alpha}(v,H)$ of $\Kum_{n}$-type as the fiber of its Albanese map.
        \item There exists an isometry of Hodge lattices
\begin{equation}\label{eq:MukaiMorphism}
    \theta_{v}^{-1}\colon H^2(K_{A,\alpha}(v,H),\bbZ) \xrightarrow{\sim} v^\perp \subset \widetilde{H}(A,B,\bbZ),
\end{equation}
called the \emph{Mukai morphism}, where the lattice structure on the LHS is induced by the BBF form.
    \end{enumerate} 
\end{theorem}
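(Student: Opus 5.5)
We reduce the twisted statement to the untwisted theorem of Yoshioka \cite[Theorem 0.2]{YosModuliSheaves} by deformation. The outline is: build the moduli spaces in families, deform to an untwisted point where the result is known, and transport both conclusions back along the family.

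\textbf{Step 1: basic properties of $M_{A,\alpha}(v,H)$.} First we establish that $M:=M_{A,\alpha}(v,H)$ is a smooth projective variety of dimension $v^2+2$ carrying a symplectic form. Since $H$ is $v$-generic, $M$ is a fine or coarse moduli space of stable twisted sheaves, projective via GIT for twisted sheaves (Lieblich, Yoshioka). Smoothness follows from Mukai's argument: for stable $E$ we have $\Ext^2(E,E)\cong \Hom(E,E)^\vee=\bbC$, and the obstruction lies in the traceless part of $\Ext^2$, which vanishes. Riemann--Roch then gives $\dim \Ext^1(E,E)=v^2+2$, and the Yoneda pairing gives the symplectic form.

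\textbf{Step 2: the Albanese fiber.} Next we construct the Albanese-type map $a\colon M\to A\times \widehat A$, sending $E$ to $(\det$-type class, Fourier--Mukai determinant$)$. Concretely, we use the action of $A\times\widehat A$ on $M$ by translations and tensoring with $\Pic^0$; this action makes sense for twisted sheaves because translations preserve $\alpha$ up to isomorphism, $\alpha$ being topologically trivial on fibers. The orbit map is étale, and $M\simeq (A\times\widehat A\times K)/G$ for a finite group $G$, with $K=a^{-1}(0)$. It follows that $K$ is smooth, projective, holomorphic symplectic and of dimension $v^2-2=2n$.

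\textbf{Step 3: deformation to the untwisted case.} To show that $K$ is simply connected with $h^{2,0}=1$ and is of $\Kum_n$-type, we deform. Choose a family $(\mathcal A_t, B_t, H_t)$ over a connected base, with $v$ staying of type $(1,1)$ in $\widetilde H(\mathcal A_t,B_t,\bbZ)$ and $H_t$ staying $v$-generic. At a special point we want $B_0=0$, or more generally $v$ lying in a Hodge structure isometric to an untwisted one. This step is the main obstacle. We would use the standard trick of replacing $(A,\alpha)$ by a twisted Fourier--Mukai partner, or of deforming within the period domain of $\widetilde H$ with $v$ fixed, to reach a pair where $\alpha$ is trivial. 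Surjectivity of the twisted period map for abelian surfaces with a fixed $(1,1)$ vector handles this. One must also verify that $v$-genericity can be kept along a path avoiding walls, which holds because walls are locally finite. The relative moduli space is then smooth and proper over the base. Its Albanese fibers form a smooth projective family, so $K$ is deformation equivalent to Yoshioka's $K_A(v_0,H_0)$, which is of $\Kum_n$-type. This proves part (1).

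\textbf{Step 4: the Mukai morphism.} For part (2), define $\theta_v\colon v^\perp\to H^2(K,\bbZ)$ via the quasi-universal twisted family $\mathcal E$, using $\theta_v(x)=[p_{K*}(\ch_B(\mathcal E)\sqrt{\mathrm{td}}\cdot x^\vee)]_1$ suitably normalized by the rank of the quasi-family. This construction is compatible with Hodge structures, because $\ch_B$ of a twisted sheaf is of Hodge type for the twisted Hodge structure, and it is locally constant in the family of Step 3. At the untwisted point it is Yoshioka's isometry, so it is an isometry everywhere by flatness of the Gauss--Manin connection.
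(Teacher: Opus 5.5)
Your architecture is the same as the paper's. Step~1 is the trace-map argument of Proposition~\ref{prop:modulitwistsmoothproj}, and Step~4 is the paper's proof of part~(2): a quasi-universal family gives a Mukai morphism that is locally constant over a contractible base and equals Yoshioka's isometry at an untwisted fibre. In between, both arguments deform $(A,\alpha,H)$ to a point where the Brauer class dies.

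The gap is in Step~3, which you yourself flag as the main obstacle: you never actually produce the untwisted point. Surjectivity of a (twisted) period map gives some abelian surface with a prescribed period. What you need is a Brauer-trivial point \emph{inside} a connected family through $(A,\alpha,H)$ along which the gerbe extends, $v$ stays algebraic, $H_t$ stays $v$-generic, and the relative moduli space is smooth.

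The missing ingredient is a density statement. The paper deforms in the family of $L$-polarized abelian surfaces with $L=\operatorname{Sat}(\bbZ H\oplus\bbZ D)$ and $D=\theta-rB$; keeping $D$ algebraic is exactly keeping $v$ of type $(1,1)$. It extends the $\mu_k$-gerbe to the total space after an \'etale base change. It then uses Voisin's density argument (Lemma~\ref{lem:Brauertrivialdense}): the points where $B$ becomes of type $(1,1)$ modulo $H^2(\calA_t,\bbZ)$, i.e.\ where the Brauer class is trivial, are analytically dense. This is combined with openness of stability (Lemma~\ref{lem:relativemodulismooth}): after shrinking $U$ around $0$, all fibres are smooth and $v$-generic, and by density this neighbourhood still contains untwisted points.

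Local finiteness of walls only lets you move around inside the good locus. It does not give an untwisted endpoint in that locus, because untwisted points lie on Noether--Lefschetz loci where $\NS(\calA_t)$ jumps. You must know these loci meet the open set where $H_t$ is $v$-generic, and that is what density supplies. The Fourier--Mukai alternative does not work in general either: an untwisted partner would force a copy of $U$ inside $\widetilde H^{1,1}(A,B,\bbZ)$, which typically fails.

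Step~2 is also unsupported as written. To let $A$ act on $\alpha$-twisted sheaves you must choose isomorphisms $t_a^*\mathfrak{X}\simeq\mathfrak{X}$, and these are only determined up to line bundles. Moreover, the \'etaleness of the orbit map, i.e.\ $M\simeq(A\times\widehat A\times K)/G$, is the substance of Yoshioka's theorem in the untwisted case, not a formal consequence.

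The paper avoids this route. It forms the relative Albanese of $\calM\to U$ and gets $\kappa(\calM_t)=0$ by invariance of plurigenera from an untwisted fibre. It then applies Kawamata's theorem to make each Albanese map an \'etale fibre bundle, and concludes that $\calK\to U$ is smooth by miracle flatness plus smooth fibres (Lemma~\ref{lem:smoothalbanesefiber}). This relative smoothness is exactly what your Step~3 needs to say that the Albanese fibres form a smooth family.

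Alternatively, the symplectic form from your Step~1 already gives $K_M\simeq\mathcal{O}_M$. The Beauville--Bogomolov decomposition then yields fibrewise smoothness of the Albanese map with no group action, but the relative argument is still required.
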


We prove this result in Appendix \ref{sec:modulispaces}.

\begin{definition}\label{def:modularKumn}
Hyperk\"ahler manifolds of $\Kum_n$-type arising from the construction of Theorem~\ref{thm:twistedYoshioka} are called \textit{twisted modular} (or simply \textit{modular} when the $B$-field vanishes).
\end{definition}

\subsection{Monodromy}
Let $X$ be a $\Kum_n$-type manifold for some $n\geq 2$. Recall that the monodromy group $\Mon^2(X)$ of $X$ is the subgroup of isometries induced by parallel transport operators, and we note in \Cref{rem monodromy are orientation-preserving} that $\Mon^2(X)\subseteq O^+(H^2(X,\bbZ))$, the group of orientation-preserving isometries.
As seen in Theorem~\ref{thm:BirmapHK}, isometries induced by birational self-maps are, in particular, monodromy operators.

\begin{citedprop}[{(\cite[Section 4]{MarMeh}, \cite[Theorem 2.3]{MongardiMonodromy})}]\label{prop:Mon2isN}
     The subgroup $\Mon^2(X)\subset O^+(H^2(X,\bbZ))$ consists of the isometries $f$ such that $\det(f)f$ acts trivially on $A_{H^2(X,\bbZ)}$.
\end{citedprop}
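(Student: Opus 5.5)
Since this proposition is quoted from \cite{MarMeh} and \cite{MongardiMonodromy}, I would not reprove it from scratch. The plan is to organise the argument around the Mukai lattice, using the modular model from Theorem~\ref{thm:twistedYoshioka}.

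\textbf{Setup.} Fix a modular model $X=K_A(v,H)$ with $v^2=2n+2$. The Mukai morphism \eqref{eq:MukaiMorphism} identifies $H^2(X,\bbZ)$ with $v^\perp\subset \widetilde H(A,\bbZ)\cong U^{\oplus 4}$. Since $U^{\oplus4}$ is unimodular and $\bbZ v\oplus v^\perp$ has finite index in it, the gluing identifies $A_{v^\perp}\cong \bbZ/(2n+2)$ with $A_{\bbZ v}$.

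\textbf{Step 1 (reformulation).} An isometry $g\in O(v^\perp)$ acts on $A_{v^\perp}$ by some $\varepsilon\in\{\pm1\}$, because $A_{v^\perp}$ is generated by the class of $\delta/(2n+2)$. The gluing then shows that $g$ extends to $\widetilde g\in O(U^{\oplus4})$ with $\widetilde g(v)=\varepsilon v$, and that this extension is unique. Hence $\det\widetilde g=\varepsilon\det g$. So the condition ``$\det(g)g$ acts trivially on $A_{H^2}$'', i.e.\ $\varepsilon=\det g$, is equivalent to $\widetilde g\in SO(U^{\oplus4})$. I will denote by $\mathcal N$ the group of orientation-preserving $g$ satisfying this condition. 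Because both $\det$ and the character $g\mapsto\varepsilon$ are homomorphisms, $\mathcal N$ has index $2$ in $O^+(H^2(X,\bbZ))$.

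\textbf{Step 2 (containment $\Mon^2(X)\subseteq\mathcal N$).} Orientation preservation is \Cref{rem monodromy are orientation-preserving}. For the determinant condition, I would follow Markman--Mehrotra. The monodromy of $K_A(v)$ comes from the monodromy of the family $A^{[n+1]}\to A$ twisted by translations, and it acts on $H^*(A,\bbZ)$ through the spin representation. Concretely, parallel transport of the pair $(A,v)$ in families of abelian surfaces, together with the natural cohomological action on the Albanese fiber, yields elements of $\mathrm{Spin}(H^*(A))$ whose image in $O(H^{\rm even})$ lies in $SO^+$. The key check, which I expect to be the main obstacle, is to show that \emph{every} monodromy operator (not only those obtained from deformations of $A$) is compatible with this structure. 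Markman--Mehrotra do this by showing that $\det(g)\cdot(\text{action on }A_{H^2})$ is a monodromy invariant: it is the sign of the action on a natural rank-one local system built from $H^3(X)$, or equivalently the character by which monodromy acts on the canonical $O(U^{\oplus4})$-orbit of embeddings. If I had to reprove this, I would try to compute it via the action on $H^{\rm odd}$ of $A$ through $H^3(K_n(A))$.

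\textbf{Step 3 (equality).} Conversely, I would exhibit enough monodromy to generate $\mathcal N$. Derived autoequivalences of $A$, Fourier--Mukai transforms by Poincaré bundles and tensoring by line bundles, induce parallel transports between moduli spaces $K_A(v)$ and $K_{A'}(w)$. These act by $SO^+(U^{\oplus4})$-elements stabilising $v$ up to sign. Together with the deformations of $A$ they realise a set of generators of $\mathcal N$, for instance products of pairs of reflections $R_uR_{u'}$ in $\pm2$-vectors of $v^\perp$, with the sign bookkeeping from Step~1. Mongardi then closes the argument with an index count: the subgroup already realised has index at most $2$ in $O^+$, and it is not all of $O^+$ by Step~2. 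So it equals $\mathcal N$, and this description is independent of the connected component of the moduli space of marked pairs.
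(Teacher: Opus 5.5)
Deferring to \cite{MarMeh} and \cite{MongardiMonodromy} is all the paper does here, so that part matches. The sketch you attach, however, contains a false step that breaks its closing argument.

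In Step~1 you assert that every $g\in O(H^2(X,\bbZ))$ acts on $A_{H^2(X,\bbZ)}\cong\bbZ/(2n+2)\bbZ$ by $\pm1$ because this group is cyclic. Cyclicity only gives multiplication by a unit $u$. Compatibility with the discriminant form $q(\delta/(2n+2))=-1/(2n+2)$ then forces $u^2\equiv1\pmod{4(n+1)}$. This congruence has $2^{s}$ solutions modulo $2n+2$, where $s$ is the number of distinct primes dividing $n+1$. For $n=5$, for example, $u=5$ works since $25\equiv1\pmod{24}$. Because $O(\Lambda_n)\to O(A_{\Lambda_n})$ is surjective (as the paper recalls from Nikulin), this $u$ is realised by an isometry of $H^2(X,\bbZ)$. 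This is exactly why \Cref{lem:monodromyextension} carries the hypothesis that $g$ acts as $\pm\Id$: other isometries do not extend to $U^{\oplus4}$ at all. Consequently $\mathcal N$ has index $2^{s}$ in $O^+(H^2(X,\bbZ))$, not $2$; your count is right only when $n+1$ is a prime power. So the index count closing Step~3 (``index at most $2$ in $O^+$, not all of $O^+$, hence equal to $\mathcal N$'') fails in general.

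The correct picture is $\mathcal N\subset\mathcal W^+\subset O^+(H^2(X,\bbZ))$. Here $\mathcal W^+$ consists of the orientation-preserving isometries acting by $\pm1$ on the discriminant group, and $[\mathcal W^+:\mathcal N]=2$. An index argument can only be run inside $\mathcal W^+$. Your sketch is therefore missing the genuinely nontrivial input $\Mon^2(X)\subseteq\mathcal W^+$: every monodromy operator acts by $\pm1$ on $A_{H^2(X,\bbZ)}$, equivalently extends to $U^{\oplus4}$. Step~2 takes this for granted when it speaks of the character $\det(g)\cdot(\text{action on }A_{H^2})$, which is only $\{\pm1\}$-valued on $\mathcal W^+$. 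You also cannot borrow it from \Cref{cor:Mon2isSO}, since the paper derives that corollary from this very proposition.

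A smaller point concerns the generators you suggest in Step~3, the products $R_uR_{u'}$ of reflections in $\pm2$-vectors. All of them have determinant $+1$ and act trivially on the discriminant. They therefore generate at most the determinant-one part of $\mathcal N$ and miss elements such as the reflections of \Cref{Prop:Reflection in Mon}, so a determinant $-1$ generator has to be added.
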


A typical candidate for such isometry is given by the reflection $R_e$ (see \eqref{eq:reflection}) in the hyperplane orthogonal to a root $e\in H^2(X,\bbZ)$.
We have the following result.

\begin{proposition}\label{Prop:Reflection in Mon}
    Let $e\in H^2(X,\bbZ)$ be a negative primitive root. Then the reflection $R_e$ belongs to $\Mon^2(X)$ if and only if $e$ satisfies 
    $$e^2=-2(n+1)\quad\text{and}\quad n+1\text{ divides }\dv_{H^2(X,\bbZ)}(e).$$
\end{proposition}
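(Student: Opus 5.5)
The plan is to apply Proposition~\ref{prop:Mon2isN} directly, so that everything reduces to computing how $R_e$ acts on the discriminant group. Write $L=H^2(X,\bbZ)\cong U^{\oplus 3}\oplus\langle -2-2n\rangle$. Then $A_L\cong \bbZ/(2n+2)\bbZ$, generated by $\delta/(2n+2)$.

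First, a reflection in a negative vector preserves the orientation of positive three-spaces. Indeed, $R_e$ is the identity on $e^\perp$, which contains a positive definite three-space. Hence $R_e\in O^+(L)$ automatically. Since $\det(R_e)=-1$, Proposition~\ref{prop:Mon2isN} says that $R_e\in\Mon^2(X)$ if and only if $-R_e$ acts trivially on $A_L$. Equivalently, $R_e$ must act as $-\Id$ on $A_L$.

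Next, compute this action. Put $k=\dv_L(e)$. For $x\in L^\vee$, $R_e(x)=x-\frac{2(x,e)}{e^2}e$. We need $R_e(x)\equiv -x \pmod L$ for all $x\in L^\vee$, that is,
\[2x\equiv \tfrac{2(x,e)}{e^2}\,e \pmod L \quad\text{for all } x\in L^\vee.\]
We evaluate this on the generator $x_0=\delta/(2n+2)$ and on elements of $L$. On $x\in L$ the condition is automatic, because $e$ is a root, so $2(x,e)/e^2\in\bbZ$.

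Write $e=k e'$ with $e'\in L^\vee$ primitive in $L^\vee$, where $e'$ has order $k$ in $A_L$ (here $k\mid 2n+2$). The criterion then becomes a statement about the images in $A_L$. Since $A_L$ is cyclic, $2\bar x$ for a generator $\bar x$ generates $2A_L$, which has order $n+1$. The right-hand side lies in the cyclic subgroup generated by $\bar e'$, which has order $k$. So we need $2A_L\subseteq\langle \bar e'\rangle$, which forces $k\ge n+1$ with $n+1\mid k$.

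Then handle the norm, which is the main bookkeeping step. The root condition says $e^2\mid 2k$, and primitivity of $e$ in $L$ gives $k^2\mid e^2\cdot\frac{2n+2}{?}$ via the discriminant form. Concretely, $q(\bar e')=e^2/k^2 \bmod 2\bbZ$ must be a value of $q_L$ on an element of order $k$. The values on $A_L$ are $-a^2/(2n+2)$. With $k\in\{n+1,2n+2\}$ and $e^2$ dividing $2k$, I would check case by case that the identity $\frac{2(x_0,e)}{e^2}\bar e=2\bar x_0$ holds exactly when $e^2=-2(n+1)$. This uses $(x_0,e)\in\frac1{?}\bbZ$ together with $\bar e=k\bar e'$. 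For the converse, if $e^2=-(2n+2)$ and $n+1\mid k$, then the condition $e^2\mid 2k$ holds, and a direct computation shows $R_e$ acts as $-1$ on $A_L$.

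I expect the main obstacle to be pinning down the exact norm: excluding other values of $e^2$ dividing $2k$ (such as $-2$ or $-(n+1)$) that are compatible with $n+1\mid k$. To handle this, I would use the fact that $e/k$ represents a class of order $k$ whose discriminant form value is $e^2/k^2\equiv -a^2/(2n+2)$ with $\gcd(a,2n+2)=(2n+2)/k$. This forces $|e^2|\ge k^2/(2n+2)\cdot(\dots)$. Combined with $|e^2|\le 2k$ and the requirement that the scalar $2(x_0,e)/e^2$ produce exactly $-2$ times a generator, this should isolate $e^2=-2(n+1)$.
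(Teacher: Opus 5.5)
Your reduction is correct. $R_e$ fixes $e^\perp_\bbR$, which contains a positive definite three-space, so $R_e\in O^+$, and Proposition~\ref{prop:Mon2isN} reduces everything to $R_e$ acting as $-\Id$ on $A_L$. Your cyclic-subgroup argument also correctly gives $n+1\mid\dv(e)$: you need $2A_L\subseteq\langle\bar e'\rangle$, where $\bar e'=\overline{e/k}$ has order exactly $k$ because $e$ is primitive. This is a self-contained replacement for the paper's appeal to \cite[Proposition 3.2(i)]{ghs07}.

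\textbf{The gap.} You never derive $e^2=-2(n+1)$. That step contains placeholders, ends with ``should isolate'', and the inequality $|e^2|\ge\dots$ you propose is not what makes it work. The missing fact is a divisibility: once $n+1\mid k$, the integer $2(n+1)$ divides $e^2$. Your own discriminant-form setup proves it. Write $\bar e'=a\bar x_0$ with $\gcd(a,2n+2)=(2n+2)/k$. Then $e^2/k^2\equiv -a^2/(2n+2)\pmod{2\bbZ}$, so $e^2\equiv -a^2k^2/(2n+2)\pmod{2k^2}$.
\begin{itemize}
\item If $k=n+1$, then $a=2a'$ and $e^2\equiv -2a'^2(n+1)\pmod{2(n+1)^2}$, so $2(n+1)\mid e^2$.
\item If $k=2n+2$, then $a$ is odd and $e^2/(2n+2)$ is an odd integer.
\end{itemize}
In both cases the root condition $e^2\mid 2k$ together with $e^2<0$ forces $e^2=-2(n+1)$.

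Equivalently, in coordinates, write $e=(n+1)\xi+t\delta$ with $\xi\in U^{\oplus3}$. Then $e^2=(n+1)^2\xi^2-2(n+1)t^2\in 2(n+1)\bbZ$, since $\xi^2$ is even. If $\dv(e)=2n+2$, then $\xi\in 2U^{\oplus3}$ and primitivity makes $t$ odd, so $e^2/(2n+2)$ is odd and $e^2=-4(n+1)$ is excluded. In particular, your worry about norms such as $-2$ or $-(n+1)$ is unfounded: they are ruled out as soon as $n+1\mid\dv(e)$.

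The converse is also only asserted. It is the short computation the paper carries out. With $x_0=\delta/(2n+2)$ and $e=(n+1)\xi+t\delta$ one has $(x_0,e)=-t$, hence $R_e(x_0)+x_0=-t\xi-\frac{t^2-1}{n+1}\delta$. The equation $e^2=-2(n+1)$ gives $t^2-1=(n+1)\xi^2/2$, so this vector lies in $L$. Therefore $R_e$ acts as $-\Id$ on $A_L$.
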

\begin{proof}
    Since any reflection is of determinant $-1$, the description of $\Mon^2(X)$ gives that $R_e$ is a monodromy if and only if it acts as 
    $-\Id$ on the discriminant group $A_{H^2(X,\bbZ)}$ of $H^2(X,\bbZ)$.

    If we suppose first that $R_e = -\Id$ on ${A_{H^2(X,\bbZ)}}$, we can apply \cite[Proposition 3.2(i)]{ghs07}: Since the exponent $D = 2n+2$ of $A_{H^2(X,\bbZ)}$ is even, such a reflection $R_e$ exists only if $e^2 = -D = -2(n+1)$ and $D/2 = n+1$ divides the divisibility of $e$ inside $H^2(X,\bbZ)$. This shows one direction. 

    Now we show the converse statement. Suppose that $e\in H^2(X,\bbZ)$ is a primitive class satisfying
        $$e^2=-2(n+1)\quad\text{and}\quad n+1\text{ divides }\dv_{H^2(X,\bbZ)}(e).$$
    Then by these assumptions, $R_e\in O^+(H^2(X,\bbZ))$.  To show it is in $\Mon^2(X)$ it remains to show that $R_e$ acts on $H^2(X,\bbZ)^\vee/H^2(X,\bbZ)$ via multiplication by $-1$.  Write $e=a+t\delta$ for $a\in U^{\oplus 3}$ and $t\in\bbZ$.  Then by the assumption that $n+1\mid \dv_{H^2(X,\bbZ)}(e)$ we get $a=(n+1)\xi$
    for some $\xi\in U^{\oplus 3}$.  Consider $w:=\frac{\delta}{2n+2}$.  Then $w+H^2(X,\bbZ)$ generates the discriminant group $A_{H^2(X,\bbZ)}$ which is isomorphic to $\bbZ/(2n+2)\bbZ$.  
    Thus 
    $$R_e(w)=w-\frac{2(w,e)}{e^2}e=w-\frac{t}{n+1}((n+1)\xi+t\delta)=-t\xi+(1-2t^2)w$$
    which is congruent to $(1-2t^2)w$ modulo $H^2(X,\bbZ)$.  
    The equality $e^2=-(2n+2)$ gives
    $$t^2-1=\frac{(n+1)(\xi,\xi)}{2},$$ 
    so $2t^2-2\equiv 0\pmod{2n+2}$, or equivalently $1-2t^2\equiv -1\pmod{2n+2}$. This implies that $R_e(w) + w \in H^2(X, \mathbb{Z})$, meaning that $R_e\in\Mon(X)$ as required.
\end{proof}

We conclude by remarking the following consequence of \Cref{prop:Mon2isN}. Let $\Lambda_n$ be a lattice isometric to $U^{\oplus3}\oplus \langle-2-2n\rangle$, where $n \geq 2$. For the rest of the paper, we fix that $\calM_{\Lambda_n}$ is the moduli space of $\Lambda_n$-marked pairs \emph{of $\Kum_n$-type}. For each connected component $\calM_{\Lambda_n}^\circ$ of the moduli space of $\Lambda_n$-marked pairs, the associated group $\Mon^2_\circ(\Lambda_n)$ coincides with
\[\Mon^2(\Lambda_n) := \left\{f\in O^+(\Lambda_n)\;\mid\;\det(f)f\text{ acts trivially on $A_{\Lambda_n}$}\right\}.\]
In particular, as claimed earlier, the definition of such a group does not depend on the choice of $\calM_{\Lambda_n}^\circ$. In what follows, we show how one can effectively determine elements of $\calW^{\rm pex}_\circ(\Lambda_n)$.

\subsection{Wieneck embeddings}
Let $\Lambda_n$ be a lattice isometric to $U^{\oplus3}\oplus \langle-2-2n\rangle$ for some fixed $n \geq 2$, and let $\widetilde{\Lambda}$ be isometric to $U^{\oplus4}$. The lattice $\widetilde{\Lambda}$ is unimodular, and for any abelian surface $A$, we have an identification of lattices: $\widetilde{H}(A,\bbZ)\simeq \widetilde{\Lambda}$. 
Recall that, $A_{\Lambda_n}\simeq \bbZ/(2n+2)\bbZ$.
The homomorphism $O(\Lambda_n)\to O(A_{\Lambda_n})$ is surjective \cite[Theorem 1.14.2]{nik79} and $\rk\widetilde{\Lambda} = \rk\Lambda_n+1$. Hence the analog of Witt's theorem in its strong form holds for the pair $(\widetilde{\Lambda}, \Lambda_n)$ \cite[Proposition 1.14.1]{nik79}. This means that $\Lambda_n$ admits at least one primitive embedding inside $\widetilde{\Lambda}$ and that there is a unique double coset in:
\[O(\widetilde{\Lambda})\backslash\{\Lambda_n\hookrightarrow\widetilde{\Lambda}\text{ primitive embedding}\}/O(\Lambda_n),\]
i.e. such a primitive embedding is unique up to actions of the orthogonal groups of both lattices.
The following lemma is well-known. 

\begin{lemma}\label{lem:monodromyextension}
    Fix any primitive embedding $\iota\colon \Lambda_n \to \widetilde{\Lambda}$ and pick $g\in O(\Lambda_n)$. Then, $g$ extends to an isometry $\widetilde{g}\in O(\widetilde{\Lambda})$ if and only if $g$ acts as $\pm \Id$ on $A_{\Lambda_n}$. Moreover, if any of the previous holds, $\widetilde{g}\in SO(\widetilde{\Lambda})$ if and only if $g$ acts as $\det(g)\Id$ on $A_{\Lambda_n}$.

\end{lemma}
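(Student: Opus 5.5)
The plan is the standard gluing argument of Nikulin for primitive sublattices of a unimodular lattice. Write $\Lambda := \iota(\Lambda_n)$ and $K := \Lambda^\perp\subset\widetilde{\Lambda}$. Since $\rk\widetilde{\Lambda}=\rk\Lambda_n+1$, $K$ has rank one, say $K=\bbZ u$. Because $\widetilde{\Lambda}$ is unimodular and $\Lambda$ is primitive, $\widetilde{\Lambda}$ is the overlattice of $\Lambda\oplus K$ determined by an anti-isometry (glue map) $\gamma\colon A_{\Lambda}\xrightarrow{\sim} A_K$. Its graph is $\widetilde{\Lambda}/(\Lambda\oplus K)\subset A_\Lambda\oplus A_K$. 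Comparing discriminants gives $u^2=2n+2$ and $A_K\simeq\bbZ/(2n+2)\bbZ$, generated by $u/(2n+2)$. I will use \cite[Corollary 1.5.2]{nik79}: a pair $(g,h)\in O(\Lambda)\times O(K)$ extends to an isometry of $\widetilde{\Lambda}$ if and only if $\gamma\circ\bar g=\bar h\circ\gamma$, where bars denote the induced actions on discriminant groups.

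For the first claim, I observe that $O(K)=\{\pm\Id_K\}$, so $\bar h=\pm\Id$ on $A_K$. Any extension $\widetilde g$ of $g$ preserves $\Lambda$, hence preserves $K$, and restricts to some $h=\pm\Id_K$. The compatibility condition then forces $\bar g=\gamma^{-1}\bar h\gamma=\pm\Id$ on $A_{\Lambda_n}$. Conversely, if $\bar g=\epsilon\Id$ with $\epsilon=\pm1$, I take $h=\epsilon\Id_K$. Then $\gamma\bar g=\epsilon\gamma=\bar h\gamma$, so $g\oplus h$ preserves the graph of $\gamma$ and extends to $\widetilde{\Lambda}$.

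For the second claim, I use that $\widetilde g$ respects the decomposition $\widetilde{\Lambda}_\bbQ=\Lambda_\bbQ\oplus K_\bbQ$, so $\det\widetilde g=\det(g)\cdot\epsilon$, where $\epsilon$ is the sign by which $\widetilde g$ acts on $u$. Thus $\widetilde g\in SO(\widetilde{\Lambda})$ if and only if $\epsilon=\det(g)$. By the compatibility above, this is equivalent to $\bar g=\det(g)\Id$ on $A_{\Lambda_n}$.

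The one point I expect to need care is uniqueness. When $2n+2>2$ the identities $\Id$ and $-\Id$ on $A_{\Lambda_n}\simeq\bbZ/(2n+2)\bbZ$ are distinct, so $\epsilon$ is uniquely determined by $\bar g$. The extension is then unique, and the determinant statement is unambiguous. This holds because $n\geq2$ gives $2n+2\ge 6$. The remaining facts used, namely unimodular gluing and $A_K\simeq A_\Lambda$, are routine.
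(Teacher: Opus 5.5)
Your proposal is correct and is essentially the paper's proof. Like the paper, you use Nikulin's Corollary~1.5.2 together with $O(\bbZ v)=\{\pm\Id\}$ and unimodularity of $\widetilde{\Lambda}$ for the extension criterion, and the decomposition $\widetilde{\Lambda}_\bbQ=\Lambda_\bbQ\oplus\bbQ v$ for the determinant. Your remark that $\Id\neq-\Id$ on $\bbZ/(2n+2)\bbZ$ makes explicit something the paper uses only implicitly: the sign of $\widetilde{g}$ on $v$, and hence $\widetilde{g}$ itself, is uniquely determined by the action of $g$ on $A_{\Lambda_n}$.
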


\begin{proof}
    Note that there exists a primitive $(2n+2)$-vector $v\in\widetilde{\Lambda}$ such that $\iota(\Lambda_n)^\perp =\bbZ v\subset \widetilde{\Lambda}$. Now, since $O(\mathbb{Z}v) = \{\pm\Id\}$, the first statement follows from \cite[Corollary~1.5.2]{nik79} and the fact that $\widetilde{\Lambda}$ is unimodular. For the second statement, note that if $g$ acts as $\varepsilon \Id$ on $A_\Lambda$, then its extension $\widetilde{g}$ satisfies $\widetilde{g}(v)=\varepsilon v$. Since $\widetilde{\Lambda}_\bbQ=\Lambda_\bbQ\oplus \bbQ v$, the results follows.
\end{proof}

If one restricts to a right action of $\Mon^2(\Lambda_n)$, the number of double cosets of primitive embeddings $\Lambda_n\hookrightarrow \widetilde{\Lambda}$ as before grows. As we show in \Cref{lem: connected components and primitive embeddings}, each such double coset is linked to some connected component(s) of the moduli space $\calM_{\Lambda_n}$ of $\Lambda_n$-marked pairs of $\Kum_n$-type. But before that, we need to give a criterion for determining when certain isometries between the second cohomology groups of $\Kum_n$-type manifolds are induced by bimeromorphic maps. We start with the following construction.

\begin{citedthm}[{(\cite[Theorem 4.9]{Wie18})}]\label{thm:PrimEmbeddingH2}
Let $X$ be a $\Kum_n$-type manifold. There exists a canonical $\Mon^2(X)$-invariant $O(\widetilde{\Lambda})$-orbit of primitive embeddings $H^2(X,\bbZ) \hookrightarrow \widetilde{\Lambda}$. The orthogonal complement $H^2(X,\bbZ)^\perp\subset \widetilde{\Lambda}$ is generated by a primitive vector $v$ of square $2n+2$, and $\widetilde{\Lambda}$ inherits a Hodge structure from the one on $H^2(X, \bbZ)$, by declaring $v\in \widetilde{\Lambda}_\bbC^{1,1}$. 

If $X$ is twisted modular (Definition~\ref{def:modularKumn}), then the embedding $H^2(X,\hz)\hookrightarrow \widetilde{\Lambda}$ identifies, up to $O(\widetilde{\Lambda})$, with the Mukai morphism (\ref{eq:MukaiMorphism}).
\end{citedthm}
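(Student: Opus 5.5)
This result is cited from Wieneck, so the plan reconstructs its proof: build the embedding on one explicit member of the deformation class, then transport it along the moduli space, with Lemma~\ref{lem:monodromyextension} guaranteeing that the transported orbit does not depend on the path chosen.

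\textbf{Step 1: the model embedding.} I would start with the model case $X=K_{A}(v,H)$ for an abelian surface $A$ and a primitive Mukai vector $v$ with $v^2=2n+2$. The natural choice is $v=(1,0,-n-1)$, so that $X=K_n(A)$. Theorem~\ref{thm:twistedYoshioka} supplies the Mukai morphism $\theta_v^{-1}\colon H^2(X,\bbZ)\isomor v^\perp\subset \widetilde{H}(A,\bbZ)\simeq\widetilde{\Lambda}$. This embedding is primitive, and its orthogonal complement is $\bbZ v$, which is of type $(1,1)$ by construction. This gives the Hodge statement for $X$.

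\textbf{Step 2: invariance under monodromy on the model.} For $g\in\Mon^2(X)$, Proposition~\ref{prop:Mon2isN} says that $g$ acts on $A_{H^2(X,\bbZ)}$ as $\det(g)\Id$, in particular as $\pm\Id$. Lemma~\ref{lem:monodromyextension} then shows that $g$ extends to some $\widetilde g\in O(\widetilde{\Lambda})$. Hence $\iota\circ g=\widetilde g\circ\iota$, so the $O(\widetilde{\Lambda})$-orbit of $\iota$ is $\Mon^2(X)$-invariant.

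\textbf{Step 3: transport to an arbitrary $X$.} For any $\Kum_n$-type manifold $Y$, choose a parallel transport operator $P\colon H^2(Y,\bbZ)\to H^2(X,\bbZ)$ and define the orbit of $Y$ as that of $\iota\circ P$. Any two choices of $P$ differ by an element of $\Mon^2(X)$, so by Step 2 the orbit is well defined. Monodromy invariance on $Y$ follows in the same way. The complement is generated by $v$, primitive of square $2n+2$. Declaring $v$ to be of type $(1,1)$ gives a Hodge structure on $\widetilde{\Lambda}$ extending that of $H^2(Y)$. This Hodge structure is independent of the choice within the orbit, because any isometry of $\widetilde{\Lambda}$ relating two embeddings maps the complement to the complement.

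\textbf{Step 4: compatibility with Mukai morphisms.} The main obstacle is showing that for every (twisted) modular $K_{A,\alpha}(v',H)$ its own Mukai morphism lies in the canonical orbit, not merely that some embedding does. I would first deform $(A,\alpha,v',H)$ in a family of twisted abelian surfaces, keeping the twisted Mukai vector of type $(1,1)$, so that the Mukai morphisms vary continuously and are compatible with parallel transport in the relative moduli space. This requires the relative construction from Appendix~\ref{sec:modulispaces}. I would then use derived equivalences (Fourier--Mukai transforms, tensoring by line bundles, twisted-to-untwisted moves) to reduce to the model vector $(1,0,-n-1)$. Under such an equivalence the Mukai morphisms are intertwined by the induced isometry of $\widetilde{H}$, which preserves the $O(\widetilde{\Lambda})$-orbit.

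The delicate point is that the connecting isometry $H^2(K_{A,\alpha}(v',H),\bbZ)\to H^2(X,\bbZ)$ must actually be a parallel transport operator. It is only determined up to $O(H^2)$, and a priori its image in $O(\widetilde{\Lambda})$ could be non-monodromy. I would settle this by checking that this isometry acts as $\pm\Id$ on the discriminant group, which is exactly the condition in Lemma~\ref{lem:monodromyextension}. If the sign turns out to be wrong, I would compose with $-\Id$ on the class of $v$ to fix it.
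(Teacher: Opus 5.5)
Your Steps 1--3 match the construction recalled in the paper (parallel transport to a modular model, followed by the Mukai morphism). Your Step~2 argument via Proposition~\ref{prop:Mon2isN} and Lemma~\ref{lem:monodromyextension} correctly shows that the orbit does not depend on the chosen transport. The gap is in how you resolve the ``delicate point'' of Step~4, which is where the content of the last sentence of the statement lies.

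Suppose a connecting isometry $\phi\colon H^2(K_{A,\alpha}(v',H),\bbZ)\to H^2(X,\bbZ)$ were known only up to $O(H^2)$. Then ``checking that $\phi$ acts as $\pm\Id$ on the discriminant group'' is not a computation you can carry out. The map $\phi$ goes between two different lattices, and what must be shown is that $\phi\circ P^{-1}$ acts as $\pm\Id$ on $A_{H^2(X,\bbZ)}$ for a parallel transport operator $P$. Equivalently, the identification of both discriminant groups with $\bbZ/(2n+2)\bbZ$ coming from the Mukai morphisms (via $A_{v^\perp}\simeq A_{\bbZ v}$, generated by the class of $v/(2n+2)$) must agree up to sign with the one coming from parallel transport. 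That is a reformulation of the theorem, not a proof of it.

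Your proposed repair does nothing. Composing with $-\Id$ on $\bbZ v$, i.e.\ with the reflection $R_v$, fixes $v^\perp$ pointwise, so it does not change the embedding at all. It only flips the determinant of the extension, which is irrelevant for $O(\widetilde{\Lambda})$-orbits; it matters only for the $SO$-compatibility of Definition~\ref{def:compatible embeddings}, cf.\ Example~\ref{ex:nonMon}. Moreover, when $n+1$ has two distinct prime factors, $O(A_{\Lambda_n})$ contains units $a\not\equiv\pm 1$, and a discrepancy of that kind could not be corrected by any modification.

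The gap closes once you see that the connecting isometry is not ``only determined up to $O(H^2)$'' when every link of your chain is geometric. Deformations in a family, isomorphisms and birational maps all induce parallel transport operators, so their composite $P$ is one on the nose. What remains is to show that each link is intertwined with the Mukai morphisms by an isometry of Mukai lattices.
\begin{itemize}
\item \emph{Untwisting step.} This must be a deformation, since derived equivalences cannot untwist in general. Here the intertwining is the local constancy of the relative Mukai map $\Phi_{t,2}$ over a contractible base containing untwisted points (Lemma~\ref{lem:Brauertrivialdense} and Corollary~\ref{coro:mukai+marking=wieneck}).
\item \emph{Derived equivalences.} A derived equivalence $\Phi$ gives $K_A(v,\sigma)\cong K_T(\Phi_*v,\Phi(\sigma))$, intertwined by $\Phi_*$. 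But $\Phi(\sigma)$ is generally not in a Gieseker chamber, so you also need the wall-crossing birational maps of Theorem~\ref{thm:BridgelandmoduliBirational} and their compatibility with $\theta_v$.
\end{itemize}
These compatibilities are what the paper imports from Yoshioka: any deformation between two modular Kummer-type manifolds is monodromy equivalent to one induced by deforming the polarized surfaces, compatibly with the Mukai pairing. For the twisted case the paper uses the appendix. With them in hand, $\theta_{v'}^{-1}$ and $\iota\circ P$ differ by an element of $O(\widetilde{\Lambda})$ directly, and the discriminant check and the sign correction should be dropped.
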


Let us briefly recall Wieneck's construction.  An embedding $\iota_X$ is chosen as 
\begin{equation}\label{eqn:naturalembedding}
    H^2(X,\bbZ) \xrightarrow{P} H^2(K_A(v, H),\bbZ) \xrightarrow{\theta_v^{-1}} v^\perp \hookrightarrow \widetilde{\Lambda}
\end{equation}
where $P$ is a parallel transport isometry between $X$ and some modular $\Kum_n$-type hyperk\"ahler manifold. Thanks to the work of Yoshioka \cite[Proof of Proposition~4.12 and Lemma~5.1]{YosModuliSheaves} (see also \cite[Remark~1.9]{PerRapISVModuli}), the $O(\widetilde{\Lambda})$-orbit of the embedding does not depend of any choice made. Indeed, any deformation between two modular $\Kum_n$-type manifolds $K_{A_1}(v_1, H_1)$ and $K_{A_2}(v_2,H_2)$ is monodromy equivalent to one induced by a deformation of the polarized surfaces $(A_1,H_1)$ to $(A_2,H_2)$, which is compatible with the Mukai pairing. Finally, we show in Appendix \ref{sec:modulispaces} that the same statement holds true in the twisted settings.

\begin{definition}\label{def:wieneck}
    For $X$ a $\Kum_n$-type manifold, we call any representative $\iota_X$ of the canonical $\Mon^2(X)$-invariant $O(\widetilde{\Lambda})$-orbit of primitive embeddings $H^2(X,\bbZ) \hookrightarrow \widetilde{\Lambda}$ from \Cref{thm:PrimEmbeddingH2} a \emph{Wieneck embedding of} $X$.
\end{definition}
Not any embedding $H^2(X,\bbZ)\into\widetilde{\Lambda}$ is a Wieneck embedding.
    In \cref{lem: connected components and primitive embeddings} we show that given an embedding $\kappa\colon \Lambda_n\into \Lambda$, there may be a marking $\mu\colon H^2(X,\bbZ)\to\Lambda_n$, for which the composition $\kappa\circ\mu$ is not a Wieneck embedding of $X$.

A direct corollary of \Cref{thm:PrimEmbeddingH2}, combined with Proposition~\ref{prop:Mon2isN} and \Cref{lem:monodromyextension}, is the following.

\begin{corollary}\label{cor:Mon2isSO}
    Let $X$ be a $\Kum_n$-type manifold. Fix a Wieneck embedding $\iota_X$ of $X$. An orientation-preserving isometry $g\in O^+(H^2(X,\bbZ))$ is a monodromy operator if and only if there exists $\widetilde{g}\in SO(\widetilde{\Lambda})$ such that $\widetilde{g}\circ \iota_X=\iota_X\circ g$.
\end{corollary}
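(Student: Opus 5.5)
The plan is to reduce the statement to the numerical description of the monodromy group in Proposition~\ref{prop:Mon2isN}, and then translate it to $\widetilde{\Lambda}$ with Lemma~\ref{lem:monodromyextension}.

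First fix a Wieneck embedding $\iota_X\colon H^2(X,\bbZ)\hookrightarrow\widetilde{\Lambda}$. By Theorem~\ref{thm:PrimEmbeddingH2} it is primitive, and its orthogonal complement is $\bbZ v$ with $v^2=2n+2$. So, after choosing any marking, $\iota_X$ is a primitive embedding of $\Lambda_n$ into $\widetilde{\Lambda}$, and Lemma~\ref{lem:monodromyextension} applies to it.

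For the forward implication, let $g\in O^+(H^2(X,\bbZ))$ be a monodromy operator. By Proposition~\ref{prop:Mon2isN}, $\det(g)g$ acts trivially on $A_{H^2(X,\bbZ)}$. Since $\det(g)=\pm1$, this means $g$ acts as $\det(g)\Id$ on the discriminant group, and in particular as $\pm\Id$. The first part of Lemma~\ref{lem:monodromyextension} then gives an extension $\widetilde{g}\in O(\widetilde{\Lambda})$ with $\widetilde{g}\circ\iota_X=\iota_X\circ g$. The second part shows $\widetilde{g}\in SO(\widetilde{\Lambda})$, because $g$ acts as $\det(g)\Id$.

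For the converse, suppose there is $\widetilde{g}\in SO(\widetilde{\Lambda})$ with $\widetilde{g}\circ\iota_X=\iota_X\circ g$. Then $\widetilde{g}$ preserves $\iota_X(H^2(X,\bbZ))$, hence also its orthogonal complement $\bbZ v$, so $\widetilde{g}(v)=\varepsilon v$ with $\varepsilon=\pm1$. The gluing argument in the proof of Lemma~\ref{lem:monodromyextension} (via \cite[Corollary~1.5.2]{nik79}, using that $\widetilde{\Lambda}$ is unimodular) identifies the action of $g$ on $A_{H^2(X,\bbZ)}\cong A_{\bbZ v}$ with the action of $\widetilde{g}$ on $v$. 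Hence $g$ acts as $\varepsilon\Id$ on the discriminant group. Since $\widetilde{\Lambda}_\bbQ=\iota_X(H^2(X,\bbQ))\oplus\bbQ v$, we have $1=\det\widetilde{g}=\det(g)\,\varepsilon$, so $\varepsilon=\det(g)$. Therefore $\det(g)g$ acts trivially on the discriminant group. As $g$ is orientation-preserving by hypothesis, Proposition~\ref{prop:Mon2isN} shows $g\in\Mon^2(X)$.

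Finally, the statement does not depend on which Wieneck embedding is chosen. Any two differ by an element $h\in O(\widetilde{\Lambda})$, and conjugating $\widetilde{g}$ by $h$ preserves $SO(\widetilde{\Lambda})$. The $\Mon^2(X)$-invariance in Theorem~\ref{thm:PrimEmbeddingH2} is not needed for the equivalence itself. The only slightly delicate point is the sign bookkeeping relating $\varepsilon$, $\det g$ and $\det\widetilde{g}$, and the identification of the discriminant action with the action on $v$, both of which are already contained in the proof of Lemma~\ref{lem:monodromyextension}.
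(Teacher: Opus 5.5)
Your proof is correct and follows the paper's route: the corollary comes from combining Proposition~\ref{prop:Mon2isN} with Lemma~\ref{lem:monodromyextension}, applied to the Wieneck embedding, which is primitive with orthogonal complement $\bbZ v$ by Theorem~\ref{thm:PrimEmbeddingH2}. In the converse direction you re-derive the second half of that lemma (the sign relation $\varepsilon=\det(g)$), which you could instead have cited directly.
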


In order to compare $\Kum_n$-type manifolds via Wieneck embeddings and isometries of $\widetilde{\Lambda}$, we generalize the previous result to parallel transport isometries between two $\Kum_n$-type manifolds.

\begin{definition}\label{def:compatible embeddings}
    Let $X,Y$ be $\Kum_n$-type manifolds, and choose two respective Wieneck embeddings $\iota_X, \iota_Y$. We say that $\iota_X$ and $\iota_Y$ are \textit{compatible} if for every parallel transport isometry $g\colon H^2(X,\bbZ) \to H^2(Y,\bbZ)$, there exists $\widetilde{g}\in SO(\widetilde{\Lambda})$ such that
    \begin{equation}\label{eq:compatibleembedding}
        \widetilde{g}\circ \iota_X = \iota_Y \circ g.
    \end{equation}
\end{definition}

\begin{lemma}\label{lem:compatible wieneck embeddings}
    Let $X,Y$ be $\Kum_n$-type manifolds. Then there exist compatible Wieneck embeddings $\iota_X$ and $\iota_Y$ of $X$ and $Y$ respectively.
\end{lemma}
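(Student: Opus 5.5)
The plan is to fix one parallel transport isometry $g_0\colon H^2(X,\bbZ)\to H^2(Y,\bbZ)$, which exists because $X$ and $Y$ are deformation equivalent. Choose any Wieneck embedding $\iota_X$ of $X$ and put $\iota_Y' := \iota_X\circ g_0^{-1}$. The first step is to check that $\iota_Y'$ is a Wieneck embedding of $Y$. By Wieneck's construction \eqref{eqn:naturalembedding}, $\iota_X = \phi\circ\theta_v^{-1}\circ P$ with $\phi\in O(\widetilde{\Lambda})$ and $P$ a parallel transport from $X$ to a modular $K_A(v,H)$. Then $\iota_Y' = \phi\circ\theta_v^{-1}\circ(P\circ g_0^{-1})$, and $P\circ g_0^{-1}$ is a parallel transport from $Y$ to $K_A(v,H)$. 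Since \Cref{thm:PrimEmbeddingH2} says the $O(\widetilde{\Lambda})$-orbit is independent of the choices made, $\iota_Y'$ lies in the canonical orbit for $Y$.

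Next I take $\iota_Y := \iota_Y'$ and verify compatibility. Let $g\colon H^2(X,\bbZ)\to H^2(Y,\bbZ)$ be any parallel transport isometry. Then $h := g_0^{-1}\circ g$ lies in $\Mon^2(X)$, and
\[
\iota_Y\circ g = \iota_X\circ g_0^{-1}\circ g = \iota_X\circ h .
\]
Because $h$ is a monodromy operator, it is orientation-preserving by \Cref{rem monodromy are orientation-preserving}. So \Cref{cor:Mon2isSO} gives $\widetilde{h}\in SO(\widetilde{\Lambda})$ with $\widetilde{h}\circ\iota_X = \iota_X\circ h$. Setting $\widetilde{g} := \widetilde{h}$, we obtain $\widetilde{g}\circ\iota_X = \iota_Y\circ g$, which is \eqref{eq:compatibleembedding}.

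The main point needing care is the first step: that precomposing a Wieneck embedding with a parallel transport isometry stays in the canonical orbit. If one only uses the $\Mon^2$-invariance stated in \Cref{thm:PrimEmbeddingH2}, this is not immediate. Instead it must come from the independence of the orbit from the choice of $P$ and of the modular model (Yoshioka's argument, extended to the twisted case in the appendix). Concretely, I would argue that the canonical orbit for $Y$ can be computed using the parallel transport $P\circ g_0^{-1}$, and independence of choices then gives the claim. Everything else is formal.
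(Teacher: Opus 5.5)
Your proposal is correct and follows essentially the same route as the paper. The only difference is that the paper fixes $\iota_Y$ and sets $\iota_X:=\iota_Y\circ g$, whereas you fix $\iota_X$ and set $\iota_Y:=\iota_X\circ g_0^{-1}$. Both then lift $g_0^{-1}\circ g\in\Mon^2(X)$ to $SO(\widetilde{\Lambda})$ via \Cref{cor:Mon2isSO}, with the Wieneck property justified by the construction \eqref{eqn:naturalembedding}.
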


\begin{proof}
    Fix a Wieneck embedding $\iota_Y$ and a parallel transport $g\colon H^2(X,\bbZ) \to H^2(Y,\bbZ)$ and define $\iota_X$ as $\iota_Y\circ g$. By construction, see (\ref{eqn:naturalembedding}), $\iota_X$ is a Wieneck embedding, and $g$ lifts as $\widetilde{g}=\Id\in SO(\widetilde{\Lambda})$ with respect to $\iota_X$ and $\iota_Y$. Let now $g'\colon H^2(X,\bbZ)\to H^2(Y,\bbZ)$ be another parallel transport isometry. The composition $g^{-1}\circ g'\in \Mon^2(X)$ is a monodromy operator, hence it lifts to some $\widetilde{h}\in SO(\widetilde{\Lambda})$ by Corollary~\ref{cor:Mon2isSO}, and we get that
$$\iota_Y\circ g' = \iota_Y \circ g \circ (g^{-1} \circ g')=\iota_X \circ (g^{-1}\circ g') = \widetilde{h}\circ \iota_X.$$
\end{proof}

\begin{remark}\label{rmk:liftpti}
    Let $X$ and $Y$ be two $\Kum_n$-type manifolds and fix any two Wieneck embeddings $\iota_X$ and $\iota_Y$ of $X$ and $Y$ respectively. The proof of Lemma \ref{lem:compatible wieneck embeddings} in fact reveals that:
    \begin{enumerate}
        \item For any parallel transport isometry $g\colon H^2(X,\bbZ) \to H^2(Y,\bbZ)$, there exists $\widetilde{g}\in O(\widetilde{\Lambda})$ such that $\widetilde{g}\circ\iota_X = \iota_Y\circ g$. This follows from the fact that $\iota_Y\circ g$ is a Wieneck embedding for $X$, hence differ from $\iota_X$ by postcomposition with an element of $O(\widetilde{\Lambda})$.
        \item For two parallel transport isometries $g_1,g_2\colon H^2(X,\bbZ)\to H^2(Y, \bbZ)$, we have $\det(\widetilde{g_1}) = \det(\widetilde{g_2})$ (where we define $\widetilde{g_i}$ as in part (1)). This follows from the fact that $g_1$ and $g_2$ differs by an element of $\Mon^2(Y)$ and Corollary~\ref{cor:Mon2isSO}.
    \end{enumerate}
\end{remark}

We can now prove the first main result of this paper. It can be seen as an analog of \cite[Theorem 9.8]{Marsurvey} for Kummer-type manifolds equipped with compatible Wieneck embeddings.

\begin{citedthm}\label{thm:biratU4} Let $X$ and $Y$ be $\Kum_n$-type manifolds, and let $\iota_X,\iota_Y$ be two compatible Wieneck embeddings.
Then the following are equivalent:
\begin{enumerate}
    \item[(i)] $X$ and $Y$ are bimeromorphic,
    \item[(ii)] There exists a commutative diagram
    $$\begin{tikzcd}
        H^2(X,\bbZ) \ar[r,hook,"\iota_X"] \ar[d,"g"] & \widetilde{\Lambda} \ar[d,"\widetilde{g}"] \\
        H^2(Y,\bbZ) \ar[r,hook,"\iota_Y"] & \widetilde{\Lambda}
    \end{tikzcd}$$
    such that $\widetilde{g}\in SO(\widetilde{\Lambda})$, and $g$ is a Hodge isometry.
\end{enumerate}
	\end{citedthm}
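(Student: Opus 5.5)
The plan is to reduce both directions to Corollary~\ref{cor:Verbitskytorelli}: $X$ and $Y$ are bimeromorphic if and only if there is a Hodge isometry $H^2(X,\bbZ)\to H^2(Y,\bbZ)$ which is a parallel transport operator. It then suffices to show that, for compatible Wieneck embeddings, a Hodge isometry $g$ is (up to a harmless sign) a parallel transport operator exactly when it lifts along $\iota_X,\iota_Y$ to some $\widetilde g\in SO(\widetilde\Lambda)$.

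\emph{(i)$\Rightarrow$(ii).} A bimeromorphic map $f\colon X\dashrightarrow Y$ induces a Hodge isometry $g=(f^{-1})^*\colon H^2(X,\bbZ)\to H^2(Y,\bbZ)$. By Theorem~\ref{thm:BirmapHK} (or Corollary~\ref{cor:Verbitskytorelli}) this $g$ is a parallel transport operator. Compatibility of $\iota_X$ and $\iota_Y$ (Definition~\ref{def:compatible embeddings}) then directly gives $\widetilde g\in SO(\widetilde\Lambda)$ with $\widetilde g\circ\iota_X=\iota_Y\circ g$.

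\emph{(ii)$\Rightarrow$(i).} Let $g$ be a Hodge isometry admitting a lift $\widetilde g\in SO(\widetilde\Lambda)$.

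\begin{enumerate}
\item Fix any parallel transport operator $p\colon H^2(X,\bbZ)\to H^2(Y,\bbZ)$. One exists because $X$ and $Y$ are deformation equivalent and the moduli of marked pairs has connected components whose periods surject, so we can connect them. By compatibility, $p$ lifts to some $\widetilde p\in SO(\widetilde\Lambda)$.
\item Set $h:=p^{-1}\circ g\in O(H^2(X,\bbZ))$. Then $h$ lifts to $\widetilde p^{-1}\widetilde g\in SO(\widetilde\Lambda)$ along $\iota_X$.
\item By Lemma~\ref{lem:monodromyextension}, $h$ acts on the discriminant group as $\det(h)\Id$, so $\det(h)h$ acts trivially on it.
\item By Proposition~\ref{prop:Mon2isN}, $h\in\Mon^2(X)$ provided $h$ is orientation-preserving, and then $g=p\circ h$ is a parallel transport Hodge isometry. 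Corollary~\ref{cor:Verbitskytorelli} then gives (i).
\end{enumerate}

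The main obstacle is the orientation issue: $h$ need not lie in $O^+$. In that case I would replace $g$ by $-g$. This is still a Hodge isometry, and its lift is $-\widetilde g$, which still has determinant $+1$ because $\widetilde\Lambda$ has even rank $8$. Replacing $h$ by $-h$ flips the orientation, since $-\Id$ reverses the orientation of positive $3$-planes. It also preserves the property that $\det(h)h$ acts trivially on the discriminant group, because $\det(-h)(-h)=\det(h)h$ as the rank $7$ is odd. Hence exactly one of $\pm h$ lies in $\Mon^2(X)$, and the corresponding $\pm g$ is a parallel transport Hodge isometry.

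I would double-check that Corollary~\ref{cor:Verbitskytorelli} (which uses Theorem~\ref{thm:BirmapHK} together with the Weyl group action of Theorem~\ref{theo:semidirect decomposition}) needs only some parallel transport Hodge isometry, not $g$ itself. Then the sign change costs nothing.
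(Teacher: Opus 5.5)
Your proof is correct and is essentially the paper's argument. The paper normalizes the embeddings so that $\iota_X=\iota_Y\circ f$ for a parallel transport $f$, flips the sign of $g$ and $\widetilde g$ if needed, and applies Corollary~\ref{cor:Mon2isSO} to $f^{-1}\circ g$; this is exactly your steps with Lemma~\ref{lem:monodromyextension} and Proposition~\ref{prop:Mon2isN} unpacked, and your explicit check that $-\widetilde g$ stays in $SO(\widetilde\Lambda)$ and that $\det(-h)(-h)=\det(h)h$ makes the paper's ``up to changing the sign'' precise. The only blemish is your justification for the existence of $p$: it follows directly from deformation equivalence by transporting along the connecting families, and surjectivity of the period map plays no role.
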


\begin{proof} 
Without loss of generality, by the proof of Lemma~\ref{lem:compatible wieneck embeddings}, we can assume that $\iota_X=\iota_Y\circ f$, where $f$ is a parallel transport isometry from $H^2(X,\bbZ)$ to $H^2(Y,\bbZ)$ (which exists since $X$ and $Y$ are deformation equivalent).

First assume that there exists a bimeromorphic map $\varphi\colon Y\to X$ and let $g:= \varphi^{\ast}\colon H^2(X,\bbZ) \to H^2(Y,\bbZ)$ be the associated Hodge parallel transport isometry (Corollary \ref{cor:Verbitskytorelli}). Then $g$ extends to $\widetilde{g}\in SO(\widetilde{\Lambda})$ by definition of compatibility of $\iota_X$ and $\iota_Y$.

Conversely, assume that there exists a commutative diagram as in the statement.
Up to changing the sign of $g$ and $\widetilde{g}$, we can assume that $g$ is orientation-preserving. By \cref{cor:Verbitskytorelli} it suffices to prove that $g$ is a parallel transport isometry, or equivalently, that $f^{-1}\circ g$ is a monodromy operator. We have $\iota_X\circ (f^{-1}\circ g)=\iota_Y\circ g = \widetilde{g}\circ \iota_X$. As $\widetilde{g}$ has determinant $1$, we get that $f^{-1}\circ g$ is a monodromy operator by Corollary~\ref{cor:Mon2isSO}.   
\end{proof}

\begin{example}[non-parallel transport Hodge isometry]\label{ex:nonMon}
The following example is taken from \cite[Section 4]{MarMeh}.
Let $A$ and $A'$ be  abelian surfaces.
   For any parallel transport isometry $f\colon H^*(A,\bbZ) \to H^*(A',\bbZ)$, the restriction $f_2\colon H^2(A,\bbZ) \to H^2(A',\bbZ)$ is admissible in the sense of \cite[Section 1]{ShiodaPeriodAbSurf}, in particular, it sends an admissible basis of $H^2(A,\bbZ)$ to an admissible basis of $H^2(A',\bbZ)$. Indeed, admissibility of a basis $u_1,u_2,u_3,u_4\in H^1(A,\bbZ)$ (and its associated basis of $H^2(A,\bbZ)=\bigwedge^2H^1(A,\bbZ)$) is defined by the property $u_1\wedge u_2\wedge u_3\wedge u_4=[pt]\in H^4(A,\bbZ)$, and this property is clearly preserved under deformation.

   Choose any marking $\eta\colon \widetilde{H}(A,\bbZ)\to \widetilde{\Lambda}$. Given an isometry $f$ as above let  $\eta'\coloneqq \eta\circ f|_{\widetilde{H}(A',\bbZ)}^{-1}$, then
 \begin{equation}\label{eq:ptiAAhat}
     \eta'\circ f|_{\widetilde{H}(A, \bbZ)} \circ \eta^{-1}=\Id_{\widetilde{\Lambda}}\in SO(\widetilde{\Lambda}).
 \end{equation}

   Now we let $A'=\hat{A}$ and denote $\hat{\eta}\coloneqq \eta'$. Shioda proved the existence of a Hodge isometry
   $$\tau_2\colon H^2(A,\bbZ) \to H^2(\hat{A},\bbZ)$$
   of determinant $-1$ (with respect to any admissible bases on both sides).
   %which is orientation-preserving. 
   Consider the extension $\tau\colon \widetilde{H}(A,\bbZ) \to \widetilde{H}(\hat{A},\bbZ)$ 
   % \yd{such that on ${H^0(A, \bbZ)\oplus H^4(A,\bbZ)}$, $\tau$ acts as $\Id$}. 
   which sends the natural generators of $H^0(A,\bbZ)$ and $H^4(A,\bbZ)$ to the corresponding ones on $\hat{A}$.
   For any positive Mukai vector $v\in \widetilde{H}(A,\bbZ)$, the image $\hat{v}\coloneqq \tau(v)$ is (up to a sign) also a positive Mukai vector, and we obtain a diagram
   $$\begin{tikzcd}
       H^2(K_A(v),\bbZ) \ar[r,hook] \ar[d,"\tau|_{v^\perp}"]& \widetilde{H}(A,\bbZ) \ar[d,"\tau"] \ar[r,"\eta"] & \widetilde{\Lambda} \ar[d,"\widetilde{\tau}"] \\
       H^2(K_{\hat{A}}(\hat{v}),\bbZ) \ar[r,hook] & \widetilde{H}(\hat{A},\bbZ) \ar[r,"\hat{\eta}"] & \widetilde{\Lambda}
   \end{tikzcd},$$
   where we define $\widetilde{\tau} \coloneqq \hat{\eta}\circ\tau\circ\eta^{-1}$.
    Remark that the embeddings obtained by composition of the horizontal maps on both lines are compatible Wieneck embeddings in the sense of Definition~\ref{def:compatible embeddings}. Indeed, this follows from the definition of $\hat{\eta}$ (see (\ref{eq:ptiAAhat})) and the fact that any parallel transport isometry $f\colon H^*(A,\bbZ) \to H^*(\hat{A},\bbZ)$ with $f(v) = \hat{v}$ restricts to a parallel transport isometry between $H^2(K_A(v),\bbZ)$ and $H^2(K_{\hat{A}}(\hat{v}),\bbZ)$. Moreover $\tau|_{v^\perp}$ is a Hodge isometry which lifts to $\widetilde{\tau}\in  O(\widetilde{\Lambda})$ with determinant $-1$. In particular, $\tau|_{v^\perp}$ is not a parallel transport isometry. Indeed, $K_v(A)$ and $K_{\hat{v}}(\hat{A})$ are not birational in general, see for instance \cite[Proposition]{Namikawa:KKdualnotbirational} (which corresponds to taking $v=(1,0,-3)=\hat{v}$).

 Finally, note that extending $\tau_2$ by reversing $H^0$ and $H^4$ of $A$ and $\hat{A}$ does give an isometry $\widetilde{\tau}$ that lifts to $SO(\widetilde{\Lambda})$, hence is a parallel transport isometry. In the light of \cref{thm:biratU4}, $K_2(A)$ is birational to $K_{\hat{v}'}(\hat{A})$ where $\hat{v}' = (2,0,-1)$, see (\ref{eqn:FMabeliansurfanddual}).

 Thus the subtelty of the sign of $\widetilde{g}$ in Theorem~\ref{thm:biratU4} is a necessary feature of the Kummer-type case.

\end{example}

\subsection{Numerical criterion for twisted modularity}
A main goal of this paper is to identify when a manifold $X$ of Kummer-type is twisted modular. For this, we use the following result, which is a Kummer-type version of \cite[Proposition~4]{AddRationalityCubic}, \cite[Lemma~2.6]{HuyK3category}. Again let $n\geq 2$, let $\Lambda_n$ be a lattice isometric to $U^{\oplus3}\oplus\langle -2-2n\rangle$ and let $\widetilde{\Lambda}$ be a lattice isometric to $U^{\oplus4}$.

\begin{proposition}\label{prop:modularityisotropic}
  Let $X$ be a manifold of $\Kum_n$-type, let $\iota_X\colon H^2(X,\bbZ)\hookrightarrow \widetilde{\Lambda}$ be a Wieneck embedding of $X$,  and equip $\widetilde{\Lambda}$ with the weight 2 Hodge structure induced by $\iota_X$ (as in Theorem~\ref{thm:PrimEmbeddingH2}). Then $X$ is twisted modular if and only if $\widetilde{\Lambda}^{1,1}\coloneqq (\widetilde{\Lambda}_\bbC)^{1,1}\cap \widetilde{\Lambda}$ contains an isotropic vector.
\end{proposition}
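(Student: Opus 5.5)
The forward direction is the easy one. If $X$ is twisted modular, then $X$ is birational to some $K:=K_{A,\alpha}(v,H)$. By Corollary~\ref{cor:Verbitskytorelli} there is a Hodge parallel transport isometry $g\colon H^2(X,\bbZ)\to H^2(K,\bbZ)$. Theorem~\ref{thm:PrimEmbeddingH2} lets us take, as Wieneck embedding of $K$, the Mukai morphism $\theta_v^{-1}$ composed with a marking $\eta\colon\widetilde{H}(A,B,\bbZ)\to\widetilde{\Lambda}$. Then $\eta\circ\theta_v^{-1}\circ g$ is a Wieneck embedding of $X$, and it differs from $\iota_X$ by some element $\widetilde{h}\in O(\widetilde{\Lambda})$. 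That element respects the extended Hodge structures: both are determined by requiring the orthogonal complement of the image of $H^2(X,\bbZ)$ to be of type $(1,1)$. So it suffices to find an isotropic $(1,1)$-class in $\widetilde{H}(A,B,\bbZ)$. The vector $(0,0,1)$ is one: it is isotropic, and it is of type $(1,1)$ because it pairs to zero with $(0,\sigma,\sigma\cdot B)$ (see also Remark~\ref{rmk:U(k)inH11}).

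For the converse, suppose $w\in\widetilde{\Lambda}^{1,1}$ is isotropic, and let $v$ generate $\iota_X(H^2(X,\bbZ))^\perp$, so that $v^2=2n+2$ and $v$ is of type $(1,1)$. The plan is to construct a twisted abelian surface $(A,B)$ and a Hodge isometry $\eta\colon\widetilde{\Lambda}\to\widetilde{H}(A,B,\bbZ)$ with $\eta(w)=(0,0,1)$, following the arguments of Addington and Huybrechts for K3 surfaces.

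\begin{enumerate}
\item Since $\widetilde{\Lambda}$ is unimodular, we may take $w$ primitive, pick $w'$ with $(w,w')=1$, and correct it to be isotropic. This gives $\widetilde{\Lambda}=U\oplus U^{\oplus 3}$ with $w^\perp/\bbZ w\simeq U^{\oplus3}$.
\item Project the period $\sigma\in\widetilde{\Lambda}_\bbC$ to $(w^\perp/\bbZ w)_\bbC$. This is legitimate because $(\sigma,w)=0$, as $w$ is of type $(1,1)$. Concretely, write $\sigma=\sigma_0+\lambda w$ with $\sigma_0\in U^{\oplus3}_\bbC$, and write the Hodge structure as $\exp(B)$ applied to a Hodge structure on $U^{\oplus 3}$ with period $\sigma_0$. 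Here $B$ is the rational class determined by $\lambda$ and by how $w'$ sits. Since $\sigma$ is only defined up to scaling and the $(2,0)$ part pairs integrally only after twisting, $B$ will be rational rather than integral.
\item By surjectivity of the period map for complex tori (Shioda), $U^{\oplus3}\simeq H^2(A,\bbZ)$ for a $2$-torus $A$ with period $\sigma_0$. The weight-2 Hodge structure of $A$ is realised by a torus; we may have to adjust the orientation, and we can use the freedom of replacing $A$ by $\hat A$ as in Example~\ref{ex:nonMon}. Then $\widetilde{\Lambda}\simeq\widetilde{H}(A,B,\bbZ)$ Hodge-isometrically.
\item Show that $A$ is projective. $\NS(A)\supseteq$ the $(1,1)$-part of $w^\perp/\bbZ w$, and $v$ together with $w$ produces a positive class there, since $X$ is projective (signature considerations on $\widetilde{\Lambda}^{1,1}$, which has signature $(2,\ast)$). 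Then $\alpha:=\exp(B^{0,2})\in\Br(A)$.
\item Let $v'=\pm\eta(v)$ be positive. Apply Theorem~\ref{thm:twistedYoshioka}, using the $B$-field Mukai vector, to get $K_{A,\alpha}(v',H)$ with $H^2\simeq v'^\perp$. Its Wieneck embedding and $\iota_X$ then give a Hodge isometry $g\colon H^2(X,\bbZ)\to H^2(K_{A,\alpha}(v',H),\bbZ)$ that extends to $\widetilde{\Lambda}$.
\end{enumerate}

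The main obstacle is the last step. Theorem~\ref{thm:biratU4} demands that the extension $\widetilde{g}$ lie in $SO(\widetilde{\Lambda})$, not merely in $O(\widetilde{\Lambda})$. If $\det\widetilde{g}=-1$, I would fix this by composing with a Hodge isometry of determinant $-1$ that preserves the isotropic structure. One option is to replace $(A,B)$ by $(\hat A,\hat B)$ using Shioda's determinant $-1$ isometry $\tau_2$, which is exactly the phenomenon in Example~\ref{ex:nonMon}. Another is to apply $-\Id$ on $H^0\oplus H^4$ combined with the sign of $v'$. Both of these preserve twisted modularity, and after the correction Theorem~\ref{thm:biratU4} shows that $X$ is bimeromorphic, hence birational, to $K_{A,\alpha}(v',H)$. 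One should also check that the canonical orbit from Theorem~\ref{thm:PrimEmbeddingH2} is compatible in the sense of Definition~\ref{def:compatible embeddings}; Lemma~\ref{lem:compatible wieneck embeddings} supplies this.
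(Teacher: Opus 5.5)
Your proposal is essentially the paper's proof. The forward direction uses the Mukai morphism and the isotropic $(1,1)$-class $(0,0,1)$. The converse uses a Huybrechts-style construction of a twisted abelian surface from the isotropic vector together with Shioda, followed by Theorem~\ref{thm:biratU4}, with the determinant defect repaired by precomposing with the dual-surface isometry $\tau$ of Example~\ref{ex:nonMon}, exactly as the paper does. Two small corrections: only that first repair works, since $-\Id$ on $H^0\oplus H^4$ has determinant $+1$ on $\widetilde{\Lambda}$; and the rationality of $B$ in your step~2 comes from a $(1,1)$-class $x$ with $(x,w)\neq 0$, not from how an arbitrary $w'$ sits (such an $x$ exists because $\widetilde{\Lambda}^{1,1}$ is nondegenerate for projective $X$, as you already use in step~4).
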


 \begin{proof}

Let $X$ be twisted modular, say $X=K_{A, \alpha}(v,H)$ for some abelian surface $A$ and some Brauer class $\alpha$ represented by some $B$-field $B\in H^2(A,\bbQ)$. The Mukai morphism 
\[\theta_v^{-1}\colon H^2(X,\bbZ) \simeq v^{\perp}\hookrightarrow \widetilde{H}(A, B, \bbZ),\] 
which is the Hodge isometry defined in Theorem~\ref{thm:twistedYoshioka}(2), is a Wieneck embedding of $X$ and the isotropic vector with coordinates $(0,0,1)$ lies in $\widetilde{H}^{1,1}(A, B, \bbZ)$\footnote{Note that $(1,0,0)$ is however not algebraic in general in the twisted Hodge structure $\widetilde{H}(A,B,\bbZ)$.}. This shows one direction.

For the other direction, using similar arguments as in \cite[Lemma 2.6]{HuyK3category} and Shioda's Torelli theorem \cite[Theorem II]{ShiodaPeriodAbSurf}, one can show the existence of an abelian surface $A$, a Brauer class $\alpha$ represented by some $B$-field $B\in H^2(A,\bbQ)$, a primitive positive Mukai vector $v\in \widetilde{H}(A,B,\bbZ)$ of square $v^2 = 2n+2$ and a Hodge isometry $\eta\colon \widetilde{H}(A,B,\bbZ) \rightarrow \widetilde{\Lambda}$, which restricts to a Hodge isometry $\widetilde{g}\colon v^\perp \to \iota_X(H^2(X,\bbZ))$. In particular, $v^2=\dim(X)+2\geq 6$, so we may assume that $v$ is positive in the sense of Definition $\ref{def:posvector}$. For a $v$-generic polarization $H$ on $A$, we let moreover $K_{A, \alpha}(v, H)$ be the associated $\Kum_n$-type manifold and $\theta_v\colon v^{\perp}\to H^2(K_{A, \alpha}(v, H),\bbZ)$ its Mukai morphism. 
%Up to replace $v$ by $-v$, we can assume that $v$ is positive in the sense of \Cref{def:posvector}. 
The composition $\iota\coloneqq \eta\circ \theta_v^{-1}\colon H^2(K_{A, \alpha}(v, H),\bbZ) \to \widetilde{\Lambda}$ is a Wieneck embedding of $K_{A, \alpha}(v, H)$ and one gets a diagram:

$$\begin{tikzcd}
    H^2(K_{A, \alpha}(v, H),\bbZ) \ar[r,"\iota"] \ar[d,"g","\simeq"'] & \widetilde{\Lambda} \ar[d,"\Id"] \\
    H^2(X,\bbZ) \ar[r,"\iota_X"] & \widetilde{\Lambda},
\end{tikzcd}$$
where the left vertical isometry $g$ is induced by $\widetilde{g}$.

Now, if $\iota_X$ and $\iota$ are compatible, we get that $X$ and $K_{A, \alpha}(v, H)$ are birational by Theorem~\ref{thm:biratU4}. If they are not compatible, pick $\iota_X'\coloneqq \widetilde{h}\circ \iota_X$ for some $\widetilde{h}\in O(\widetilde{\Lambda})$ with $\det(\widetilde{h})=-1$, so that $\iota_X'$ and $\iota$ are compatible. Then one gets
$$\widetilde{h}\circ \iota = \iota_X' \circ g.$$
To finish, precompose $g$ with the Hodge isometry $\tau\colon H^2(K_{\hat{A}, \hat{\alpha}}(\hat{v}, \hat{H}),\bbZ) \to H^2(K_{A, B}(v),\bbZ)$ from Example~\ref{ex:nonMon}, giving
$$(\widetilde{h}\circ  \widetilde{\tau}) \circ \hat{\iota} = \iota_X' \circ (g\circ \tau).$$
Since $\widetilde{\tau}$ has determinant $-1$, we conclude by Theorem~\ref{thm:biratU4} that $X$ is birational to $K_{\hat{A}, \hat{\alpha}}(\hat{v}, \hat{H})$.
\end{proof}

To prove Theorem~\ref{thm:main theorem}, we adapt the strategy used for \cite[Theorem 1.1]{DMPM}. Rather than treating each $\Kum_n$-type deformation family individually as $n \ge 2$ varies, we adopt a uniform approach using the lattice $\widetilde{\Lambda}$ and Wieneck embeddings. The numerical criterion established in Proposition~\ref{prop:modularityisotropic} is central to this method.

The challenge then becomes understanding how to study the symplectic birational self-maps of $\Kum_n$-type manifolds via their action on $\widetilde{\Lambda}$. Traditionally, one studies these maps on the abstract lattice $\Lambda_n$ using markings and the Hodge-theoretic Torelli-type theorems (Theorem~\ref{thm:BirmapHK}). This standard strategy involves fixing a connected component $\mathcal{M}$ of the moduli space $\mathcal{M}_{\Lambda_n}$ of $\Lambda_n$-marked pairs of $\Kum_n$-type. One then leverages the surjectivity of the period map $\mathcal{P}_{\Lambda_n}$ (Proposition~\ref{prop:surjectivity period map}), alongside descriptions of the numerical monodromy group $\Mon^2(\Lambda_n)$ and the associated numerical stably prime exceptional divisors $\mathcal{W}_{\circ}^{\rm pex}(\Lambda_n)$.

To translate this strategy to the abstract Mukai lattice $\widetilde{\Lambda}$ (Lemma~\ref{numerical torelli on mukai lattice}), we replace markings with Wieneck embeddings. This substitution allows us to relate the connected components of $\mathcal{M}_{\Lambda_n}$ to the classes of primitive embeddings $\Lambda_n \hookrightarrow \widetilde{\Lambda}$ (Lemma~\ref{lem: connected components and primitive embeddings}). Proposition~\ref{prop:Mon2isN} has already characterized the numerical monodromy operators of $\Lambda_n$ via their extensions to $\widetilde{\Lambda}$. The final necessary ingredient, a characterization in terms of $\widetilde{\Lambda}$ of the numerical stably prime exceptional divisors of $\Lambda_n$ associated to a connected component of $\mathcal{M}_{\Lambda_n}$, is provided in Lemma~\ref{lem: wall divisors on Mukai lattice} and Corollary~\ref{corollary: wall divisors via wieneck embeddings}.
\begin{lemma}\label{lem: connected components and primitive embeddings}
    Consider the map of sets
    \[\Psi_n\colon \{\text{Connected components of }\calM_{\Lambda_n}\}\to O(\widetilde{\Lambda})\backslash\{\Lambda_n\hookrightarrow\widetilde{\Lambda}\text{ primitive embedding}\},\]
    defined as follows: For a connected component $\calM_{\Lambda_n}^\circ$, the image $\Psi_n(\calM_{\Lambda_n}^\circ)$ is given by the right coset:
    \[O(\widetilde{\Lambda})\cdot( \iota_X\circ\mu^{-1}),\]
    where $[(X, \mu)]\in \calM_{\Lambda_n}^\circ$ is arbitrary and $\iota_X$ is any Wieneck embedding of $X$. Then the map $\Psi_n$ is well-defined and surjective.
\end{lemma}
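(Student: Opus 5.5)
Well-definedness has two parts: the class must not depend on the choice of Wieneck embedding $\iota_X$, and it must not depend on the point $[(X,\mu)]$ chosen in the component.

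For the first part, fix $[(X,\mu)]$. By Definition~\ref{def:wieneck} and Theorem~\ref{thm:PrimEmbeddingH2}, any two Wieneck embeddings of $X$ lie in the same $O(\widetilde{\Lambda})$-orbit. Hence $O(\widetilde{\Lambda})\cdot(\iota_X\circ\mu^{-1})$ is independent of $\iota_X$. It is also unchanged if we replace $(X,\mu)$ by an equivalent pair $(X',\mu')$. Such a pair comes with an isomorphism $f\colon X\to X'$ satisfying $\mu\circ f^\ast=\mu'$. Then $f^\ast$ is a parallel transport isometry, so $\iota_X\circ f^\ast$ is a Wieneck embedding of $X'$. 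Using this embedding, $(\iota_X\circ f^\ast)\circ\mu'^{-1}=\iota_X\circ\mu^{-1}$.

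For the second part, let $[(X,\mu)]$ and $[(X',\mu')]$ lie in the same connected component. By Proposition~\ref{propo:same connected component gives parallel transport}, $P:=\mu^{-1}\circ\mu'\colon H^2(X',\bbZ)\to H^2(X,\bbZ)$ is a parallel transport operator. By the construction (\ref{eqn:naturalembedding}), $\iota_X\circ P$ is then a Wieneck embedding of $X'$. This is exactly the observation used in Remark~\ref{rmk:liftpti}(1) and in the proof of Lemma~\ref{lem:compatible wieneck embeddings}. Now compute
\[(\iota_X\circ P)\circ\mu'^{-1}=\iota_X\circ\mu^{-1}\circ\mu'\circ\mu'^{-1}=\iota_X\circ\mu^{-1}.\]
So both points give the same coset, and $\Psi_n$ is well-defined.

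For surjectivity, start from an arbitrary primitive embedding $\kappa\colon\Lambda_n\hookrightarrow\widetilde{\Lambda}$. Pick any $\Kum_n$-type manifold $X$ with a Wieneck embedding $\iota_X$, for instance a generalized Kummer. The embedding $\iota_X\circ\mu_0^{-1}$, for any marking $\mu_0$, is another primitive embedding. By the uniqueness of the double coset stated before Lemma~\ref{lem:monodromyextension}, which follows from \cite[Proposition 1.14.1]{nik79}, there exist $\widetilde{h}\in O(\widetilde{\Lambda})$ and $\gamma\in O(\Lambda_n)$ with
\[\kappa=\widetilde{h}\circ\iota_X\circ\mu_0^{-1}\circ\gamma.\]
Set $\mu:=\gamma^{-1}\circ\mu_0$. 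This is a marking of $X$, and $\iota_X\circ\mu^{-1}=\widetilde{h}^{-1}\circ\kappa$. Therefore the component containing $[(X,\mu)]$ maps to $O(\widetilde{\Lambda})\cdot\kappa$.

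No step here is a serious obstacle. The main point to check is that precomposing a Wieneck embedding with a parallel transport operator gives a Wieneck embedding, and this holds because the Wieneck orbit is defined through parallel transport to modular models. The other point to verify is that every marking of a $\Kum_n$-type $X$ defines a point of $\calM_{\Lambda_n}$, which holds by definition.
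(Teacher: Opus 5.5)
Your proof is correct and follows essentially the same route as the paper. Well-definedness comes from $\mu^{-1}\circ\mu'$ being a parallel transport operator, so that $\iota_X$ precomposed with it is a Wieneck embedding of $X'$ (the content of Remark~\ref{rmk:liftpti}). Surjectivity comes from the uniqueness of the double coset of primitive embeddings, with the $O(\Lambda_n)$-ambiguity absorbed into the marking. Your extra check for equivalent marked pairs is harmless but already covered by the same-component argument.
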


\begin{proof}
    Let $\calM_{\Lambda_n}^\circ$ be a connected component of the moduli space of $\Lambda_n$-marked pairs. Fix two equivalence classes $[(X, \mu)],\,[(X', \mu')]\in \calM_{\Lambda_n}^\circ$ and let $\iota_X,\, \iota_{X'}$ be Wieneck embeddings. We already remark that the right coset $\Psi_n(\calM_{\Lambda_n}^\circ)$ does not depend on the choice of $\iota_X$ by definition. Moreover, since both classes $[(X, \mu)],\,[(X', \mu')]$ are in the same connected component, Proposition~\ref{propo:same connected component gives parallel transport} tells us that the isometry $g\coloneqq \mu^{-1}\circ\mu'\colon H^2(X', \bbZ)\to H^2(X,\bbZ)$ is a parallel transport operator. By Remark \ref{rmk:liftpti}, there exists $\widetilde{g}\in O(\widetilde{\Lambda})$ such that the following holds:
    \[\widetilde{g}\circ\iota_{X'} = \iota_X\circ g.\]
    Hence, we have an equality: $\widetilde{g}\circ\iota_{X'}\circ (\mu')^{-1} = \iota_X\circ \mu^{-1},$
    and $\Psi_n(\calM_{\Lambda_n}^\circ)$ is well-defined.

    Fix now a primitive embedding $\kappa\colon \Lambda_n\hookrightarrow\widetilde{\Lambda}$, let $(X,\mu)$ be any $\Lambda_n$-marked pair and denote $\iota\coloneqq \kappa\circ\mu$. 
    We have already seen that the Wieneck embedding $\iota_X$ is, up to the left and right actions of $O(\widetilde{\Lambda})$ and $O(H^2(X,\bbZ))$ respectively, the unique primitive embedding of $H^2(X, \mathbb{Z})$ into $\widetilde{\Lambda}$. Therefore, there exists $(\widetilde{g}, f)\in O(\widetilde{\Lambda})\times O(H^2(X,\bbZ))$ such that the following holds:
    \[\iota_X = \widetilde{g}\circ \iota\circ f.\]
    But now, if we define $\mu' := \mu\circ f$, we have that
    \[\iota_X\circ(\mu')^{-1} = \widetilde{g}\circ \kappa\]
    holds, meaning that $\kappa$ represents $\Psi_n(\calM_{\Lambda_n}^\circ)$ where $\calM_{\Lambda_n}^\circ$ is the connected component of $\mathcal{M}_{\Lambda_n}$ containing $[(X, \mu\circ f)]$.
\end{proof}

\begin{proposition}\label{lem: wall divisors on Mukai lattice}
    Let $\Psi_n$ be as in \cref{lem: connected components and primitive embeddings}.
    Let $\calM_{\Lambda_n}^\circ$ be a connected component of the moduli space of $\Lambda_n$-marked pairs, let $\kappa$ be a representative of $\Psi_n(\calM_{\Lambda_n}^\circ)$ and denote by $v\in \widetilde{\Lambda}$ any generator of $\kappa(\Lambda_n)^\perp\simeq A_1(n+1)$. Then a vector $w\in\Lambda_n$ lies in $\calW_\circ^{\rm pex}(\Lambda_n)$ if and only if there exists an isotropic vector $a\in \operatorname{Sat}_{\widetilde{\Lambda}}(\bbZ v\oplus\bbZ \kappa(w))$ such that $(a,v)\in\{1,2\}$.
\end{proposition}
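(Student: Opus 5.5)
Since $\calW_\circ^{\rm pex}(\Lambda_n)$ is defined through any marked pair in the component, I will reduce to a twisted modular representative. By surjectivity of the period map (Proposition~\ref{prop:surjectivity period map}), for a given $w\in\Lambda_n$ with $w^2<0$ I choose $[(X,\mu)]\in\calM^\circ_{\Lambda_n}$ whose period is generic subject to $\mu^{-1}(w)\in\NS(X)$; then $\NS(X)$ has rank $1$ or $2$ and is spanned up to finite index by $\mu^{-1}(w)$ and possibly one positive class, which I pick so that $\widetilde{\Lambda}^{1,1}=\operatorname{Sat}(\bbZ v\oplus\bbZ\kappa(w)\oplus\bbZ h)$. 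By \eqref{eq:numerical walls}, $w\in\calW^{\rm pex}_\circ(\Lambda_n)$ if and only if $\mu^{-1}(w)$ is stably prime exceptional on $X$. After replacing $\kappa$ inside its coset, which changes neither the statement nor $v$ up to sign, I may take $\kappa\circ\mu=\iota_X$ to be a Wieneck embedding, as in the definition of $\Psi_n$.

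\textbf{Step 1 ($\Leftarrow$).} Suppose an isotropic $a$ with $(a,v)\in\{1,2\}$ lies in $\operatorname{Sat}(\bbZ v\oplus\bbZ\kappa(w))\subset\widetilde{\Lambda}^{1,1}$. By Proposition~\ref{prop:modularityisotropic}, $X$ is twisted modular. Following the proof of that proposition, I choose the Hodge isometry $\widetilde{H}(A,B,\bbZ)\cong\widetilde{\Lambda}$ so that $a$ corresponds to $(0,0,1)$, possibly passing to $\hat A$ via Example~\ref{ex:nonMon} to keep compatibility. Then $X$ is birational to $K_{A,\alpha}(v,H)$ with $v$ of twisted rank $(a,v)\in\{1,2\}$. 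I then invoke the known description of stably prime exceptional divisors on such moduli spaces: for rank $1$ the Hilbert–Chow-type divisor, and for rank $2$ the locus of non-locally-free or non-generic sheaves (Yoshioka; Mongardi/Hassett–Tschinkel in the Kummer case). The class of this divisor is, up to sign and rescaling, the projection of $a$ to $v^\perp$, that is, $2(v,a)a-(a,\ldots)v$ suitably normalized. Since $\operatorname{Sat}(\bbZ v\oplus\bbZ\kappa(w))\cap v^\perp=\bbZ\kappa(w)$ (as $\kappa(w)$ is primitive, assuming $w$ primitive), this class is $\pm\kappa(w)$. Stably prime exceptionality is deformation invariant, and it transfers along birational maps because $\mathcal{FE}$ walls are preserved, so $w\in\calW^{\rm pex}_\circ$.

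\textbf{Step 2 ($\Rightarrow$).} If $\mu^{-1}(w)=E$ is stably prime exceptional, I first use that $E$ is a root with $R_E\in\Mon^2_{\rm Hdg}$ (Theorem~\ref{theo:semidirect decomposition}). Via Proposition~\ref{Prop:Reflection in Mon}, possibly after passing from $E$ to $2E$, this yields the numerical shape $w^2=-2(n+1)$ with $(n+1)\mid\dv(w)$, or the corresponding $w^2=-\tfrac{n+1}{2}$ type. For this shape I compute the discriminant of the rank-$2$ lattice $\operatorname{Sat}(\bbZ v\oplus\bbZ\kappa(w))$ and show it is $-1$ or $-4$, so that it is a hyperbolic binary lattice representing $0$ with an isotropic $a$ having $(a,v)\in\{1,2\}$. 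This shape condition is necessary but far from sufficient; to see that the correct isotropic vector exists for every genuine prime exceptional divisor, I would again deform to a modular model. Concretely, I would deform $(X,E)$ keeping $E$ algebraic to a very general pair of Picard rank one containing $E$, in the spirit of Markman's $\mathrm{K3}^{[n]}$ argument and Yoshioka's wall classification for abelian surfaces. I would then use the classification of divisorial contractions (Bayer–Macrì, Yoshioka) on a modular $K_A(v)$, whose divisorial walls are exactly the lattices $\mathcal{H}$ containing isotropic $a$ with $(a,v)\in\{1,2\}$.

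\textbf{Main obstacle.} The hard step is this converse, namely ensuring that every stably prime exceptional class is realised on some modular model where the Bayer–Macrì divisorial-wall classification applies. This requires showing that the deformed pair $(X,E)$ can be taken twisted modular, meaning $\widetilde{\Lambda}^{1,1}$ is isotropic, without losing stable primeness. I would try first to add a further algebraic class making $\widetilde{\Lambda}^{1,1}$ isotropic, using Proposition~\ref{prop:surjectivity period map} and the fact that stably prime exceptional classes persist under deformations keeping them algebraic (\cite[Proposition 5.14]{MarkmanPex}).
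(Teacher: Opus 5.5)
The genuine gap is the direction ($\Rightarrow$): your Step~2 asserts something false, and the deformation argument you fall back on is left open.

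Your direction ($\Leftarrow$) is essentially sound. On a model of rank one or two, the Hilbert--Chow (resp.\ non-locally-free) divisor has class proportional to the projection of $a$ to $v^\perp$, which is the content of Theorem~\ref{thm:ClassificationWalls}(1). Invariance under Hodge parallel transport then carries that class back to $X$. Since the paper only classifies walls for untwisted $A$, it is cleaner to choose the period so that $a$ lies in a hyperbolic plane inside $\widetilde{\Lambda}^{1,1}$.

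The implication in Step~2 fails. The shape $w^2=-2(n+1)$, $(n+1)\mid\dv(w)$ does give discriminant $-1$ or $-4$ for $\operatorname{Sat}_{\widetilde{\Lambda}}(\bbZ v\oplus\bbZ\kappa(w))$. It does not give an isotropic $a$ with $(a,v)\in\{1,2\}$. Take $n=11$, let $e_1,f_1$ be a standard basis of the first summand $U_1$ of $\widetilde{\Lambda}=U^{\oplus4}$, and put $v=3e_1+4f_1$ and $\kappa(w)=3e_1-4f_1$. Then $v^\perp=\bbZ\kappa(w)\oplus U^{\oplus3}$, so $w^2=-24$, $\dv(w)=24$, and $R_w\in\Mon^2$ by Proposition~\ref{Prop:Reflection in Mon}. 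Yet the saturation is $U_1$, whose isotropic vectors pair with $v$ only in $3\bbZ\cup4\bbZ$. There is also no ``$-\frac{n+1}{2}$ type'': that proposition forces $e^2=-2(n+1)$ for the primitive class. So Step~2 contributes nothing, as your own caveat half-admits.

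The obstacle you single out is the easy part. Getting $v$, $\kappa(w)$ and a copy of $U$ into $\widetilde{\Lambda}^{1,1}$, so the model is even untwisted modular and projective, follows at once from surjectivity of the period map. The class $w$ stays stably prime exceptional by \eqref{eq:numerical walls}. Your intermediate suggestion of Picard rank one containing $E$ would instead give a non-projective, hence non-modular, $X$.

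What is actually missing is matching an arbitrary stably prime exceptional class on $K=K_A(v,H)$ to a divisorial wall. Such a class need not cut out a face of $\Mov(K)$, so three steps are needed:
\begin{enumerate}
\item Move the class by $W_{\rm Exc}(K)$ onto a face of $\Mov(K)$.
\item Observe that Hodge monodromies lift to Hodge isometries in $SO(\widetilde{\Lambda})$ sending $v$ to $\pm v$ (Corollary~\ref{cor:Mon2isSO}), so the numerical condition is preserved.
\item Prove that every codimension-one face of $\Mov(K)=\overline{\ell}(\Stabd(A))$ is the image of a divisorial wall $\calW$ with $\calH_\calW=\operatorname{Sat}_{\widetilde{\Lambda}}(\bbZ v\oplus\bbZ\kappa(w))$.
\end{enumerate}
Only after this does Theorem~\ref{thm:ClassificationWalls}(1) apply.

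This is exactly what the paper imports rather than reproves. It reduces via \eqref{eq:numerical walls} to a marked pair with $X=K_n(A)$, where $\kappa\circ\mu$ agrees with the Mukai morphism up to $O(\widetilde{\Lambda})$. It then quotes Yoshioka's characterization \cite[Proposition 5.4]{Yos16}, which gives both directions at once.
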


\begin{proof}
    Since stably prime exceptional classes are invariant under parallel transport, (see (\ref{eq:numerical walls})), it is enough to characterize $\mathcal{W}_\circ^{\rm pex}(X)$ on $\widetilde{\Lambda}$ for any $[(X, \mu)]\in\mathcal{M}^\circ_{\Lambda_n}$ where $X = K_n(A)$ for some abelian surface $A$. 
    Let $[(X, \mu)]$ be such a class and let
    $\iota_X := \kappa\circ\mu$ be the corresponding Wieneck embedding of $X$. Note that, up to the action of $O(\widetilde{\Lambda})$, the embedding $\iota_X$ coincides with the Mukai morphism of $K_n(A)$. From this, the result follows from Yoshioka's characterization of stably prime exceptional classes on $K_n(A)$ \cite[Proposition 5.4]{Yos16} (see \cite[Theorem 12.3]{BM14} for a similar result in the case $K3^{[n]}$-cases).
\end{proof}
 
\begin{corollary}\label{corollary: wall divisors via wieneck embeddings}
    Let $X$ be a $\Kum_n$-type manifold, let $\iota_X\colon H^2(X,\bbZ)\to\widetilde{\Lambda}$ be a Wieneck embedding of $X$ and let $v\in\widetilde{\Lambda}$ be a generator of $\iota_X(H^2(X,\bbZ))^\perp$. Then a vector $w\in \NS(X)$ lies in $\mathcal{W}^{\rm pex}(X)$ if and only if there exists an isotropic vector $a\in \operatorname{Sat}_{\widetilde{\Lambda}}(\mathbb{Z}v\oplus\mathbb{Z}\iota_X(w))$ such that $(a,v)\in \{1,2\}$.
\end{corollary}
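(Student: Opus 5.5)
The plan is to reduce the corollary to Proposition~\ref{lem: wall divisors on Mukai lattice} by choosing a marking adapted to the given Wieneck embedding. Set $\Lambda_n := H^2(X,\bbZ)$ abstractly, or, more concretely, fix any marking $\mu\colon H^2(X,\bbZ)\to\Lambda_n$. Let $\calM^\circ_{\Lambda_n}$ be the connected component of $\calM_{\Lambda_n}$ containing $[(X,\mu)]$, and put $\kappa := \iota_X\circ\mu^{-1}\colon \Lambda_n\hookrightarrow\widetilde{\Lambda}$. By the very definition of $\Psi_n$ in Lemma~\ref{lem: connected components and primitive embeddings}, $\kappa$ is a representative of the coset $\Psi_n(\calM^\circ_{\Lambda_n})$. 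Moreover, $\kappa(\Lambda_n)^\perp = \iota_X(H^2(X,\bbZ))^\perp = \bbZ v$, so the $v$ in the corollary is a generator as required in the proposition.

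Next, use the equality \eqref{eq:numerical walls}:
\[
\calW^{\rm pex}_X = \mu^{-1}\bigl(\calW^{\rm pex}_\circ(\Lambda_n)\bigr)\cap H^{1,1}(X,\bbR).
\]
For $w\in\NS(X)$, this gives that $w\in\calW^{\rm pex}_X$ if and only if $\mu(w)\in\calW^{\rm pex}_\circ(\Lambda_n)$. Applying Proposition~\ref{lem: wall divisors on Mukai lattice} to the vector $\mu(w)\in\Lambda_n$, the latter holds if and only if there is an isotropic $a\in\operatorname{Sat}_{\widetilde{\Lambda}}(\bbZ v\oplus\bbZ\kappa(\mu(w)))$ with $(a,v)\in\{1,2\}$. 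Since $\kappa(\mu(w))=\iota_X(w)$, this is exactly the stated condition.

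The only point needing care is independence of the choices. The condition in the corollary depends on $\iota_X$ only through its $O(\widetilde{\Lambda})$-orbit. Indeed, replacing $\iota_X$ by $\widetilde{g}\circ\iota_X$ replaces $v$ by $\pm\widetilde{g}(v)$ and transports the saturated sublattice and its isotropic vectors isometrically. A sign change $v\mapsto -v$ is absorbed by replacing $a$ with $-a$. The proposition itself already allows any representative $\kappa$ and either generator $v$, so no further check is needed. The main (mild) obstacle is making sure the arbitrary choice of $\mu$ does not matter. This is settled because the proposition applies to whichever component contains $[(X,\mu)]$, together with \eqref{eq:numerical walls} for that same component.
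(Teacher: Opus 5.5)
Your proposal is correct and follows essentially the paper's argument: combine \eqref{eq:numerical walls} with Proposition~\ref{lem: wall divisors on Mukai lattice}, using that $\iota_X\circ\mu^{-1}$ represents $\Psi_n$ of the component containing $[(X,\mu)]$. The paper makes the specific choice $\Lambda_n = v^\perp$ with the marking induced by $\iota_X$, so that $\kappa$ is just the inclusion $v^\perp\hookrightarrow\widetilde{\Lambda}$; your final independence-of-choices paragraph is harmless but not needed.
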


\begin{proof}
    Let $\mu\colon H^2(X, \mathbb{Z})\to v^\perp$ be the marking of $X$ induced by $\iota_X$, let $M$ be the connected component of $\mathcal{M}_{v^\perp}$ containing $[(X, \mu)]$ and let $\mathcal{W}_\circ^{\rm pex}(v^\perp)$ be the associated set of numerical stably prime exceptional divisors. By definition, we know that $\mathcal{W}^{\rm pex}(X) = \NS(X)\cap \mu^{-1}(\mathcal{W}_\circ^{\rm pex}(v^\perp))$. The result then follows directly from Proposition~\ref{lem: wall divisors on Mukai lattice}, with $\kappa$ being the inclusion $v^\perp\hookrightarrow \widetilde{\Lambda}$.
\end{proof}

\section{Symplectic birational self-maps of finite order}\label{sec:SympBirat}

\subsection{Bimeromorphic self-maps of Kummer-type manifolds}
Let again $n\geq 2$ and let $X$ be a $\Kum_n$-type manifold. The natural orthogonal representation
\begin{equation}\label{eq:natural representation}
    \rho_X\colon \Bir(X)\to O(H^2(X, \mathbb{Z}))
\end{equation}
has finite kernel \cite[Proposition 9.1]{HuycptHKbasic}, isomorphic to $\left(\ \!^{\bbZ}\!/\!_{(n+1)\bbZ}\right)^{\oplus 4}\times\ \!^{\bbZ}\!/\!_{2\bbZ}$
\cite[Corollary 5]{BNWSenriques}.
Moreover, according to \Cref{thm:BirmapHK}, we know that $\im\rho_X = \Mon^2_{\Bir}(X)$ is the group of Hodge monodromies preserving the fundamental exceptional chamber of $X$ (see Definition~\ref{def:wallchamberCX}).
A bimeromorphic self-map $f\in \Bir(X)$ is called \emph{symplectic} if its action on $H^{2,0}(X)$ is trivial. We denote by $\Bir_s(X)\subseteq \Bir(X)$ the normal subgroup consisting of symplectic bimeromorphic self-maps. 
Let again $\widetilde{\Lambda}$ be isometric to $U^{\oplus4}$.

\begin{definition}\label{def:symplectic effective}
    An isometry $g\in SO(\widetilde{\Lambda})$ is called \emph{symplectic effective} if there exists a manifold $X$ of $\Kum_n$-type, for some $n\geq 2$, and a symplectic birational self-map $f\in \Bir_s(X)$ such that $g$ is induced by $\rho_X(f)$ via some Wieneck embedding of $X$ (as in \Cref{lem:monodromyextension}).
\end{definition}

\begin{lemma}\label{numerical torelli on mukai lattice}
    Let $g\in SO(\widetilde{\Lambda})$ be a finite order isometry. Then $g$ is symplectic effective if and only if there exists a primitive vector $v\in \widetilde{\Lambda}$ with $v^2\geq 6$ such that $g(v)= \pm v$, the lattice $(\bbZ v)^\perp\cap \widetilde{\Lambda}_g$ is negative definite, where $\widetilde{\Lambda}_g$ is the coinvariant lattice, and such that there is no isotropic vector $u\in \operatorname{Sat}_{\widetilde{\Lambda}}(\widetilde{\Lambda}_g+\bbZ v)$ with $(u,v)\in\{1,2\}$.
\end{lemma}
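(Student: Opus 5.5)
Both directions go through the Wieneck embedding and the Torelli-type statements, as in the $\text{K3}^{[n]}$ argument of \cite{DMPM}.

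\emph{Necessity.} Let $g$ be induced by $\rho_X(f)$ for some $f\in\Bir_s(X)$ via a Wieneck embedding $\iota_X$. Let $v$ be a generator of $\iota_X(H^2(X,\bbZ))^\perp$; then $v^2=2n+2\geq 6$. Since $g$ preserves $\iota_X(H^2(X,\bbZ))$, it preserves its orthogonal complement, so $g(v)=\pm v$. Because $g\in SO(\widetilde{\Lambda})$, \Cref{lem:monodromyextension} shows the sign equals $\det(\rho_X(f))$, but we only need $\pm$.

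For the negativity claim, put $h=\rho_X(f)$. The map $f$ is symplectic, so $H^{2,0}\oplus H^{0,2}$ lies in the invariant part. Since $h$ has finite order, it also fixes a $g$-invariant average of an ample class, which lies in $\Mov(X)$ since $h$ preserves the movable cone. This gives a positive $3$-space inside $H^2(X,\bbR)^h$. The coinvariant lattice $H^2(X,\bbZ)_h$ therefore lies in $\NS(X)$ and is negative definite. Now $(\bbZ v)^\perp\cap\widetilde{\Lambda}_g$ is contained in $\iota_X(H^2(X,\bbZ)_h)$, so it is negative definite too.

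For the last condition, suppose $u$ is an isotropic vector in $\operatorname{Sat}(\widetilde{\Lambda}_g+\bbZ v)$ with $(u,v)\in\{1,2\}$. Project it to $v^\perp$: set $w=(v^2)u-(u,v)v$, scaled to be primitive. Then $w$ lies in $\iota_X(H^2(X,\bbZ)_h)\subset\iota_X(\NS(X))$. Also $\operatorname{Sat}(\bbZ v\oplus\bbZ w)$ contains $u$, so by \Cref{corollary: wall divisors via wieneck embeddings} the class $w$ is stably prime exceptional. Since $h$ fixes a class $\omega$ in the interior of $\Mov(X)$, we have $(\omega,w')=0$ for every $w'$ in the coinvariant lattice. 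This contradicts the fact that $\omega$ lies in a chamber, i.e.\ avoids all hyperplanes $w^\perp$ with $w\in\calW^{\rm pex}_X$.

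\emph{Sufficiency.} Given such $v$, set $n=v^2/2-1\geq 2$ and $\kappa\colon v^\perp\hookrightarrow\widetilde{\Lambda}$. By \Cref{lem: connected components and primitive embeddings}, some connected component $\calM^\circ$ of marked pairs has $\Psi_n(\calM^\circ)=[\kappa]$. The restriction $h=g|_{v^\perp}$ lies in $\Mon^2(v^\perp)$ by \Cref{lem:monodromyextension} and \Cref{prop:Mon2isN}, once we check it is orientation preserving. To choose a period, let $N=(\bbZ v)^\perp\cap\widetilde{\Lambda}_g$, which is negative definite. Then $(v^\perp)^h\otimes\bbR$ has signature $(3,\ast)$, so we can pick a generic period $\sigma\in (v^\perp)^h_\bbC$ with $\NS=N$, or $N$ plus an invariant positive class $H$ if we want projectivity. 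The group $h$ preserves a positive $3$-space, so it is orientation preserving.

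By surjectivity of the period map (\Cref{prop:surjectivity period map}), some $(X,\mu)\in\calM^\circ$ has this period, and then $h$ is a Hodge monodromy operator. By \Cref{corollary: wall divisors via wieneck embeddings} and the hypothesis, no stably prime exceptional divisor of $X$ lies in $N$. Hence a very general invariant class $\omega$ in the positive cone, chosen in $(N^\perp)_\bbR$, lies in some exceptional chamber. Using \Cref{theo:semidirect decomposition}, write $h=w\circ b$ with $w\in W_{\rm Exc}$ and $b\in\Mon^2_{\rm Bir}$. After replacing the marking by a Weyl translate, we may assume $\omega\in\mathcal{FE}_X$; since $h$ fixes $\omega$, it stabilizes $\mathcal{FE}_X$. \Cref{thm:BirmapHK} then gives $f\in\Bir(X)$ with $\rho_X(f)=h$, and $f$ is symplectic because $h$ fixes $\sigma$.

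\emph{Main obstacle.} The delicate step is the sufficiency part. It requires ensuring that all walls orthogonal to $\omega$ come from classes in $N$. It also requires handling the fact that the conjugating Weyl element must commute with $h$, which is why $\omega$ is taken $h$-invariant and generic. Finally, one must check that the Wieneck embedding of the resulting $X$ is indeed $\kappa\circ\mu$ up to $O(\widetilde{\Lambda})$ with the correct determinant, which is where \Cref{lem: connected components and primitive embeddings} is used.
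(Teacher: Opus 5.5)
Your proposal is correct and follows essentially the same route as the paper. This covers the averaging argument for negativity, the projection of $u$ to $v^\perp$ combined with Corollary~\ref{corollary: wall divisors via wieneck embeddings} for the isotropic condition, and the period-map construction for the converse, where your change of marking by an element of $W_{\rm Exc}(X)$ is equivalent to the paper's replacement of $X$ by a bimeromorphic model. One small fix is needed in the necessity part: $X$ need not be projective (the paper's own converse produces a non-projective $X$), so you should average a K\"ahler class and work in $\mathcal{FE}_X$, which $\rho_X(f)$ preserves by Theorem~\ref{thm:BirmapHK}, rather than an ample class and $\Mov(X)$.
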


\begin{proof}
    Let us first assume that $g$ is symplectic effective, and let $(X, f)$ be as in Definition~\ref{def:symplectic effective}. Let moreover $\iota_X$ be a Wieneck embedding of $X$ such that the following holds:
    \[\iota_X\circ \rho_X(f) = g\circ\iota_X.\]
    We denote by $v$ any generator of $\iota_X(H^2(X, \bbZ))^\perp$. It follows from Lemma~\ref{lem:monodromyextension} and its proof that $g(v) = \det(\rho_X(f))v \in\{\pm v\}$. Now the equality $(\bbZ v)^\perp\cap \widetilde{\Lambda}_g = \iota_X(H^2(X, \bbZ)_{\rho_X(f)})$ holds by construction. Since $f$ is symplectic, it fixes a nontrivial symplectic holomorphic two-form $\sigma_X$ on $X$ and the real quadratic space $H^2(X, \bbZ)^{\rho_X(f)}_\mathbb{R}$ contains a positive definite plane spanned by $\text{Re}(\sigma_X)$ and $\text{Im}(\sigma_X)$. Moreover, $\rho_X(f)$ is of finite order and it preserves the fundamental exceptional chamber $\mathcal{FE}_X$ of $X$ (see Theorem~\ref{thm:BirmapHK}), which is a convex subcone of the positive cone $\calC_X$. Therefore, by averaging the translates by $\rho_X(f)$, we see that there exists a positive vector $k_X\in \mathcal{FE}_X\subseteq \langle \text{Re}(\sigma_X), \text{Im}(\sigma_X)\rangle^{\perp}$ fixed by $\rho_X(f)$. Thus, there exists a positive definite three-space, spanned by $\text{Re}(\sigma_X)$, $\text{Im}(\sigma_X)$ and $k_X$, inside $H^2(X, \bbZ)^{\rho_X(f)}_\mathbb{R}$. It follows that $(\bbZ v)^\perp\cap \widetilde{\Lambda}_g$ is negative definite. Finally, looking for a contradiction, let us assume that there is an isotropic vector $u\in T := \operatorname{Sat}_{\widetilde{\Lambda}}(\widetilde{\Lambda}_g+\bbZ v)$ with $(u,v)\in\{1,2\}$. Together, the vectors  $v$ and $u$ spans a hyperbolic sublattice of $T$. Let  $w$ be a generator of $v^\perp\cap (\mathbb{Z}v+\mathbb{Z}u)\subseteq v^\perp\cap T\subseteq  v^\perp\cap \widetilde{\Lambda}_g$: it is a negative vector lying in the image of $H^2(X, \bbZ)_{\rho_X(f)}$ via the Wieneck embedding $\iota_X$. Since $f$ is symplectic of finite order, it follows that $H^2(X, \bbZ)_{\rho_X(f)}\subseteq \NS(X)$, meaning that $\iota_X^{-1}(w)\in \mathcal{W}^{\rm pex}(X)$ (see Corollary~\ref{corollary: wall divisors via wieneck embeddings}). But this contradicts the assumption on $k_X$: this vector is not orthogonal to any element of $\mathcal{W}^{\rm pex}(X)$ by definition of $\mathcal{FE}_X$, but it is indeed orthogonal to $\iota_X^{-1}(w)$ since $k_X$ is $\rho_X(f)$-invariant. Thus, such a primitive isotropic vector $u$ cannot exist.

    Conversely, let us assume there exists some $v\in \widetilde{\Lambda}$ with $v^2\geq 6$ such that $g(v) = \pm v$ holds, the lattice $(\bbZ v)^\perp\cap \widetilde{\Lambda}_g$ is negative definite, and such that there is no isotropic vector $u\in \operatorname{Sat}_{\widetilde{\Lambda}}(\widetilde{\Lambda}_g+\bbZ v)$ with $(u,v)\in\{1,2\}$. Let $n := \frac{v^2-2}{2}$, let $\Lambda_n := v^\perp$ and let $\kappa\colon\Lambda_n\hookrightarrow\widetilde{\Lambda}$ be the associated primitive embedding. According to Lemma~\ref{lem: connected components and primitive embeddings}, we may fix a connected component $\mathcal{M}^\circ_{\Lambda_n}$ of the moduli space of $\Lambda_n$-marked pairs such that $\Psi_n(\mathcal{M}^\circ_{\Lambda_n})$ is represented by $\kappa$.
    
    Now, since $g$ preserves $\bbZ v$, it descends to an isometry $h\in O(\Lambda_n)$ satisfying that $\det(h)h$ acts trivially on $A_{\Lambda_n}$ (see Lemma~\ref{lem:monodromyextension}). Moreover, by the assumption that $(\bbZ v)^\perp\cap \widetilde{\Lambda}_g$ is negative definite, the invariant sublattice $\Lambda_n^h$ has signature $(3, \ast)$ and $h$ preserves the orientation of the positive cone of $\Lambda_n$. Hence $h\in\Mon^2(\Lambda_n)$. Moreover, we can find a general isotropic class $\sigma\in \Lambda_n^h\otimes \bbC$ with $(\sigma,\overline{\sigma}) > 0$ such that
    \[(\bbR\Re(\sigma)\oplus \mathbb{R}\Im(\sigma))\cap \Lambda_n = \Lambda_n^h.\]
    By the surjectivity of the period map $\mathcal{P}_{\Lambda_n}$ (see Proposition~\ref{prop:surjectivity period map}), there exists a $\Lambda_n$-marked pair $(X, \mu)$ with $[(X, \mu)]\in \mathcal{M}^\circ_{\Lambda_n}$ and such that $\mu(H^{2,0}(X)) = \bbC\sigma.$ It satisfies that $\mu(\NS(X)) = (\Lambda_n)_h = (\bbZ v)^\perp\cap \widetilde{\Lambda}_g$. The isometry $h_X \coloneqq\mu^{-1}h\mu$ lies in $\Mon^2(X)$ because $h\in \Mon^2(\Lambda_n)$ holds. Furthermore, $\NS(X) = \mu^{-1}((\Lambda_n)_h)$ contains no elements of $\mathcal{W}^{\rm pex}(X)$ by our assumption on $(\bbZ v)^\perp\cap \widetilde{\Lambda}_g$, by Corollary~\ref{corollary: wall divisors via wieneck embeddings} and the fact that $\kappa\circ\mu$ is a Wieneck embedding of $X$ (see also the first part of the proof). Thus $h_X$ preserves an exceptional chamber of positive cone of $X$, and up to replacing $X$ by a bimeromorphic model (allowed by Theorem~\ref{theo:semidirect decomposition}), we may assume that $h_X$ preserves $\mathcal{FE}_X$. But now, since $h_X$ is Hodge by construction, it lies in $\Mon^2_{\rm Bir}(X)$ and it is of the form $\rho_X(f)$ for some $f\in \Bir(X)$. Note that such an $f$ is symplectic since $h_X$ acts trivially on $H^{2,0}(X)$ by construction. Hence, $g$ is symplectic effective.
\end{proof}

\begin{remark}\label{rem adapt the proof to the projective case}
    In the proof of \Cref{numerical torelli on mukai lattice}, we realize the isometry $g$ by representing on $\widetilde{\Lambda}$ a symplectic bimeromorphic self-map of some nonprojective $\Kum_n$-type manifold $X$ such that $\NS(X)$ is identified to $v^\perp\cap \widetilde{\Lambda}_g$ via some Wieneck embedding of $X$. Note that with the assumption of the statement of \Cref{numerical torelli on mukai lattice}, we have that $v^\perp\cap\widetilde{\Lambda}^g$ has signature $(3,\ast)$. Therefore one may adapt the proof by fixing some primitive vector $h\in v^\perp\cap\widetilde{\Lambda}^g$ of positive square and pick, this time, a general period point in $\mathbb{P}((h^\perp\cap v^\perp\cap \widetilde{\Lambda}^g)\otimes \mathbb{C})$. From this, we obtain a manifold $X$ as in the proof which is projective with:
    \[\NS(X) \simeq \operatorname{Sat}_{v^\perp}(\mathbb{Z}h\oplus \widetilde{\Lambda}_g).\]
     So for any $d\in \bbN$ such that $v^\perp\cap\widetilde{\Lambda}^g$ represents $2d$ primitively, we may actually realize $g$ on some projective Kummer-type manifold equipped with a (quasi)polarization of BBF-square $2d$.
\end{remark}

\subsection{Non-modular symmetric examples}
In the paper \cite{DMPM}, the first two named authors and Prieto-Monta\~nez show that any projective $\text{K3}^{[n]}$-type hyperk\"ahler manifold $X$ admitting a symplectic birational self-map $f$ of finite order is twisted modular. One of the main steps of the proof is the following: if $f$ acts trivially on the discriminant group of $H^2(X, \mathbb{Z})$, then the Picard rank of $X$ is at least 9 and a theorem of Meyer gives that $\NS(X)$, which has signature $(1,\ast)$, contains isotropic vectors. Then they concluded modularity of $X$ using the numerical criterion, i.e., a criterion similar to \cref{prop:modularityisotropic} valid for $K3^{[n]}$-type manifolds. They show more generally that the coinvariant lattice of the isometry induced by $f$ on the associated Mukai lattice $U^{\oplus4}\oplus E_8^{\oplus2}(-1)$ is isotropic.

This argument does not carry over directly to the case of Kummer-type manifolds, primarily because the numerical classifications of stably prime exceptional divisors differ between the two settings. Notably, in the case of Kummer-type manifolds, classes of square $-2$ do not represent prime exceptional divisors. This raises a crucial difference between the two cases: if a finite order isometry of an even unimodular lattice (of rank at most 24) has its coinvariant lattice $C$ without $(-2)$-vectors, then the rank of $C$ is at least 8 \cite[Proposition 1.2]{Zhengleech}, \cite[Table 1]{zbMATH06530862}. This difference was previously already observed by Mongardi, Tari and Wandel who determined the possible invariant and coinvariant sublattices for some symplectic effective prime order isometries of $\widetilde{\Lambda} \coloneqq U^{\oplus4}$ \cite[Sections 2, 5 and 6]{MTWAutKummerFourfold}. They classify two types of isometries, of order 2 and 3 respectively, with coinvariant sublattice of small rank. In Lemma~\ref{lem: special isometries mukai lattice}, we give an explicit description for such isometries. 

\begin{lemma}\label{lem: special isometries mukai lattice}
    There exist two isometries $g_2,g_3\in SO(\widetilde{\Lambda})$ of respective order 2 and 3 satisfying the following:
    \begin{enumerate}
        \item both $g_2$ and $g_3$ fixes pointwise two orthogonal copies of $U$,
        \item $\widetilde{\Lambda}_{g_2}\simeq A_1^{\oplus2}(-1)$
        and $\widetilde{\Lambda}_{g_3}\simeq A_2(-1)$.
    \end{enumerate}
\end{lemma}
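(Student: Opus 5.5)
We construct $g_2$ and $g_3$ explicitly by writing $\widetilde{\Lambda}=U^{\oplus 2}\oplus U^{\oplus 2}$. Each isometry will act as the identity on the first summand $U^{\oplus2}$, which gives (1), and as a suitable isometry $h$ on the second summand $M:=U^{\oplus 2}$. It then suffices to find $h_2,h_3\in O(M)$ of orders $2$ and $3$ with $\det h_i=1$, $M^{h_2}\simeq A_1^{\oplus2}$ and $M_{h_2}\simeq A_1^{\oplus2}(-1)$, and $M^{h_3}\simeq A_2$ and $M_{h_3}\simeq A_2(-1)$. Indeed, the coinvariant lattice of $g_i=\Id\oplus h_i$ is $(\widetilde{\Lambda}^{g_i})^\perp=M_{h_i}$, since $\widetilde{\Lambda}^{g_i}=U^{\oplus2}\oplus M^{h_i}$.

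The cleanest model uses $M\simeq U^{\oplus 2}\simeq A_1\oplus A_1(-1)\oplus\cdots$; it is more convenient to realize $M$ as an overlattice. For order $2$, I would use the standard gluing: $A_1^{\oplus2}$ and $A_1^{\oplus2}(-1)$ have anti-isometric discriminant forms, namely $(\bbZ/2)^2$ with $q=(1/2,1/2)$ and $(-1/2,-1/2)$. The graph of the identity gives an even unimodular overlattice of $A_1^{\oplus2}\oplus A_1^{\oplus2}(-1)$ of signature $(2,2)$, and this overlattice is $U^{\oplus 2}$ by uniqueness of even indefinite unimodular lattices. Take $h_2=\Id\oplus(-\Id)$. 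It extends to the overlattice because $-\Id$ acts trivially on the $2$-elementary discriminant group, so the gluing subgroup is preserved. Then $\det h_2=(-1)^2=1$, $h_2$ has order $2$, and its coinvariant lattice is $A_1^{\oplus2}(-1)$.

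For order $3$, I would glue $A_2\oplus A_2(-1)$ along the graph of the identity on $\bbZ/3$, where the forms are $2/3$ and $-2/3$. This again yields an even unimodular lattice of signature $(2,2)$, hence $U^{\oplus2}$. Take $h_3=\Id\oplus\rho$, where $\rho$ is the order-$3$ rotation of $A_2(-1)$, i.e.\ the Coxeter element $s_1s_2$ of the Weyl group. The element $\rho$ has determinant $1$ and fixes no nonzero vector, so the coinvariant lattice is all of $A_2(-1)$. It also acts trivially on $A_{A_2}\simeq\bbZ/3$, because Weyl group elements act trivially on the discriminant group of a root lattice. Therefore $h_3$ preserves the glue and extends to the overlattice $M$.

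The only delicate point is verifying that each $h_i$ preserves the glue subgroup, which is equivalent to acting compatibly on the two discriminant groups. This holds here since $h_i$ is trivial on both sides. Alternatively, one can write explicit $4\times4$ matrices on $U^{\oplus2}$ and check them directly. For instance, $h_2$ can be taken as $-\Id$ on the span of $e_1-f_1$ and $e_2-f_2$ and as the identity on the span of $e_1+f_1$ and $e_2+f_2$, since these spans are $A_1^{\oplus2}(-1)$ and $A_1^{\oplus2}$; integrality follows from $h_2(e_i)=f_i$ and $h_2(f_i)=e_i$, the swap on each $U$. For $h_3$ an explicit matrix can likewise be written, and we would check that it has order $3$, determinant $1$, invariant lattice $A_2$, and coinvariant Gram matrix $\begin{pmatrix}-2&1\\1&-2\end{pmatrix}$. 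Finally, because both $g_i$ lie in $SO(\widetilde{\Lambda})$ and fix $U^{\oplus 2}$ pointwise, statements (1) and (2) follow.
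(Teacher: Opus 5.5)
Your proof is correct, and your explicit description of $h_2$ (the swap $e_i\leftrightarrow f_i$ on the second copy of $U^{\oplus2}$) is exactly the paper's $g_2$. The routes differ for $g_3$ and in how the verification is organized.

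The paper writes down an explicit integral $4\times 4$ matrix for $g_3$ on $U_3\oplus U_4$, acting as the identity on $U_1\oplus U_2$, and leaves all the checks to the reader. You instead produce $h_3$ abstractly: you realize $U^{\oplus2}$ as the overlattice of $A_2\oplus A_2(-1)$ glued along the graph of the identity of $\bbZ/3\bbZ$. You extend $\Id\oplus\rho$ to it because Weyl group elements act trivially on discriminant groups, and you identify the overlattice with $U^{\oplus2}$ by uniqueness of even indefinite unimodular lattices.

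Two small points you should make explicit. First, both summands are primitive in the overlattice because the glue is the graph of an isomorphism. This is what gives $M^{h_i}=A_1^{\oplus2}$ (resp.\ $A_2$) and $M_{h_i}=A_1^{\oplus2}(-1)$ (resp.\ $A_2(-1)$), rather than proper overlattices of them. Second, $U^{\oplus2}$ is not isometric to $A_1\oplus A_1(-1)\oplus\cdots$; it is only an overlattice of such a sum, which is what you actually use.

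Your approach needs no matrix computation. It also explains conceptually why exactly $A_1^{\oplus2}(-1)$ and $A_2(-1)$ appear: the same gluing of $N(-1)\oplus N$ works for any negative definite even $N$ and any fixed-point-free $\rho\in O(N)$ acting trivially on $A_N$. It is close in spirit to the Nikulin-type uniqueness remark following the lemma.

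The paper's explicit matrices instead give concrete coordinates for $\widetilde{\Lambda}^{g_i}$ and $\widetilde{\Lambda}_{g_i}$, such as $e_3\pm f_3$, $e_4\pm f_4$, or the $g_3$-invariant vector $e_3-f_3+2(e_4+f_4)$. These are reused in Proposition~\ref{prop:exceptions} and Corollary~\ref{cor: NeronSeveriExceptions} to build the invariant classes $h$ and compute saturations. With your abstract identification you would have to transport such vectors through the gluing; this is possible via the glue vectors, but less direct.
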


\begin{proof}
    Write $\widetilde{\Lambda}$ as $U_1\oplus U_2\oplus U_3\oplus U_4$ where for each $i\in \{1,\ldots, 4\}$, we have $U_i\simeq U$. For each such index $i$, we let $e_i, f_i\in U_i$ be isotropic vectors such that $(e_i,f_i) = 1$. Then the result follows directly by defining $g_2, g_3$ being the identity on $U_1\oplus U_2$ and acting on the basis $\{e_3,f_3,e_4,f_4\}$ of $U_3\oplus U_4$ respectively as
    \[
        (g_2)_{\mid U_3\oplus U_4}= \begin{pmatrix}
            0&1&0&0\\1&0&0&0\\0&0&0&1\\0&0&1&0
        \end{pmatrix}\qquad
        (g_3)_{\mid U_3\oplus U_4} = \begin{pmatrix}
            0&1&0&1\\1&0&0&-1\\0&0&0&1\\-1&1&1&1
        \end{pmatrix}.\]
\end{proof}

\begin{remark}
    Note that $g_2$ (resp. $g_3$) is, up to conjugacy in $O(\widetilde{\Lambda})$, the unique isometry of $\widetilde{\Lambda}$ whose coinvariant sublattice is isometric to $A_1^{\oplus2}(-1)$ (resp. $A_2(-1)$) (it follows by a straightforward combination of \cite[Corollary 1.5.2, Corollary 1.6.2, Theorem 1.10.1 and Theorem 1.14.2]{nik79}).
\end{remark}

We show in Proposition~\ref{prop:exceptions} and Lemma~\ref{lem: nonmodularity of exceptions} that the existence of the isometries $g_2$ and $g_3$ from Lemma~\ref{lem: special isometries mukai lattice} obstructs full generalization of \cite[Theorem 1.1]{DMPM} to the case of Kummer-type manifolds. In order to do so, we first prove the following standard results about isometries of (even) lattices.

\begin{lemma}\label{lem:possible char poly}
    Let $L$ be a free $\bbZ$-module of finite rank $r\in\{2,3\}$ and let $f\in \GL(L)$ be an automorphism of finite order $n>1$ that fixes a nonzero vector in $L$. Let us denote by $\chi(t)\in\bbZ[t]$ the characteristic polynomial of $f$.
    Then, exactly one of the following holds:
    \begin{enumerate}
        \item either $n=2$, $\chi(t) = (t-1)^{1+a}(t+1)^{r-1-a}$ with $a\in\{0,r-2\}$ and $\det(f) = (-1)^{r-1-a}$,
        \item or $r = 3$, $n =3,4,6$, $\chi(t) = (t-1)\Phi_n(t)$ and $\det(f) = 1$.
    \end{enumerate}
\end{lemma}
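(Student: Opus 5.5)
The approach is to determine $\chi(t)$ from the fact that $f$ has finite order, and then enumerate the few possible factorizations in ranks $2$ and $3$.

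\emph{Structure of $\chi$.} Since $f^n=\Id$, the minimal polynomial of $f$ divides $t^n-1$. This polynomial is separable over $\bbC$, so $f_\bbC$ is diagonalizable and its eigenvalues are $n$-th roots of unity. Because $\chi(t)\in\bbZ[t]$ is monic with roots of unity as roots, it factors as a product of cyclotomic polynomials $\Phi_d$ with $d\mid n$. Diagonalizability also shows that the order of $f$ equals the lcm of the orders of its eigenvalues. Hence $n=\operatorname{lcm}\{d : \Phi_d\mid\chi\}$. Finally, $f$ fixes a nonzero vector, so $1$ is an eigenvalue and $\Phi_1=t-1$ divides $\chi$. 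We write $\chi(t)=(t-1)\,\psi(t)$ with $\deg\psi=r-1\in\{1,2\}$.

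\emph{Enumeration.} The cyclotomic polynomials of degree $1$ are $\Phi_1,\Phi_2$, and those of degree $2$ are $\Phi_3,\Phi_4,\Phi_6$, since $\varphi(d)\le 2$ exactly for $d\in\{1,2,3,4,6\}$. This gives two cases.
\begin{enumerate}
\item $\psi$ is a product of linear factors $\Phi_1,\Phi_2$. Then $n=\operatorname{lcm}$ of the indices, and $n>1$ forces at least one factor $\Phi_2=t+1$. Thus $n=2$ and $\chi=(t-1)^{1+a}(t+1)^{r-1-a}$ with $0\le a\le r-2$. For $r=2$ this means $a=0$, and for $r=3$ it means $a\in\{0,1\}$; in both cases $a\in\{0,r-2\}$. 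The determinant is the product of the eigenvalues, namely $(-1)^{r-1-a}$.
\item $\psi$ is irreducible of degree $2$. This is only possible if $r=3$, and then $\psi=\Phi_m$ with $m\in\{3,4,6\}$. Here $n=\operatorname{lcm}(1,m)=m$, so $\chi=(t-1)\Phi_n$. The determinant equals $(-1)^3\chi(0)=-(-1)\Phi_n(0)=\Phi_n(0)=1$, since $\Phi_3,\Phi_4,\Phi_6$ all have constant term $1$. Equivalently, the two primitive $n$-th roots of unity are complex conjugates and their product is $1$.
\end{enumerate}

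\emph{Exclusivity.} The two cases are distinguished by $\chi$ itself: in the second case $\chi$ has a non-real root, and in the first it does not.

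\emph{Main obstacle.} The only step needing care is the identity $n=\operatorname{lcm}$ of the orders of the eigenvalues. It relies on diagonalizability, which holds because finite-order automorphisms are semisimple in characteristic $0$. Everything else is bookkeeping.
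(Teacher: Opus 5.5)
Your proposal is correct and follows the same route as the paper. Both arguments factor $\chi$ into cyclotomic polynomials, use the fixed vector to get a factor $t-1$, and enumerate the possibilities in ranks $2$ and $3$. You also make explicit the step the paper's sketch leaves implicit: semisimplicity of $f$ gives $n=\operatorname{lcm}\{d : \Phi_d\mid\chi\}$, which rules out $\chi=(t-1)^r$ and identifies $n$ in each case.
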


\begin{proof}
    Since $f$ is of finite order, the characteristic polynomial of $f$ is a product of cyclotomic polynomials. Since $f$ fixes a nonzero vector in $L$, we already know that $\chi(1) = 0$. The proof follows by writing all possible characteristic polynomials for $f$, depending on the rank of $L$. 
\end{proof}

\begin{lemma}\label{lem:action on discriminant}
    Let $L$ be an even lattice of signature $(l_+, l_-)$, and let $f\in O(L)$ be an isometry of finite order $n>1$ satisfying both of the following:
    \begin{enumerate}
        \item[(a)] $f$ acts trivially on the discriminant group $A_L$ of $L$,
        \item[(b)] $\Phi_n(f) = 0$.
    \end{enumerate}
    Then, $\rk(L)$ is divisible by the Euler totient $\varphi(n)$ and exactly one of the following holds:
    \begin{enumerate}
        \item $n = 2^k$ for some $k\geq 1$, $L$ is $2$-elementary and $l_+-l_-$ is divisible by $4$ if $n\geq 4$;
        \item $n = p^k$ for $p$ an odd prime number and some $k\geq 1$, and $L$ is $p$-elementary;
        \item $n$ is composite, $L$ is unimodular and $l_+-l_-$ is divisible by 8.
    \end{enumerate}
\end{lemma}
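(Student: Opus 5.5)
Since $\Phi_n(f)=0$ and $\Phi_n$ is irreducible over $\bbQ$, the $\bbQ[t]$-module $L_\bbQ$ is a vector space over the cyclotomic field $\bbQ(\zeta_n)$. Hence $\rk L$ is a multiple of $[\bbQ(\zeta_n):\bbQ]=\varphi(n)$. The main idea for the rest is to show that $A_L$ is killed by $\Phi_n(1)$. For this, we use the operator $1-f$ restricted to $L^\vee$. Since $f$ acts trivially on $A_L$, we have $(1-f)(L^\vee)\subseteq L$. We also have $f^\vee=f$ on $L^\vee\subset L_\bbQ$.

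\textbf{Annihilator.} Because $\Phi_n(1)\neq 0$ for $n>1$, the number $1$ is not an eigenvalue of $f$. Write $\Phi_n(t)=(t-1)q(t)+\Phi_n(1)$ with $q\in\bbZ[t]$. Applying this at $f$ gives
$$\Phi_n(1)\,x=-q(f)(f-1)x\in L \quad\text{for } x\in L^\vee,$$
because $(f-1)x\in L$ and $L$ is $q(f)$-stable. So $\Phi_n(1)A_L=0$. Recall that $\Phi_n(1)=p$ when $n=p^k$ is a prime power, and $\Phi_n(1)=1$ when $n$ has at least two distinct prime factors. This already yields the elementary conclusions: $L$ is $2$-elementary if $n=2^k$, $p$-elementary if $n=p^k$, and unimodular if $n$ is composite.

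\textbf{Signature constraints.} In the unimodular case, $L$ is even unimodular, so $l_+-l_-\equiv 0\pmod 8$. This follows from van der Blij / Milgram, since the Gauss sum over the trivial discriminant form equals $1$. In the case $n=2^k$ with $k\ge2$, the form $q_L$ lives on $A_L\simeq(\bbZ/2)^m$, and by Milgram's formula $l_+-l_-\bmod 8$ is determined by $q_L$. I would show that $A_L$ carries a compatible structure, namely $1-f$ inducing an isomorphism, or the $\bbZ[\zeta_n]$-module structure. Then $q_L$ becomes an orthogonal sum of copies of $u$ or $v$ (the hyperbolic and anisotropic $2$-elementary planes), or more generally has trivial parity $\delta=0$ with even length. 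That forces the Gauss sum to be $\pm1$, i.e. $l_+-l_-\equiv 0\pmod 4$.

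\textbf{Main obstacle.} The hardest step is this last signature claim for $n=2^k\ge4$. My first approach is the following. Since $\rk L$ is divisible by $\varphi(n)$, which is even, $L_\bbR$ decomposes under $f$ into rotation planes. A Hermitian form over $\bbZ[\zeta_{n}]$ (with $\zeta_n^{n/4}=i$) gives $L$ the structure of a $\bbZ[i]$-lattice. If $J=f^{n/4}$ is a complex structure, then $J^2=-1$ and $J$ is an isometry acting trivially on $A_L$. The discriminant form is then invariant under an order-$4$ automorphism with $J^2=-1\equiv 1$ on $A_L$ and $J=\Id$ there. From this I expect to derive $q_L(x)=(x,Jx)$-type identities showing that $q_L(x)\in\bbZ/2\bbZ$ for all $x$ and that $\delta=0$. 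For a $2$-elementary form, $\delta=0$ implies $l_+-l_-\equiv 0\pmod 4$ by Nikulin's classification. If this route stalls, I would instead directly cite the known classification of $2$-elementary lattices with such an isometry.
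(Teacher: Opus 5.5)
Your proposal is correct and follows the paper's proof for every part except one sub-step. The shared parts are:
\begin{enumerate}
\item rank divisibility from $\Phi_n(f)=0$;
\item the fact that $\Phi_n(1)$ annihilates $A_L$ (your polynomial-division argument is an explicit version of the paper's one-line remark);
\item Milgram/Serre for even unimodular lattices when $n$ is not a prime power.
\end{enumerate}

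The one real difference is the case $n=2^k\geq 4$. The paper takes from Taki the fact that $x^2\in\bbZ$ for all $x\in L^\vee$, then quotes Conway--Sloane to get $4\mid l_+-l_-$. You instead propose to derive the integrality from $J=f^{n/4}$, and the identity you anticipate does hold and closes the step. Since $\Phi_{2^k}(t)=t^{2^{k-1}}+1$, we have $f^{n/2}=-\Id$, so $J^2=-\Id$. Then $(x,Jx)=(Jx,J^2x)=-(x,Jx)$, so $(x,Jx)=0$. For $x\in L^\vee$ this gives $x^2=-(x,(J-1)x)\in\bbZ$, because $(J-1)x\in L$ by hypothesis (a).

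Consequently every term $\exp(\pi i q_L(a))$ in the Milgram Gauss sum is $\pm1$. The sum is therefore real and nonzero, so $\exp(2\pi i(l_+-l_-)/8)$ is real, i.e.\ $4\mid l_+-l_-$. This does not even need the classification of $2$-elementary forms. Your version is thus self-contained at exactly the point where the paper relies on a citation.

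Drop the aside about $1-f$ ``inducing an isomorphism'' on $A_L$: by hypothesis (a), $1-f$ induces the zero map there. The $J$-argument is the one to keep.
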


\begin{proof}
    The characteristic polynomial of $f$ is a power of $\Phi_n$, so the rank of $L$ is divisible by the degree of $\Phi_n$, which is exactly $\varphi(n)$. Now, by assumption, $\Phi_n(f)$ acts by multiplication by $0$ on $A_L$. But since $f$ induces the identity on $A_L$, it follows that $\Phi_n(1)$ annihilates $A_L$. The value $\Phi_n(1)$ is $p$ if $n$ is a nontrivial power of the prime number $p$, and 1 for $n\geq 2$ composite. Hence $A_L$ is either an elementary abelian $p$-group, or it is trivial, respectively. Moreover, if $n = 2^k$ for some $k\geq 2$, we know that $L$ is 2-elementary and \cite[Proposition 2.4]{taki12} tells us that the quadratic form on $L^\vee$ takes integer values, which implies that $l_+-l_-$ is divisible by 4 \cite[Chapter 15, Theorem 13]{splg}. Finally the last claim follows from \cite[Chapter V, Section 2, Theorem 2 \& Corollary 1]{Serre}.
\end{proof}

\begin{proposition}\label{prop:exceptions}
    Let $X$ be a projective $\Kum_n$-type manifold of Picard rank 3, for some $n\geq 2$, and suppose that there exists a symplectic birational self-map $f\in \Bir(X)$ of finite order such that $f^\ast\in \Mon^2(X)$ is nontrivial and has determinant $+1$. Then there exists a positive integer $d\geq 1$ such that exactly one the following two holds:
    \begin{enumerate}
        \item[\textnormal{(I)}] $\ord(f^*) = 2$ and there is a finite index embedding $\langle 2d\rangle\oplus  A_1^{\oplus2}(-1)\hookrightarrow\NS(X)$,
        \item[\textnormal{(II)}] $\ord(f^*) = 3$ and there is a finite index embedding $\langle 2d\rangle\oplus  A_2(-1)\hookrightarrow\NS(X)$.
    \end{enumerate}
    Moreover, for every $n\geq 2$ and $d\geq 1$, a pair $(X, f)$ satisfying the above conditions exists.
\end{proposition}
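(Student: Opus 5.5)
The plan is to study the coinvariant lattice $L\coloneqq H^2(X,\bbZ)_{f^*}$ and identify it through the extension of $f^*$ to $\widetilde{\Lambda}$. Since $f$ is symplectic of finite order, $f^*$ acts trivially on $H^{2,0}(X)$, and hence on the transcendental lattice. Therefore $L\subseteq \NS(X)$. As in the proof of Lemma~\ref{numerical torelli on mukai lattice}, averaging a class of $\Mov(X)$ (or an ample class) over $\langle f\rangle$ gives a positive $f^*$-invariant class in $\NS(X)$. Consequently $L$ is negative definite, $\NS(X)^{f^*}$ is hyperbolic, and $\rk L\in\{1,2\}$.

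\textbf{Step 1: the rank of $L$ and the order of $f^*$.} Apply Lemma~\ref{lem:possible char poly} to $f^*|_{\NS(X)}$, which has rank $3$ and fixes a nonzero vector. If $\rk L=1$, the characteristic polynomial is $(t-1)^2(t+1)$. Then $\det(f^*)=\det(f^*|_{\NS(X)})=-1$, since $f^*$ is trivial on the transcendental part, and this contradicts the hypothesis. So $\rk L=2$, and either $\ord(f^*)=2$ with $L$ the $(-1)$-eigenlattice, or $\ord(f^*)\in\{3,4,6\}$ with $\Phi_n(f^*|_L)=0$. In both cases $\Phi_n(f^*|_L)=0$ (for $n=2$, $\Phi_2=t+1$), so Lemma~\ref{lem:action on discriminant}(b) holds.

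\textbf{Step 2: the isometry class of $L$.} Since $\det f^*=1$ and $f^*\in\Mon^2(X)$, Proposition~\ref{prop:Mon2isN} and Lemma~\ref{lem:monodromyextension} show that $f^*$ acts trivially on $A_{H^2(X,\bbZ)}$. Hence it extends, via a Wieneck embedding, to $g\in SO(\widetilde{\Lambda})$ with $g(v)=v$. Then $\widetilde{\Lambda}_g=L$ and $\widetilde{\Lambda}^g$ is primitive. Because $\widetilde{\Lambda}$ is unimodular, $A_L\simeq A_{\widetilde{\Lambda}^g}$ through the gluing, and $g$ is trivial on $A_{\widetilde{\Lambda}^g}$, so $g$ acts trivially on $A_L$. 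Lemma~\ref{lem:action on discriminant} now applies to $L$, which has signature $(0,2)$.
\begin{itemize}
\item $n=4$ is excluded, since $-2$ is not divisible by $4$.
\item $n=6$ is excluded, since it would force $L$ unimodular with $8\mid -2$.
\item For $n=2$, $L$ is an even, negative definite, $2$-elementary lattice of rank $2$. A short reduced-form computation with Gram matrix $\begin{pmatrix}-2x&a\\a&-2y\end{pmatrix}$, $|a|\le x\le y$, shows the determinant must be $4$ with $x=y=1$, $a=0$. So $L\simeq A_1^{\oplus 2}(-1)$.
\item For $n=3$, $L$ is $3$-elementary of rank $2$, and $\varphi(3)=2$ forces $L\simeq A_2(-1)$.
\end{itemize}
Finally, $\NS(X)^{f^*}$ has rank $1$ and is positive, so it equals $\langle 2d\rangle$ for some $d\ge1$. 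Then $\NS(X)^{f^*}\oplus L\hookrightarrow\NS(X)$ has finite index. The two cases are mutually exclusive because the order of $f^*$ differs.

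\textbf{Step 3: existence.} Take $g_2,g_3$ from Lemma~\ref{lem: special isometries mukai lattice}, and put $v=e_1+(n+1)f_1\in U_1$, which is primitive of square $2n+2\ge 6$ and fixed by $g$. Then $v^\perp\cap\widetilde{\Lambda}_g=\widetilde{\Lambda}_g$ is negative definite. Since $v$ and $\widetilde{\Lambda}_g$ lie in orthogonal unimodular summands, $\operatorname{Sat}(\widetilde{\Lambda}_g+\bbZ v)=\widetilde{\Lambda}_g\oplus\bbZ v$. An isotropic $u=l+kv$ in this lattice has $(u,v)=k(2n+2)\notin\{1,2\}$, so Lemma~\ref{numerical torelli on mukai lattice} applies. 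Since $U_2\subseteq v^\perp\cap\widetilde{\Lambda}^g$ represents $2d$ primitively, Remark~\ref{rem adapt the proof to the projective case} produces a projective $X$ with $\NS(X)\simeq\operatorname{Sat}(\bbZ h\oplus\widetilde{\Lambda}_g)$, which has Picard rank $3$. The induced $f^*$ has determinant $+1$ because $g(v)=v$.

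The main obstacle is Step 2: correctly transferring the triviality of the discriminant action from $H^2(X,\bbZ)$ to $L$ through the unimodular overlattice $\widetilde{\Lambda}$, and making the small classification of rank-$2$ definite $p$-elementary lattices airtight.
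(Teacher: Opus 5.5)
Your proposal is correct and follows essentially the same route as the paper: the characteristic-polynomial lemma together with $\det f^*=+1$ forces a rank-$2$ negative definite coinvariant lattice, the discriminant lemma cuts the order down to $2$ or $3$ and identifies $A_1^{\oplus 2}(-1)$ or $A_2(-1)$, and existence comes from $g_2,g_3$ via Lemma~\ref{numerical torelli on mukai lattice} and Remark~\ref{rem adapt the proof to the projective case}. The only variation is that you get triviality of the action on $A_L$ through the unimodular overlattice $\widetilde{\Lambda}$, whereas the paper passes from $A_{H^2(X,\bbZ)}$ to $A_{\NS(X)}$ and then to the coinvariant part; also note that $\varphi(3)=2$ is not what forces $L\simeq A_2(-1)$, which instead needs the same reduced-form check as in your $2$-elementary case (determinant $1$, $3$ or $9$, and only $3$ occurs for an even definite binary form).
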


\begin{proof}
    Suppose that $X$ and $f$ as in the statement exists, let $g$ be the restriction of $f^\ast$ to $\NS(X)$ and let $m\geq 2$ denote the order of $g$. Note that since $f$ is symplectic, the isometry $f^\ast$ acts trivially on the the transcendental lattice $\NS(X)^\perp$ of $X$ and therefore
    $\ord(g) = \ord(f^\ast)$. Moreover, since $\det(f^\ast) = +1$, by Proposition \ref{prop:Mon2isN} it follows that the isometry $f^\ast$ acts trivially on the discriminant group of $H^2(X, \bbZ)$. Furthermore, it follows that $g$ acts trivially on $A_{\NS(X)}$.

    Now, since $X$ is projective and $f$ is of finite order, the isometry $f^\ast$ fixes a class $h\in \NS(X)$ of positive square $2d$, for some $d\geq 1$, and it is clear that $h$ is fixed by $g$ as well by definition (choose $h$ being the sum of the elements in the orbit of an ample class under the action of $f$ on $\NS(X)$). Therefore, since $\rk\NS(X) = 3$, \Cref{lem:possible char poly} tells us that $m = 2,3,4,6$, the coinvariant lattice $\NS(X)_g$ is even of rank 2, and the minimal polynomial of the restriction of $g$ to $\NS(X)_g$ is $\Phi_m$. But since $g$ acts trivially on the discriminant group of $\NS(X)$, we obtain that $g$ acts trivially on the discriminant group of $\NS(X)_g$. Thus, according to \Cref{lem:action on discriminant}, we must have $m=2,3$. In the case where $m = 2$ we have that $\NS(X)_g$ is a negative definite 2-elementary lattice of rank 2, so it is isometric to $A_1^{\oplus2}(-1)$. In the case where $m=3$, the lattice $\NS(X)_g$ is negative definite, 3-elementary and of rank 2, so it has to be isometric to $A_2(-1)$. The claim follows.\smallskip

    For the existence statement, we prove only the type \textnormal{(I)} case: the other case follows similarly.
    Let $g_2\in SO(\widetilde{\Lambda})$ be as in \Cref{lem: special isometries mukai lattice}. Then, by its definition, we have that
    \[\widetilde{\Lambda}^{g_2}\simeq U_1\oplus U_2\oplus A_1^{\oplus2}\qquad\text{and}\qquad \widetilde{\Lambda}_{g_2}\simeq A_1^{\oplus2}(-1),\]
    where for $i=1,2$, we have $U_i\simeq U$. Again, for both $i=1,2$, we denote by $e_i,f_i$ isotropic vectors of $U_i$ such that $(e_i,f_i)=1$. Let now $h := de_1+f_1\in U_1$ and $v = (n+1)e_2+f_2\in U_2$: these vectors are primitive in $\widetilde{\Lambda}$ and they are orthogonal to each other. Moreover, by construction, we have the following equalities:
    \[\operatorname{Sat}_{\widetilde{\Lambda}}(\mathbb{Z}v\oplus \widetilde{\Lambda}_{g_2}) = \mathbb{Z}v\oplus \widetilde{\Lambda}_{g_2}\qquad\text{and}\qquad \operatorname{Sat}_{\widetilde{\Lambda}}(\mathbb{Z}h\oplus \widetilde{\Lambda}_{g_2}) = \mathbb{Z}h\oplus \widetilde{\Lambda}_{g_2}.\]
    In particular, if $a\in \operatorname{Sat}_{\widetilde{\Lambda}}(\mathbb{Z}v\oplus \widetilde{\Lambda}_{g_2})$ is isotropic, the product $(a,v)$ is a nonzero positive multiple of $v^2 = 2n+2\geq 6$. Hence, according to Lemma~\ref{numerical torelli on mukai lattice} and Remark~\ref{rem adapt the proof to the projective case}, there exists a pair $(X, f)$ where $X$ is a $\Kum_n$-type manifold with $\NS(X)\simeq \operatorname{Sat}_{\widetilde{\Lambda}}(\mathbb{Z}h\oplus \widetilde{\Lambda}_{g_2})$ and $f\in \Bir(X)$ is a finite order symplectic birational self-map such that $f^\ast$ is an involution fixing a class of square $2d$ in $\NS(X)$ and whose coinvariant lattice is isometric to $A_1^{\oplus2}(-1)$. Note that $f^\ast$ has determinant $+1$ since its characteristic polynomial is equal to $(t-1)^5(t+1)^2\in \bbQ[t]$.
\end{proof}

\begin{lemma}\label{lem: nonmodularity of exceptions}
    Let $n\geq 2$ and $d\geq 1$ be positive integers and let $X$ be a $\Kum_n$-type manifold such that $\NS(X)$ contains a finite index sublattice isometric to either one of the following two:
    \begin{enumerate}
        \item[\textnormal{(I)}] $\langle2d\rangle\oplus A_1^{\oplus2}(-1)$,
        \item[\textnormal{(II)}] $\langle2d\rangle\oplus A_2(-1)$.
    \end{enumerate}
    Then $X$ is twisted modular if and only if
    \begin{enumerate}
        \item[\textnormal{(I)}] the quaternary quadratic form $ (n+1)x^2+dy^2-z^2-t^2$ is isotropic,
        \item[\textnormal{(II)}] the quaternary quadratic form $ (n+1)x^2+dy^2-3z^2-t^2$ is isotropic.
    \end{enumerate}
\end{lemma}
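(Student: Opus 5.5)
By Proposition~\ref{prop:modularityisotropic}, $X$ is twisted modular if and only if $\widetilde{\Lambda}^{1,1}$ contains an isotropic vector, where $\widetilde{\Lambda}$ carries the Hodge structure induced by a Wieneck embedding $\iota_X$. The plan is to identify $\widetilde{\Lambda}^{1,1}$ up to finite index and then reduce to a rational isotropy question.

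Since the generator $v$ of $\iota_X(H^2(X,\bbZ))^\perp$ is declared of type $(1,1)$, we have $\widetilde{\Lambda}^{1,1}=\operatorname{Sat}_{\widetilde{\Lambda}}(\iota_X(\NS(X))\oplus\bbZ v)$. This lattice contains the finite index sublattice $\bbZ v\oplus\iota_X(\NS(X))$. A lattice contains a nonzero isotropic vector if and only if its rational span does, because any rational isotropic vector can be scaled to an integral one. Isotropy therefore depends only on the rational quadratic space
\[
\widetilde{\Lambda}^{1,1}_\bbQ\simeq \langle 2n+2\rangle\oplus \NS(X)_\bbQ .
\]
By hypothesis, $\NS(X)_\bbQ$ is isometric over $\bbQ$ to $\langle 2d\rangle\oplus A_1^{\oplus 2}(-1)_\bbQ$ in case (I), and to $\langle 2d\rangle\oplus A_2(-1)_\bbQ$ in case (II).

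Next we diagonalize over $\bbQ$. The form $A_1^{\oplus2}(-1)$ is $\langle -2,-2\rangle$. The form $A_2(-1)$ has Gram matrix $\begin{pmatrix}-2&1\\1&-2\end{pmatrix}$, and completing the square gives $\langle -2,-\tfrac32\rangle$. In case (I) the relevant space is therefore $\langle 2(n+1),2d,-2,-2\rangle$. Dividing the form by $2$ does not affect isotropy, so we obtain $(n+1)x^2+dy^2-z^2-t^2$.

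In case (II) we get $\langle 2(n+1),2d,-2,-\tfrac32\rangle$. Multiplying by $2$ gives $\langle 4(n+1),4d,-4,-3\rangle$. Rescaling $x,y,z$ by $2$ (squares change nothing over $\bbQ$) gives $\langle n+1,d,-1,-3\rangle$, which is the form $(n+1)x^2+dy^2-3z^2-t^2$ after renaming the variables.

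The proof is then assembled as: twisted modular $\iff$ $\widetilde{\Lambda}^{1,1}$ isotropic $\iff$ the displayed quaternary form is isotropic over $\bbQ$ (equivalently over $\bbZ$, by clearing denominators).

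The one point needing care is the identification of $\widetilde{\Lambda}^{1,1}$. One must check that the $(1,1)$-part of $\widetilde{\Lambda}$ is exactly the saturation of $\bbZ v+\iota_X(\NS(X))$, with nothing further. This follows because $\widetilde{\Lambda}_\bbQ=\iota_X(H^2(X,\bbQ))\oplus\bbQ v$, and the Hodge structure on $\widetilde{\Lambda}$ restricts to that of $H^2(X)$ on the first summand. Beyond this, the argument is routine rational diagonalization; no Hasse--Minkowski analysis is needed, since the statement is phrased as isotropy of the form itself.
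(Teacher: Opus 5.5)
Your proposal is correct and follows the paper's proof essentially verbatim: you reduce via Proposition~\ref{prop:modularityisotropic} to isotropy of $\widetilde{\Lambda}^{1,1}_\bbQ\simeq\langle 2n+2\rangle\oplus\NS(X)_\bbQ$, and then you diagonalize and rescale. The only cosmetic difference is that you complete the square in $A_2(-1)$ to get $\langle -2,-\tfrac32\rangle$, whereas the paper uses the finite index sublattice $\langle -6,-2\rangle\subset A_2(-1)$; these give the same rational quadratic space.
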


\begin{proof}
    By Proposition~\ref{prop:modularityisotropic}, in order to determine whether $X$ is twisted modular, it is equivalent to determine whether the extended N\'eron--Severi lattice $\widetilde{\Lambda}^{1,1}$ is isotropic. Since being isotropic is a rational property, it is therefore equivalent to determining when $\widetilde{\Lambda}^{1,1}_\bbQ$ is isotropic. Since there is a finite index embedding $\langle2n+2\rangle\oplus \NS(X)\hookrightarrow\widetilde{\Lambda}^{1,1}$, we obtain the following isometric rational quadratic spaces:
    \begin{enumerate}
        \item[\textnormal{(I)}] $\widetilde{\Lambda}^{1,1}_\bbQ\simeq \langle 2n+2, 2d, -2, -2\rangle_\bbQ$,
        \item[\textnormal{(II)}] $\widetilde{\Lambda}^{1,1}_\bbQ\simeq \langle 2n+2, 2d, -6, -2\rangle_\bbQ$.
    \end{enumerate}
    For the type (II), we use the fact that there exists an embedding $\langle-6, -2\rangle\hookrightarrow A_2(-1)$ of finite index.
    Up to rescaling the previous quadratic forms by $\frac{1}{2}$, the result follows.
\end{proof}

\begin{remark}
    Testing whether an indefinite rational quadratic form is isotropic is a standard routine, and explicit numerical criteria are known \cite[Chapter IV, Theorems 6 and 8]{Serre}. In particular, one can easily conclude that there are infinitely many pairs $(n, d)\geq (2,1)$ such that the quadratic forms from \Cref{lem: nonmodularity of exceptions} are (an)isotropic. In practice, one would check for isotropy using some computer algebra system featuring functionalities about number theory and rational quadratic forms. For the reader's convenience, we use the software Hecke \cite{nemo} to determine all the pairs $(n,d)$ with $n\in\{2,\ldots, 20\}$ and $d\in \{1,\ldots, 100\}$ for which the quadratic forms in \Cref{lem: nonmodularity of exceptions} are anisotropic. We report these values in \Cref{tab:tab1,tab:tab2}.
\end{remark}

\begin{table}[ht]
    \centering
    \renewcommand\arraystretch{1.2}
    \begin{tabular}{c|l}
         $n$&$d$\\
         \hline
         \rowcolor{gray!30!white}2& $3, 11, 12, 19, 21,27,30,35,39,43,44,48,51,57,59,66,67,75,76,83, 84,91,93,99$\\
         5& $6,15,22,24,33,38,42,51,54,60,69,70,78,86,87,88,96$\\
         \rowcolor{gray!30!white}6& $7, 14, 15, 23, 28, 31, 39, 47, 55, 56, 60, 63, 71, 77, 79, 87, 92, 95$\\
         10& $3, 11, 12, 19, 27, 33, 35, 43, 44, 48, 51, 55, 59, 67, 75, 76, 83, 91, 99$\\
         \rowcolor{gray!30!white}11&$3, 11, 12, 19, 21, 27, 30, 35, 39, 43, 44, 48, 51, 57, 59, 66, 67, 75, 76, 83, 84, 91, 93, 99$\\
         13&$7, 14, 28, 30, 46, 56, 62, 63, 77, 78, 94$\\
         \rowcolor{gray!30!white}14&$6, 7, 15, 23, 24, 28, 31, 33, 39, 42, 47, 51, 54, 55, 60, 63, 69, 71, 78, 79, 87, 92, 95, 96$\\
         18&$3, 11, 12, 19, 27, 35, 43, 44, 48, 51, 59, 67, 75, 76, 83, 91, 95, 99$\\
         \rowcolor{gray!30!white}20&$3, 12, 21, 27, 30, 35, 39, 42, 48, 57, 66, 70, 75, 84, 91, 93$
    \end{tabular}
    \caption{Pairs $(n,d)\leq (20,100)$ with $(n+1)x^2+dy^2-z^2-t^2$ anisotropic}\label{tab:tab1}
\end{table}

\begin{table}[ht]
    \centering
    \renewcommand\arraystretch{1.2}
    \begin{tabular}{c|l}
         $n$&$d$\\
         \hline
         \rowcolor{gray!30!white}4& $6, 10, 15, 24, 33, 35, 40, 42, 51, 54, 60, 65, 69, 78, 85, 87, 90, 96$\\
         5& $2, 5, 8, 11, 14, 17, 18, 20, 23, 26, 29, 32, 34, 35, 38, 41, 44, 45, 47, 50, 53, 56, 59, 62, 65,$\\
         &$ 66, 68, 71, 72, 74, 77, 80, 82, 83, 86, 89, 92, 95, 98, 99$\\
         \rowcolor{gray!30!white}7& $6, 15, 22, 24, 33, 38, 42, 51, 54, 60, 69, 70, 78, 86, 87, 88, 96$\\
         9&$5, 14, 20, 30, 45, 46, 55, 56, 62, 70, 78, 80, 94, 95$\\
         \rowcolor{gray!30!white}10&$6, 11, 15, 24, 33, 42, 44, 51, 54, 55, 60, 69, 78, 87, 96, 99$\\
         13&$6, 10, 15, 24, 26, 33, 40, 42, 51, 54, 58, 60, 69, 74, 78, 87, 90, 96$\\
         \rowcolor{gray!30!white}14&$2, 5, 8, 11, 14, 17, 18, 20, 23, 26, 29, 30, 32, 35, 38, 41, 44, 45, 47, 50, 53, 55, 56, 59, 62,$\\
         \rowcolor{gray!30!white}&$ 65, 68, 70, 71, 72, 74, 77, 80, 83, 86, 89, 92, 95, 98, 99$\\
         16&$36, 15, 24, 33, 42, 51, 54, 60, 69, 78, 85, 87, 96$\\
         \rowcolor{gray!30!white}17&$6, 15, 22, 24, 33, 38, 42, 51, 54, 60, 69, 70, 78, 86, 87, 88, 96$\\
        19&$6, 10, 15, 24, 33, 35, 40, 42, 51, 54, 60, 65, 69, 78, 85, 87, 90, 96$
    \end{tabular}
    \caption{Pairs $(n,d)\leq (20,100)$ with $(n+1)x^2+dy^2-3z^2-t^2$ anisotropic}\label{tab:tab2}
\end{table}

\subsection{Proof of the main theorem}

In this section, we prove 
%the main theorem of this work. It states 
that apart from the exceptions described Proposition~\ref{prop:exceptions} and Lemma~\ref{lem: nonmodularity of exceptions}, the existence of a finite order symplectic birational self-map of a projective manifold $X$ of Kummer-type is enough to ensure the twisted modularity of $X$ (see Definition~\ref{def:modularKumn}). 
The proof, however, is different from the $K3^{[n]}$ case (see \cite[Theorem 1.1]{DMPM}) since one of the key inputs there (see [Lemma 2.9, \textit{loc.\ cit.}]) involves numerical conditions for extremal wall divisors. 
As these are different for the case of Kummer-type manifolds (they do not include $(-2)$-roots) the proof in \textit{loc.\ cit} does not apply to our current setup.

\begin{theorem}\label{thm:modular except for some examples}
    Let $X$ be a projective $\Kum_n$-type manifold, for some $n\geq2$. 
    Assume that there exists a symplectic birational self-map $f\in \Bir(X)$ such that $f^*\in O(H^2(X,\bbZ))$ has finite order greater than $1$.
   Then $X$ is twisted modular if and only if there does not exist any positive integer $d\geq 1$ such that either of the following two holds:
   \begin{enumerate}
       \item[\textnormal{(I)}] There is a finite index embedding $\langle2d\rangle\oplus A_1^{\oplus2}(-1)\hookrightarrow \NS(X)$ and $(n+1)x^2+dy^2-z^2-t^2$ is anisotropic
       \item[\textnormal{(II)}] There is a finite index embedding $\langle2d\rangle\oplus A_2(-1)\hookrightarrow \NS(X)$ and $(n+1)x^2+dy^2-3z^2-t^2$ is anisotropic.
   \end{enumerate}
\end{theorem}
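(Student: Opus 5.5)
Throughout, let $\iota_X$ be a Wieneck embedding, $v$ a generator of $\iota_X(H^2(X,\bbZ))^\perp$, and $g\in O(\widetilde{\Lambda})$ the extension of $f^\ast$ (Lemma~\ref{lem:monodromyextension}). By Proposition~\ref{prop:modularityisotropic}, $X$ is twisted modular if and only if $\widetilde{\Lambda}^{1,1}$ is isotropic. Up to finite index, $\widetilde{\Lambda}^{1,1}$ is $\bbZ v\oplus\iota_X(\NS(X))$. One direction is immediate: if (I) or (II) holds, Lemma~\ref{lem: nonmodularity of exceptions} gives non-modularity. For the converse, I assume neither (I) nor (II) holds and show that $\widetilde{\Lambda}^{1,1}$ is isotropic. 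The natural split is by $\rho:=\rk\NS(X)$.

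Set $r:=\rk\NS(X)_{f^\ast}$, which is $\geq 1$ because $f^\ast\neq\Id$ acts trivially on the transcendental lattice. Since $f$ is projective and of finite order, $\NS(X)^{f^\ast}$ contains an invariant positive class $h$, so $\rho\geq 2$.

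\textbf{Case $\rho\geq 4$.} Then $\widetilde{\Lambda}^{1,1}$ is an indefinite lattice of rank $\geq 5$, so it is isotropic by Meyer's theorem. Hence $X$ is twisted modular.

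\textbf{Case $\rho=2$.} Then $\NS(X)=\operatorname{Sat}(\bbZ h\oplus\bbZ w)$ with $f^\ast w=-w$ and $w^2<0$, so $f^\ast|_{H^2}$ is the reflection $R_w$ and has determinant $-1$. By Proposition~\ref{prop:Mon2isN}, $R_w$ is a monodromy only when it is of the form in Proposition~\ref{Prop:Reflection in Mon}, i.e.\ $w$ is primitive with $w^2=-2(n+1)$ and $(n+1)\mid\dv(w)$. By Lemma~\ref{lem:monodromyextension}, $g(v)=-v$, so $g$ is the reflection of $\widetilde{\Lambda}$ with coinvariant lattice the rank-one lattice $\widetilde{\Lambda}_g=\operatorname{Sat}(\bbZ w')$, where $w'$ is a combination of $w$ and $v$. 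I would then compute $\operatorname{Sat}_{\widetilde{\Lambda}}(\bbZ v+\bbZ\iota_X(w))$ directly. The divisibility condition forces $\frac{\iota_X(w)\pm v}{2}$ (or similar) to lie in $\widetilde{\Lambda}$, producing a vector of square $0$, namely $\frac{-2(n+1)+2(n+1)}{4}=0$. So $\widetilde{\Lambda}^{1,1}$ is isotropic. The check that the half-sum is integral follows from unimodularity of $\widetilde{\Lambda}$ and the glue between $\bbZ v$ and $v^\perp$, using $\dv_{H^2}(w)\in\{n+1,2n+2\}$. If it instead yields $(a,v)\in\{1,2\}$, that would contradict Lemma~\ref{numerical torelli on mukai lattice}, so I must check that the isotropic vector found has $(a,v)=\pm(n+1)$, which is harmless for this case.

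\textbf{Case $\rho=3$.} If $\det f^\ast=+1$, Proposition~\ref{prop:exceptions} gives the finite-index sublattices of (I) or (II). Since we assumed the corresponding form is isotropic, Lemma~\ref{lem: nonmodularity of exceptions} gives modularity. If $\det f^\ast=-1$, Lemma~\ref{lem:possible char poly} gives order $2$ with $\chi=(t-1)(t+1)^2$ or $(t-1)^2(t+1)$ on $\NS(X)$.
\begin{itemize}
\item If the coinvariant lattice has rank $1$, it is a reflection and the rank-two argument above applies verbatim.
\item If it has rank $2$, $-f^\ast$ acts trivially on the discriminant, and $g(v)=-v$. Then $\widetilde{\Lambda}_g$ has rank $3$, contains $v$, and satisfies $\widetilde{\Lambda}_g\supseteq\bbZ v\oplus\iota_X(\NS(X)_{f^\ast})$. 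Since $g$ is an involution of the unimodular $\widetilde{\Lambda}$, the lattice $\widetilde{\Lambda}_g$ is $2$-elementary, even, and of signature $(1,2)$. Such lattices are $U\oplus\langle-2\rangle$, $U(2)\oplus\langle-2\rangle$, or $\langle2\rangle\oplus A_1^{\oplus 2}(-1)$-type. I would enumerate them via Nikulin's classification and show that each is isotropic, except possibly $\langle 2\rangle\oplus\langle-2\rangle^2$, which is itself isotropic ($x=y=1$).
\end{itemize}
Since $\widetilde{\Lambda}_g\subseteq\widetilde{\Lambda}^{1,1}$ (as $\widetilde{\Lambda}_g$ is orthogonal to the periods), this gives isotropy.

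The main obstacle is the $\det=-1$ cases. Here I must verify that the constraint $(a,v)\notin\{1,2\}$ from Lemma~\ref{numerical torelli on mukai lattice} does not rule out all isotropic vectors. It does not need to: any isotropic vector suffices for Proposition~\ref{prop:modularityisotropic}. The real care needed is in the classification of the $2$-elementary coinvariant lattices of signature $(1,\ast)$, together with the observation that signature $(1,k)$ with $k\geq 1$ and $2$-elementary forces isotropy.
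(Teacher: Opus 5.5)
Your proof is correct. For $\rho\ge 4$ (Meyer) and for $\rho=3$ with $\det f^\ast=+1$ (Proposition~\ref{prop:exceptions} together with Lemma~\ref{lem: nonmodularity of exceptions}), it is the paper's argument.

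The difference is in the cases where $f^\ast$ is a reflection $R_w$, namely $\rho=2$, and $\rho=3$ with $\det f^\ast=-1$. The paper observes that $\operatorname{Sat}_{\widetilde{\Lambda}}(\bbZ v\oplus\bbZ\iota_X(w))\subseteq\widetilde{\Lambda}^{1,1}$ is the coinvariant lattice of an involution of a unimodular lattice. It is therefore hyperbolic and $2$-elementary, and the paper quotes Nikulin's result that such lattices are isotropic.

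You instead use Proposition~\ref{Prop:Reflection in Mon} to force $w^2=-2(n+1)=-v^2$. Since $(v,\iota_X(w))=0$, the integral vector $v+\iota_X(w)$ is then already isotropic. So no half-sum is needed, and in fact $\tfrac{\iota_X(w)\pm v}{2}$ need not lie in $\widetilde{\Lambda}$ in general. Your route is more elementary and exhibits an explicit isotropic vector. The paper's route is more structural and does not need the numerical description of monodromy reflections.

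There are two slips, and neither affects the conclusion.
\begin{itemize}
\item Your $g$ (the extension of $f^\ast$ to $\widetilde{\Lambda}$) is not a reflection of $\widetilde{\Lambda}$. It lies in $SO(\widetilde{\Lambda})$ and acts as $-1$ on both $v$ and $\iota_X(w)$, so its coinvariant lattice is the rank-two saturation you then compute with anyway.
\item When $\rho=3$ and $\det f^\ast=-1$, Lemma~\ref{lem:possible char poly} only allows $\chi=(t-1)^2(t+1)$. The polynomial $(t-1)(t+1)^2$ has determinant $+1$; that is the type (I) situation already handled by Proposition~\ref{prop:exceptions}. So your second bullet is vacuous. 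As written it is also inconsistent, since for a determinant $+1$ monodromy the extension fixes $v$ rather than sending it to $-v$.
\end{itemize}
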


\begin{proof}
   By Proposition~\ref{prop:modularityisotropic}, it is enough to determine when the extended N\'eron--Severi lattice $\widetilde{\Lambda}^{1,1}$ admits an isotropic vector. We fix a Wieneck embedding $\iota\colon H^2(X, \mathbb{Z})\hookrightarrow\widetilde{\Lambda}\coloneqq U^{\oplus4}$, we let $v$ be an associated Mukai vector, i.e. such that $\bbZ v = \iota(H^2(X, \mathbb{Z}))^\perp$, and we define 
   \[\widetilde{\Lambda}^{1,1} := \operatorname{Sat}_{\widetilde{\Lambda}}(\iota(\NS(X))\oplus\bbZ v).\]
   Let us denote $g := f^\ast$ and let $\widetilde{g}$ be the extension to $\widetilde{\Lambda}$ (see the notation from \Cref{cor:Mon2isSO}). Note that $g$ and $\widetilde{g}$ have the same order. 
    Since $f$ is symplectic, $g$ fixes pointwise the transcendental lattice of $X$ and we have the equality of orders $\ord(g) = \ord(g|_{\NS(X)})$ (meaning in particular that $\rk\NS(X) \geq 2$). Moreover, by the description of the monodromy of $X$, we know that $\det(g) = -1$ if and only if $\widetilde{\Lambda}^{\widetilde{g}} = \iota(H^2(X,\bbZ)^g)$ holds, and $\det(g) = +1$ if and only if  $\widetilde{\Lambda}_{\widetilde{g}} = \iota(H^2(X,\bbZ)_g)$ holds. In particular, since $H^2(X,\bbZ)_g\subseteq \NS(X)$ is negative definite, because $f$ is symplectic, we obtain that $\widetilde{\Lambda}_{\widetilde{g}}\subseteq \widetilde{\Lambda}^{1,1}$ is negative definite whenever $\det(g) = +1$, and hyperbolic otherwise.

    When $\rk \NS(X) \geq 4$, we have $\rk \widetilde{\Lambda}^{1,1} \geq 5$ and $\widetilde{\Lambda}^{1,1}$ is isotropic according to Meyer's theorem. Therefore, from now on, we assume that $2\leq \rk \NS(X) \leq 3$.

    Let $m\geq 2$ be the order of $g$. Since $g$ has finite order and it preserves the movable cone 
    $\Mov(X)$, which is a convex cone, it fixes a nonzero vector in $\Mov(X)$: namely for any ample divisor $D\in \NS(X)$, the following holds: $0\neq w\coloneqq (D+\cdots + (g^{m-1})(D))\in \Mov(X)^g$.

    \textbf{Case 1.} Assume $\rk\NS(X)=2$. Then according to \Cref{lem:possible char poly}, $g$ is a reflection and the coinvariant lattice of the involution $\widetilde{g}$ is thus hyperbolic. Moreover, since $\widetilde{\Lambda}$ is unimodular, $\widetilde{g}$ acts trivially on the discriminant group of $\widetilde{\Lambda}_{\widetilde{g}}$, which is therefore 2-elementary (\Cref{lem:action on discriminant}).
    In particular, according to \cite[Theorem 4.3.3]{NikulinQuotient}, $\widetilde{\Lambda}_{\widetilde{g}}\subseteq \widetilde{\Lambda}^{1,1}$ contains an isotropic vector.
    
    \textbf{Case 2.} Assume $\rk\NS(X)=3$ and $\det(g)=-1$. According to \Cref{lem:possible char poly} again, we must have $\ord(g)=2$, and the result follows similarly as in \textbf{Case 1}.

    \textbf{Case 3.} If $\rk\NS(X)=3$ and $\det(g) = +1$, then we can apply \Cref{prop:exceptions,lem: nonmodularity of exceptions} to give exactly which cases are not modular, concluding the proof.
 \end{proof}

We conclude by giving a complete characterization of the non-modular cases from Theorem~\ref{thm:modular except for some examples} via their N\'eron--Severi lattices.

\begin{corollary}\label{cor: NeronSeveriExceptions}
    Let $X$ be a projective $\Kum_n$-type manifold, for some $n\geq2$, and suppose that there exists a symplectic birational self-map $f\in \Bir(X)$ such that $f^*\in O(H^2(X,\bbZ))$ has finite order greater than $1$. Then $X$ is not twisted modular if and only if there exists a positive integer $d\geq 1$ such that one of the following holds:
    \begin{enumerate}
        \item the form $(n+1)x^2+dy^2-z^2-t^2$ is anisotropic and there is a basis $B$ of $\NS(X)$ such that
        \[\NS(X) = \begin{pmatrix}
            2d&0&0\\0&-2&0\\0&0&-2
        \end{pmatrix}_B\qquad \text{and}\qquad f^\ast|_{\NS(X)}=\begin{pmatrix}
            1&0&0\\0&-1&0\\0&0&-1
        \end{pmatrix}_B;\]
        \item the form $(n+1)x^2+(4d+1)y^2-z^2-t^2$ is anisotropic and there is a basis $B$ of $\NS(X)$ such that
        \[\NS(X) = \begin{pmatrix}
            2d&-1&0\\-1&-2&0\\0&0&-2
        \end{pmatrix}_B\qquad \text{and}\qquad f^\ast|_{\NS(X)}=\begin{pmatrix}
            1&0&0\\-1&-1&0\\0&0&-1
        \end{pmatrix}_B;\]
        \item the form $(n+1)x^2+(4d+2)y^2-z^2-t^2$, is anisotropic and there is a basis $B$ of $\NS(X)$ such that
        \[\NS(X) = \begin{pmatrix}
            2d&-1&-1\\-1&-2&0\\-1&0&-2
        \end{pmatrix}_B\qquad \text{and}\qquad f^\ast|_{\NS(X)}=\begin{pmatrix}
            1&0&0\\-1&-1&0\\-1&0&-1
        \end{pmatrix}_B;\]
        \item the form $(n+1)x^2+dy^2-3z^2-t^2$ is anisotropic and there is a basis $B$ of $\NS(X)$ such that
        \[\NS(X) = \begin{pmatrix}
            2d&0&0\\0&-2&-1\\0&-1&-2
        \end{pmatrix}_B\qquad \text{and}\qquad f^\ast|_{\NS(X)}=\begin{pmatrix}
            1&0&0\\0&0&1\\0&-1&-1
        \end{pmatrix}_B;\]
        \item the form $(n+1)x^2+(9d+3)y^2-3z^2-t^2$ is anisotropic and there is a basis $B$ of $\NS(X)$ such that
        \[\NS(X) = \begin{pmatrix}
            2d&-1&-1\\-1&-2&-1\\-1&-1&-2
        \end{pmatrix}_B\qquad \text{and}\qquad f^\ast|_{\NS(X)}=\begin{pmatrix}
            1&0&0\\0&0&1\\-1&-1&-1
        \end{pmatrix}_B.\]
    \end{enumerate}
    Moreover, all the cases (1)-(5) can occur.
\end{corollary}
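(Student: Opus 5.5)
By Theorem~\ref{thm:modular except for some examples} combined with Proposition~\ref{prop:exceptions}, $X$ is not twisted modular exactly when $\rk\NS(X)=3$, $\det(f^\ast)=+1$, and one of the following holds. Either $g:=f^\ast|_{\NS(X)}$ is an involution with $\NS(X)_g\simeq A_1^{\oplus2}(-1)$ and $\NS(X)^g=\bbZ h$ with $h^2=2d$, or $g$ has order $3$ with $\NS(X)_g\simeq A_2(-1)$. In addition, the rational form attached to $\langle 2n+2\rangle\oplus\NS(X)$ must be anisotropic. The task is therefore to classify all overlattices $\NS(X)$ of $L:=\bbZ h\oplus \NS(X)_g$ that satisfy three conditions: $\NS(X)_g$ is primitive, $h$ is primitive, and $g$ extends to $\NS(X)$ acting trivially on $A_{\NS(X)}$. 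We then read off the quadratic form in each case. Note that the relevant $d$ in the anisotropy condition should be recomputed from the generator of $\NS(X)^g$. Equivalently, it comes from $\operatorname{disc}(\NS(X))$, since isotropy of $\langle 2n+2\rangle\oplus\NS(X)$ over $\bbQ$ only depends on the rational class.

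The key steps proceed as follows. Since both $\bbZ h$ and $\NS(X)_g$ are primitive in $\NS(X)$, overlattices correspond, via Nikulin's gluing \cite[Proposition 1.5.1]{nik79}, to isotropic subgroups $H\subseteq A_{\bbZ h}\oplus A_{\NS(X)_g}$ that are graphs of anti-isometries between subgroups $H_1\subseteq \bbZ/2d$ and $H_2\subseteq A_{\NS(X)_g}$. The extension of $g$ (which is trivial on $h$) must preserve $H$. Because $g$ is trivial on $A_{\bbZ h}$, this forces $g$ to act trivially on $H_2$. In case (I), $A_{A_1^{\oplus 2}(-1)}\simeq(\bbZ/2)^2$ with form $\langle -1/2\rangle^{\oplus2}$, and $g=-\Id$ acts trivially on it. Up to the $O(A_1^{\oplus2}(-1))$-swap, the possible $H_2$ are then $0$, the order-two subgroup generated by $e_1/2$, and the order-two subgroup generated by $(e_1+e_2)/2$. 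The subgroup $H_2=A$ is excluded because $A_{\bbZ h}$ is cyclic. Isotropy requires $q(x)=-q(y)$, that is, $h^2/4\cdot k^2\equiv 1/2$ or $\equiv 1\pmod{2\bbZ}$. These congruences force $2d\equiv -2$ or $2d\equiv -4\pmod 8$. Writing $h'=(h+e_1)/2$ (respectively $(h+e_1+e_2)/2$) and renaming $d$ produces the Gram matrices (2) and (3). Computing $\operatorname{disc}$ gives the anisotropy forms with $4d+1$ and $4d+2$. In case (II), $A_{A_2(-1)}\simeq\bbZ/3$ with $q=-2/3$, and $g$ acts trivially on it. Hence $H_2$ is either $0$ or all of $A_{A_2(-1)}$. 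The latter requires $3\mid 2d$ and gives the glue vector $(h+e_1+2e_2)/3$ (up to sign). After a change of basis this yields matrix (5) and discriminant-equivalent $d'=9d+3$. The stated matrices for $f^\ast$ follow directly by expressing $g$ (identity on $h$, $-\Id$ or rotation on the coinvariant part) in the new basis.

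Conversely, for existence I would mirror the existence part of Proposition~\ref{prop:exceptions}. I would embed each of the lattices (1)--(5) together with $v$ into $\widetilde{\Lambda}$ compatibly with $g_2$ or $g_3$ from Lemma~\ref{lem: special isometries mukai lattice}, choosing $h$ inside $\widetilde{\Lambda}^{g_i}\simeq U^{\oplus2}\oplus(\widetilde{\Lambda}^{g_i}\cap(U_3\oplus U_4))$ with a glue component in the latter summand. I would then check the no-isotropic-$u$ condition of Lemma~\ref{numerical torelli on mukai lattice} and invoke Remark~\ref{rem adapt the proof to the projective case}. The main obstacle is this realization step. It requires $v$ and $h$ chosen so that $\operatorname{Sat}(\widetilde{\Lambda}_{g}+\bbZ v)$ has no isotropic $u$ with $(u,v)\in\{1,2\}$, while the saturation of $\bbZ h\oplus\widetilde{\Lambda}_g$ realizes the prescribed glue. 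I would try $v\in U_2$ as before, which keeps the first saturation split and so makes the isotropic condition automatic, and place $h=de_1+f_1+w$ with $w\in\widetilde{\Lambda}^{g}\cap(U_3\oplus U_4)$ the appropriate glue vector. Finally, I would need to confirm that anisotropy is achievable for some $(n,d)$ in each case, by a local-invariant argument or by table entries.
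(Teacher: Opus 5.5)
Your classification is the paper's argument. It uses Nikulin's description of primitive extensions of $\NS(X)^g\oplus\NS(X)_g$. Using that invariant and coinvariant lattices are automatically primitive is a slightly cleaner route to the paper's three cases than its ``no even overlattice plus rescaling $\alpha$'' step. You also carry out case (II), which the paper leaves to the reader.

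\textbf{The gap is in the realization step.} Your ansatz $h=de_1+f_1+w$ with $w\in\widetilde{\Lambda}^{g}\cap(U_3\oplus U_4)$ cannot produce cases (2), (3) or (5), because for such $h$ the lattice $\bbZ h\oplus\widetilde{\Lambda}_g$ is always saturated in $\widetilde{\Lambda}$. To see this, suppose $(ah+c)/m\in\widetilde{\Lambda}$ with $c\in\widetilde{\Lambda}_g\subseteq U_3\oplus U_4$. Projecting onto the direct summand $U_1$ gives $a(de_1+f_1)/m\in U_1$. Since $de_1+f_1$ is primitive, this forces $m\mid a$. Then $c/m\in\widetilde{\Lambda}\cap(\widetilde{\Lambda}_g)_\bbQ=\widetilde{\Lambda}_g$. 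So this choice only ever yields the split Gram matrices of cases (1) and (4), with $h^2=2d+w^2$.

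\textbf{The fix.} To create glue, the $U_1$-component of $h$ must be divisible by $m\in\{2,3\}$, and $w$ must be congruent modulo $m\widetilde{\Lambda}$ to an element of $\widetilde{\Lambda}_g\setminus m\widetilde{\Lambda}_g$. This is exactly the paper's choice:
\begin{itemize}
\item for case (2), $h=2(d'e_1+f_1)+e_3+f_3$, so that $(h+e_3-f_3)/2=d'e_1+f_1+e_3\in\widetilde{\Lambda}$;
\item for case (3), $h=2(d'e_1+f_1)+e_3+f_3+e_4+f_4$;
\item for case (5), using $g_3$, $h=3(d'e_1+f_1)+e_3-f_3+2(e_4+f_4)$.
\end{itemize}
With $v=(n+1)e_2+f_2$ the rest of your plan goes through as you describe. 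The first saturation is split, so there is no isotropic $u$ with $(u,v)\in\{1,2\}$. Then Lemma~\ref{numerical torelli on mukai lattice} and Remark~\ref{rem adapt the proof to the projective case} apply.

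\textbf{Two smaller slips in the classification.}
\begin{itemize}
\item In case (I), $h^2/4\equiv 1/2\pmod{2}$ means $2d\equiv 2\pmod 8$, i.e.\ $d\equiv 1\pmod 4$, not $2d\equiv -2$. With $2d\equiv-2\pmod 8$ the vector $(h+e_1)/2$ would have odd square.
\item In case (II), the anti-isometry condition is $q(h/3)=2d/9\equiv 2/3\pmod{2}$, i.e.\ $d\equiv 3\pmod 9$. This is strictly stronger than $3\mid 2d$, though your final value $9d'+3$ shows you used it implicitly.
\end{itemize}

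\textbf{Anisotropic instances.} The paper's tables already supply one for each glued case:
\begin{itemize}
\item $(n,4d+1)=(2,21)$ for case (2);
\item $(n,4d+2)=(2,30)$ for case (3);
\item $(n,9d+3)=(5,66)$ for case (5).
\end{itemize}
One can also check these directly with Hasse invariants at $3$, $3$ and $2$ respectively.
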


\begin{proof}
    Let $X$ and $f$ be as in the first part of the statement and assume that $f^\ast$ has order 2. According to Theorem~\ref{thm:modular except for some examples} and its proof, we know that $X$ is not twisted modular if and only if there exists $d\geq 1$ such that $\NS(X)$ is isometric to an even overlattice of $\langle2d\rangle\oplus A_1^{\oplus2}(-1)$ and $(n+1)x^2+dy^2-z^2-t^2$ is anisotropic. The first condition is equivalent to saying that there exist pairwise orthogonal vectors $\alpha,\beta,\gamma\in \NS(X)$ with $\alpha^2 = 2d$ and $\beta^2 = \gamma^2 = -2$, and such that we have an equality of rational quadratic forms:
    \[\NS(X)_\mathbb{Q} = \Span_\bbQ(\alpha,\beta,\gamma).\]
    Now, the sublattice $A_1^{\oplus2}(-1)\simeq \bbZ\beta\oplus\bbZ\gamma\subset \NS(X)$ admits no nontrivial even overlattice \cite[Proposition 1.4.1]{nik79} meaning that it is primitive in $\NS(X)$. Moreover, there exists a unique positive integer $k\geq 1$ such that the vector $\alpha$ is primitive in $k\NS(X)$. Note that $(n+1)x^2+dy^2-z^2-t^2$ is anisotropic if and only if so is $(n+1)x^2+\frac{d}{k^2}y^2-z^2-t^2$. So up to replacing $\alpha$ by $\alpha/k$ and $d$ by $d/k^2$, we may assume that $\alpha$ is primitive in $\NS(X)$. The upshot is the following: The lattice $\NS(X)$ is a primitive extension of $\bbZ\alpha\oplus (\bbZ\beta\oplus\bbZ\gamma)$ (i.e. it is an overlattice and both of the summands $\bbZ\alpha$ and $\bbZ\beta\oplus\bbZ\gamma$ are primitive in $\NS(X)$). Following Nikulin's theory on primitive extensions of even lattices (see for instance \cite[Section 1, 5°]{nik79}), up to permuting $\beta$ and $\gamma$, exactly one of the following three holds:
    \begin{enumerate}
        \item either $\NS(X) = \Span_\bbZ(\alpha,\beta,\gamma)$,
        \item or $d\equiv 1\mod 4$ and $\NS(X) = \Span_\bbZ\left(\frac{\alpha+\beta}{2},\beta,\gamma\right)$,
        \item or $d\equiv 2\mod 4$ and $\NS(X) = \Span_\bbZ\left(\frac{\alpha+\beta+\gamma}{2},\beta,\gamma\right)$,
    \end{enumerate}
    giving cases (1)--(3), respectively.
    For cases (4) and (5) one proceeds similarly, by assuming that the isometry $f^\ast$ has order 3.  The details are left to the reader.\smallskip

    Now let us show that the cases (1)--(5) actually occur. Let $\widetilde{\Lambda}\coloneqq U_1\oplus U_2\oplus U_3\oplus U_4$ where for each $i\in \{1,\ldots, 4\}$, we have $U_i\simeq U$. For each such index $i$, we let again $e_i, f_i\in U_i$ be isotropic vectors such that $(e_i,f_i) = 1$. We let moreover $g_2,g_3\in SO(\widetilde{\Lambda})$ denote the isometries defined as in the proof of Lemma~\ref{lem: special isometries mukai lattice}. In \Cref{prop:exceptions,lem: nonmodularity of exceptions}, we show that cases (1) and (4) from the statement can occur. For this, for fixed $n$ and $d$, we define $v \coloneqq (n+1)e_2+f_2$ and $h \coloneqq de_1+f_1$, and we show the existence of a  $(v^\perp)$-marked pair $(X, \mu)$ where $X$ is a projective $\Kum_n$-type manifold such that $\mu(\NS(X)) = \bbZ h\oplus \widetilde{\Lambda}_{g_i}$ for some $i=1,2$. To realize the other cases, we proceed similarly but we change the $g_i$-invariant primitive positive vector $h$ we construct.

    By definition of $g_2$, the following equalities hold: $\widetilde{\Lambda}^{g_2} = \Span_\bbZ(e_1,f_1,e_2,f_2,e_3+f_3,e_4+f_4)$ and $\widetilde{\Lambda}_{g_2}=\Span_\bbZ(e_3-f_3, e_4-f_4)\simeq A_1^{\oplus2}(-1)$. In order to realize case (2) from the statement, we write $d=4d'+1$ where $d'\geq 1$ and we define $h \coloneqq 2(d'e_1+f_1)+e_3+f_3$: one can easily check that $h$ is a primitive $(2d)$-vector fixed by $g_2$. However, this time, the lattice $\bbZ h\oplus \widetilde{\Lambda}_{g_2}$ is not primitive in $\widetilde{\Lambda}$ since $\widetilde{\Lambda}$ contains 
    \[d'e_1+f_1+e_3 = \frac{h+e_3-f_3}{2}\in (\bbZ h)^\vee\oplus \widetilde{\Lambda}_{g_2}^\vee.\]
    Since $g_2$ acts trivially on $\bbZ h$ and as negative identity on $\widetilde{\Lambda}_{g_2}$, the lattice $\bbZ h\oplus \widetilde{\Lambda}_{g_2}$ has index 2 in its saturation, and the latter can thus be obtained by replacing $h$ with the vector $d'e_1+f_1+e_3$. We therefore compute:
    \[\operatorname{Sat}_{\widetilde{\Lambda}}(\bbZ h\oplus \widetilde{\Lambda}_{g_2}) = \Span_\bbZ\left(d'e_1+f_1+e_3, e_3-f_3, e_4-f_4\right)= \begin{pmatrix}
            2d'&-1&0\\-1&-2&0\\0&0&-2
        \end{pmatrix}.\]
    By applying similar arguments as in the proof of Proposition~\ref{prop:exceptions} and according to Lemma~\ref{lem: nonmodularity of exceptions}, we can thus show that case (2) occurs. For case (3), the argument is similar but this time we set $h \coloneqq 2(d'e_1+f_1)+e_3+f_3+e_4+f_4$ where $d = 4d'+2$ and $d'\geq 1$.

    For case (5), the proof works the same, by writing $d = 9d'+3$ for some $d'\geq 1$ and by setting $h\coloneqq 3(d'e_1+f_1)+e_3-f_3+2(e_4+f_4)$. \qedhere

\end{proof}

\section{Weyl reflections on modular Kummer-type manifolds}\label{sec:RefVertWall}

\subsection{Stability conditions on abelian surfaces}

Let $A$ be an abelian surface. 
We denote the derived category of 
coherent sheaves on $A$ by $\Db(A)$.  We will not give the general definition of \emph{stability conditions} on triangulated categories, directing the interested reader to \cite{MS:LecturesOnBS} for an introduction to the general theory, but instead focus on the ones we will use in this paper and the case of $\Db(A)$.

A stability condition $\sigma=(\calA_\sigma,Z_\sigma)$ consists of the heart $\calA_\sigma$ of a $t$-structure on $\Db(A)$ and the \emph{central charge} $Z_\sigma\in\Hom_\bbZ(\widetilde{H}(A,\bbZ),\bbC)$ satisfying some technical conditions, the most important of which, for our exposition, is that $Z_\sigma(v(E))\in\bbH\cup\bbR_{<0}$ for any $0\ne E\in\calA_\sigma$, where $v(E)\in\widetilde{H}(A,\bbZ)$ is the Mukai vector (simply equal to the Chern character in the case of an abelian surface).  This allows us to define the \emph{generalized slope} of $E\in\calA_\sigma$ as
$$\mu_\sigma(E):=\begin{cases}
    \frac{-\Re Z_\sigma(v(E))}{\Im Z_\sigma(v(E))}&\Im Z_\sigma(v(E))>0\\
    \infty&\Im Z_\sigma(v(E))=0
\end{cases},$$
and with it to define an object $E\in\calA_\sigma$ as \emph{$\sigma$-(semi)stable} if $\mu_\sigma(F)<\mu_\sigma(E/F)$ (resp. $\mu_\sigma(F)\le\mu_\sigma(E/F)$) for all proper subobjects $F\subset E$ in the abelian category $\calA_\sigma$.  A general object $E\in\Db(A)$ is said to be $\sigma$-(semi)stable if $E[k]\in\calA_\sigma$ for some $k\in\bbZ$ and $E[k]$ is $\sigma$-(semi)stable.
Since the Mukai pairing $(-,-)$ on $\widetilde{H}(A,\bbZ)$ is nondegenerate, there exists $w_\sigma\in\widetilde{H}(A,\bbC)$ such that $Z_\sigma(-)=(w_\sigma,-)$.

The set $\Stab(A)$ of all stability conditions admits a generalized metric \cite[Proposition 8.1]{Bri07}, and via the map
$$\eta\colon\Stab(A)\to\widetilde{H}(A,\bbC),\quad \sigma\mapsto w_\sigma,$$
whose restriction to any connected component $\Sigma\subset\Stab(A)$  is a local homeomorphism onto an open subset of a linear subspace $V(\Sigma)\subset\widetilde{H}(A,\bbC)$ \cite[Corollary 1.3]{Bri07}, the set $\Stab(A)$ is seen to be a finite dimensional complex manifold.
Furthermore, there are commuting actions of $\Aut\Db(A)$ and $\wGL2$ on $\Stabd(A)$ from the left and the right, respectively \cite[Lemma 8.2]{Bri07}.  

The stability conditions we consider in this paper lie in a distinguished connected component $\Stabd(A)\subset\Stab(A)$ characterized by the fact that for $\sigma\in\Stabd(A)$, skyscraper sheaves $\calO_x$ of points $x\in A$ are $\sigma$-stable.  More explicitly, we fix a pair $(\omega,\beta)\in \Amp(A)\times\NS(A)_\bbR$ and define 
$$Z_{\omega,\beta}(E)=(\exp(\sqrt{-1}\omega+\beta),v(E))\in\bbC$$
for every $E\in \Db(A)$.
Thus if $v(E)=(r,\theta,\chi)$, then $\Im Z_{\omega,\beta}(E)=\omega\cdot(\theta-r\beta)$.
For the heart, we define a pair of additive subcategories making up a \emph{torsion pair} (see \cite[Definition 6.1]{MS:LecturesOnBS} for the definition or \cite{HRS:Tilting} for more details) as follows, where $\langle-\rangle$ denotes the extension closure: 
\begin{align*}
    \calT^{\omega,\beta}&:=\langle E\in\Coh(A)\; |\; \text{$E$ is $\mu_\omega$-semistable with $\Im Z_{\omega,\beta}(E)>0$}\rangle\\
    \calF^{\omega,\beta}&:=\langle E\in\Coh(A)\; |\; \text{$E$ is $\mu_\omega$-semistable with $\Im Z_{\omega,\beta}(E)\le0$}\rangle
\end{align*}
From the torsion pair $(\calT^{\omega,\beta},\calF^{\omega,\beta})$, we form the following abelian subcategory, which is the heart of a $t$-structure on $\Db(A)$, by tilting $\Coh(A)$ at the torsion pair:
\begin{equation}
    \calA_{\omega,\beta}:=\{F\in\Db(A)\ |\ \calH^{-1}(F)\in\calF^{\omega,\beta},\calH^0(F)\in\calT^{\omega,\beta},\calH^i(F)=0\text{ for }i\ne-1,0\}.
\end{equation}

The following result is due to Bridgeland \cite[Section 15]{Bri08}:
\begin{theorem}
   Let $A$ be an abelian surface and fix $(\omega,\beta)\in\Amp(A)\times\NS(A)_\bbR$.  Then  $\sigma_{\omega,\beta}=(\calA_{\omega,\beta},Z_{\omega,\beta})\in\Stabd(A)$ holds, depending continuously on $(\omega,\beta)\in\Amp(A)\times\NS(A)_\bbR$.  Moreover, under the natural right action of $\widetilde{\GL}^+(2,\bbR)$, there is an identification:
$$\!^{\Stabd(A)}\!/\!_{\widetilde{\GL}^+(2,\bbR)}\simeq\Amp(A)\times\NS(A)_\bbR.$$
\end{theorem}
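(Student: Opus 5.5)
The statement is Bridgeland's theorem for abelian surfaces, so the plan follows the K3 argument of \cite{Bri08}, simplified by the absence of spherical objects.

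\textbf{Step 1: $\sigma_{\omega,\beta}$ is a stability condition.} First I check that $Z_{\omega,\beta}$ is a stability function on $\calA_{\omega,\beta}$. For $E\in\calT^{\omega,\beta}$ we have $\Im Z>0$ by construction, unless $E$ is torsion supported in dimension zero. In that case $Z=-\chi<0$. For $F[1]$ with $F\in\calF^{\omega,\beta}$ we have $\Im Z(F[1])\ge 0$. When this vanishes, $F$ is $\mu_\omega$-semistable of slope $\omega\cdot\beta$, and we need $\Re Z(F)>0$, i.e.
\[
\chi-\theta\cdot\beta+\tfrac{r}{2}(\beta^2-\omega^2)>0 .
\]
The Bogomolov inequality $\theta^2\ge 2r\chi$ (note $\Delta=v^2\ge 0$ on an abelian surface) together with the Hodge index theorem gives this estimate. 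This is simpler than the K3 case, because $v(F)^2\ge0$ for every semistable sheaf, so no restriction on $\omega^2$ is needed.

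Next I prove the Harder--Narasimhan property. Since the image of $Z$ is discrete when $\omega,\beta$ are rational, the standard noetherianity argument for tilted hearts applies. For real classes I then deform.

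Finally I verify the support property. I use the quadratic form $v\mapsto v^2$, which is nonnegative on $\sigma$-semistable objects, together with the fact that it is negative definite on $\ker Z$.

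\textbf{Step 2: continuity and membership in $\Stabd(A)$.} Skyscraper sheaves $\calO_x$ lie in $\calT^{\omega,\beta}$ with phase $1$. They are stable because any subobject in $\calA_{\omega,\beta}$ gives a short exact sequence $0\to F[1]\to G\to \calO_x$; analyzing this as in \cite[Lemma 10.1]{Bri08} rules out destabilizing subobjects. The central charge varies continuously, so Bridgeland's deformation theorem and the local injectivity of $\eta$ place the whole family in the single component $\Stabd(A)$, depending continuously on the parameters.

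\textbf{Step 3: the quotient identification.} Let $\sigma\in\Stabd(A)$. Since every $\calO_x$ is stable of the same phase, acting by $\widetilde{\GL}^+(2,\bbR)$ normalizes $Z(\calO_x)=-1$ with phase $1$. I then show that the heart equals $\calA_{\omega,\beta}$, following \cite[Proposition 10.3]{Bri08}: objects of the heart have cohomology only in degrees $-1,0$, and the degree $-1$ part is torsion free. Writing $w_\sigma=\exp(\sqrt{-1}\omega+\beta)$ up to this normalization determines $(\omega,\beta)$. The one extra input is that $\omega^2>0$, which comes from positivity of $Z$ on the shifted line bundles.

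The main obstacle is excluding a possibly degenerate $\omega$. I would show, as in \cite[Section 11]{Bri08}, that in the abelian case the period $\Omega=w_\sigma$ spans a positive 2-plane, and that there are no $(-2)$-classes in $\widetilde{H}^{1,1}$ on which it could vanish. Hence the whole region
\[
\mathcal P^+_0(A)=\mathcal P^+(A)
\]
is covered with no holes. It follows that the $\widetilde{\GL}^+(2,\bbR)$-orbits are exactly parametrized by $\Amp(A)\times\NS(A)_\bbR$.
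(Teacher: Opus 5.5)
Your outline is the right one. It is Bridgeland's argument, which is all the paper relies on: it simply cites \cite[Section 15]{Bri08} and gives no proof of its own. Steps 1--2 are fine up to a sign slip. The condition you need is $\Re Z(F)=\frac r2\omega^2-\chi_\beta>0$ with $\chi_\beta=\chi-\theta\cdot\beta+\frac r2\beta^2$, so your displayed inequality should read $<0$. Bogomolov and Hodge index then give $\chi_\beta\le 0$.

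The genuine gap is in Step 3. You begin with ``let $\sigma\in\Stabd(A)$; since every $\calO_x$ is stable of the same phase''. For the component containing the $\sigma_{\omega,\beta}$, this is precisely the nontrivial content of the abelian case. Let $U(A)$ be the open set of stability conditions in which all skyscrapers are stable of one phase; the point is that $U(A)$ is closed, hence equal to $\Stabd(A)$. Without that, your heart identification only yields $U(A)/\widetilde{\GL}^+(2,\bbR)\simeq\Amp(A)\times\NS(A)_\bbR$.

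The argument you offer for this remaining step does not work, for two reasons.
\begin{itemize}
\item $\widetilde{H}^{1,1}(A,\bbZ)$ always contains $(-2)$-classes, because $H^0\oplus H^4$ is a hyperbolic plane. For example $(1,0,1)^2=-2$, and $Z_{\omega,0}((1,0,1))=\frac{\omega^2}{2}-1$ vanishes as soon as $\omega^2=2$.
\item ``No holes in $\calP^+(A)$'' is a statement about the image of $\eta$. It says nothing about walls inside the component along which skyscrapers get destabilized. For K3 surfaces, the walls bounding $U(X)$ (types $A^\pm$, $C_k$) lie over $\calP^+_0(X)$, away from the holes.
\end{itemize}

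What is missing is the boundary analysis of \cite[Section 12]{Bri08}, which becomes vacuous for $A$.
\begin{itemize}
\item Let $\sigma\in\partial U(A)$. Semistability of a fixed object is a closed condition, so all $\calO_x$ are $\sigma$-semistable of a common phase. Since $U(A)$ is open, some $\calO_x$ is strictly semistable, with Jordan--H\"older factors $S_1,\dots,S_k$, $k\ge 2$, all stable of that phase.
\item Since $\Db(A)$ has no spherical objects, $v(S_i)^2=\ext^1(S_i,S_i)-2\ge 0$. Also $(v(S_i),v(S_j))=\ext^1(S_i,S_j)\ge 0$ when $S_i\not\cong S_j$. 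This is the same fact you already use for the support property in Step 1.
\item Hence $0=v(\calO_x)^2=\sum_{i,j}(v(S_i),v(S_j))$ forces every term to vanish.
\item All $v(S_i)$ lie in $\{w : Z(w)\in\bbR\, Z(\calO_x)\}$. This space is hyperbolic because $\Re w_\sigma$ and $\Im w_\sigma$ span a positive definite plane, which is the statement you do plan to prove. So these isotropic, mutually orthogonal classes are positive integer multiples of one primitive isotropic $w_0$.
\item Since $(0,0,1)=\sum_i v(S_i)$ is primitive, $k=1$, a contradiction.
\end{itemize}
Thus $U(A)$ is open and closed, so $U(A)=\Stabd(A)$. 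Only after this does your Step 3 give the identification of the quotient with $\Amp(A)\times\NS(A)_\bbR$.
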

In other words, the explicit construction above gives a stability condition $\sigma_{\omega,\beta}$ on $\Db(A)$, and all stability conditions on $\Db(A)$ lying in the component $\Stabd(A)$ are of this form up to applying certain symmetries of $\bbC\cong\bbR^2$.

\subsubsection{Walls and moduli spaces}
Fix a Mukai vector $v\in\widetilde{H}(A,\bbZ)$.  We first recall that such a choice determines a wall-and-chamber decomposition of $\Stabd(A)$:
\begin{proposition}[{\cite[Definition 2.7]{YY}, \cite[Proposition 5.7]{MYY14}}]
There exists a locally finite set of walls (real codimension one submanifolds with boundary) in $\Stabd(A)$, depending on $v$, such that the sets of $\sigma$-stable and $\sigma$-semistable of objects of class $v$ remain constant  when $\sigma$ varies in a chamber (i.e. in a component of the complement of the walls) but necessarily change when $\sigma$ crosses a wall.  
\end{proposition}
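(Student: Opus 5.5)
The plan is to follow Bridgeland's argument for local finiteness of walls \cite[Section 9]{Bri08}, adapted to abelian surfaces as in \cite{YY,MYY14}. First, for each class $w\in\widetilde{H}^{1,1}(A,\bbZ)$ not proportional to $v$, I will define the candidate wall
\[
\calW_w:=\{\sigma\in\Stabd(A)\;\mid\; Z_\sigma(w)\in\bbR_{>0}Z_\sigma(v)\}.
\]
I then keep only those $w$ for which there actually exist $\sigma$-semistable objects of classes $w$ and $v-w$ for $\sigma\in\calW_w$. Since $\eta$ is a local homeomorphism onto an open subset of $V(\Stabd(A))$, the condition $\Im\big(Z_\sigma(w)\overline{Z_\sigma(v)}\big)=0$ is a single real-analytic equation in local coordinates. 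Its differential is nonzero because $w$ and $v$ are not proportional, and we can move $(\omega,\beta)$ freely. Hence $\calW_w$ is a real codimension one submanifold, possibly with boundary where positivity of the proportionality constant degenerates.

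Local finiteness comes next. Fix a compact subset $K\subset\Stabd(A)$. Any $\sigma$-stable object $E$ on an abelian surface satisfies $v(E)^2=\ext^1(E,E)-2\hom(E,E)\ge -2+\ext^1\ge 0$; in fact $v(E)^2\ge0$, since $\hom=\ext^2=1$ and $\chi(E,E)=-v(E)^2$. So every Jordan--H\"older factor class $w$ of a semistable object of class $v$ satisfies $w^2\ge0$ and $|Z_\sigma(w)|\le|Z_\sigma(v)|$. Combined with the support property, which holds uniformly on $K$, this gives a constant $C$ with $\|w\|\le C|Z_\sigma(w)|$. This leaves only finitely many possible classes $w$, so only finitely many walls meet $K$.

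For constancy on chambers, I will use that $\sigma$-stability of a fixed object is an open condition and semistability a closed condition in $\Stabd(A)$ \cite[Proposition 9.3]{Bri08}. Suppose $\sigma_0,\sigma_1$ lie in the same chamber and are joined by a path $\gamma$ inside it. If the set of stable objects of class $v$ changed along $\gamma$, then at the first point of change some object of class $v$ would be strictly semistable with a subobject of class $w$ of the same phase. That point would lie on $\calW_w$, which is a contradiction. The same argument handles semistable objects, since semistable objects off walls are stable when $v$ is primitive; for non-primitive $v$ the argument is the same with S-equivalence classes.

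The main obstacle is the final clause, that the sets necessarily change when crossing a wall. For this I will use the non-emptiness results for moduli of stable objects on abelian surfaces (Yoshioka; Theorem~\ref{thm:twistedYoshioka} in the twisted setting), valid for positive primitive classes with $w^2\ge0$. Take $\sigma_0\in\calW_w$ generic on the wall and choose $\sigma_0$-stable $F$ of class $w$ and $G$ of class $v-w$. Then $\ext^1(G,F)=-\chi(G,F)=(w,v-w)$, and I must show this is positive. For that I would argue via signature: $w$ and $v-w$ span a hyperbolic plane, or one of them is isotropic with nonzero pairing. Given positivity, nontrivial extensions $0\to F\to E\to G\to0$ are stable on exactly one side of $\calW_w$ and unstable on the other, so the set of stable objects changes. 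If the positivity step fails in degenerate isotropic cases, I would instead restrict the definition of walls to those $w$ with $(w,v-w)>0$, in the spirit of \cite[Definition 2.7]{YY}.
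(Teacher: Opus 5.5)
The paper gives no argument of its own for this statement; it only cites \cite[Definition 2.7]{YY} and \cite[Proposition 5.7]{MYY14}. Your first three steps are the standard Bridgeland argument and are fine: walls as real-analytic hypersurfaces, local finiteness from the support property, and constancy on chambers from openness of stability, closedness of semistability and primitivity of $v$. Two small corrections. First, your computation only gives $v(E)^2=\ext^1(E,E)-2\ge -2$. The bound $v(E)^2\ge 0$ actually rests on Mukai's result that simple objects on an abelian surface have $\ext^1(E,E)\ge 2$. Second, the positivity you hedge on does hold. Let $\calH$ be the rank-two lattice of classes whose central charge lies on $\bbR Z_{\sigma_0}(v)$. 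The kernel of $Z_{\sigma_0}$ on $\calH_\bbR$ is a negative line, so $w$ and $v-w$ lie in the same component of the closed positive cone of this hyperbolic plane. Being non-proportional, they satisfy $(w,v-w)>0$.

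The genuine gap is in the last step: nothing guarantees $\sigma_0$-stable objects $F,G$ of classes $w$ and $v-w$. The usable nonemptiness results are Theorem~\ref{thm:Bridgelandmoduli} and its isotropic analogue; Theorem~\ref{thm:twistedYoshioka} concerns Gieseker moduli with $v^2\ge 6$ and is not relevant. These results need a primitive class and a stability condition generic for that class. But any wall for $w$ through a generic point of $\calW_w$ is cut out by the same lattice $\calH$, so genericity along $\calW_w$ does not help. Knowing that \emph{semistable} objects of classes $w$ and $v-w$ exist does not produce stable ones either.

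The step does fail in practice. Take the vertical (Hilbert--Chow) wall for $v=(1,0,-n)$ with $n\ge 3$, where your criterion selects $w=(1,0,-k)$. A $\sigma_0$-stable object of class $\pm w$ would be stable on the Gieseker side, hence of the form $I_Z\otimes P$ up to shift. But such an object contains $\calO_Z$ as a subobject of the same $\sigma_0$-phase, so it is not $\sigma_0$-stable. A rank count in the tilted heart shows more generally that no decomposition of $v$ into the classes of two $\sigma_0$-stable objects exists there.

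The missing idea is the one the paper uses in the proof of Theorem~\ref{thm:ClassificationWalls}:
\begin{enumerate}
\item By \cite[Lemma 9.2]{BM14}, choose $v=v_1+v_2$ in $\calH$ with $v_i$ in the closed positive cone and no lattice points in the parallelogram $0,v_1,v,v_2$ other than its vertices.
\item Take stable objects $A_1,A_2$ of classes $v_1,v_2$ for a \emph{nearby generic} $\sigma_+$; these exist.
\item Use \cite[Lemma 9.3]{BM14} to see that nontrivial extensions of $A_2$ by $A_1$ are $\sigma_+$-stable. Such extensions exist since $\ext^1(A_2,A_1)\ge(v_1,v_2)>0$. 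On the $\sigma_-$ side, $A_1$ destabilizes them.
\end{enumerate}
In the Hilbert--Chow example this means $v_1=(0,0,-1)$ and $v_2=(1,0,1-n)$, with $A_1=\calO_x[-1]$ and $A_2=I_Z$. Here $A_2$ is not $\sigma_0$-stable, which is exactly why your $\sigma_0$-stable version cannot work.
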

The most important facet of the definition of the walls in $\Stabd(A)$ from the proposition is that they are the loci of $\sigma=(\calA,Z)$ for which $v$ and another, linearly independent, Mukai vector have the same $\mu_\sigma$-slope.
A stability condition is called \emph{generic with respect to $v$} if it does not lie on a wall with respect to $v$.  In particular, from the definition of these walls, when $v$ is primitive, $\sigma$ lies on a wall if and only if there exists a strictly $\sigma$-semistable object of Mukai vector $v$.

For $\sigma\in\Stabd(A)$, we denote by $M_A(v,\sigma)$ the coarse moduli space of ($S$-equivalence classes of) $\sigma$-semistable objects $E\in\Db(A)$ of Mukai vector $v$, which contains as an open subscheme $M^{st}_A(v,\sigma)$, the coarse moduli space of $\sigma$-stable objects $E\in\Db(A)$ with $v(E)=v$.

\begin{citedthm}[{(\cite[Proposition 5.16]{MYY14})}]\label{thm:Bridgelandmoduli}
    Let $A$ be an abelian surface and $v=(r,\theta,\chi)\in \widetilde{H}(A,\bbZ)$ a primitive Mukai vector with $v^2\geq 2$, and let $\sigma\in\Stabd(A)$ be general with respect to $v$.  Then the moduli space $M_A(v,\sigma)$ is a smooth projective symplectic manifold which is deformation equivalent to $\Hilb_A^{v^2/2}\times \widehat{A}$, where $\widehat{A}:=\Pic^0(A)$ is the dual abelian surface, and the Albanese morphism
$$\fraka\colon M_A(v,\sigma)\to A\times\widehat{A}$$
is an \'{e}tale locally trivial fibration. If $v^2\ge 6$ and  $X=K_A(v,\sigma)$ is a fiber of $\fraka$, then
\begin{enumerate}
    \item $X$ is a hyperkähler manifold of $\Kum_n$-type, where $2n = v^2-2$.
    \item The Mukai homomorphism \cite[Definition 1.12(2)]{Yos16}
    $$\theta_{v}\colon v^\perp\to H^2(X,\Z)$$
    is an isometry of polarized Hodge structures, where $v^\perp\subset\widetilde{H}(A,\bbZ)$ is taken with the induced polarized Hodge structure.
\end{enumerate}
\end{citedthm}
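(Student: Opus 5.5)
This result is cited from \cite[Proposition 5.16]{MYY14}, so the plan is to reduce it to the known Gieseker-stable case rather than reprove it from scratch. The first step is to use the wall-and-chamber structure of $\Stabd(A)$ and Bridgeland's description of $\Stabd(A)$ modulo $\widetilde{\GL}^+(2,\bbR)$. Together these let us assume that $\sigma=\sigma_{\omega,\beta}$ with $\omega$ large, or more generally that $\sigma$ lies in a chamber for $v$. The $\widetilde{\GL}^+(2,\bbR)$-action does not change the moduli space, so this reduction is harmless.

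We then connect $\sigma$ to the large volume limit by a path in $\Stabd(A)$ that crosses finitely many walls; local finiteness of the walls guarantees finiteness. In the large volume chamber, $\sigma$-stable objects of class $v$ are exactly the $\beta$-twisted Gieseker stable sheaves, up to shift and taking $\pm v$ positive. Here Yoshioka's results \cite[Theorem 0.2]{YosModuliSheaves} (Theorem~\ref{thm:twistedYoshioka} in the untwisted case) give the required properties: smooth projective, deformation equivalent to $\Hilb^{v^2/2}_A\times\widehat A$, an \'etale locally trivial Albanese fibration, a $\Kum_n$-type fiber, and a Mukai isometry $\theta_v$. To transport these properties across each wall, I would use Fourier--Mukai autoequivalences of $\Db(A)$, which act on $\Stabd(A)$ and on $\widetilde H(A,\bbZ)$ by Hodge isometries. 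Any chamber for $v$ can be moved by such an autoequivalence, composed with the $\widetilde{\GL}^+(2,\bbR)$-action, to a large volume chamber for some vector $v'$. Then $M_A(v,\sigma)\cong M_{A'}(v',H')$ for a suitable (possibly dual or isogenous) abelian surface $A'$. In particular, $M_A(v,\sigma)$ is a fine or twisted Gieseker moduli space of the same dimension $v^2+2$.

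The properties then follow. Smoothness and the symplectic form come from the Mukai pairing and the vanishing of obstructions on an abelian surface (trace-free $\Ext^2$). The Albanese map $\fraka(E)=(\det\text{-type},\ \det\Phi_{\calP}(E))$ is equivariant for the action of $A\times\widehat A$ by translation and tensoring. This equivariance gives \'etale local triviality via the finite stabilizer group of order $(v^2/2)^4$. The Mukai homomorphism $\theta_v$, defined using a quasi-universal family, is compatible with the Fourier--Mukai identification, so the isometry statement is inherited from the Gieseker case.

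The main obstacle is projectivity, together with the identification of each chamber with a Gieseker chamber via an autoequivalence. Projectivity does not follow formally from the Fourier--Mukai argument if $\sigma$ lies in a chamber that is not a large volume limit. For that step I would first try the positivity lemma of Bayer--Macr\`i, which provides an ample class $\ell_\sigma$ on $M_A(v,\sigma)$. If that fails, I would fall back on Minamide--Yanagida--Yoshioka's argument that every chamber is reached by a Fourier--Mukai transform from a Gieseker chamber, i.e.\ that no genuinely new moduli spaces appear for abelian surfaces.
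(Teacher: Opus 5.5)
The paper gives no proof of this statement; it imports it from \cite[Proposition 5.16]{MYY14}. Your outline has the right overall shape: identify $M_A(v,\sigma)$ with a Gieseker-type moduli space through a Fourier--Mukai transform, then import Yoshioka's results. There is a genuine gap, however, because the step that carries all the content is only asserted, and as stated it is not right.

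You attribute the key step to autoequivalences of $\Db(A)$, which in general cannot carry an arbitrary $v$-generic chamber to a large volume chamber. The equivalence has to be chosen depending on the chamber; in the source, I believe it comes from a suitable point of the chamber with rational parameters. Its target is in general a different abelian surface $A'$, namely a two-dimensional moduli space of stable objects with primitive isotropic Mukai vector. That moduli space need not be fine, so the kernel is in general only a twisted universal family. The target category is therefore $\Db(A',\alpha')$ with $\alpha'\neq 0$ possible. What you need on the Gieseker side is then the Brauer-twisted version of Yoshioka's theorem, as in Theorem~\ref{thm:twistedYoshioka}, not its untwisted case. Your fallback, that every chamber is reached from a Gieseker chamber by a Fourier--Mukai transform, is exactly the content of the cited result. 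Appealing to it cites the statement rather than proving it.

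Two other steps do not work as written.

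\begin{itemize}
\item \textbf{Crossing walls.} Connecting $\sigma$ to the large volume limit and crossing walls one by one transports nothing. On the two sides of a flopping wall the moduli spaces are only birational, and smoothness or projectivity do not pass through birational maps. Each chamber has to be identified directly.
\item \textbf{Projectivity.} Projectivity is not a separate obstacle. Once $M_A(v,\sigma)\cong M_{A',\alpha'}(v',H')$ is known, it is automatic, since the right-hand side is projective (Theorem~\ref{thm:modulitwistgeneral}, Proposition~\ref{prop:modulitwistsmoothproj}). Without that identification, the positivity lemma only gives strict positivity of $\ell_\sigma$ on curves. You would still need $M_A(v,\sigma)$ to be a proper algebraic space, plus a Nakai--Moishezon-type argument, to conclude.
\end{itemize}

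The remaining ingredients are fine once the identification is in place. These are unobstructedness via $\Ext^2(E,E)\cong\Hom(E,E)^\vee$, the $A\times\widehat{A}$-equivariance of $\fraka$ giving an \'etale cover of degree $(v^2/2)^4$, and the compatibility of $\theta_v$ with the cohomological Fourier--Mukai isometry.
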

\begin{remark}
 When $v^2=0$, the moduli space $M_A(v,\sigma)$ is again an abelian surface, derived equivalent to $A$.
\end{remark}
\begin{remark}
Fix an ample divisor $H\in\Amp(A)$.  Then for a positive Mukai vector $v\in\widetilde{H}(A,\bbZ)$ (as in Definition~\ref{def:posvector}) with $v^2\ge 0$, there exists a distinguished chamber, called the \emph{Gieseker chamber}, characterized by the property that for any $\sigma$ in this chamber, $M_A(v,\sigma)\cong M_A(v,H)$ the moduli space of $H$-Gieseker semistable sheaves.
\end{remark}
    
\subsubsection{Wall-crossing and birational geometry}
A natural question is how the coarse moduli space $M_A(v,\sigma)$ changes when $\sigma$ crosses a wall in $\Stabd(A)$.  Since this moduli space is constant as $\sigma$ varies within a chamber $\calC$, we also denote it by $M_A(v,\calC)$.  Fixing a base point $\sigma$ in any chamber $\calC\subset\Stabd(A)$ with respect to $v$, the composition $\overline{\ell_\calC}=\mathrm{res}\circ\ell_\calC$ of the Bayer--Macr\`{i} map
$\ell_\calC\colon\overline{\calC}\to\NS(M_A(v,\calC))_\bbR$ from \cite{BM14a} with restriction  
$\NS(M_A(v,\calC))_\bbR\mor[\mathrm{res}]\NS(K_A(v,\calC))_\bbR$ satisfies
$$\overline{\ell_\calC}(\tau)=\theta_v\left(-\Im\left(\frac{w_\tau}{Z_\tau(v)}\right)\right),\quad \overline{\ell_\calC}(\overline{\calC})=\Nef(K_A(v,\calC)),\quad \overline{\ell_\calC}(\calC)=\Amp(K_A(v,\calC))$$
by \cite[Proposition 4.4, Remark 5.5, Lemma 9.2]{BM14a} and \cite[Proposition 3.15]{Yos16}.
The Bayer--Macr\`{i} map provides the connection to birational geometry via the following result:
\begin{proposition}[{\cite[Theorem 1.4]{BM14a}, \cite[Corollary 3.16, Proposition 3.17]{Yos16}}]
    Let $\calW\subset\Stabd(A)$ be a wall separating two chambers $\calC_\pm$, let $\sigma_0\in\calW$ be a generic stability condition (i.e. contained in no other walls), and let $\sigma_\pm\in\calC_\pm$ be two nearby stability conditions (see \cite[Remark 5.6]{BM14} for the precise meaning of \emph{nearby}).  Then $\ell_{\calC_\pm}(\sigma_0)$ (resp.   $\overline{\ell_{\calC_\pm}}(\sigma_0)$) induces a birational contraction
    $\pi^\pm\colon M_A(v,\calC_\pm)\to\overline{M}_\pm$ (resp. $\overline{\pi}^\pm\colon K_A(v,\calC_\pm)\to\overline{K}_\pm$), where $\overline{M}_\pm$ (resp. $\overline{K}_\pm$) is a normal irreducible projective variety.  The curves contracted by $\pi^\pm$ (resp. $\overline{\pi}^\pm$) are precisely the curves of $\sigma_\pm$-stable objects that are $S$-equivalent to one another with respect to $\sigma_0$.
\end{proposition}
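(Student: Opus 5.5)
This is a cited result, combining Bayer--Macrì's wall-crossing theorem for K3 surfaces with Yoshioka's analogue for abelian surfaces. I would prove it by following the structure of \cite[Theorem 1.4]{BM14a} and checking where the abelian-surface input of \cite{Yos16} replaces the K3 input.

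\textbf{Step 1: the maps $\ell_{\calC_\pm}(\sigma_0)$ are nef and big.}
\begin{itemize}
\item By the formula above, $\ell_{\calC_\pm}$ extends continuously to the closure $\overline{\calC_\pm}$, with $\overline{\ell_{\calC_\pm}}(\overline{\calC_\pm})=\Nef(K_A(v,\calC_\pm))$.
\item Hence $\ell_{\calC_\pm}(\sigma_0)$ is nef on $M_A(v,\calC_\pm)$.
\item Bigness follows from positivity of the BBF square of $\theta_v(-\Im(w_{\sigma_0}/Z_{\sigma_0}(v)))$. This square is computed directly in $v^\perp$: the imaginary part of $w/Z(v)$ lies in $v^\perp$ and has positive Mukai square because $\sigma_0\in\Stabd(A)$.
\item Bigness could also be deduced from the positivity lemma of \cite{BM14a}: a curve $C$ has $\ell_{\sigma_0}\cdot C=0$ if and only if the objects parametrized by $C$ are all $S$-equivalent with respect to $\sigma_0$.
\end{itemize}

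\textbf{Step 2: semiampleness and the contraction.} I would establish semiampleness as in \cite{BM14a}. The route is to realize $\ell_{\sigma_0}$ as the pullback of an ample class under a morphism to a good moduli space of $\sigma_0$-semistable objects. Alternatively, one can argue through abundance-type results for hyperkähler manifolds: nef and big implies semiample, since $K$ is trivial, by base point freeness. Taking Proj of the section ring gives $\pi^\pm$ to a normal projective $\overline{M}_\pm$. Restricting to an Albanese fiber gives $\overline{\pi}^\pm$, using that the Albanese map is an étale locally trivial fibration (Theorem~\ref{thm:Bridgelandmoduli}).

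\textbf{Step 3: identify the contracted curves.} By the positivity lemma:
\begin{itemize}
\item a curve is contracted exactly when $\ell_{\sigma_0}\cdot C=0$;
\item this happens exactly when the corresponding $\sigma_\pm$-stable objects have the same Jordan--Hölder factors with respect to $\sigma_0$.
\end{itemize}
The ``nearby'' condition of \cite[Remark 5.6]{BM14} ensures that $\sigma_\pm$-stable objects are $\sigma_0$-semistable. It also ensures that the $\sigma_0$-Jordan--Hölder filtration is well behaved, so the two descriptions match.

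\textbf{Main obstacle.} The hardest point is semiampleness, or equivalently the existence of the contraction morphism with the stated fibers, on the abelian surface. Over a K3 surface, \cite{BM14a} use the coarse moduli space of $\sigma_0$-semistable objects. For abelian surfaces the moduli space has an Albanese factor $A\times\widehat{A}$, so one must check compatibility of the Bayer--Macrì class with the fibration. This is where I would invoke \cite[Corollary 3.16, Proposition 3.17]{Yos16}.
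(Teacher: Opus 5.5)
Your outline matches the paper's treatment. The paper gives no independent proof: it simply imports \cite[Theorem 1.4]{BM14a} together with Yoshioka's abelian-surface results \cite[Corollary 3.16, Proposition 3.17]{Yos16}. Your steps (nefness from the positivity lemma, bigness from positive square, the basepoint-free theorem, contracted curves via the positivity lemma) are a reasonable reconstruction of that cited argument. One caveat: the BBF-square bigness argument only makes sense on the hyperk\"ahler fiber $K_A(v,\calC_\pm)$, not on $M_A(v,\calC_\pm)$. So for $\pi^\pm$ you genuinely depend on Yoshioka's handling of the Albanese directions, as you already indicate.
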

We classify walls in terms of the corresponding birational contraction $\overline{\pi}^\pm$ as follows:
\begin{definition}
    We say that a wall $\calW$ is:
    \begin{enumerate}
        \item a \emph{fake wall} if $\overline{\pi}^\pm$ contracts no curves;
        \item a \emph{flopping wall} if we can identify $\overline{K}_-=\overline{K}_+$ and $\overline{\pi}^\pm$ is a flopping contraction;
        \item a \emph{divisorial wall} if $\overline{\pi}^\pm$ are both divisorial contractions. 
    \end{enumerate}
    While the above types classify the wall $\calW$ in terms of the behavior of the corresponding birational contractions $\overline{\pi}^\pm$, it is useful to say that $\calW$ is 
    \begin{enumerate}[resume]
        \item a \emph{totally semistable wall} if $M_A^{st}(v,\sigma_0)=\varnothing$ for generic $\sigma_0\in\calW$.
    \end{enumerate}
\end{definition}
The type of a wall $\calW$ can be determined purely lattice-theoretically.  Given a wall $\calW$, one can associate to it the hyperbolic rank $2$ primitive sublattice $\calH_\calW\subset\widetilde{H}^{1,1}(A,\bbZ)$ given by 
\begin{equation}
    \calH_\calW=\left\{w\in\widetilde{H}^{1,1}(A,\bbZ) \;|\;\Im\frac{Z_\sigma(w)}{Z_\sigma(v)}=0\text{ for all }\sigma\in\calW\right\}.
\end{equation}
Conversely, given a primitive rank $2$ hyperbolic sublattice $\calH\subset\widetilde{H}^{1,1}(A,\bbZ)$ containing $v$, one can define a \emph{potential wall} $\calW$ associated to $\calH$ as a connected component of the real codimension one submanifold of $\sigma\in\Stabd(A)$ such that $Z_\sigma(\calH)$ is contained in a line.  We summarize the lattice-theoretic classification of walls in the following result:
\begin{theorem}\label{thm:ClassificationWalls}
Let $v\in\widetilde{H}^{1,1}(A,\bbZ)$ be a primitive Mukai vector  with $v^2\ge 6$. Let $\sH\subset\widetilde{H}^{1,1}(A,\bbZ)$ be a primitive hyperbolic rank $2$ sublattice containing $v$ and let $\sW\subset\Stabd(A)$ be a potential wall associated to it.
    \begin{enumerate}[leftmargin=*, noitemsep]
        \item The set $\sW$ is a divisorial wall if and only if there exists a primitive isotropic class $w\in\sH$ such that $(w,v)=1$ or $2$.
        \item The set $\sW$ is a flopping wall if and only if the conditions in $(1)$ are not satisfied and there exists a class $v_1\in\sH$ satisfying 
        \begin{equation}\label{eqn:wall-def}
             (v_1,v-v_1)>0,v_1^2\ge 0,(v-v_1)^2\ge 0.
        \end{equation}
        \item In all other cases, $\sW$ is not a wall.
    \end{enumerate}
\end{theorem}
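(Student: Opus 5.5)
This classification is essentially due to Yoshioka for abelian surfaces (the analogue for K3 surfaces being Bayer--Macr\`i \cite[Theorem 5.7]{BM14a}). The plan is to follow the K3 strategy of Bayer--Macr\`i, substituting the abelian-surface inputs from \cite{Yos16,MYY14}.

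\textbf{Reduction to the lattice $\sH$.} Fix a generic $\sigma_0\in\sW$ and nearby $\sigma_\pm$. Every Jordan--H\"older factor of a $\sigma_0$-semistable object $E$ of class $v$ has Mukai vector in $\sH$, because its central charge is aligned with $Z_{\sigma_0}(v)$ and $\sH$ is primitive. On an abelian surface a $\sigma_0$-stable object $F$ satisfies $v(F)^2\ge 0$, since $\Hom(F,F)=\bbC$ and $\ext^1(F,F)\ge 2$. Hence the relevant decompositions are $v=\sum a_i v_i$ with $v_i\in\sH$, $v_i^2\ge 0$, and the $Z_{\sigma_0}(v_i)$ lying on the same ray in the positive cone of $\sH$.

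\textbf{Divisorial case, statement (1).} If $w\in\sH$ is primitive isotropic with $(w,v)\in\{1,2\}$, I would use the fact that $M_A(w,\sigma_0)$ is again an abelian surface $\widehat{A}'$ and apply the associated Fourier--Mukai transform, which sends $w$ to $(0,0,1)$. The wall then becomes the Hilbert--Chow type wall (for $(w,v)=1$) or the Li--Gieseker--Uhlenbeck type wall (for $(w,v)=2$). In both cases the contracted locus is a divisor: non-locally-free sheaves, respectively a $\bbP^1$-fibration over pairs. This gives $\ell_{\calC_\pm}(\sigma_0)$ a divisorial contraction, following \cite[Proposition 3.17]{Yos16} and \cite[Section 5]{Yos16}. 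Conversely, a divisorial contraction produces a stably prime exceptional class in $\NS(K_A(v,\calC_\pm))$ lying in $\theta_v(v^\perp\cap\sH)$. By Proposition~\ref{lem: wall divisors on Mukai lattice} (Yoshioka's characterization), the saturation of that class together with $v$, which is $\sH$, contains an isotropic $w$ with $(w,v)\in\{1,2\}$.

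\textbf{Flopping versus non-wall, statements (2) and (3).} Suppose such a $w$ does not exist but some $v_1$ satisfies \eqref{eqn:wall-def}. I would construct strictly $\sigma_0$-semistable objects as extensions of $\sigma_0$-stable objects of classes $v_1$ and $v-v_1$. Both moduli spaces are nonempty by \cite[Proposition 5.16]{MYY14} or by the isotropic case, and $\ext^1$ between them is at least $(v_1,v-v_1)>0$. This yields a positive-dimensional family contracted by $\overline{\pi}^\pm$. A dimension count then shows the contracted locus has codimension at least $2$, once the divisorial configurations are excluded. So $\overline{\pi}^\pm$ is a small contraction and $\sW$ is a flopping wall.

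If no such $v_1$ exists, every $\sigma_0$-semistable object of class $v$ is stable, because any destabilizing decomposition would produce such a $v_1$. The contraction therefore contracts no curves and $\sW$ is not a wall.

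\textbf{Main obstacle.} The delicate point is the abelian-surface specific bookkeeping. Fibers of the Albanese map must replace the whole moduli space, which requires tracking the $A\times\widehat{A}$ action. One must also show that totally semistable walls do not produce extra divisorial behaviour beyond $(w,v)\in\{1,2\}$. I expect to rely on Yoshioka's analysis of $\sigma_0$-stable factors on abelian surfaces \cite{Yos16,MYY14} rather than reprove it.
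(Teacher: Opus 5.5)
Your overall architecture matches the paper's. Part (1) is delegated to Yoshioka, part (2) exhibits a contracted projective space of extensions, and part (3) decomposes a destabilized object into classes of non-negative square lying in one component of the positive cone of $\sH$. Your JH-factor version of (3) is fine and equivalent to the paper's HN-filtration version.

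The gap is in the flopping step. From ``$\ext^1$ is at least $(v_1,v-v_1)>0$'' you cannot conclude that a positive-dimensional family is contracted. If $\ext^1(A_2,A_1)=1$, then $\bbP\Ext^1(A_2,A_1)$ is a single point, and varying $A_1,A_2$ in their moduli moves the $S$-equivalence class. You need a bound $(v_1,v-v_1)\ge 2$, and this is exactly where the failure of (1) must enter; you only use it for the codimension claim. The paper first reduces, via \cite[Lemma 9.2]{BM14}, to a $v_1$ for which the parallelogram $0,v_1,v-v_1,v$ contains no other lattice points. Then $v_1$ and $v-v_1$ are primitive and the nearby $\sigma_\pm$ are generic for them. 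It then shows $(v_1,v-v_1)>2$:
\begin{itemize}
\item if $v_1^2=0$, then $(v_1,v-v_1)=(v_1,v)\notin\{1,2\}$ because (1) fails;
\item if both squares are positive, reverse Cauchy--Schwarz in the hyperbolic plane and evenness give $(v_1,v-v_1)>\sqrt{v_1^2(v-v_1)^2}\ge 2$.
\end{itemize}
Hence $\dim\bbP\Ext^1(A_2,A_1)\ge 2$.

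Two further ingredients are missing from your construction.
\begin{itemize}
\item Being strictly $\sigma_0$-semistable does not make an extension a point of $M_A(v,\sigma_+)$. You need that every non-split extension of $A_2$ by $A_1$, ordered so that $\mu_{\sigma_+}(v_1)<\mu_{\sigma_+}(v-v_1)$, is $\sigma_+$-stable; this is \cite[Lemma 9.3]{BM14}.
\item The existence of the stable objects $A_1,A_2$ needs the above reduction. A bare appeal to \cite[Proposition 5.16]{MYY14} does not suffice, since $\sigma_0$ need not be generic for an arbitrary $v_1$.
\end{itemize}

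The Albanese issue you flag as the main obstacle is in fact immediate. $\bbP\Ext^1(A_2,A_1)$ maps constantly to the abelian variety $A\times\widehat{A}$, so it lies in a single Albanese fibre and is contracted by $\overline{\pi}^+$.

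For (1), your converse via Yoshioka's characterization of stably prime exceptional classes is a legitimate alternative to the paper's direct use of \cite[Propositions 3.27 and 3.29]{Yos16}. Those results say $\codim(K^\pm\setminus K^*)\le 1$ if and only if such a $w$ exists, where $K^*$ is the $\sigma_0$-stable locus. On your route you still must justify that the contracted divisor's class lies in $\theta_v(v^\perp\cap\sH)$. One way: the contracted curves are orthogonal to the hyperplane $\theta_v(\sH^\perp)$ spanned by the image of the wall.
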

\begin{proof}
For $\sigma_0\in\calW$ not on any other walls for $v$, let $\sigma_\pm$ be two nearby stability conditions in opposite chambers separated by $\calW$. 
For ease of notation, let us denote $K^+\coloneqq K_A(v,\sigma_+)$, $K^-\coloneqq K_A(v,\sigma_-)$ and $K^0\coloneq K_A(v,\sigma_0)$.
We denote by $K^*:=K^\pm\cap M_A^{st}(v,\sigma_0)$.  By the openness of stability, $M_A^{st}(v,\sigma_0)$ is an open subset of $M_A^{st}(v,\sigma_\pm)$, so $K^*$ is an open subset of $K_A^\pm$.  In particular, $\calW$ is a totally semistable wall if and only if this open subset of $K^\pm$ is empty. Moreover, the exceptional locus of $\overline{\pi}^\pm$ is contained in $K^\pm\backslash K^*$ by \cite[Proposition 3.17]{Yos16}.

Thus, if a wall $\calW$ is divisorial then $K^\pm\backslash K^*$ contains a divisor, i.e.~$\codim\left(K^\pm\backslash K^*,K^\pm\right)\le 1$.  This occurs if and only if there exists a primitive isotropic class $w\in\sH$ such that $(w,v)=1$ or $2$ by \cite[Propositions 3.27 and 3.29]{Yos16}.  Moreover, it is shown there that in each of these cases $\overline{\pi}^\pm$ is a divisorial contraction.

Using (1), we conclude that if the conditions in $(1)$ are not satisfied,
     then we must have $\codim\left(K^\pm\backslash K^*,K^\pm\right)\ge 2$.  
Now assume that there exists a class $v_1\in\sH$ satisfying 
     $$(v_1,v-v_1)>0,v_1^2\ge 0,(v-v_1)^2\ge 0.$$
Then we claim that $K^*\setminus K^\pm\neq\emptyset$, which implies that $\sW$ is a flopping wall.
To this end, note that, by \cite[Lemma 9.2]{BM14} we may assume that the parallelogram in $\sH\otimes\bbR$ has no other lattice points in $\sH_\sW$ other than 
     its vertices $0$, $v_1$, $v-v_1$, and $v$.  In particular, we may assume that                                         $v_1$ and $v-v_1$ are both primitive.  Up to switching $\sigma_+$ and $\sigma_-$, we may assume that $\mu_{\sigma_+}(v_1)<\mu_{\sigma_+}(v-v_1)$.  Since $\sigma_\pm$ is by definition nearby, it is also generic with respect to $v_1$ and $v-v_1$, by Theorem~\ref{thm:Bridgelandmoduli}, there exists $\sigma_+$-stable objects $A_1$ and $A_2$ of Mukai vectors $v_1$ and $v-v_1$, respectively.
     In addition, we have $(v_1,v-v_1)>2$.  Indeed, if $v_1^2=0$, then $(v_1,v-v_1)=(v,v_1)>2$ by the assumption that the conditions in (1) are not satisfied.  And if $v_1^2>0$, and by symmetry, we may assume $(v-v_1)^2>0$ as well, then by the signature of $\sH_\sW$, we have 
$$(v_1,v-v_1)>\sqrt{v_1^2(v-v_1)^2}\ge 2$$
since $\sH_\sW$ is an even lattice.  It follows from this that 
$$\ext^1(A_2,A_1)=(v_1,v-v_1)+\hom(A_2,A_1)+\hom(A_1,A_2)\ge(v_1,v-v_1)>2.$$
Any non-trivial extension in $\Ext^1(A_2, A_1)$
%$$0\to A_1\to E\to A_2\to 0$$
is $\sigma_+$-stable of Mukai vector $v$ by \cite[Lemma 9.3]{BM14}, and as all these extensions are $S$-equivalent to each other with respect to $\sigma_0$, there is a projective space $\bbP=\bbP\Ext^1(A_2,A_1)$ of dimension at least two that gets contracted by $\pi^+$.  As the restriction $\fraka|_{\bbP}$ of the \'{e}tale locally trivial Albanese morphism $\fraka$ to $\bbP$ is  constant, we must have $\bbP\subset K^+$ is thus contracted to a point by $\overline{\pi}^+=\pi^+|_{K^+}$.   As the exceptional locus of this birational morphism is contained in $K^+\backslash K^*$, we obtain the desired nonemptyness.

If the numerical conditions in $(1)$ and $(2)$ are both not satisfied, then $\sW$ is not a wall at all as every $\sigma_+$-stable object $E$ of Mukai vector $v$ is still $\sigma_0$-stable, and thus $\sigma_-$-stable by openness of stability.  Indeed, otherwise some $\sigma_+$-stable object $E$ of Mukai vector $v$ is strictly $\sigma_0$-semistable and thus $\sigma_-$-unstable.  Taking the penultimate Harder-Narasmihan filtration subobject $S\subset E$ and the final HN-factor $Q$, we see that $E$ is an extension of $Q$ by $S$.
As $v(S)$ is the sum of Mukai vectors in the same component of the positive cone of $\calH$ as $v$, it follows that $v(Q)^2\ge0$, $v(S)^2\ge 0$ and $(v(Q),v(S))>0$ by the same signature argument above, contradicting the assumption that the conditions in $(1)$ and $(2)$ are not satisfied.
\end{proof}
By varying over all chambers with respect to $v$ and studying the wall-crossing contractions $\pi^\pm$ and $\overline{\pi^\pm}$, Yoshioka proved the following result showing that wall-crossing in $\Stabd(A)$ runs the full minimal model program on $K_A(v,\sigma)$:
\begin{theorem}[{\cite[Theorem 3.31, Corollary 3.33]{Yos16}}]\label{thm:BridgelandmoduliBirational}
    Let $A$ be an abelian surface, $v\in\widetilde{H}(A,\bbZ)$ a primitive Mukai vector with $v^2\ge 6$, and $\sigma\in\Stabd(A)$ generic with respect to $v$.  
    \begin{enumerate}
    \item For any other stability condition $\tau\in\Stabd(A)$ generic with respect to $v$, $M_A(v,\tau)$ (resp. $K_A(v,\tau)$) is birational to $M_A(v,\sigma)$ (resp. $K_A(v,\sigma)$).  Upon identifying $\NS(M_A(v,\calC_+))$ and $\NS(M_A(v,\calC_-))$ (resp. $\NS(K_A(v,\calC_+))$ and $\NS(K_A(v,\calC_-))$) when crossing a wall $\calW$ separating $\calC_+$ and $\calC_-$, the maps $\ell_{\calC_\pm}$ (resp. $\overline{\ell_{\calC_\pm}}$)  then glue to continuous maps
\begin{equation}
\overline{\ell}\colon\Stabd(A)\mor[\ell]\NS(M_A(v,\sigma))_\bbR\mor[\mathrm{res}]\NS(K_A(v,\sigma))_\bbR
\end{equation}
such that the image of $\overline{\ell}$ is $\Mov(K_A(v,\sigma))$.
\end{enumerate}
More precisely, we have the following:
\begin{enumerate}[resume]
        \item Let $(K,L)$ be a pair of a smooth manifold $K$ with $\omega_K\cong\calO_K$ and an ample divisor $L$ on $K$.  If $K$ is birationally equivalent to $K_A(v,\sigma)$, then there is some $\tau\in\Stabd(A)$, generic with respect to $v$, such that $K\cong K_A(v,\tau)$ such that $\ell(\tau)\sim_{\bbR_{>0}} L$, where $\sim_{\bbR_{>0}}$ means proportional by a factor in $\bbR_{>0}$.
        \item Let $(M,L)$ be a pair of smooth manifold $M$ with $\omega_M\cong\calO_M$ and an ample divisor $L$ on $M$.  If $M$ is birational to $M_A(v,\sigma)$, then there is $\tau\in\Stabd(A)$, generic with respect to $v$, such that $M\cong M_A(v,\tau)$ and $\ell(\tau)\sim_{\bbR_{>0}}L$ up to divisors coming from the Albanese variety $\Alb(M_A(v,\tau))$.
    \end{enumerate}
\end{theorem}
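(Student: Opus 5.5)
The plan is to follow the Bayer--Macr\`{i} strategy for K3 surfaces, using Yoshioka's analysis of walls on abelian surfaces in place of the K3 inputs. First I would prove (1), then deduce (2) and (3) from the surjectivity onto $\Mov$ together with Theorem~\ref{thm:BirmapHK}.

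\emph{Birationality and gluing.} Given generic $\sigma,\tau\in\Stabd(A)$, I would join them by a path. The walls for $v$ are locally finite, so after perturbing the path it meets finitely many walls, each at a point lying on no other wall. Hence it suffices to treat one wall $\calW$ separating chambers $\calC_\pm$. By Theorem~\ref{thm:ClassificationWalls}, $\calW$ is fake, flopping or divisorial.
\begin{enumerate}
\item At fake and flopping walls, the contractions $\overline{\pi}^\pm$ are isomorphisms in codimension one onto a common $\overline K$. So $K_A(v,\calC_+)\dashrightarrow K_A(v,\calC_-)$ is birational, and similarly for $M_A$.
\item At divisorial walls, $\overline{\pi}^\pm$ are both divisorial contractions to $\overline K_\pm$. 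Here I would use Yoshioka's description \cite[Propositions 3.27, 3.29]{Yos16}: a spherical or isotropic twist, or a duality functor, identifies the two sides. This again gives a birational map, in fact an isomorphism on the locus of $\sigma_0$-stable objects.
\end{enumerate}
In each case the birational map is induced on the common open set $M^{st}_A(v,\sigma_0)$ by the identity on objects (up to an autoequivalence acting trivially on $v^\perp$, up to sign). Its pullback on $H^2$ is therefore compatible with the two Mukai morphisms $\theta_v$. Since $\overline{\ell_{\calC_\pm}}(\tau)=\theta_v(-\Im(w_\tau/Z_\tau(v)))$ is the same formula on both sides, the maps agree on $\calW$ and glue to a continuous $\overline{\ell}$.

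\emph{Image equals $\Mov$.} By the formula, $\overline\ell$ sends each chamber $\calC$ onto $\Amp(K_A(v,\calC))$, transported into $\NS(K_A(v,\sigma))_\bbR$. Hence the image is a connected union of ample cones of birational models, and it lies in $\Mov(K_A(v,\sigma))$. To get equality, I would argue that the image is both open and closed in the interior of $\Mov$.
\begin{enumerate}
\item Openness follows at flopping and fake walls because both adjacent chambers lie in the image.
\item The only boundary of the image inside the positive cone comes from divisorial walls. Their images are the hyperplanes $\theta_v(w)^\perp$, with $w$ the class from Theorem~\ref{thm:ClassificationWalls}(1). These are exactly the hyperplanes orthogonal to stably prime exceptional classes, by Corollary~\ref{corollary: wall divisors via wieneck embeddings}, so they are faces of $\Mov$.
\item The remaining boundary, where $Z_\tau(v)$ degenerates (the large-volume limit and the boundary of $\Stabd(A)$), maps to the boundary of the positive cone.
\end{enumerate}
\textbf{This surjectivity step is the main obstacle.} It requires controlling the limiting behaviour of $\overline\ell$ at the boundary of $\Stabd(A)$, and checking that divisorial walls produce every face of $\Mov$ and nothing else. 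I would first try to reduce it to the $\wGL2$-quotient description $\Amp(A)\times\NS(A)_\bbR$ and compute $\overline\ell$ explicitly in the coordinates $(\omega,\beta)$.

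\emph{Proof of (2).} Let $\phi\colon K\dashrightarrow K_A(v,\sigma)$ be birational. Then $\phi_*L$ is a movable class lying in the interior of a K\"ahler-type chamber. By surjectivity of $\overline\ell$, there is a generic $\tau$ with $\overline\ell(\tau)\sim_{\bbR_{>0}}\phi_*L$, so $K$ and $K_A(v,\tau)$ are birational with ample classes matching. The birational map is a parallel transport Hodge isometry sending the ample cone to the ample cone, hence extends to an isomorphism by Theorem~\ref{thm:BirmapHK}(2).

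\emph{Proof of (3).} I would use that the Albanese map $\fraka$ is \'etale locally trivial. This writes $M\cong (K\times A\times\widehat A)/G$ for a finite group $G$, and $\NS(M)_\bbR$ splits as $\NS(K)^G_\bbR$ plus classes pulled back from $\Alb$. Applying (2) to the $K$-component of $L$, compatibly with $G$, gives the statement up to divisors from $\Alb(M_A(v,\tau))$.
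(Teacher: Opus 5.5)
The paper gives no proof of its own here: the statement is quoted from \cite[Theorem 3.31, Corollary 3.33]{Yos16}. Judged on its own, your outline has a genuine gap, and it is not only the surjectivity step you flag. The gluing step contains a false claim, and that claim hides exactly the mechanism that makes the image equal to $\Mov$.

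At a divisorial wall, \emph{no} birational identification $\phi$ of the two sides is compatible with the Mukai morphisms $\theta_\pm$ of $K_A(v,\calC_\pm)$. To see this, note that the contraction at $\sigma_0$ contracts a prime divisor $E$. Hence $q(\overline\ell(\sigma_0),E)=0$, and $\overline\ell(\calW)$ spans the hyperplane $E^\perp$, which supports a face of $\overline{\Mov}$. Now set $x_-=-\Im(w_{\sigma_-}/Z_{\sigma_-}(v))$.
\begin{itemize}
\item The class $\theta_+(x_-)$ lies on the other side of $E^\perp$ from the ample cone of $K_A(v,\calC_+)$, so it is not in $\overline{\Mov}(K_A(v,\calC_+))$.
\item The class $\phi^*\theta_-(x_-)$ is the pullback of an ample class, so it does lie in $\Mov(K_A(v,\calC_+))$.
\end{itemize}
The two morphisms actually differ by the reflection $R_E$. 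For example, for $v=(1,0,-n)$ the Hilbert--Chow (vertical) wall is totally semistable. The two sides are matched through the derived dual, which acts on $v^\perp$ as $\pm R_{(1,0,n)}$, not as $\pm\Id$. (Spherical twists play no role on an abelian surface.) The maps $\overline{\ell_{\calC_\pm}}$ still agree on $\calW$, but only because $R_E$ fixes $E^\perp$ pointwise.

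This error is not cosmetic. If your compatibility claim held, $\overline\ell(\tau)=\theta_v(-\Im(w_\tau/Z_\tau(v)))$ would be one global formula. By Bridgeland's description of $\Stabd(A)$ \cite[Section 15]{Bri08}, the map $\tau\mapsto w_\tau$ is onto the set of $w$ whose real and imaginary parts span a suitably oriented positive plane. Taking $w_\tau=v/v^2-\sqrt{-1}\,x$ then realizes every $x$ in the relevant component of the positive cone of $v^\perp\cap\widetilde{H}^{1,1}(A,\bbZ)_\bbR$. The image would be the whole positive cone. That is strictly larger than $\Mov(K_A(v,\sigma))$ whenever a prime exceptional divisor exists, e.g.\ for $v=(1,0,-n)$.

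This reflection bookkeeping is the missing idea for surjectivity, and your open/closed sketch does not supply it. Closedness would need a properness statement for $\overline\ell$ that you never prove. Two items of that sketch also fail.
\begin{itemize}
\item The large-volume-limit claim is wrong. For $v=(r,\theta,\chi)$ with $r>0$ and $\omega=tH$, $t\to\infty$, the class $-\Im(w_\tau/Z_\tau(v))$ converges projectively to the line spanned by $(0,H,H\cdot\theta/r)$. That class has square $H^2>0$ and is the image of the vertical wall, not a boundary point of the positive cone.
\item The claim that divisorial walls produce every face of $\Mov$ invokes Corollary~\ref{corollary: wall divisors via wieneck embeddings}. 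That corollary rests on Yoshioka's description of stably prime exceptional classes. Like its K3 analogue \cite[Theorem 12.3]{BM14}, that description is deduced from the wall-crossing picture itself, so it cannot serve as input here.
\end{itemize}

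What works is the following.
\begin{enumerate}
\item Along a path from $\sigma$ to $\tau$, the identification of N\'eron--Severi groups changes only at divisorial walls, each time by a reflection in a prime exceptional class. Hence $\overline\ell(\tau)=\gamma_\tau\,\theta_v(-\Im(w_\tau/Z_\tau(v)))$ with $\gamma_\tau\in W_{\rm Exc}(K_A(v,\sigma))$.
\item Given $y$ in the interior of $\Mov(K_A(v,\sigma))$, Bridgeland's surjectivity gives $\tau$ with $\theta_v(-\Im(w_\tau/Z_\tau(v)))=y$.
\item Then $\gamma_\tau(y)=\overline\ell(\tau)\in\overline{\Mov}$, so $\gamma_\tau$ stabilizes the chamber $\Mov$.
\item Since $W_{\rm Exc}$ acts simply transitively on exceptional chambers (Theorem~\ref{theo:semidirect decomposition}), $\gamma_\tau=\Id$ and $\overline\ell(\tau)=y$.
\end{enumerate}

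Granting (1), your deduction of (2) from Theorem~\ref{thm:BirmapHK}(2) is fine. For (3) you still need to show that the isomorphism $K\cong K_A(v,\tau)$ from (2) is equivariant for the finite group in the \'etale trivialization of the Albanese map. At present you only assert this.
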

\subsection{Reflection in the vertical wall}
\subsubsection{Reducing to moduli of sheaves}
With precisely classified exceptions, we know by Theorem~\ref{thm:main theorem} that if a projective $\Kum_n$-type manifold $K$ for $n\ge 2$ admits a finite order symplectic birational self-map, then there is a birational map  $\varphi\colon K\dashrightarrow K_{A,\alpha}(v,H)$ for some abelian surface $A$, Brauer class $\alpha\in\Br(A)$, polarization $H$ and primitive Mukai vector $v\in\widetilde{H}(A,\bbZ)$ with $v^2\ge 6$. We focus on the case where $\alpha=0$, in particular we get $K\cong K_A(v,\sigma)$ for some $\sigma\in\Stabd(A)$ generic with respect to $v$.
In the next result, we show that we may assume $K$ is the Albanese fiber of a moduli space of stable sheaves with positive rank on $A$ or $\widehat{A}$, and then we construct such a finite order symplectic birational self-map on $K$.

\begin{lemma}\label{lem:positive rank}
Let $A$ be an abelian surface and $v\in\widetilde{H}(A,\hz)$ be a primitive Mukai vector with $v^2\ge 6$.  
Then for any generic stability condition $\sigma\in\Stabd(A)$ with respect to $v$ there exists a birational map $K_A(v,\sigma)\dashrightarrow K_T(v',H)$, where $T\in\{A,\widehat{A}\}$, the vector $v'\in\widetilde{H}(T,\bbZ)$ has positive rank, and $H$ is an ample divisor which is generic with respect to $v$.
\end{lemma}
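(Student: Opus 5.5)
The plan is to use Theorem~\ref{thm:BridgelandmoduliBirational} to move $\sigma$ freely. Any other $v$-generic $\tau\in\Stabd(A)$ has $K_A(v,\tau)$ birational to $K_A(v,\sigma)$, so it is enough to find one $v$-generic $\tau$ together with a derived equivalence $\Phi\colon\Db(A)\to\Db(T)$, $T\in\{A,\widehat{A}\}$, such that $\Phi$ carries $\tau$ into the Gieseker chamber of a positive-rank vector $v'=\pm\Phi^H(v)$ on $T$. An autoequivalence or equivalence gives an isomorphism $K_A(v,\tau)\cong K_T(v',\Phi\cdot\tau)$, since the Albanese fibres correspond. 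So the first step is purely lattice-theoretic: write $v=(r,\theta,\chi)$ and produce $\Phi$ with $\operatorname{rk}\Phi^H(v)\neq 0$.

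\textbf{Case $r\neq 0$.} Replace $v$ by $-v$ if $r<0$; this corresponds to the shift $[1]$ and does not change the moduli space. Nothing else is needed in this case.

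\textbf{Case $r=0$.} Use the Fourier--Mukai transform $\Phi_{\mathcal P}\colon\Db(A)\to\Db(\widehat{A})$ with the Poincaré bundle, possibly composed with tensoring by a line bundle $L$. Tensoring sends $(0,\theta,\chi)$ to $(0,\theta,\chi+\theta\cdot c_1(L))$, and $\Phi_{\mathcal P}$ essentially exchanges $H^0$ and $H^4$ up to sign (cf.\ (\ref{eqn:FMabeliansurfanddual})). The new rank is therefore $\pm(\chi+\theta\cdot c_1(L))$. Since $v^2=\theta^2\ge 6$ gives $\theta\neq 0$ in $\NS(A)$, I can choose $L=\mathcal O(mH)$ with $m$ large so that this quantity is nonzero. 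If $r=\theta=0$, then $v^2=0$, which contradicts $v^2\ge 6$.

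Next I need the statement that for a positive vector $v'$ of positive rank on $T$, the Gieseker chamber is nonempty. It consists of the $\sigma_{tH,\beta}$ with $t\gg0$ and $\beta$ fixed (Bridgeland's large volume limit, \cite{Bri08}; for abelian surfaces see \cite{MYY14,Yos16}). Choosing $H$ generic with respect to $v'$ puts these conditions off the walls, by local finiteness.

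\textbf{Main obstacle.} The hard part is verifying that the composition $\Phi$ sends $\Stabd(A)$ to $\Stabd(T)$, so that $\Phi\cdot\tau$ really lies in the distinguished component, and that the large-volume-limit conditions are $v'$-generic. For the first point I would invoke the fact that Fourier--Mukai equivalences between abelian surfaces preserve $\Stabd$: $\Stabd$ is the unique component containing geometric stability conditions, and every connected component of $\Stab$ of an abelian surface is $\Stabd$ (Bridgeland, \cite[Section 15]{Bri08}).

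The surjectivity part of Theorem~\ref{thm:BridgelandmoduliBirational}(1), together with genericity, then supplies the birational map $K_A(v,\sigma)\dashrightarrow K_A(v,\tau)\cong K_T(v',H)$.
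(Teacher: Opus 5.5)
Your proposal is correct and follows essentially the same route as the paper. The case split is the same: the shift when $r<0$, and for $r=0$ the Poincar\'e Fourier--Mukai transform to $\widehat{A}$, preceded by tensoring with a line bundle when needed so that the new rank is nonzero. You then conclude, as the paper does, with Yoshioka's wall-crossing theorem to move into the Gieseker chamber. The only real difference is how you justify that these equivalences preserve $\Stabd$. The paper cites \cite[Section 15]{Bri08} for autoequivalences of $A$ and \cite[Theorem 3.8(1)]{YY} for $\Phi^{\Pp}_{A\to\widehat{A}}$, which is all that is needed, instead of your stronger claim that every connected component of $\Stab(A)$ is $\Stabd(A)$.
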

\begin{proof}
We show that by applying an appropriate sequence of derived equivalences, we get a Mukai vector $v'$ of positive rank on either $A$ or $\widehat{A}$.  Indeed, just as autoequivalences act on $\Stab(A)$, if $\Phi\colon\Db(A)\to\Db(T)$ is a derived equivalence, where $T$ is another abelian surface, then we get a corresponding isometry of Mukai lattices (\cite[Corollary 9.43]{HuyFM}) which we write $\Phi_*\colon\widetilde{H}(A,\bbZ)\to\widetilde{H}(T,\bbZ)$.
Together these induce isomorphisms
$$\Phi\colon \Stab(A)\isomor\Stab(T),\quad M_A(v,\sigma)\isomor M_{T}(\Phi_*(v),\Phi(\sigma)),\quad K_A(v,\sigma)\isomor K_{T}(\Phi_*(v),\Phi(\sigma)).$$
In particular, if $\Phi$ sends $\Stabd(A)$ to $\Stabd(T)$ and $v'=\Phi_\ast(v)$ has positive rank, then choosing a generic ample divisor $H$ and $\tau$ in the Gieseker chamber for $v'$ with respect to $H$, then by Theorem~\ref{thm:BridgelandmoduliBirational} we have a birational map 
$$K_A(v,\sigma)\isomor K_T(\Phi_*(v),\Phi(\sigma))\dashrightarrow K_T(\Phi_*(v),\tau)\cong K_T(v',H).$$
As the action of $\Aut(\Db(A))$ on $\Stab(A)$ preserves the component $\Stabd(A)$ (see \cite[Section 15]{Bri08}), we may freely take $T=A$ and $\Phi\in\Aut(\Db(A))$.

Now, write $v=(r,\theta,\chi)$.   
If $r\ne 0$, then taking $T=A$ we may assume $r>0$ up to applying $\Phi=[1]$.
So we may assume that $r=0$.  

If $r=0$ but $\chi\ne 0$, then we apply the Fourier--Mukai transform introduced by Mukai \cite{Mukai:FMDuality}.  Denote by $\Pp$ the Poincar\'{e} line bundle on $A\times\widehat{A}$, and let $\Phi_{A \to \widehat{A}}^{{\Pp}}:\Db(A) \to \Db(\widehat{A})$
be the Fourier--Mukai transform whose kernel is ${\Pp}$:
\begin{equation}
\begin{matrix}
\Phi_{A \to \widehat{A}}^{{\Pp}}: & \Db(A) & \to & \Db(\widehat{A})\\
& E & \mapsto & {\bf R}p_{\widehat{A}*}({\Pp} \otimes p_A^*(E))
\end{matrix},
\end{equation}
where $p_A:A \times \widehat{A} \to A$ and
$p_{\widehat{A}}:A \times \widehat{A} \to \widehat{A}$
are the two projections.
Then 
$\Phi:=\Phi_{A \to \widehat{A}}^{{\Pp}}$ is a derived equivalence which induces an isometry of Hodge structures
\begin{equation}\label{eqn:FMabeliansurfanddual}
\begin{matrix}
\widetilde{H}(A,\hz) & \to & \widetilde{H}(\widehat{A},\hz)\\
(m,\xi,a) & \mapsto & (a,-\widehat{\xi},m)
\end{matrix},
\end{equation}
where $\widehat{\xi}$ is the Poincar\'{e} dual of $\xi$ by \cite[Proposition 9.19, Remark 9.20,Lemma 9.23]{HuyFM}. 
As $\Phi$ sends $\Stabd(A)$ to $\Stabd(\widehat{A})$ by \cite[Theorem 3.8(1)]{YY}, we may apply $\Phi$ to reduce to the previous case, in which case $T=\widehat{A}$ and $v'=\pm \Phi_*(v)$ depending on the sign of $\chi$.
So we may assume that $r=\chi=0$.

Since $r=\chi=0$, but $v^2>0$, we must have $\theta\ne 0$.  Tensoring by a line $L\in\Pic(A)$ such that $\theta\cdot L\ne 0$ gives an autoequivalence of $\Db(A)$ that sends $v$ to $(0,\theta,\theta\cdot L)$, so may reduce to the previous case.
\end{proof}
Lemma~\ref{lem:positive rank} lets us reduce to the case constructing finite order symplectic birational self-maps on the Albanese fiber of a moduli space of sheaves on an abelian surface $A$.
We construct one in this case in the next subsection.

\subsection{The vertical wall}
As in the proof of the lemma, write the vector $v$ as $v=(r,\theta,\chi)$, with $r,\chi\in \bbZ$, and $\theta=cD\in \NS(A)$, with $D$ a primitive class. According to \cref{lem:positive rank}, we assume that $r>0$.  Let $K\coloneqq K_A(v,H)$ be the Albanese fiber of a moduli space of $H$-stable sheaves for a $v$-generic ample divisor $H$.

\begin{definition}\label{def:vertical wall}
    We define the \emph{vertical wall} of $A$ associated to $v$ to be the set
    \[\calW\coloneqq\{(\omega,\beta)\in \Amp(A)\otimes \NS(A)_{\mathbb{R}}\,\mid\, Z_{\omega, \beta}(v) \in \bbR\}.\]
\end{definition}
In other words, $\calW$ is the set of $(\omega,\beta)$ satisfying $\omega\cdot\theta=r\omega\cdot\beta$.  We consider the corresponding hyperbolic rank two lattice
$$\calH_\calW=\{w=(\lambda,(\lambda/r)\theta,\mu)\ |\ \lambda,\mu\in\Z,\quad r\mid c\lambda\},$$
the saturation in $\widetilde{H}(A,\bbZ)$ of the sublattice spanned by $v$ and $(0,0,1)$.  Equivalently, this is the sublattice of $w\in \widetilde{H}(A,\bbZ)$ such that $Z_{\omega,\beta}(w)\in\bbR$ for all $(\omega,\beta)\in\calW$. The computations of \cite[Lemma 9.2]{BM14a} are identical for abelian surfaces, so $\overline{\ell}(\sigma_{\omega,\beta})\sim_{\bbR_{>0}}\theta_v(a_{\omega,\beta})$ for $(\omega,\beta)\in \calW $, where $a_{\omega,\beta}=(0,\omega,\beta\cdot\omega)=(0,\omega,\omega\cdot\theta/r)$.  The previous vectors form a cone $C_\calW$ of codimension one, that is, a wall, in the positive cone $\calC_K\subset\NS(K)_\bbR\simeq v^\perp_\bbR\subset\widetilde{H}^{1,1}(A,\bbZ)_\bbR$.  One of the chambers adjacent to $C_\calW$ is the Gieseker chamber.  

We would like to consider the reflection in this wall.  Via $\theta_v^{-1}$, this wall consists of all vectors in $\widetilde{H}^{1,1}(A,\bbZ)_\bbR$ that are orthogonal to both $v$ and $a_{\omega,\beta}$ for all $(\omega,\beta)\in\calW$.  The latter condition means that any such vector $(R,C,S)$ must satisfy 
$$0=((R,C,S),(0,\omega,\omega\cdot\theta/r))=C\cdot\omega-\frac{R}{r}\theta\cdot\omega$$
as $\omega$ varies over the ample cone so that any $\omega\in\Amp(A)$ satisfies $(R\theta-rC)\cdot\omega=0$. As $\Amp(A)$ is a full dimensional open cone in $\NS(A)_\bbR$, it follows that $R\theta-rC=0$ holds, so using orthogonality to $v$, we see that $(R,C,S)$ must be proportional to $e\coloneqq(r,\theta,\frac{\theta^2}{r}-\chi)$.  
The reflection in the wall $C_\calW$ is thus $R_e$, and using \cref{Prop:Reflection in Mon}, we can now determine precisely when this reflection is in $\MonH(K)$.
\begin{lemma}
    The reflection $R_e$ belongs to $\MonH(K)$ if and only if 
    \begin{equation}\label{eqn:numerical criteria on reflection in Mon2}
   r\mid 2c  \ \text{ and } \ \gcd(r,\chi)=1 \text{ or } 2.
    \end{equation}
\end{lemma}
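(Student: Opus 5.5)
The plan is to reduce everything to Proposition~\ref{Prop:Reflection in Mon}, applied to $K$ via the Mukai morphism $\theta_v\colon v^\perp\to H^2(K,\bbZ)$. First, the Hodge condition is free. The line $\bbR e$ lies in $\widetilde{H}^{1,1}(A,\bbZ)_\bbR\cap v^\perp_\bbR$, so any isometry $R_e$ of $v^\perp$ fixes $H^{2,0}$ and is a Hodge isometry. Hence $R_e\in\MonH(K)$ if and only if $R_e\in\Mon^2(K)$.

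The reflection depends only on the line $\bbR e$. So let $e_0$ be the primitive generator of $\bbQ e\cap v^\perp$. A direct computation gives
\[
e^2=\theta^2-2r\Big(\frac{\theta^2}{r}-\chi\Big)=-(\theta^2-2r\chi)=-v^2=-2(n+1).
\]
Proposition~\ref{Prop:Reflection in Mon} requires $e_0^2=-2(n+1)$. Writing $e=\lambda e_0$ with $\lambda\in\bbQ_{>0}$, this forces $\lambda=1$. The first step is therefore to show that this holds exactly when $e\in\widetilde{H}(A,\bbZ)$ and $e$ is primitive. Integrality of $e$ means $r\mid\theta^2=c^2D^2$. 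Primitivity means $\gcd\!\big(r,c,\tfrac{c^2D^2}{r}-\chi\big)=1$, using that $D$ is primitive. Since $e$ is negative, $R_e$ is automatically orientation preserving, so the remaining condition of the Proposition is $n+1\mid\dv_{v^\perp}(e)$.

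The second step computes $\dv_{v^\perp}(e)$. The key identity is
\[
v-e=\Big(0,0,\,2\chi-\frac{\theta^2}{r}\Big)=-\frac{v^2}{r}\,(0,0,1).
\]
For $x=(R,C,S)\in v^\perp$ this gives $(e,x)=(e-v,x)=\pm\frac{v^2}{r}R$. Hence $\dv_{v^\perp}(e)=\frac{v^2}{r}\,g$, where $g$ is the gcd of the ranks $R$ of vectors in $v^\perp$. The condition $\frac{v^2}{2}\mid\frac{v^2}{r}g$ becomes $r\mid 2g$.

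Next we compute $g$. A vector $(R,C,S)$ lies in $v^\perp$ if and only if $c\,(C\cdot D)-rS-\chi R=0$. Since $D$ is primitive in $H^2(A,\bbZ)$, which is unimodular, $C\cdot D$ ranges over all of $\bbZ$. The admissible ranks are therefore the $R$ with $\chi R\in c\bbZ+r\bbZ$. This gives
\[
g=\frac{\gcd(c,r)}{\gcd(c,r,\chi)}.
\]
Here we must be careful whether $D\cdot H^2(A,\bbZ)$ or only $D\cdot\NS(A)$ is allowed. Since $v^\perp$ is the full lattice, we use $H^2(A,\bbZ)$ and unimodularity.

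The final and main step is the elementary arithmetic combining three conditions into \eqref{eqn:numerical criteria on reflection in Mon2}: integrality $r\mid c^2D^2$, primitivity $\gcd(r,c,\frac{c^2D^2}{r}-\chi)=1$, and $r\mid 2\gcd(c,r)/\gcd(c,r,\chi)$.

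\emph{Sufficiency.} Assume $r\mid 2c$ and $\gcd(r,\chi)\in\{1,2\}$. Then $r^2\mid 4c^2$. Combined with evenness of $D^2$ this should give $r\mid c^2D^2$, after checking the parity case. Set $\gcd(r,c)\in\{r,r/2\}$ and $\gcd(c,r,\chi)\mid\gcd(r,\chi)$; the divisibility $r\mid 2g$ then follows. Primitivity of $e$ is checked by reducing $\frac{c^2D^2}{r}-\chi$ modulo common divisors of $r$ and $c$.

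\emph{Necessity.} From $r\mid 2g$ and $g\mid\gcd(c,r)$ we get $r\mid 2c$ directly. Then $\frac{2\gcd(c,r)}{r}\in\{1,2\}$ bounds $\gcd(c,r,\chi)$ by $2$. Since $\gcd(r,\chi)$ divides both $r$ and $\chi$, one shows it equals $\gcd(c,r,\chi)$ up to the primitivity constraint.

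I expect this case analysis, especially the interplay between the factor $2$, the parity of $D^2$ and the primitivity of $e$ when $r$ is even, to be the only delicate point. I would handle it by splitting into $r$ odd (where $r\mid c$) and $r$ even with $c/(r/2)$ odd.
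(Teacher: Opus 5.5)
Your route is the paper's route: reduce to Proposition~\ref{Prop:Reflection in Mon}, which the paper invokes just before the lemma and which is the computation it imports from \cite[Lemma 4.13]{DMPM}. Your intermediate computations are correct: $e^2=-v^2$, $v-e=-\frac{v^2}{r}(0,0,1)$, $\dv_{v^\perp}(e)=\frac{v^2}{r}g$, and $g=\gcd(c,r)/\gcd(c,r,\chi)$. Your reduction to "$e$ integral, $e$ primitive, $r\mid 2g$" is also correct.

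The closing arithmetic does not work as you sketch it, because you never use the hypothesis that makes it trivial. Since $v=(r,cD,\chi)$ and $D$ are both primitive, $\gcd(r,c,\chi)=1$. Without this fact your sufficiency step is false. For $(r,c,\chi)=(4,2,2)$ one has $r\mid 2c$ and $\gcd(r,\chi)=2$, yet $g=1$ and $r\nmid 2g$; only non-primitivity of such a $v$ excludes it. Your necessity claim that $\gcd(r,\chi)$ "equals $\gcd(c,r,\chi)$" is also false in general. For the primitive vector $(r,c,\chi)=(2,1,2)$ these two numbers are $2$ and $1$.

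Once you use $\gcd(r,c,\chi)=1$, the argument closes immediately:
\begin{enumerate}
\item You get $g=\gcd(r,c)$, so $r\mid 2g$ if and only if $r\mid 2c$.
\item If $r\mid 2c$, then $\gcd(r,c)$ divides $\frac{c^2D^2}{r}$. Either $r\mid c$, or $r=2a$ and $c=ma$ with $m$ odd, in which case $\gcd(r,c)=a$ and $\frac{c^2D^2}{r}=m^2a\frac{D^2}{2}$.
\item Hence $e$ is integral, and $\gcd\bigl(r,c,\frac{c^2D^2}{r}-\chi\bigr)=\gcd(r,c,\chi)=1$, so $e$ is automatically primitive.
\item Finally, $\gcd(r,\chi)$ divides $r$, hence $2c$, and is coprime to $c$, so it divides $2$.
\end{enumerate}
So $R_e\in\MonH(K)$ if and only if $r\mid 2c$, and the condition $\gcd(r,\chi)\in\{1,2\}$ in the statement is a consequence of it. No case analysis beyond the one in step 2 is needed.
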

\begin{proof}The proof works exactly the same way as in \cite[Lemma 4.13]{DMPM}.
\end{proof}

     \begin{proposition}\label{prop:walldivisorialorflopping}
         Let $\calW$ be the vertical wall of $A$ associated to a primitive Mukai vector $v=(r,cD,\chi)$ with $v^2\ge 6$. If $v$ satisfies the conditions in \eqref{eqn:numerical criteria on reflection in Mon2}, then the corresponding wall in $\Stabd(A)$ is divisorial or flopping.
     \end{proposition}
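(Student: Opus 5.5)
By Theorem~\ref{thm:ClassificationWalls} applied to $\calH=\calH_\calW$, it suffices to exhibit either a primitive isotropic class $w\in\calH_\calW$ with $(w,v)\in\{1,2\}$, or a class $v_1\in\calH_\calW$ satisfying \eqref{eqn:wall-def}. The plan is to produce a $v_1$ satisfying \eqref{eqn:wall-def} directly, unless condition (1) of that theorem already holds, in which case the wall is divisorial and we are done.

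First make $\calH_\calW$ explicit. By definition its elements are $(\lambda,(\lambda/r)cD,\mu)$ with $r\mid c\lambda$. Set $r_0:=r/\gcd(r,c)$. Then $\calH_\calW$ has basis
\[
u:=\Bigl(r_0,\tfrac{r_0c}{r}D,0\Bigr),\qquad f:=(0,0,1),
\]
with Gram matrix
\[
u^2=\frac{r_0^2c^2D^2}{r^2},\qquad (u,f)=-r_0,\qquad f^2=0.
\]
Next express $v$ in this basis: $v=\frac{r}{r_0}u+\chi f$.

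The hypothesis $r\mid 2c$ gives $r_0\in\{1,2\}$. Indeed $r\mid 2c$ forces $r/\gcd(r,c)\mid 2$. The isotropic class $f$ is primitive and satisfies $(f,v)=-r$. Isotropic vectors of a hyperbolic rank-two lattice span exactly two lines. The line of $f$ therefore gives a divisorial wall only when $r\in\{1,2\}$. The other isotropic line is spanned by a primitive $w$ proportional to $v-\frac{v^2}{2(v,f)}f$, i.e.\ to $2rv+v^2f$ up to scaling. One computes $(w,v)$ in terms of $r$, $\chi$, $r_0$, and the condition $\gcd(r,\chi)\in\{1,2\}$ should control whether this value lies in $\{1,2\}$. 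If either isotropic class pairs with $v$ to $1$ or $2$, the wall is divisorial.

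Otherwise we aim for a flopping wall by finding $v_1$. The natural candidate is $v_1:=kf=(0,0,k)$ for suitable $k$ of the right sign, or $v_1:=u+kf$. For $v_1=kf$ we have $v_1^2=0$ and $(v_1,v-v_1)=(kf,v)=-kr$, so $k<0$ makes this positive. It remains to check $(v-kf)^2=v^2+2kr\ge 0$. This holds for $k=-1$, since $v^2\ge 6$; more precisely $(v+f)^2=v^2-2r$, so we would choose $k$ small enough, or instead use the decomposition $v=v_1+(v-v_1)$ with both parts in the positive cone. Since $v^2\ge 6$ is positive, and $f$ is isotropic on the boundary of the positive cone of $\calH_\calW\otimes\bbR$, small lattice steps from $v$ towards $f$ should stay in the closed positive cone. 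The conditions $v_1^2\ge0$ and $(v-v_1)^2\ge0$ then give $(v_1,v-v_1)>0$ by the reverse Cauchy--Schwarz inequality, as used in the proof of Theorem~\ref{thm:ClassificationWalls}.

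The main obstacle is that a potential wall might be only a "fake" candidate, namely one where neither condition holds. This occurs exactly when no nonzero lattice point lies in the parallelogram spanned by $0$ and $v$ inside the positive cone. I expect to rule this out with the explicit basis above: $v-\frac{r}{r_0}u=\chi f$, and $\frac{r}{r_0}\ge 1$. One then shows that some $v_1\in\{f,-f,u,\ldots\}$ lies in the closed positive cone together with $v-v_1$, using $v^2\ge 6$ and $r_0\le 2$. The delicate part is the sign bookkeeping of $\chi$ and $D^2$, and this is where I would spend the effort, splitting into cases $r_0=1$ and $r_0=2$.
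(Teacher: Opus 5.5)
There is a genuine gap. Your reduction to Theorem~\ref{thm:ClassificationWalls} and your description of $\calH_\calW$ are fine. The candidate $v_1=-f=(0,0,-1)$ is also the right one when it works: $v_1^2=0$, $(v_1,v-v_1)=r>0$ and $(v+f)^2=v^2-2r$. But your claim that this is nonnegative ``since $v^2\ge 6$'' is false, because it requires $v^2\ge 2r$. Since $k$ is a nonzero integer, there is no smaller step than $k=-1$ to fall back on.

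Concretely, take $v=(10,5D,2)$ with $D$ primitive and $D^2=2$. This $v$ is primitive, has $v^2=10$, $r\mid 2c$ and $\gcd(r,\chi)=2$, yet $(v+f)^2=-10$. The appeal to ``small lattice steps'' staying in the positive cone does not fill this, since lattice points are discrete. For a general hyperbolic rank-two lattice with $v^2$ small compared to $r$, there is no admissible lattice point in the parallelogram and the potential wall is not a wall. So the hypothesis must enter through an arithmetic fact. Your sketch only extracts $r_0\le 2$ and defers the rest to unspecified sign bookkeeping.

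The missing idea is the divisibility $r\mid v^2$. Write $2c=mr$ and use that $D^2$ is even. Then $c^2D^2=r\cdot \frac{mcD^2}{2}$ with $\frac{mcD^2}{2}\in\bbZ$, so $r\mid v^2=c^2D^2-2r\chi$. Hence either $v^2\ge 2r$, where your $v_1=-f$ works, or $v^2=r$. In the second case $1=\frac{mcD^2}{2}-2\chi$, so $m$, $c$ and $D^2/2$ are odd, and $r=2a$ with $a$ odd. This gives $v=\bigl(2a,maD,\frac{D^2m^2a-2}{4}\bigr)$.

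For such $v$, set $w=\bigl(4,2mD,\frac{D^2m^2}{2}\bigr)$. Then:
\begin{itemize}
\item $w\in\calH_\calW$, since $\frac{4}{r}\cdot maD=2mD$ and $2a\mid 4ma$;
\item $w$ is primitive, because its last entry is odd;
\item $w$ is isotropic and $(v,w)=2$.
\end{itemize}
So the wall is divisorial by part (1) of Theorem~\ref{thm:ClassificationWalls}. Equivalently, $v_1=w$ satisfies \eqref{eqn:wall-def}, since $(v-w)^2=r-4>0$ and $(w,v-w)=2$.

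This dichotomy, $v^2\ge 2r$ versus $v^2=r$, is exactly the paper's argument. Your discussion of the second isotropic line and of $\gcd(r,\chi)$ is not needed for this statement.
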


\begin{proof}
Suppose first that $v^2\ge 2r$.  Then consider the decomposition $v=w+t$, where $w=v+(0,0,1)$ and $t=(0,0,-1)$, so that $(w,t)=r$, $w^2=v^2-2r$ and $t^2=0$. Thus $\calW$ is either divisorial or flopping by (\ref{eqn:wall-def}). 

Now, assume that $0<v^2<2r$. Since $r\mid 2c$, we get $r\mid v^2=c^2D^2-2r\chi$, in particular $v^2=r$. Write $2c=mr$. Then
\begin{equation}\label{eqn:v^2/r}
    \dfrac{v^2}{r}=1=mc\dfrac{D^2}{2}-2\chi.
\end{equation}
We obtain that $mc\dfrac{D^2}{2}$ is odd, hence $r=2a$ is even (follows from $2c=mr$) and $a$ is odd (follows from $c=ma$ odd). Isolating $\chi$ in (\ref{eqn:v^2/r}), it gives
$$v=\left(2a,maD ,\frac{D^2m^2a-2}{4}\right)$$
which is precisely the last case listed in Proposition \ref{prop:when the vertical wall divisorial for v}. Explicitly, the integral vector $w=\left(4,2mD,\dfrac{D^2m^2}{2}\right)\in \widetilde{H}^{1,1}(S,\bbZ)$ satisfies $(v,w)=2$.
\end{proof}
\begin{remark}\label{rem:geometric_wall_crossing}
While the lattice-theoretic proof above is efficient, it is conceptually illuminating to consider the underlying geometric wall-crossing behavior. By the standard classification of walls, demonstrating that $\calW$ is an actual (non-fake) wall requires constructing a curve in $K$ that parameterizes non-isomorphic $H$-stable sheaves of class $v$ which become $S$-equivalent with respect to the stability condition $\sigma_0 = \sigma_{H, \beta}$ for $(H, \beta) \in \calW$. Since the vertical wall induces the Hilbert--Chow morphism when $r=1$, a well-known divisorial contraction, we may assume that $r \ge 2$.

As demonstrated by Tajakka \cite[Theorem 7.1]{Tajakka}, crossing the vertical wall always induces the Li--Gieseker--Uhlenbeck contraction morphism to the Uhlenbeck compactification of the moduli space of $\mu_H$-stable locally free sheaves. Furthermore, the $\sigma_0$-stable objects with infinite slope are precisely the $\mu_H$-stable locally free sheaves, alongside shifted skyscraper sheaves $\mathcal{O}_x[-1]$ for points $x \in A$ \cite[Proposition~2.2]{Tajakka}. It follows that the $\mu_H$-stable sheaves of class $v$ that become strictly $\sigma_0$-semistable must be either non-locally-free or strictly $\mu_H$-semistable. The dichotomy between the two cases considered in the proof of Proposition~\ref{prop:walldivisorialorflopping} dictates which of these two types of objects populates the contracted exceptional locus:

\begin{itemize}[leftmargin=*]
    \item \textbf{Case 1 ($v^2 \ge 2r$):} In this ``large discriminant'' regime, the exceptional locus is dominated by non-locally-free sheaves (arising conceptually as kernels of surjections from a bundle to a skyscraper sheaf). 
    
    \item \textbf{Case 2 ($v^2 < 2r$):} In this ``small discriminant'' regime, under the numerical conditions $v^2 \ge 6$, $r | 2c$, and $\gcd(r, \chi) \in \{1, 2\}$, the rank $r$ must be even and there is a decomposition $v=v_1+v_2$ with
\[v_1=\left(2,mD,\frac{m^2d-1}{2}\right) \text{ and } v_2=\left(r-2,\left(\frac{r}{2}-1\right)mD,\frac{(r-2)m^2d}{4}\right).\]
One checks easily that
$v_1^2=2$, $v_2^2=0$, and $\langle v_1,v_2\rangle=\frac{r}{2}-1\ge 2$. Thus we may take $\mu_H$-stable locally free sheaves $E_i$ of Mukai vector $v_i$ and all non-trivial extensions $E\in\Ext^1(E_2,E_1)$ are Gieseker stable but $S$-equivalent at the vertical wall.  Thus the entire positive dimensional projective space $\bbP\Ext^1(E_2,E_1)$ gets contracted.
Unlike the first case, the contraction here does not address point-like singularities of the sheaves. 
\end{itemize}
\end{remark}

We call $\sW$ divisorial or flopping in accordance with the type of the corresponding wall in $\Stabd(A)$.

\begin{proposition}\label{prop:when the vertical wall divisorial for v}
Let $\calW$ be a vertical wall of $A$ associated to a primitive Mukai vector $v=(r,cD,\chi)$ satisfying condition \eqref{eqn:numerical criteria on reflection in Mon2}. Then $\calW$ is a divisorial wall if and only if one of the following conditions hold:
    \begin{enumerate}
        \item $r\le 2$
        \item $v=(r,krD,\frac{D^2k^2r}{2}-m)$ for some $k\in\Z$ and $m=1$ or $2$.
        \item $r>2$, $r\nmid c$, and one of the two possibilities occurs.
        \begin{enumerate}
            \item $D^2\equiv 0\pmod 4$, $v=(2a,maD,\frac{D^2m^2a}{4}-1)$ for some integers $a\ge 2$, $m$ odd.
            \item $D^2\equiv 2\pmod 4$, $v=(2a,m'aD,\frac{D^2m'^2a-2}{4})$ for some odd integers $a\ge 3$, $m'$.
        \end{enumerate}
    \end{enumerate}
\end{proposition}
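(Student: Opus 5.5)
By Theorem~\ref{thm:ClassificationWalls}(1), $\calW$ is divisorial exactly when the rank two lattice $\calH_\calW$ contains a primitive isotropic class $w$ with $(w,v)\in\{1,2\}$. The plan is to parametrize $\calH_\calW$ explicitly, list all its isotropic vectors, and turn the condition into elementary congruences on $(r,c,\chi,D^2)$ under the assumptions $r\mid 2c$ and $\gcd(r,\chi)\in\{1,2\}$ from \eqref{eqn:numerical criteria on reflection in Mon2}.

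\textbf{Step 1 (coordinates).} Let $r_0=r/\gcd(r,c)$. Then $\lambda$ ranges over $r_0\bbZ$, and $\calH_\calW$ has basis $u=(r_0,(r_0c/r)D,0)$ and $f=(0,0,1)$, with $v=(r/r_0)u+\chi f$. The Gram matrix is $u^2=r_0^2c^2D^2/r^2$, $(u,f)=-r_0$ and $f^2=0$. The condition $r\mid 2c$ forces $r_0\in\{1,2\}$: we get $r_0=1$ iff $r\mid c$, and $r_0=2$ otherwise.

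\textbf{Step 2 (isotropic classes).} For $w=xu+yf$ we have $w^2=x(xu^2-2r_0y)$. The isotropic lines are therefore spanned by $f$, which gives $(f,v)=-r$, and by the primitive vector on $xu^2=2r_0y$. Since $f$ is primitive, $f$ (or $-f$) is a divisorial class exactly when $r\in\{1,2\}$, which is case (1). For the other line, solve $xu^2=2r_0y$ with $\gcd(x,y)=1$ and compute the pairing with $v$ as a function of $x$, $r$, $\chi$ and $u^2$. The requirement that this pairing be $\pm1$ or $\pm2$ then becomes an equation in $\chi$.

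\textbf{Step 3 (case split).} If $r_0=1$, write $c=kr$, so $u=(1,kD,0)$ and $u^2=k^2D^2$. The second isotropic vector is $w=(1,kD,k^2D^2/2)$, which is primitive, and $(w,v)=k^2D^2r/2-\chi$. Setting this equal to $m\in\{1,2\}$ gives exactly case (2); the sign is fixed by positivity, and $v^2\ge 6$ rules out the wrong sign. If $r_0=2$, then $r=2a$ with $a\nmid c$, so $a\ge 2$ and $c=ma$ with $m$ odd. Here $u=(2,mD,0)$ and $u^2=m^2D^2$, so the isotropic direction is $(4,2mD,m^2D^2/2)$ up to scaling.
\begin{itemize}
\item If $D^2\equiv 0\pmod 4$, the primitive isotropic vector is $w=(2,mD,m^2D^2/4)$. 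The equation $(w,v)\in\{1,2\}$ gives $\chi=D^2m^2a/4-1$ or $\chi=D^2m^2a/4-2$. The second choice makes $\gcd(r,\chi)$ even while $v$ is primitive, and it must be excluded or absorbed using $\gcd(r,\chi)\in\{1,2\}$ together with the parity of $m$. This should leave case (3a).
\item If $D^2\equiv 2\pmod 4$, the vector $(4,2mD,m^2D^2/2)$ is primitive and $(w,v)=m^2D^2a-4\chi$, which is never $\pm1$. It equals $2$ exactly when $\chi=(D^2m^2a-2)/4$, and integrality forces $a$ odd. Combined with $a\ge 2$ this gives $a\ge 3$, which is case (3b). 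This matches the example in the proof of Proposition~\ref{prop:walldivisorialorflopping}.
\end{itemize}

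\textbf{Main obstacle.} The delicate part is the bookkeeping. One must check primitivity of $w$ in each subcase, track the signs of $(w,v)$, and use the hypotheses $\gcd(r,\chi)\in\{1,2\}$, primitivity of $v$ and $v^2\ge 6$ to discard spurious solutions, such as $(w,v)=2$ in subcase (3a). One must also confirm that the listed families actually satisfy \eqref{eqn:numerical criteria on reflection in Mon2}, so that the "if" direction holds. I would handle this by direct substitution in each branch.
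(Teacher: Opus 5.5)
Your strategy is the same one the paper uses: its one-line proof refers to the K3 argument of \cite[Proposition 4.15]{DMPM}, with the spherical classes dropped. That means parametrizing $\calH_\calW$ by $u,f$, noting that $r\mid 2c$ forces $r_0\in\{1,2\}$, listing the two isotropic lines, solving $(w,v)=\pm1,\pm2$, and fixing signs with $v^2>0$. Your treatment of the line $\pm f$, of the case $r_0=1$, and of subcase (3b) is correct.

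\textbf{Subcase (3a): the pairing is miscomputed.} With $w=(2,mD,m^2D^2/4)$ and $v=(2a,maD,\chi)$ one gets
\[
(w,v)=m^2aD^2-2\chi-\tfrac{m^2D^2}{4}\cdot 2a=2\Big(\tfrac{m^2aD^2}{4}-\chi\Big),
\]
not $\tfrac{m^2aD^2}{4}-\chi$. So $(w,v)$ is always even and $(w,v)=\pm1$ has no solution. Since $v^2=2a\,(w,v)$, positivity leaves only $(w,v)=2$, that is $\chi=\tfrac{D^2m^2a}{4}-1$, which is exactly (3a). There is no second family to discard.

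\textbf{Your proposed exclusion would not work.} You planned to rule out $\chi=\tfrac{D^2m^2a}{4}-2$ by arguing that it makes $\gcd(r,\chi)$ even, but this is false. Take $D^2=4$, $m=1$, $a=3$, giving $v=(6,3D,1)$. It satisfies $r\mid 2c$, $\gcd(r,\chi)=1$ and $v^2=24$. Its only isotropic classes are $\pm f$ and $\pm(2,D,1)$, which pair with $v$ to $\mp 6$ and $\pm 4$. So the wall is not divisorial, yet your formula would have listed it as divisorial. Once the pairing is corrected, the issue disappears.

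\textbf{A smaller slip.} In the case $r_0=2$, $a$ always divides $c$. The bound $a\ge 2$ comes from $r>2$, since $r=2$ is already case (1); it does not come from $a\nmid c$.

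\textbf{An unnecessary step.} For the ``if'' direction you do not need to check that the listed families satisfy \eqref{eqn:numerical criteria on reflection in Mon2}, because that is a standing hypothesis. Exhibiting the class $w$ is enough.
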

\begin{proof}
    The proof is identical to that of \cite[Proposition 4.15]{DMPM} except that the spherical cases need not be considered.   
\end{proof}
It follows from Proposition~\ref{prop:walldivisorialorflopping} that if $v$ does not satisfy any of the conditions of Proposition~\ref{prop:when the vertical wall divisorial for v}, then $R_e\in\Mon_{\rm Bir}^2(K)$.  We are now in position to prove \cref{theorem:main theorem 2-explicit examples}. 
\begin{proof}[Proof of \cref{theorem:main theorem 2-explicit examples}]
Assume that $v$ satisfies  \eqref{eqn:numerical criteria on reflection in Mon2} so that $R_e\in \Mon_{\rm Hdg}^2(K)$.  Then by Theorem~\ref{theo:semidirect decomposition}, it decomposes uniquely as 
 $R_e= \psi\circ g^*$,
with $\psi\in W_{\rm Exc}(K)$ and $g\in \Bir(K)$ a symplectic birational self-map, unique up to the subgroup of cohomologically trivial automorphisms $\ker \rho_K$ (see \eqref{eq:natural representation}).
In view of  \cref{prop:walldivisorialorflopping}, all we have to show is that $\psi=\Id$ exactly when $\calW$ is a flopping wall. If $\calW$ is divisorial, then it lies on the boundary of $\Mov(K)$. By Definition \ref{def:wallchamberCX},
%\cite[Theorem 6.17 and Lemma 6.22]{Marsurvey}, 
the equality $\calW=E^\perp$ holds for some prime exceptional divisor $E$, in particular $R_e=\psi$ lies in $W_{\Exc}(K)$ and $g=\Id$. On the other hand, when $\calW$ is a flopping wall, it lies in the interior of $\Mov(K)$. Therefore, one can find $D\in \Mov(K)$ such that $(g^{-1})^*(D)$ lies close to $C_\calW$. Then we have $R_e\circ (g^{-1})^*(D)=\psi(D)\in \Mov(K)$. This is only possible when $\psi=\Id$ since $\Mov(K)$ is a fundamental domain for the faithful action of $W_{\Exc}(K)$ (Theorem~\ref{theo:semidirect decomposition}).  Thus $R_e=g^*$. The equality $g^*|_{A_M}=-\Id$ is a consequence of \cref{Prop:Reflection in Mon}.
\end{proof}

\begin{example}\label{example:MarkmanExample}
When $r>2$, we recover an analog of Markman's examples \cite[Section 11]{MarkmanPex}. In loc. cit., Markman considers a polarized K3 surface $(S,H)$ and the Mukai vectors $v=(r,0,s)$ (resp. $v=(2r,rH,-b)$) for some values $r,s,b\in \bbZ$ satisfying certain properties. He then proves that the moduli space $M_S(v)$ admits a birational involution generically described by $\calE\mapsto \calE^\vee$ (resp. $\calE\mapsto \calE^\vee\otimes \calO(H)$), whose cohomological action is exactly the reflection in the vertical wall. 

These operations and Markman's computations hold and generalize to $K_A(v,H)$: for any $v=(r,cD,\chi)$ with $r\geq 1$ and $r\mid 2c$, i.e. $c=\frac{mr}{2}$, 
the reflection $R_e$ in the vertical wall is induced by the functor
\[\calE \mapsto \calE^\vee \otimes \calO(mD).\]
The conditions under which the reflection is divisorial of flopping are determined by Proposition~\ref{prop:when the vertical wall divisorial for v}.
\end{example}

\appendix
\section{Moduli spaces of twisted sheaves}\label{sec:modulispaces} Due to lack of references, in this appendix, we define the twisted modular Kummer-type manifolds $K_{A,\alpha}(v, H)$ as the Albanese fiber of the moduli spaces of $H$-stable $\alpha$-twisted sheaves with Mukai vector $v$ on abelian surfaces $A$. More precisely, we aim to prove Theorem \ref{thm:twistedYoshioka}. In this section, we use the notation $M_{A,B}(v,H)$ rather than $M_{A,\alpha}(v,H)$ (and similarly for the Albanese fiber) to emphasize the choice of the $B$-field, see Section \ref{sec:AbSurf}. We work over $\bbC$.

The proof uses the constructions explained in the next sections. More precisely, we deform $A$, $\alpha$, $H$ and $K_{A,B}(v,H)$ and the Mukai homomorphism $\theta_{v}$ to another abelian surface $A'$ over which $\alpha$ becomes trivial, so that $K_{A'}(v,H')$ parametrizes non-twisted sheaves. Since the result has been shown by Yoshioka in the case $\alpha=0$ \cite[Theorem 0.2]{YosModuliSheaves}, the theorem follows.

\subsection{Twisted sheaves and moduli spaces}

For an extensive introduction to twisted sheaves, we refer to \cite{CaldararuPhD,VanBreePhD}. Given a smooth projective variety $X$ over an algebraically closed field and a class $\alpha\in \Br(X)$, defining $\alpha$-twisted sheaves and their moduli spaces usually requires an extra datum, which usually amounts to lift the class $\alpha$ through the morphism $H^1(X,{\rm PGL}_k) \to H^2(X,\bbG_m)$ or $H^2(X,\mu_k) \to H^2(X,\bbG_m)$.
\begin{enumerate}
    \item One can fix an Azumaya algebra $\mathfrak{A}$ that represents the class $\alpha$. Twisted sheaves are then defined as left $\mathfrak{A}$-modules. \label{eq:Azumayatwist} 
    \item One can fix a Brauer-Severi variety $Y\to X$ representing $\alpha$, and consider twisted sheaves as certain sheaves on $Y$. \label{eq:BStwist}
    \item One can fix a $\bbG_m$-gerbe (or, as often needed, a $\mu_k$-gerbe) $\mathfrak{X}\to X$, and consider twisted sheaves as sheaves of weight $1$ on $\mathfrak{X}$. \label{eq:gerbetwist}
\end{enumerate}

The moduli space of semistable twisted sheaves was constructed in setting (\ref{eq:Azumayatwist}) by Simpson \cite[Theorem 4.7]{SimpsonModuliTwist}, see also \cite{HoffmannStuhler}; in setting (\ref{eq:BStwist}) by Yoshioka \cite{YoshiokaTwistedSheaves} and in setting (\ref{eq:gerbetwist}) by Lieblich \cite{LieblichTwistedSheaves}. These constructions are equivalent \cite[Theorem 3.2.9]{VanBreePhD}. Before considering the case of abelian surfaces, we state the general existence result. We use the form that appears (and is reproven) in \cite[Theorem 3.2.11]{VanBreePhD}. In the following statement, twisted sheaves are considered as in setting (\ref{eq:gerbetwist}).

\begin{theorem}\label{thm:modulitwistgeneral}
    Let $f\colon X\to S$ be a projective morphism, with $S$ an affine scheme of finite type over an algebraically closed field of characteristic $0$. Fix $H$ a $f$-relative polarization, $\mathfrak{X}\to X$ a $\mu_k$-gerbe and $P$ a polynomial. Then, there exists a relative moduli stack $\mathfrak{M}^P_{\mathfrak{X}/S} \to S$ parametrizing families of twisted semistable sheaves on $X$ with twisted Hilbert polynomial $P$. Moreover, there exists a projective coarse moduli space $M^P_{\mathfrak{X}/S}\to S$. Over the open stable locus $M^{P,\text{st}}_{\mathfrak{X}/S}\subset M^P_{\mathfrak{X}/S}\to S$, there exists a universal family \'etale-locally and $\mathfrak{M}^{P,\text{st}}_{\mathfrak{X}/S}\to M^{P,\text{st}}_{\mathfrak{X}/S}$ is a $\bbG_m$-gerbe.
\end{theorem}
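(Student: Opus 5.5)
The plan is to deduce the statement from the relative Simpson/Yoshioka/Lieblich construction in the gerbe formalism, as reproven in \cite[Theorem 3.2.11]{VanBreePhD}, rather than rebuild GIT from scratch. First I reduce to a finite level: since $\alpha$-twisted sheaves only need the class of $\mathfrak{X}$ in $H^2(X,\mu_k)$, I work with the $\mu_k$-gerbe throughout. The key tool is the standard equivalence between weight-one sheaves on $\mathfrak{X}$ and modules over an Azumaya algebra. Concretely, I choose a locally free twisted sheaf $\mathcal{G}$ of weight one on $\mathfrak{X}$; such a sheaf exists because $S$ is affine and $f$ is projective, so the relative Brauer class is represented by an Azumaya algebra (Gabber/de Jong in the quasi-projective case). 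The functor $E\mapsto f_*\mathcal{H}om(\mathcal{G},E)$ identifies twisted sheaves with coherent modules over $\mathfrak{A}=\mathcal{E}nd(\mathcal{G})$. Using $\mathcal{G}$, I define the twisted Hilbert polynomial as $P_{\mathcal{G}}(E)(m)=\chi(X_s,\pi_*(E\otimes\mathcal{G}^\vee)\otimes H^m)$, and I define (semi)stability through reduced twisted Hilbert polynomials.

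Second, I transport the moduli problem to $\mathfrak{A}$-modules. Simpson's theorem on moduli of $\Lambda$-modules \cite[Theorem 4.7]{SimpsonModuliTwist} applies to the sheaf of rings $\mathfrak{A}$ over $X/S$, because $S$ is of finite type over a field of characteristic $0$. It gives a projective coarse moduli space $M^P_{\mathfrak{X}/S}\to S$ for semistable modules with Hilbert polynomial $P$, realized as a GIT quotient $R^{ss}/\!\!/\GL(V)$ of an open subset of a Quot scheme. I have to check that the equivalence preserves the relevant structures: subobjects, Hilbert polynomials and therefore stability. This holds because $\mathcal{H}om(\mathcal{G},-)$ is exact and sends twisted subsheaves to $\mathfrak{A}$-submodules.

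Third, I identify the stack and gerbe structure. The quotient stack $[R^{ss}/\GL(V)]$ is $\mathfrak{M}^P_{\mathfrak{X}/S}$, since the Quot-scheme boundedness (Langton/Maruyama type) supplies the presentation. On the stable locus the stabilizers are exactly the scalars $\bbG_m$, by the simplicity of stable objects, so $\GL(V)/\bbG_m$ acts freely. Luna's étale slice theorem then makes $R^{st}\to M^{P,st}$ a principal $\mathrm{PGL}(V)$-bundle. From this it follows that $\mathfrak{M}^{st}\to M^{st}$ is a $\bbG_m$-gerbe, and that a universal family exists étale-locally: it can be taken over local sections of the $\mathrm{PGL}$-bundle, or equivalently wherever the gerbe is trivial.

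The main obstacle I expect is boundedness and uniformity over the base $S$. I need to show that semistable twisted sheaves with fixed $P$ on the fibres $X_s$ form a bounded family, uniformly in $s$, so that a single Quot scheme works. My first attempt would be to deduce this from Simpson's boundedness for $\Lambda$-modules. The alternative is to apply Langer's characteristic-free bound to the untwisted sheaves $\pi_*(E\otimes\mathcal{G}^\vee)$. Under the equivalence these are $\mathfrak{A}$-semistable, and $\mu$-semistability is only distorted by a bounded amount determined by $\mathcal{G}$.
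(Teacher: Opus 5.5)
Your outline is sound, and it does not really compete with the paper. The paper gives no argument for this statement: it quotes it in the gerbe formulation from \cite[Theorem~3.2.11]{VanBreePhD}, and relies on \cite[Theorem~3.2.9]{VanBreePhD} for the equivalence with the Azumaya construction (\cite{SimpsonModuliTwist,HoffmannStuhler}) and the Brauer--Severi construction (\cite{YoshiokaTwistedSheaves}). You make the Azumaya route explicit, in four steps:
\begin{itemize}
\item a weight-one locally free $\mathcal{G}$ on $\mathfrak{X}$, which exists by Gabber--de Jong since $X$ carries an $f$-ample line bundle over the affine $S$;
\item the equivalence $E\mapsto \pi_*(E\otimes\mathcal{G}^\vee)$ with modules over $\mathfrak{A}=\pi_*\mathcal{E}nd(\mathcal{G})$ (you wrote $f_*$; it must be $\pi_*$ for $\pi\colon\mathfrak{X}\to X$);
\item Simpson's GIT construction;
\item the free $\mathrm{PGL}(V)$-action argument on the stable locus.
\end{itemize}
What this buys is that the stack, the $\bbG_m$-gerbe structure and the \'etale-local universal family all fall out of the presentation $[R^{ss}/\GL(V)]$; the price is the auxiliary choice of $\mathcal{G}$.

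Two smaller remarks on the outline:
\begin{itemize}
\item Boundedness is not an extra obstacle. Simpson's boundedness for $p$-semistable $\Lambda$-modules, uniform over $S$, is part of the theorem you invoke.
\item Simpson's theorem for a general sheaf of rings of differential operators only yields a quasi-projective coarse space (Higgs sheaves and $\mathcal{D}$-modules are examples). For projectivity you must add that, since $\mathfrak{A}$ is coherent over $\calO_X$, the $\mathfrak{A}$-linear quotients form a closed subscheme of the Quot scheme. Alternatively, invoke Langton's theorem for twisted sheaves.
\end{itemize}

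The point that needs real attention is that your stability depends on $\mathcal{G}$. The paper's twisted Hilbert polynomial is the intrinsic Chern-character one of \cite[Definition~3.2.1]{VanBreePhD}; it is what gets computed from $v_B$ by Hirzebruch--Riemann--Roch in Proposition~\ref{prop:modulitwistsmoothproj}.

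On a surface, write $\ch_{\mathfrak{X}}(F)=(r_F,c_F,s_F)$, $\ell=\rk\mathcal{G}$ and $g=c_1(\mathcal{G})$. Then $P_{\mathcal{G}}(F)/(\ell r_F)$ equals the reduced Chern-character polynomial of $F$ minus $(c_F\cdot g)/(r_F\ell)$, up to a term independent of $F$. So you are really using twisted Gieseker stability with respect to the $\bbQ$-class $g/\ell$. At slope ties this differs from the intended notion unless $g$ is normalized.

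The repair is easy. $(\det\mathcal{G})^{\otimes k}$ has weight $k\ell\equiv 0$, so it equals $\pi^*M$ for a line bundle $M$ on $X$. Replacing $\mathcal{G}$ by $\mathcal{G}\otimes\pi^*(M^{-1}\oplus\calO_X^{\oplus(k\ell-1)})$ gives $c_1(\mathcal{G})=0$ in $H^2(X,\bbQ)$. This suffices in relative dimension $2$. In higher relative dimension the higher components of $\ch(\mathcal{G})$ enter the lower-order coefficients in the same way and must be normalized too.

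For a polarization outside the walls for $v$, which is how generic $H$ is used later in the appendix, the discrepancy is invisible anyway. There, a subsheaf of the same $\mu_H$-slope satisfies $c_F/r_F\equiv c_E/r_E$ numerically.
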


The twisted Hilbert polynomial is defined as in the non-twisted case, see \cite[Section 3.2.1]{VanBreePhD}. In particular, one uses that $\mu_k$-gerbe are Deligne--Mumford stacks, hence admit a theory of Chow groups and Chern character (see also \cite{Heinloth, ReiAutoequiv}).

Finally, note that it is often useful in practice to fix a Chern character ${\rm ch}\in \bigoplus_{p}\Gamma(S,\mathbf{R}^pf_*\bbQ)$ and consider the moduli space $M_{\mathfrak{X}/S}({\rm ch})$ of twisted sheaves with fixed Chern character, rather than fixed Hilbert polynomial.

\subsection{Abelian surface}\label{sec:AbSurf}

Let $A$ be an abelian surface and $\mathfrak{X}\to A$ a $\mu_k$-gerbe. Choose a $B$-field lift $B\in H^2(A,\bbQ)$ of the class $[\mathfrak{X}]\in H^2(A,\mu_k)$. The choice of $B$ determines a \v{C}ech-cocycle $(\alpha_{ijk})\in \check{H}^2(A,\calO^*_A)_{\rm tors}$. Huybrechts and Stellari \cite{HSEquivTwist} defined a $B$-Mukai vector
\begin{equation}\label{eq:Bmukaiapp}
    v_B\colon \mathbf{D}^b(A,(\alpha_{ijk})) \to \widetilde{H}(A,B,\bbZ),
\end{equation}
see Section \ref{sec:DefExamples} for the definition of the $B$-twisted Hodge structure.
Given a twisted sheaf $\calF$, there is a comparison with the Chern character constructed on $\mathfrak{X}$:
$v_B(\calF)=\ch_{\mathfrak{X}}(\calF)\cdot e^{B}$, see \cite[Proposition 3.1.71]{VanBreePhD}. We use the two notions and notations interchangeably:
$$M_{\mathfrak{X}}({\rm ch}) =: M_{A,B}(v).$$

\begin{proposition}\label{prop:modulitwistsmoothproj}
    Let $A$ be an abelian surface.
    Let $v\in \widetilde{H}(A,B,\bbZ)$ be a primitive positive $B$-Mukai vector (Definition \ref{def:posvector}). For a general polarization $H$, all semistable sheaves are stable and the moduli space $M_{A,B}(v, H)$ is smooth and projective of dimension $v^2+2$.
\end{proposition}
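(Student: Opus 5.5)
The plan is to reduce the twisted statement to the untwisted one by a gerby twist that trivializes the Brauer class after a suitable change of Chern character, and then to invoke the general existence theorem, Theorem~\ref{thm:modulitwistgeneral}, together with standard deformation theory.

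Projectivity of $M_{A,B}(v,H)$ comes directly from Theorem~\ref{thm:modulitwistgeneral}, applied with $S=\operatorname{Spec}\bbC$ and the $\mu_k$-gerbe $\mathfrak{X}$ corresponding to $B$, using that $v_B(\calF)=\ch_{\mathfrak{X}}(\calF)e^{B}$ fixes the twisted Hilbert polynomial. For the genericity claim, I would run the usual wall argument for $\mu$-semistability and Gieseker semistability of twisted sheaves (Huybrechts--Lehn, Section 4.C), which goes through verbatim for sheaves of weight one on $\mathfrak{X}$. Suppose $\calF$ is strictly semistable with a destabilizing subsheaf $\calF'$. Then $v_B(\calF')$ is proportional to $v$ in the slope and reduced Hilbert polynomial sense with respect to $H$. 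Since $v$ is primitive, $v_B(\calF')$ is not a rational multiple of $v$, so the condition defines a wall in $\Amp(A)_\bbR$. Bogomolov's inequality for twisted sheaves bounds the discriminants of the possible subsheaves, and this gives local finiteness of these walls. For $H$ off the walls, semistability therefore equals stability. In rank $0$, where $v$ is positive with effective $\theta$, the analogous argument uses pure one-dimensional twisted sheaves. When $r=\theta=0$, $v$ primitive forces $\chi=1$, and the moduli space is $A$ itself.

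Smoothness and the dimension come from deformation theory at a stable point $[\calF]$. The tangent space is $\Ext^1(\calF,\calF)$. Stability gives $\Hom(\calF,\calF)=\bbC$. Serre duality on the gerbe, using $\omega_A\cong\calO_A$, identifies $\Ext^2(\calF,\calF)\cong\Hom(\calF,\calF)^\vee=\bbC$. The obstructions lie in the kernel of the trace map $\Ext^2(\calF,\calF)\to H^2(\calO_A)$, and this trace is surjective (it is dual to $H^0(\calO_A)\to\Hom(\calF,\calF)$), so the obstruction space is zero. This is the Artamkin--Mukai argument, and it is the step that needs care in the twisted setting: one must use that the traceless obstruction theory exists on $\mathfrak{X}$, which I would cite from Lieblich. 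Finally, $\chi(\calF,\calF)=-v^2$ by twisted Riemann--Roch, since the Mukai pairing is compatible with $v_B$ (Huybrechts--Stellari). Hence $\dim\Ext^1=v^2+2$, and the moduli space is smooth of that dimension.

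The main obstacle I expect is the genericity part for twisted sheaves, specifically local finiteness of the walls. I would handle it by noting that the ranks of twisted subsheaves are multiples of the index of $\alpha$, and that the possible $v_B(\calF')$ lie in a discrete set bounded by Bogomolov's inequality. The non-twisted proof then applies without change.
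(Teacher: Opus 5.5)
Your proposal is correct and follows essentially the paper's argument: the genericity statement is handled as in the untwisted case, projectivity comes from the general existence theorem, obstructions vanish because duality identifies $\Ext^2(E,E)$ with $\Hom(E,E)^\vee$ and turns the trace map into the dual of the inclusion of scalars, and the dimension $v^2+2$ follows from Riemann--Roch. The only difference is technical: you invoke Serre duality directly on the $\mu_k$-gerbe (citing Lieblich), whereas the paper works with twisted $Y$-sheaves on a Brauer--Severi variety $p\colon Y\to A$ and derives the duality from Grothendieck duality on $A$, after writing the untwisted complex $\mathbf{R}\mathcal{H}om(E,E)$ as a pullback $p^*\mathcal{E}_0$; note also that your opening remark about reducing to the untwisted case by a twist that trivializes the Brauer class plays no role in the argument you actually give and is best dropped.
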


\begin{proof}
First, we claim that for $H$ chosen $v$-generic, all semistable sheaves are stable. Note that by the comparison $v_B(-)=\ch_\mathfrak{X}(-)\cdot e^B$, we may use the $B$-Mukai vector to compute the Hilbert polynomial, which in turns can be computed via Hirzebruch--Riemann--Roch \cite[Definition 3.2.1]{VanBreePhD}. In this situation, the proof in the non-twisted case applies identically. As a consequence, $M_{A,B}(v,H)=M_{A,B}^{\rm st}(v,H)$ is projective.
Therefore, we are left to prove smoothness of the moduli space at a point corresponding to a stable twisted sheaf. This is stated in \cite{YoshiokaTwistedSheaves} in the case of K3 surfaces.

    We make a choice of a Brauer--Severi variety $p\colon Y\to A$ corresponding to the lift of $[\mathfrak{X}]$ to a class in $H^1(A, {\rm PGL}_k)$, compatible with the choice of the $B$-field $B$
    (see [Lemma 1.2, 1.3; \textit{loc.\ cit.}]). 
    From [Lemma 2.2; \textit{loc.\ cit.}] the independence of the definition of the moduli space from the choice of the Brauer--Severi variety $Y$ follows.
    We define an $\alpha$-\emph{twisted $Y$-sheaf} to be a non-twisted coherent sheaf on $Y$ given by the pullback of an $\alpha$-twisted sheaf on $X$ untwisted by a fixed $p^*\alpha^{-1}$-twisted line bundle on $Y$, where $\alpha$ is seen as a \v{C}ech-cocycle $\alpha\in \check{H}^2(A, \sO_A^*)$\footnote{Yoshioka calls these simply $Y$-sheaves, \cite[Definition 1.3]{YosModuliSheaves}}.
    In particular, non-twisted $Y$-sheaves are simply pullback of sheaves on $A$.

    Since here all $\alpha$-twisted semistable $Y$-sheaves are stable, it is enough to show that the obstruction to the deformation of a 
     stable $\alpha$-twisted $Y$-sheaf $E$ is trivial. To this end, recall that a deformation of $E$ induces a deformation of $\det E$, preserving the obstruction class (see \cite[Theorem 4.5.3]{HuyLeh}). Since the deformation of $\det E$ is unobstructed, the obstruction at $E$ is determined by the kernel of the trace map 
    \begin{equation}\label{eq:trext}
        \tr\colon \Ext^2(E,E)\to H^2(Y, \sO_Y).
    \end{equation}
    Recall that the trace map above is induced by the $H^2$ of local trace map
    $\mathbf{R}\sH om(E,E)\to \sO_Y$.
    Since $\mathbf{R}\sH om(E,E)$ is a non-twisted object in $\mathbf{D}^b(Y)$, we may write $\mathbf{R}\sH om(E, E)\simeq p^*\sE_0$ for some non-twisted complex of sheaves $\sE_0$ on $X$. 
    Then by Grothendieck duality on $A$, we have {a quasi-isomorphism:}
    \begin{equation}\label{eq:grothendieckdual}
        \mathbf{R}\Hom_A(\sO_A, \sE_0\otimes \omega_A)[2]\simeq \mathbf{R}\Hom_A(\sE_0, \sO_A)^{\vee}.
    \end{equation}
    Now since $Rp_*\sO_Y\simeq \sO_A$, we observe {that the following holds:}
    \[\mathbf{R}\Hom_A(\sO_A, \sE_0\otimes \omega_A)\simeq \mathbf{R}\Gamma_A(\sE_0)\simeq \mathbf{R}\Gamma_YR\sH om(E,E)\simeq \mathbf{R}\Hom_Y(E,E).\]
    Similarly{, we obtain the following quasi-isomorphism:}
    \[\mathbf{R}\Hom_A(\sE_0, \sO_A)\simeq \mathbf{R}\Hom_Y(\mathbf{R}\sH om(E,E),\sO_Y).\]
    Using \cite[p.\ 84, (3.15)]{HuyFM} and the equivalence  $\mathbf{R}\Gamma_Y\circ \mathbf{R}\sH om = \mathbf{R}\Hom_Y$, we deduce that
    \begin{equation}
\mathbf{R}\Hom_Y(\mathbf{R}\sH om(E,E),\sO_Y)\simeq 
\mathbf{R}\Hom_Y(E,E)^\vee
\end{equation}
    holds. Therefore, \eqref{eq:grothendieckdual} can be rewritten as
    \begin{equation}\label{eq:dualRHomEE}
        \mathbf{R}\Hom_Y(E,E)[2]\simeq \mathbf{R}\Hom_Y(E,E)^{\vee}.
    \end{equation}
    Taking degree $0$ cohomology, we obtain that
    $\Ext^2(E,E)\simeq \Hom(\Hom(E,E),\bbC)$.
    Chasing a similar series of isomorphisms, one may also argue that $H^2(Y,\sO_Y)\simeq H^0(Y,\sO_Y)^{\vee}$ holds. 
    Therefore, the trace map $\tr\colon \Ext^2(E,E)\to H^2(Y,\sO_Y)$ is 
    induced by the following commutative square
    
    \[
    \begin{tikzcd}
        \Ext^2(E,E) \rar["\tr"]\dar["\simeq"] & H^2(Y,\sO_Y)\dar["\simeq"] \\
        \Hom(E,E)^{\vee}& H^0(Y,\sO_Y)^{\vee}\lar["\tr^{\vee}", "\sim"']
    \end{tikzcd}.
    \]
    We obtain the desired isomorphism, showing that $\ker(\tr)$ is trivial.

    Finally, since $\Ext^{>2}(E,E)=0$ by (\ref{eq:dualRHomEE}), the dimension of the moduli space $M_{A,B}(v)$ is $\dim\Ext^1(E,E)$ and the formula in terms of the $B$-Mukai vector follows from Hirzebruch--Riemann--Roch \cite[Section 3.1]{YoshiokaTwistedSheaves}\footnote{Yoshioka uses a different Mukai vector than Huybrechts--Stellari, but they differ by the Hodge isometry $v\mapsto e^B\cdot v$; see also \cite[Lemma 3.1.56]{VanBreePhD}.}.\qedhere
\end{proof}

\subsection{Deforming the moduli space}\label{sec:defmoduli}

In order to prove Theorem \ref{thm:twistedYoshioka}, we deform the abelian surface and the Brauer class to another abelian surface, on which the Brauer class becomes trivial. Similar ideas appears in many papers already  (see e.g. \cite{MukaiModuliBundlesI,CaldararuPhD}). Therefore, we fix the following data:

\begin{enumerate}
    \item A polarized abelian surface $(A,H)$,
    \item A Brauer class $\overline{\alpha}\in \Br(A)=H^2_{\text{ét}}(A,\bbG_m)$ of order $k$,
    \item Lifts $\alpha\in H^2_{\text{ét}}(A,\mu_k)$ and $B\in H^2(A,\bbQ)$ of $\overline{\alpha}$. In particular, the class $\alpha$ represents a $\mu_k$-gerbe $\mathfrak{X}\to A$.
    \item A Chern character ${\rm ch}=(r,D,s)\in \widetilde{H}(A,\bbQ)$, with associated $B$-Mukai vector $v=\exp(B)\cdot {\rm ch}=(r,\theta,\chi)\in \widetilde{H}(A,B,\bbZ)$. We assume that $v$ is positive in the sense of Definition \ref{def:posvector}.
\end{enumerate}

Note that $v$ being algebraic in the $B$-twisted Mukai lattice is equivalent to having $$D\coloneqq \theta-rB\in \NS(A).$$ Fix an admissible marking $\mu\colon H^2(A, \bbZ)\to U^{\oplus3}$ of $A$, denote $h \coloneqq \mu(H)$ and $d\coloneqq \mu(D)$, and let $L \coloneqq \operatorname{Sat}_{U^{\oplus3}}(\bbZ h \oplus\bbZ d)$. Consider $\calD_{L}^+$ the period domain for $(L, h)$-polarized abelian surfaces, as defined in \cite[Lemma 2.5]{FanLaiLatticePol}. Consider an étale neighborhood of the point $\mu(H^{2,0}(A))\eqqcolon 0\in U\subset\calD_{L}^+$ such that there exists a universal family 
$$f\colon (\calA,\calH,\calD) \to U,$$
where $\calH$ and $\calD$ are divisors on $\calA$, and for every $t\in U$ we have $c_1(\calH_t)=\mu_t^{-1}(h)$ and $c_1(\calD_t)=\mu_t^{-1}(d)$.

Consider the relative Chern character $\widetilde{\rm ch}\in \oplus_{p=0}^{2}\Gamma(U,\calH^{2p}_{\rm DR})$ such that the fiber  over $0\in U$ recovers the Chern class $\ch\in \widetilde{H}(A, \bbQ)$ that we fixed, in particular, $(\widetilde{\rm ch})_0={\rm ch}$ (see e.g. \cite[Remark 4.2]{vBGJM}). 
Here for any $p\in\bbN$, $\calH^{2p}_{\rm DR}\coloneqq \mathbf{R}^{2p}f_*\Omega_{\sA/U}^{\bullet}\simeq \mathbf{R}^{2p}f_*\bbC_{\sA}$.
This relative Chern character $\widetilde{\rm ch}$ indeed exists: the bundles $\calH^0_{\rm DR}\simeq f_*\bbC_{\sA}$ and $\calH^4_{\rm DR}\simeq \mathbf{R}^4f_*\bbC_{\sA}$ are trivial, and $\calD$ gives a section of $\calH^2_{\rm DR}$ with fiber at $0\in U$ being $c_1(\calD_{0})=D$.  Since $\mathbf{R}^2f_*\mu_k$ is an étale local system, up to replacing $U$ by a finite étale cover, we can extend $\alpha$ to a relative $\mu_k$-gerbe $\widetilde{\alpha}\in H^0(U,\mathbf{R}^2f_*\mu_k)$. 
Moreover, the obstruction to lift it to a $\mu_k$-gerbe over $\calA$ vanishes étale locally, therefore we can assume that $\widetilde{\alpha}\in H^2(\calA,\mu_k)$.

\begin{lemma}\label{lem:Brauertrivialdense}
    The set of points $t\in U$ such that the Brauer class in $H^2(\calA_t,\bbG_m)$ associated to $\widetilde{\alpha}_t\in H^2(\calA_t,\mu_k)$ is trivial is dense in the analytic topology of $U$.
\end{lemma}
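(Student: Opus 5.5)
The plan is to read the triviality condition as a condition on the period of $\calA_t$ relative to a fixed lattice class, and then use a standard density argument for Noether--Lefschetz loci. Work with the trivialized local system on $U$ and identify $H^2(\calA_t,\bbZ)\simeq U^{\oplus3}$ via $\mu_t$. Since $H^3(\calA_t,\bbZ)$ is torsion-free, the exponential sequence gives
\[
\Br(\calA_t)\simeq \bigl(H^2(\calA_t,\bbQ)/(\NS(\calA_t)_\bbQ+H^2(\calA_t,\bbZ))\bigr)_{\rm tors}.
\]
The class $\widetilde{\alpha}_t\in H^2(\calA_t,\mu_k)=H^2(\calA_t,\bbZ)/k$ maps to $\Br(\calA_t)$ via $x\mapsto x/k$. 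So the associated Brauer class is trivial precisely when some integral lift $x\in U^{\oplus3}$ of the (locally constant) class $a:=\mu_t(\widetilde{\alpha}_t)\in U^{\oplus3}/k$ satisfies
\[
\tfrac{x}{k}\in \NS(\calA_t)_\bbQ+U^{\oplus3},
\]
that is, $x+k y$ is of type $(1,1)$ for some $y\in U^{\oplus3}$. The key reduction is therefore to show that the set of $t\in U$ for which some vector of the coset $x_0+kU^{\oplus3}$ is algebraic is dense.

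For a fixed class $\lambda\in U^{\oplus3}$, the locus $\{t : \lambda\in H^{1,1}(\calA_t)\}$ is the hyperplane section $\{(\sigma_t,\lambda)=0\}$ of the period domain $\calD_L^+$. Here $\calD_L^+$ is an open subset of a quadric in $\bbP((L^\perp)_\bbC)$, because $h,d$ stay algebraic along $U$. Only the component of $\lambda$ orthogonal to $L$ matters, so I will restrict to $\lambda$ whose projection to $L^\perp_\bbQ$ is nonzero.

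Next I would show density. Given $t_0\in U$ and a small neighborhood $V$, I want $\lambda=x_0+ky$ with $(\sigma_{t},\lambda)=0$ for some $t\in V$. Approximate $\frac{1}{k}x_0+y$: the rational vectors $\frac{x_0}{k}+y$, $y\in U^{\oplus3}$, together with their rescalings $N(\frac{x_0}{k}+y)$ (taking $y$ of the form $\frac{m x_0 + k y'}{k}$ shapes), can be arranged so that their projective classes are dense in $\bbP(L^\perp_\bbR)$. The cleaner route is to consider $\lambda_N=x_0+kNz$ for $z\in U^{\oplus3}$ and $N\to\infty$: the direction of $\lambda_N$ tends to $z$. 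Since directions of integral $z$ are dense and $L^\perp$ has signature $(2,\ast)$, the hyperplanes $\lambda^\perp$ for $\lambda$ of negative square in $L^\perp_\bbR$ meet every open subset of $\calD_L^+$, exactly as in the classical density of Noether--Lefschetz loci (Green's argument / Huybrechts' K3 book, Prop.~6.2.9). Concretely, pick $t_0$ and a real vector $z_\bbR\in L^\perp_\bbR$ close to $\Re\sigma_{t_0}^\perp$ with negative square. The hyperplane $z_\bbR^\perp$ meets $V$, and this persists under small perturbations of $z_\bbR$, hence for the direction of some $\lambda_N$ with $N\gg0$. Then $\lambda_N$, and hence the Brauer class, becomes trivial at the intersection point.

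The main obstacle is handling the projection to $L^\perp$ carefully. The coset $x_0+kU^{\oplus3}$ must project to vectors whose directions are dense in $\bbP(L^\perp_\bbR)$, and this follows because the projection of $kNz$ to $L^\perp_\bbQ$ ranges over a full-rank set. I also need to check that the resulting hyperplane section of $\calD_L^+$ is nonempty near $t_0$, which holds because the periods with $\lambda\in H^{1,1}$ form a real codimension-two submanifold whose direction varies continuously with $\lambda$ and sweeps the whole domain.
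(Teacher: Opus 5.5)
Your argument is correct, but it does more than the paper does. The paper gives no argument of its own: it invokes Voisin's density theorem for Noether--Lefschetz loci \cite[Proposition 5.20]{VoisinHTtwo}, which rests on Green's infinitesimal criterion, and points to \cite[Proposition 5.3]{vBGJM}. You carry out both steps by hand. First, using the exponential sequence and the torsion-freeness of $H^3(\calA_t,\bbZ)$, you reduce triviality of the Brauer class to algebraicity of some vector in the coset $x_0+kH^2(\calA_t,\bbZ)$. Second, you prove density of the union of the hyperplane sections $\{(\sigma,\lambda)=0\}$ over that coset, using the quadric description of $\calD_L^+\subset\bbP(L^\perp_\bbC)$ and the trick $\lambda_N=x_0+kNz$ with $z\in L^\perp$, whose directions accumulate on every direction of $L^\perp_\bbR$. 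Your version is self-contained and makes visible why a whole coset and an open set of directions are needed: the locus of a single class is a closed hypersurface, not dense. The citation buys generality, since it applies to any family satisfying Green's surjectivity condition at one point.

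The one step you leave vague ("sweeps the whole domain") is exactly where Green's condition enters, and it is easy to make precise. Take $z_0\neq 0$ in $\sigma_{t_0}^\perp\cap L^\perp_\bbR$, which is negative definite. The linear form $\tau\mapsto(\tau,z_0)$ on $\sigma_{t_0}^\perp\subset L^\perp_\bbC$, the tangent directions of the quadric cone, is nonzero: $z_0$ itself lies there and $(z_0,z_0)<0$. The implicit function theorem then shows that $z^\perp$ meets any neighbourhood $V$ of $\sigma_{t_0}$ for all $z$ close to $z_0$.

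$U$ is only an \'etale neighbourhood of the period point, later replaced by finite \'etale covers. So you should run the argument on a simply connected open subset of $U$ on which the period map is a biholomorphism onto an open subset of $\calD_L^+$ and the class $a$ is constant. The sentence about rescalings $N(\frac{x_0}{k}+y)$ can simply be dropped in favour of the $x_0+kNz$ argument.
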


\begin{proof}
    This is a consequence of \cite[Proposition 5.20]{VoisinHTtwo}. See for instance \cite[Proposition 5.3]{vBGJM}.
\end{proof}

Let $\widetilde{\mathfrak{X}}\to \calA$ represent the $\mu_k$-gerbe $\widetilde{\alpha}$.
Altogether with the relative polarization $\calH$, we obtain a relative moduli space
$$\sM\coloneqq M_{\widetilde{\mathfrak{X}}/U}(\widetilde{\ch}) \to U.$$
In the next lemma we show that up to shrinking $U$, we may assume that $\calM$ is smooth over $U$. One could possibly upgrade the arguments in \cref{prop:modulitwistsmoothproj} to show this smoothness, however we give a different proof below.
\begin{lemma}\label{lem:relativemodulismooth}
     Up to shrinking $U$, the map
    $\sM \to U$
    is smooth and projective.
\end{lemma}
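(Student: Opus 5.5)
The plan is to prove projectivity and smoothness of $\sM\to U$ separately, shrinking $U$ around $0$ as needed.

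\textbf{Projectivity.} This follows from Theorem~\ref{thm:modulitwistgeneral}, applied to the projective morphism $f\colon\calA\to U$ with the $f$-relative polarization $\calH$ and the $\mu_k$-gerbe $\widetilde{\mathfrak{X}}$. The theorem is stated for $S$ affine of finite type, so I would first replace $U$ by an affine étale neighbourhood of $0$. I would also fix the twisted Hilbert polynomial determined by $\widetilde{\ch}$; it is locally constant in $t$. The theorem then gives a projective coarse moduli space $\sM\to U$ whose fibre over $t$ is $M_{\calA_t,B_t}(v_t,\calH_t)$.

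\textbf{Stable equals semistable near $0$.} Since $H$ is $v$-generic, $M_{A,B}(v,H)$ has no strictly semistable points. The locus of strictly semistable points in $\sM$ is closed, because it is the image of products of moduli of semistable sheaves with Mukai vectors summing to $v$ and proportional Hilbert polynomials. Its image in $U$ is closed by properness and does not contain $0$, so shrinking $U$ gives $\sM=\sM^{\rm st}$.

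\textbf{Smoothness.} I would avoid a relative deformation-theoretic argument and use the fibrewise criterion instead. For each $t\in U$, Proposition~\ref{prop:modulitwistsmoothproj} shows that the fibre $\sM_t$ is smooth of dimension $v^2+2$. This needs $v_t$ to be primitive and positive and $\calH_t$ to be $v_t$-generic, which hold near $0$ by the previous step. It then suffices to show that $\sM$ is flat over $U$, and for this I would use the obstruction theory. At a stable point $E$ of $\sM_t$, the relative deformations of $E$ along $U$ are governed by $\Ext^1(E,E)$. The obstruction lies in $\Ext^2(E,E)$, and as in the proof of Proposition~\ref{prop:modulitwistsmoothproj} it maps to $H^2(\calO)$ via the trace.

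\textbf{Main obstacle: the trace component.} I expect the hard step to be controlling the trace component of the obstruction, since $\tr$ is an isomorphism here. That component is the obstruction to deforming the pair $(\det E,\alpha)$, that is, to keeping the twisted class $c_1$ of type $(1,1)$. This obstruction vanishes along $U$ precisely because $U$ was built so that $\calD_t$ stays algebraic and $\widetilde{\alpha}$ extends over $\calA$. The argument would therefore run as follows.

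1. Use the family $\calD$ and the gerbe $\widetilde{\alpha}$ to show that the trace image of the obstruction class is zero, following the Mukai/Artamkin determinant argument in \cite[Theorem 4.5.3]{HuyLeh}.
2. Conclude that $\sM$ is smooth over $U$ at $E$, with relative tangent space $\Ext^1(E,E)$ of constant dimension $v^2+2$.

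If this argument becomes delicate, the fallback is a dimension count. By Mukai's estimate every component of $\sM$ has dimension at least $\dim U+v^2+2$. Since each fibre $\sM_t$ is smooth of dimension exactly $v^2+2$, miracle flatness over the smooth base $U$ gives flatness. Smoothness of the fibres then gives smoothness of $\sM\to U$.
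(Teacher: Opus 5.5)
Your proposal is correct, and its skeleton matches the paper's. You shrink $U$ so that stability equals semistability on every fibre; your closed-image argument is equivalent to the paper's appeal to openness of stability. You get each $\sM_t$ smooth of dimension $v^2+2$ from Proposition~\ref{prop:modulitwistsmoothproj}, and you reduce to flatness. The paper then finishes with \cite[Lemma 2.8]{dJAlterations}: a regular base, a smooth fibre, and $\dim\calO_{\sM,x}=\dim\calO_{U,t}+\dim\calO_{\sM_t,x}$ together give smoothness at $x$. That is exactly your fallback. Your main route instead argues directly with the relative obstruction theory.

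The two routes are not independent, and this is the main thing the comparison shows. Fibrewise smoothness only yields $\widehat{\calO}_{\sM,x}\cong\widehat{\calO}_{U,t}[[x_1,\dots,x_{v^2+2}]]/(f)$ for a single $f$, since $\Ext^2(E,E)\cong\bbC$, with $f$ lying in the ideal generated by $\mathfrak{m}_t$. If $f\neq 0$, then $\dim\calO_{\sM,x}=\dim\calO_{U,t}+v^2+1$ and $\sM\to U$ fails to be flat at $x$, even though all fibres are smooth of the same dimension.

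Hence the relative Mukai bound $\dim\geq\dim U+v^2+2$ in your fallback is available only once the obstructions are shown to be traceless, which is your step~1; without it you only get $\dim U+v^2+1$. So the fallback does not let you avoid step~1. The same dimension equality is what the paper's appeal to de Jong's criterion needs but leaves implicit.

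Your step~1 is therefore the real content in either version. It is also the one place where the choice of $U$ is genuinely used: $\calD_t$ stays algebraic and $\widetilde{\alpha}$ extends over $\calA$. Your proposal has the merit of isolating this step. When you write it out, note that $\det E$ is a weight-$r$ invertible sheaf on the gerbe. Its lifting obstruction along $U$ vanishes because $D$ stays of type $(1,1)$ there.
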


\begin{proof}
    Stability is an open property, see \cite[Proposition 2.1.3]{HuyLeh} (the proof is identical in the twisted setting). Therefore, up to shrinking $U$, we may assume that all fibers $\sM_t$, $t\in U$ are smooth of the same dimension and $U$ is regular. It remains to prove flatness of $\sM\to U$. For this, we use that a morphism $f\colon X\to Y$ between finite type Noetherian schemes, with $Y$ regular, is smooth around each point $x\in X$ for which $\dim \calO_{X,x}=\dim \calO_{Y,f(x)}+\dim \calO_{X_{f(x)},x}$ \cite[Lemma 2.8]{dJAlterations}.
\end{proof}

Since $\sM\to U$ is smooth, up to  replacing $U$ by a finite \'etale cover we may assume that it admits a section $s\colon U\to \sM$. We let $\sA lb\coloneqq \Pic^0_{\Pic^0_{\sM/U}/U}$ and consider the relative Albanese diagram (see e.g. \cite[Section 2.1]{Church})
\[ \begin{tikzcd}
    \sM \ar[rr,"a"] \ar[dr] & & {\sA lb} \ar[dl]\\
    & U \ar[ur, bend right, "e"'] \ar[ul, bend left, "s"] & 
\end{tikzcd}\]
with $e\colon U\to {\sA lb}$ the $0$-section. Set $\sK\coloneqq \sM\times_{\sA lb}U$ the relative Albanese fiber.

\begin{lemma}\label{lem:smoothalbanesefiber}
    The morphism $\sK\to U$ is smooth and projective.
\end{lemma}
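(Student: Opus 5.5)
We need to show that $\sK \to U$ is smooth and projective. Projectivity is the easy half: $\sK = \sM\times_{\sA lb}U$ is a closed subscheme of $\sM$, being the base change of the closed immersion $e\colon U\to \sA lb$ (a section of a separated morphism). Since $\sM\to U$ is projective by Lemma~\ref{lem:relativemodulismooth}, so is $\sK\to U$. The real content is smoothness, and the plan is to show that the relative Albanese morphism $a\colon \sM\to\sA lb$ is smooth. Then its base change $\sK\to U$ along $e$ is smooth.

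\textbf{Step 1: reduce to fibers.} Both $\sM$ and $\sA lb$ are smooth over $U$: the first by Lemma~\ref{lem:relativemodulismooth}, the second because it is an abelian scheme, namely the dual of the abelian scheme $\Pic^0_{\sM/U}$. (To know the latter is an abelian scheme, use that $\sM\to U$ is smooth projective with geometrically connected fibers and a section $s$.) By the fiberwise criterion for smoothness, since $\sM$ is flat over $U$, it suffices to check that for each $t\in U$ the restriction $a_t\colon \sM_t\to \Alb(\sM_t)$ is smooth. Here we use that formation of the relative Albanese commutes with base change to fibers, which holds because of the section $s$.

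\textbf{Step 2: fiberwise smoothness.} For $t$ in the dense set of Lemma~\ref{lem:Brauertrivialdense}, the fiber $\sM_t$ is a moduli space of untwisted sheaves. Yoshioka's result \cite[Theorem 0.2]{YosModuliSheaves} then says the Albanese map is an étale locally trivial fibration $\sM_t\to \calA_t\times\widehat{\calA}_t$, and in particular it is smooth. For a general $t$, including $t=0$, I would argue directly rather than by specialization. On a twisted moduli space $M_{A,B}(v,H)$ there is an action of $A\times\widehat{A}$, by translations and by tensoring with $\Pic^0$. Composing this action with $a$ gives an isogeny $A\times\widehat{A}\to \Alb$, exactly as in Yoshioka's untwisted argument; the key input is that the induced map on $H^1$ is computed via the twisted Mukai vector and has finite cokernel when $v^2>0$. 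It follows that $a$ is equivariant and surjective, so the isogeny acts transitively on the base. By generic smoothness, $a$ is smooth over a dense open set, and by equivariance and transitivity it is smooth everywhere.

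\textbf{Main obstacle.} The hardest step is making sure that the $A\times\widehat{A}$-action argument works in the twisted setting and varies in families. In particular, one needs tensoring by $\Pic^0$ and translation to preserve $B$-twisted stability, which also requires some care because translation changes the gerbe only by an isomorphic one. As a fallback, I would use relative dimension: the fibers of $a$ over $\sA lb$ have constant dimension $v^2-2$. Since $\sM$ and $\sA lb$ are both regular, miracle flatness gives that $a$ is flat, and smoothness of the fibers then follows by openness from the untwisted points supplied by Lemma~\ref{lem:Brauertrivialdense}, after shrinking $U$ as in Lemma~\ref{lem:relativemodulismooth}.
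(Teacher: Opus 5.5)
Projectivity and Step~1 are correct. Since $\sM$ is flat over $U$, the fiberwise criterion makes $a\colon\sM\to\sA lb$ smooth as soon as every $a_t\colon\sM_t\to\sA lb_t$ is smooth, and this is a cleaner reduction than first proving that $a$ is flat.

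The gap is Step~2 at the twisted fibers, and in particular at $t=0$, which is the fiber the appendix needs. Your fallback does not reach it. Openness of the smooth locus of $a$, together with properness of $\sM\to U$, only shows that $\{t\in U : a_t \text{ smooth}\}$ is open. Since it contains the untwisted points of Lemma~\ref{lem:Brauertrivialdense}, it is a dense open subset of $U$, but nothing forces it to contain $0$. Shrinking $U$ around $0$ cannot remove a nowhere dense closed bad locus that passes through $0$, because smoothness does not specialize in flat proper families. The claimed constancy of the fiber dimension $v^2-2$ of $a$ at twisted points is also assumed rather than proved, although after Step~1 you no longer need it.

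Your main route is a legitimate alternative to the paper's argument, and would even give \'etale local triviality. It runs as follows: let $\calA_t\times\widehat{\calA}_t$ act on $\sM_t$, show the orbit map onto $\Alb(\sM_t)$ is surjective, then use generic smoothness and homogeneity. The acting group is really an isogenous cover of $\calA_t$ times $\widehat{\calA}_t$, because the isomorphisms $t_x^*\widetilde{\mathfrak{X}}_t\simeq\widetilde{\mathfrak{X}}_t$ needed to translate twisted sheaves have monodromy in $H^1(\calA_t,\mu_k)$. The real problem is that the decisive input is only asserted: you give no reason why the orbit map is an isogeny for twisted sheaves. Yoshioka's untwisted computation goes through $\det E$ and the determinant of the Fourier--Mukai transform, and neither is directly available here.

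One way to close this is by deformation. The orbit maps form a homomorphism of abelian schemes into $\sA lb$ over $U$. Its effect on $H_1$ of the fibers is a morphism of local systems. It has finite cokernel at an untwisted point by Yoshioka, hence at every $t$.

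The paper avoids any group action. By invariance of plurigenera \cite{TsujiPlurigena} for the smooth projective family $\sM\to U$, every $\sM_t$ has Kodaira dimension $0$, because an untwisted fiber does. Then \cite[Theorem 8.3]{Kaw85} shows that each Albanese map $a_t$ is an \'etale fiber bundle, hence smooth. This treats $t=0$ directly, which is exactly the step your fallback misses.
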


\begin{proof}
    The dimensions of the fibers of the relative Albanese map $a\colon \sM\to \sA lb$ remain constant. Indeed, for any closed point $t\in U$, the dimension of $M_t$ is determined by the fixed Chern character and $\dim{\sA lb}_{t}=h^{1,0}(\sM_t)$ which is constant w.r.t. $t$ by Lemma \ref{lem:relativemodulismooth}.
  Since both $\sM$ and $\sA lb$ are smooth, $a$ is flat by Miracle flatness.

  Furthermore, we claim that the fibers of $a$ are smooth. To this end, we for show that for each $t\in U$, the moduli space $\sM_t$ has Kodaira dimension $0$. Indeed, by \cref{lem:relativemodulismooth},  the map $\sM\to U$ is a smooth projective morphism. Hence by \cite{TsujiPlurigena} the Kodaira dimension of the fibers remains constant over $U$. Now, by construction of $U$ as in Lemma \ref{lem:Brauertrivialdense}, there exists $t_0\in U$, such that $\sM_{t_0}$ is a moduli space of non-twisted stable sheaves on abelian surfaces. Since the Albanese fiber of $\sM_{t_0}$ is a modular Kummer-type manifold, the Kodaira dimension of $\sM_{t_0}$, and therefore of $\sM_t$ for any $t$ is zero. 
  Thus, for all $t\in U$, the map $a_t\colon \sM_t \to {\sA lb}_t$ is an \'etale fiber bundle by \cite[Theorem 8.3]{Kaw85}, and since $\calM_t$ is smooth, the morphism $a_t$ is a smooth as well.
  
  Since smoothness is an open property and $a$ is flat with smooth fibers over closed points of $U$, it is smooth \cite[\href{https://stacks.math.columbia.edu/tag/01V9}{Tag~01V9 (2)}]{stacks-project}.
  Since $\sK\to U$ is a base-change of the smooth morphism $a$, it is smooth.
\end{proof}

\begin{proof}[Proof of \cref{thm:twistedYoshioka}(1)]
Let $K\coloneqq K_{A,B}(v,H)$ be the Albanese fiber over zero of $M\to \Alb_M$, where $M=M_{A,B}(v,H)$, the moduli of stable $\alpha$-twisted sheaves with $B$-Mukai vector $v$ on the abelian surface $A$. By \cref{lem:smoothalbanesefiber}, the variety $K$ is smooth. Furthermore, by \cref{lem:Brauertrivialdense}, it deforms in a family whose special fibers are modular Kummer-type manifolds. Therefore, $K$ is a Kummer-type manifold.
\end{proof}

\subsection{Deforming the Hodge structures}
Let $\sA\to U$ be a family of abelian surfaces with $\widetilde{\mathfrak{X}}\to \sA$ a $\mu_k$-gerbe, which upon restriction as $\bbG_m$-gerbe over $\sA_t$ trivializes for an analytically dense subset of points $t\in U$ as in \cref{lem:Brauertrivialdense}.
Let $\sM\to U$ the relative moduli space $M_{\widetilde{\mathfrak{X}}/\calA}(\widetilde{{\rm ch}})$ and $\calK\to U$ the relative Albanese fiber as in Section \ref{sec:defmoduli}. 
By Theorem \ref{thm:modulitwistgeneral}, there exists a $(\beta\boxtimes \widetilde{\alpha})$-twisted universal sheaf on $\calM\times_U\calA$, where $\beta\in H^2(\calM,\bbG_m)$ is the obstruction to $\calM$ being a fine moduli space. Tensoring it by the pullback of a locally free $\beta^{-1}$-twisted sheaf on $\sM$ of rank $N$ (which always exists \cite[Theorem 1.3.5]{CaldararuPhD}), we obtain a $(0\boxtimes \widetilde{\alpha})$-twisted sheaf $\calE$ on $\sM\times_U\calA$, which satisfies the property that for each $m=[F]\in \sM_t$, $\calE|_{\{m\}\times\calA_t}\simeq F^{N}$ holds.

Now we fix an analytic open neighborhood $0\in T\subset U$, over which we restrict $\calA$, $\sM$ and $\sK$ (but we omit it from the notation). Choosing $T$ contractible, we can assume that for all $t\in T$, pullback by the closed immersion of a fiber gives an isomorphism
$$i_t^*\colon H^*(\sM,\bbQ) \xrightarrow{\sim} H^*(\sM_t,\bbQ).$$
In particular, we can consider $v$ and $B$ as cohomological classes on $\calA$, and hence on $\calA_t$ for all $t\in T$. Consider the following linear map:

\[
  \begin{aligned}
    {\Phi_t}  \colon H^{2*}(\calA_t,\bbQ) & \longrightarrow H^*(M_t,\bbQ)\\
    x & \longmapsto  \frac{1}{N}q_*(p^*x^\vee \cup v_{-B\boxtimes 0}(\calE_t^\vee))
  \end{aligned},
\]
where $\begin{tikzcd}[cramped, sep=small]
    \calA_t & \calA_t\times M_t \ar[r,"q"] \ar[l,"\ p"']& M_t
\end{tikzcd}$ are the projections, and for $x=(x_0,x_2,x_4)$ we denote $x^\vee=(x_0,-x_2,x_4)$.

\begin{proof}[Proof of Theorem \ref{thm:twistedYoshioka}(2)]
    With notations from above, a small computation shows that for all $t\in T$, the map 
\begin{equation}\label{eq:Phit}
    \Phi_{t,2}\colon H^{2*}(\calA_t,\bbQ) \xrightarrow{\Phi_t} H^*(\sM_t,\bbQ) \twoheadrightarrow H^2(\sM_t,\bbQ)
\end{equation}
sends $H^{2,0}(\sA_t)$ to $H^{2,0}(\sM_t)$, and therefore is a morphism of Hodge structure.
Restricting to $v^{\perp}\subseteq \widetilde{H}(\sA,\bbZ)$, and postcomposing with the restriction induced by the inclusion $\sK_t\into \sM_t$, we obtain
\[\Phi_{t,2}|_{v^{\perp}}\colon v^{\perp}\to H^2(\sK_t, \bbQ).\]
 
On the other hand, the map $\Phi_{t,2}$ is independent of $t$, since the rational cohomology of the fibers is constant over $T$. On any point $1\in T$ such that the Brauer class on $\sA_1$ associated to $B$ is trivial, the map $\Phi_{1,2}|_{v^\perp}=:\theta_v$ is an isometry onto $H^2(\calK_1,\bbZ)$ by \cite[Theorem 0.2]{YosModuliSheaves} (see also \cite[Section 3.2]{PerRapISVModuli}). Since $T$ contains such points by Lemma \ref{lem:Brauertrivialdense}, the result holds for all $t\in T$. 
\end{proof}

\begin{corollary}\label{coro:mukai+marking=wieneck}
The composition of 
\[\theta_{v}\colon v^\perp \to H^2(K_{A,B}(v),\bbZ)\]
with any isometry $\widetilde{H}(A,B,\bbZ)\simeq \widetilde{\Lambda}$ is a Wieneck embedding (Definition \ref{def:wieneck}).
\end{corollary}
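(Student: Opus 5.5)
The plan is to deform to the untwisted case and transport the defining property of Wieneck embeddings along the family constructed in Section~\ref{sec:defmoduli}. By Definition~\ref{def:wieneck} and Theorem~\ref{thm:PrimEmbeddingH2}, a Wieneck embedding of $X$ is any embedding of the form $\theta_{v'}^{-1}\circ P$ followed by a marking of $\widetilde{H}(A',\bbZ)$. Here $P\colon H^2(X,\bbZ)\to H^2(K_{A'}(v',H'),\bbZ)$ is a parallel transport to an untwisted modular Kummer-type manifold. The resulting orbit is well defined up to $O(\widetilde{\Lambda})$. It therefore suffices to find one such $P$ for which
\[
\theta_v^{-1} \;=\; \phi\circ\theta_{v'}^{-1}\circ P
\]
holds for some isometry $\phi\colon \widetilde{H}(A',\bbZ)\to \widetilde{H}(A,B,\bbZ)$. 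Since $\widetilde{H}(A,B,\bbZ)\simeq\widetilde{\Lambda}$ is unique up to $O(\widetilde{\Lambda})$, composing with any identification then lands in the canonical orbit.

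First, I take the family $\calA\to U$ with the gerbe $\widetilde{\mathfrak X}$, the relative moduli space $\calM$, and the relative Albanese fiber $\calK\to U$, all from Section~\ref{sec:defmoduli}. I restrict to a contractible analytic neighborhood $T\ni 0$, as in the proof of Theorem~\ref{thm:twistedYoshioka}(2). By Lemma~\ref{lem:Brauertrivialdense} there is a point $1\in T$ where the Brauer class is trivial. There $\calK_1=K_{\calA_1}(v,H_1)$ is an untwisted modular Kummer manifold, and $\theta_v$ at $t=1$ is Yoshioka's Mukai morphism.

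Second, parallel transport in the smooth family $\calK|_T\to T$ (Lemma~\ref{lem:smoothalbanesefiber}) gives $P\colon H^2(\calK_0,\bbZ)\to H^2(\calK_1,\bbZ)$. Parallel transport in $\calA|_T$ gives an isometry $\phi^{-1}\colon H^{*}(\calA_0,\bbZ)\to H^*(\calA_1,\bbZ)$. Because $B$ and $v$ are flat sections over $T$, the map $\phi^{-1}$ identifies $\widetilde{H}(A,B,\bbZ)$ with $\widetilde{H}(\calA_1,B_1,\bbZ)$ as lattices. On $\calA_1$ the class $B_1$ is the $B$-field of a trivial Brauer class, so it is integral up to $\NS$. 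Multiplication by $e^{-B_1}$ is therefore an integral isometry onto the untwisted Mukai lattice, sending $v$ to an untwisted Mukai vector $v'$. The twisted moduli space at $t=1$ is identified with the untwisted one via this $B$-shift (cf.\ the footnote in the proof of Proposition~\ref{prop:modulitwistsmoothproj}).

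Third, the proof of Theorem~\ref{thm:twistedYoshioka}(2) shows that $\Phi_{t,2}|_{v^\perp}$ is locally constant in $t$ under these flat identifications. Hence $\theta_v$ at $t=0$ is the parallel transport of $\theta_v$ at $t=1$, which is exactly the required identity. Finally, the canonical orbit is $\Mon^2$-invariant and independent of the chosen transport path. So any other choice of $P$ differs by monodromy and changes nothing.

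The main obstacle is the third step. One must check that the relative Mukai morphism built from the twisted universal sheaf $\calE$ (tensored by the rank $N$ twist) agrees at $t=1$ with Yoshioka's untwisted $\theta_{v'}$, up to the $e^{B}$-shift and the sign and dual conventions. I would handle this by expanding $v_{-B\boxtimes 0}(\calE_1^\vee)/N$ explicitly. At $t=1$ the gerbe is trivialized by a line bundle $L$ with $c_1(L)$ differing from $B_1$ by an integral class. The twisted universal family then becomes an untwisted quasi-universal family twisted by $L$. The resulting Grothendieck--Riemann--Roch computation should differ from Yoshioka's formula only by multiplication by $e^{c_1(L)-B_1}$, which is an integral isometry.
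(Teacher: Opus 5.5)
Your proposal is correct and is essentially the paper's argument: deform along the family of Section~\ref{sec:defmoduli} to a point $1\in T$ where the Brauer class is trivial, use that $\Phi_{t,2}|_{v^\perp}$ is locally constant so that $\theta_v^{-1}$ is Yoshioka's untwisted Mukai embedding precomposed with a parallel transport, and conclude directly from the definition of Wieneck embeddings. The one slip is that $e^{-B_1}$ need not be integral: the untwisting Hodge isometry is $e^{-c}$ for an integral class $c$ with $B_1-c\in\NS(\calA_1)_\bbQ$, which is what your final paragraph effectively uses, and your check of conventions at $t=1$ goes beyond what the paper spells out.
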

\begin{proof}
    The result follows from the definition of Wieneck embeddings (Definition \ref{def:wieneck}). Indeed, let $1\in T$ be such that the Brauer class on $\sA_1$ associated to $B$ is trivial. 
    Then, $\Phi_{t,2}$ as in (\ref{eq:Phit}) is a deformation of $\Phi_{1,2}$.
\end{proof}

\bibliographystyle{halpha}
\bibliography{main}

\end{document}